\def\ZZ{\mathbb{Z}}
  \def\r{\mathfrak{r}}
  \def\D{\mathcal{D}}
  \def\Z{\mathbb{Z}}
  \def\H{\mathbb{H}}
  \def\M{\mathcal{M}}
  \def\stab{\mathrm{Stab}}
  \def\U{\mathcal U}
  \def\GG{\SO_0(1,2)}
  \def\isom{Isom}
  \def\lsem{[\![}
  \def\rsem{]\!]}
\def\V{\mathcal V}
\def\C{\mathcal C}
\def\R{\mathbb{R}}
\def\E{\mathbf E}
\def\S{\mathcal S}
\def\N{\mathbb N}
\def\T{\mathcal{T}}
\def\d{\mathrm{d}}
\def\S{\mathcal{S}}
\def\D{\mathcal{D}}
\def\SO{\mathrm{SO}}
\def\E{\mathbb{E}}
\def\sing{\mathrm{Sing}}
\def\RR{\mathbb{R}}
\newcommand{\mass}[1]{{\mathbb E}^{1,2}_{#1}}
\def\isom{\mathrm{Isom}}
\def\Hol{\mathrm{Hol}}
\def\Reg{\mathrm{Reg}}
\def\fix{\mathrm{Fix}}
\def\BTZext{\mathrm{Ext}_{BTZ}}
\def\pgcd{M_1\wedge M_2}
\def\ppcm{M_1\vee M_2}
\def\Stilde{\widetilde{\mathcal S}}
\def\reg{\Reg}
\def\P{\mathcal P}
\newcommand{\fonction}[5]{\displaystyle#1:\begin{array}{l|rcl}
& \displaystyle #2 & \longrightarrow & \displaystyle #3 \\
    & \displaystyle #4 & \longmapsto & \displaystyle #5 \end{array}}
\newcommand{\fonctionn}[4]{\displaystyle\begin{array}{l|rcl}
& \displaystyle #1 & \longrightarrow & \displaystyle #2 \\
    & \displaystyle #3 & \longmapsto & \displaystyle #4 \end{array}}
\definecolor{MyGreen}{rgb}{0.0,.5,0.0}
\definecolor{MyDarkRed}{rgb}{0.7,0,0}
\title{Cauchy-compact flat spacetimes with extreme~BTZ}
\author{L\'eo Brunswic}
\newtheorem{theo}{Theorem}    
\newtheorem{lemou}{Lemma}[section]          
\newtheorem{propou}[lemou]{Proposition}
\newtheorem{corou}[lemou]{Corollary}
\newtheorem{defou}[lemou]{Definition}    
\newtheorem{remark}{Remark}             
\begin{document}

\title{Cauchy-compact flat spacetimes with extreme~BTZ}


\author{L\'eo Brunswic}


\maketitle

\begin{abstract}

Cauchy-compact flat spacetimes with extreme BTZ are Lorentzian analogue of complete hyperbolic surfaces of finite volume. Indeed, the latter are 2-manifolds locally modeled on the hyperbolic plane, with group of isometries $\mathrm{PSL}_2(\R)$, admitting finitely many cuspidal ends while the regular part of the former are 3-manifolds locally models on 3 dimensionnal Minkowski 
space, with group of isometries 
$\mathrm{PSL}_2(\R)\ltimes \RR^3$,
admitting finitely many ends whose neighborhoods are foliated by cusps. We prove a Theorem akin to the classical parametrization result for finite volume complete hyperbolic surfaces: the tangent bundle of the Teichmüller space of a punctured surface parametrizes globally hyperbolic Cauchy-maximal and Cauchy-compact locally Minkowski manifolds with extreme BTZ. Previous results of Mess, Bonsante and Barbot provide already a satisfactory parametrization of regular parts of such manifolds, the particularity of the present work reside in the consideration of manifolds with a singular geometrical structure with a singularities modeled on extreme BTZ. 
We present a BTZ-extension procedure akin to the procedure compactifying finite volume complete hyperbolic surface by adding cusp points at infinity
;

\end{abstract}

\maketitle

\tableofcontents
\section{Introduction}
\subsection{Context and main result}
Let $\mass{}$ be the 3-dimensional Minkowski space, ie the oriented and time-oriented affine space $\RR^3$ endowed with  the quadratic form $T(t,x,y)=-t^2+x^2+y^2$; 
the time orientation being defined as follow. A non zero vector  $u$ of $\mass{}$ is called {\it spacelike} (resp. {\it timelike}, resp. {\it lightlike}) if $T(u)>0$ (resp. $T(u)<0$, resp. $T(u)=0$).
Furthermore, $u$ is {\it future 
causal} (resp. {\it past causal}) if it is timelike or lightlike and its $t$ coordinate positive (resp. negative).
The {\it time orientation} of Minkowski space is the pair $(\leq,\ll)$ 
where $\leq$ (resp. $\ll$) is the so-called  causal (resp. chronological) order: $p \leq q $ (resp. $p \ll q$) if $q-p$ is future causal (resp. future  timelike). 
The group of isomorphisms which preserve the orientation as well as the time-orientation of Minkowski space is the identity component $\isom_0(\mass{})\simeq \mathrm{PSL}_2(\RR)\ltimes \RR^3\simeq \SO_0(1,2)\ltimes \RR^3$ of the  group of affine isometries of $\mass{}$. 
The action of $\isom_0(\mass{})$ on $\mass{}$ being analytical in the sense of Ehresmann \cite{MR794193_loc_hom,MR957518}, one can consider $(\isom_0(\mass{}),\mass{})$-manifolds (which for brevity sake, we shall  write $\mass{}$-manifolds instead). The present work is devoted to the study of certain classes of singular $\mass{}$-manifolds  we will now describe more precisely.

\subsubsection*{Geometrical structures}
The corner stone of the theory of $(G,X)$-manifolds is the existence of a developing map and of the holonomy map: under mild assumptions on $G$ and $X$ -- that are satisfied by  $(\isom_0(\mass{}),\mass{})$ -- and given a $(G,X)$-manifold $M$,
 there exists a $(G,X)$-morphism $\D$ from the universal cover $\widetilde{M}$ of $M$ to the model space 
 $X$ which is equivariant with respect to a morphism $\rho$ from the fundamental group or $M$, denoted $\pi_1(M)$, 
 to the group $G$ (the couple $(\D,\rho)$ is essentially unique). $\D$ is called the developing 
 map and $\rho$ the holonomy. When $X$ is a simply connected Riemannian manifold and $M$ is metrically complete, 
 then  $\D$ is automatically an isomorphism and the image of $\rho$ is discrete; therefore $M \simeq \Gamma\backslash X$ for some $\Gamma$ discrete subgroup of $G$.
This result is very efficient to reduce a geometrical problem (say classifying compact/complete locally hyperbolic $n$-manifolds) to a more algebraic one (say finding discrete subgroups of the group of isometry of the hyperbolic space of dimension $n$). One does not have such a result for Lorentzian, or more generally affine $(G,X)$-manifolds, such as $\mass{}$-manifolds: metric completeness makes no sense, compactness hypothesis is too strong for physics related purposes and $\D$ might not even be injective. To get an equivalent result, one has to translate metric completeness into a Lorentzian equivalent: the causal structure  comes into play.

\subsubsection*{Causal conditions and Mess Theorem}

A curve in a $\mass{}$-manifold $M$ is said future causal (resp. timelike)  if it is  locally increasing for the order $\leq$ (resp. $\ll$) of $\mass{}$ in charts of the $\mass{}$-atlas of $M$. 
We then extend both the causal/chronological orders of $\mass{}$ to $M$ by saying that $p\leq q$ (resp. $p\ll q$) if there exists a future causal (resp. timelike) curve in $M$ from $p$ to $q$. 
However, the relations $\leq$ and $\ll$ on $M$ may not even be order relations, one has a so-called causal hierarchy \cite{MR0469146,MR2436235} of properties of $\ll$ and $\leq$. 
Three levels of this hierarchy are of particular interest: causality, strong causality and global hyperbolicity. 
\begin{itemize}
\item $M$ is  {\it causal} if $\leq$ is an order relation 
\item $M$ is {\it globally hyperbolic} if there exists a topological hypersurface $\Sigma$ of $M$ which intersects exactly once every inextendible causal curves. 
\item $M$ is strongly causal if its topology admits a basis of causally convex domains; a domain $\U$ being causally convex in $M$ if all causal segment in $M$ whose extremities are in $\U$ is entirely in $\U$.
\end{itemize}
 Bernal and Sanchez \cite{sanchez_smooth} proved that, if $M$ is  globally hyperbolic, such a $\Sigma$ can be chosen smooth and {\it spacelike} (all tangent vectors are spacelike); in this case, the Lorentzian metric of $M$ induces a Riemannian metric on $\Sigma$. Then, a $\mass{}$-manifold is called Cauchy-compact (resp. Cauchy-complete) if there exists a smooth spacelike Cauchy-surface $\Sigma$ in $M$ which is compact (resp. metrically complete). Note that if any Cauchy-surface is compact, then all Cauchy-surfaces are compact; however, a Cauchy-complete $\mass{}$-manifold may admit non complete Cauchy-surfaces. Two fundamental theorems help clarify the picture:

\begin{itemize}
\item  Geroch \cite{MR0270697} proved that a globally hyperbolic {\it Lorentzian} manifold with Cauchy-surface $\Sigma$ is homeomorphic to $\Sigma\times \RR$
\item Choquet-Bruhat and Geroch \cite{MR0250640,sbierski_geroch}, proved that for globally hyperbolic Lorentzian manifold satisfying  Einstein's Equation (which  could be written $\mathrm{Ric}=0$ in our context), there exists an isometric embedding $\iota: M \rightarrow \overline M$ where $\overline M$ is a globally hyperbolic Lorentzian manifold satisfying the same Einstein's equation, $\iota$ sends Cauchy-surfaces of $M$ to Cauchy-surfaces of $\overline M$ and 
$(\iota,\overline M)$ is maximal among such embeddings. Such an embedding is called a {\it Cauchy-embedding}, $(\iota,\overline M)$ is unique up to isomorphisms and called the {\it Cauchy-maximal} extension of $M$. 
\end{itemize}

Mess \cite{mess,andersson:hal-00642328} noticed that if $M$ a globally hyperbolic Cauchy-compact $\mass{}$-manifold of Cauchy surface $\Sigma$, then its developing map $\D$ is an embedding and $\rho$ is discrete. This remark allows to try a  procedure similar to the one  for  locally homogeneous Riemannian manifolds.
Mess indeed successfully reduced the geometrical question "{\it What are the globally hyperbolic Cauchy-maximal Cauchy-compact $\mass{}$-manifolds}" to the more algebraic one "{\it What are the representations of $\pi_1(\Sigma)$ into $\isom(\mass{})$ which are  discrete and faithful}".
More precisely,
let $\Sigma$ be a closed surface of genus $g$ and $M$ be  a Cauchy-maximal globally hyperbolic $\mass{}$-manifold admitting a Cauchy-surface homeomorphic to $\Sigma$: 
\begin{itemize}
\item if $g=0$, such an $M$ does not exists;
\item if $g=1$, then 
\begin{itemize}
 \item either $M \simeq \Gamma \backslash \mass{} $  with $\Gamma$ a spacelike translation group; 
 \item or $M \simeq \Gamma \backslash \Omega $   with $\Omega$ the future of a spacelike line $\Delta$ and $\Gamma$ generated by an hyperbolic isometry and a spacelike translation both fixing $\Delta$ setwise.`
\end{itemize}

\item if $g\geq 2 $, then $M\simeq \rho\backslash\Omega$ where $\Omega$ is a convex $\rho$-invariant domain of $\mass{}$  whose support planes are all lightlike and $\rho$ is a  representation $\pi_1(\Sigma)\rightarrow \isom(\mass{})$ whose linear part 
is discrete and faithful. Furthermore, for any such representation there exists a unique such $\Omega$.
\end{itemize}  
 As a consequence, the deformation space of globally hyperbolic Cauchy-maximal $\mass{}$-manifolds admitting a Cauchy-surface homeomorphic to $\Sigma$ of negative genus is naturally identified with the tangent bundle of the Teichmüller space of $\Sigma$ \cite{MR762512}. 
 
On a side note, Mess results circulated as a draft for many years and piqued the interest of the 2+1 quantum gravity community in the 90's \cite{carlip_1998}.  Since then, many works have focused on quantizations of Teichmüller-like spaces 
\cite{MR3480556,MR3709650,MR3950651,MR2233852,moon2019rogeryang} and their interpretation as quantizations for quantum gravity \cite{carlip_1998,MR3523528}, relations with the cosmological microwave  background \cite{planck2019} have been investigated \cite{MR3245885} and more recently link with averaged Einstein equations in relativistic inhomogeneous cosmology \cite{Buchert_2007} has been made \cite{brunswic2020gaussbonnetchern}.

\subsubsection*{Singular flat spacetimes}
In the following, a singular $\mass{}$-manifold, is a second countable Hausdorff topological space $M$ endowed with an {\it almost everywhere} $(\isom_0(\mass{}),\mass{})$-structure. 
By "almost everywhere" we mean the $(G,X)$-structure is only defined on an open subset $\U\subset M$ dense and {\it locally connected in $M$} (recall that for a topological space $X$ and $\U\subset X$, the subset $\U$ is  locally connected in $X$ if for all connected open $\V\subset M$, the intersection $\V\cap \U$ is connected) see \cite{thesis,GXramcover} for a detailed introduction to almost everywhere $(G,X)$-structures. 
Among elementary properties presented in \cite{GXramcover}, such a manifold admits a unique maximal such open $\U$, the {\it regular locus}, denoted $\reg(M)$ the complement of which, the {\it singular locus}, is denoted by $\sing(M)$; furthermore, the time orientation of $\mass{}$ induces a time orientation on $\U$. An almost everywhere (a.e.) $\mass{}$-morphism between two singular $\mass{}$-manifolds $M, N$ is a continuous map $\varphi: M\rightarrow N$ which  (co)restriction $\varphi_{|\U}^{|\V}$ is a $(\isom_0(\mass{}),\mass{})$-morphism for some $\U\subset \reg(M)$ and $V\subset \reg(N)$  dense and locally connected in $M$ and $N$ respectively. Furthermore, if such a $\varphi$ is a local homeomorphism, then $\varphi(\reg(M)) \subset \reg(N)$ and $\varphi_{|\reg(M)}^{|\reg{N}}$ is a $\mass{}$-morphism.

Barbot, Bonsante and Schlenker \cite{Particules_1} carried out a systematic analysis of generalizations of conical singularities to Minkowski space, following an inductive construction suggested by Thurston \cite{MR1668340};  they thus provide plentiful of non trivial  singularity models among which the so-called massive particles and extreme BTZ white holes. The name BTZ comes from Ba\~nados, Teitelboim and Zanelli \cite{Ba_ados_1992} example of 3-dimensional black holes: Barbot, Bonsante and Schlenker have isolated a family of singularities which generalizes  the example of Ba\~nados, Teitelboim and Zanelli and the model of extreme BTZ white holes is  a limit case of this family.
\begin{itemize}
\item Let $\alpha$ be a positive real, the model space of massive particle of mass $2\pi-\alpha$,  denoted by  $\mass{\alpha}$, is the singular $\mass{}$-manifold defined by the flat Lorentzian metric $\d s_\alpha ^2 = -\d t^2+\d r^2+\left(\frac{\alpha}{2\pi} r\right)^2 \d \theta^2$ on $\RR^3$ where $(t,r,\theta)$ are cylindrical coordinates of $\RR^3$;
\item the model space of extreme BTZ white-hole, denoted by  $\mass{0}$, is the singular $\mass{}$-manifold defined by the flat Lorentzian metric $\d s_0 ^2 = -2\d \tau\d r +\d r^2+r^2 \d \theta^2$ on $\RR^3$ where $(\tau,r,\theta)$ are cylindrical coordinates of $\RR^3$. 
\end{itemize}
Notice the Lorentzian metric defining $\mass{\alpha}$ for $\alpha\geq 0$ has vanishing Ricci curvature, it is thus flat. One can check that they are oriented and time oriented (the future direction still being increasing $t$ or $\tau$) and thus normal charts provide natural $\mass{}$-atlases on the complement of the symmetry axis $\{r=0\}$. 
The metric is {\it a priori} singular at $r=0$ and one can also check that the holonomy of the $\mass{}$-structure on $\{r>0\}$ is elliptic of angle $\alpha$ for $\alpha>0$ and parabolic for $\alpha=0$; therefore, $\sing(\mass{\alpha}) = \{r=0\}$ for $\alpha\neq 2\pi$. One can extend the causal and chronological order to the whole $\mass{\alpha}$. A point $p$ in a singular $\mass{}$-manifold is of type $\alpha$ is there exists a neighborhood of $p$ which embeds by an a.e. $\mass{}$-morphism into $\mass{\alpha}$. 
We will show that such a point $p$ has at most one type.
Then for  $A\subset \RR_+$, we define a $\mass{A}$-manifolds as a singular $\mass{}$-manifold $M$ such that for every $p\in \sing(M)$ has type in $A$; futhermore, for $B\subset \RR_+$ we define $\reg_B(M):=\{x\in M \text{ of type }\beta\in B \}$. On such a manifold, one can define both causal and chronological relations, causality and global hyperbolicity makes sense, one can still construct spacelike Cauchy-surfaces \cite{MR2887877,MR3783554} and Cauchy-compactness/completeness  still makes sense. Also, we will show that a chart around a singularity is always a local diffeomorphism, in particular the singular locus is a 1-dimensional submanifold; each connected component is then called a {\it singular line}.
 
\subsubsection*{Main result and generalizations of Mess Theorem}

Mess Theorem has been generalized in many ways during the last 20 years. 
\begin{itemize}
\item  Bonsante \cite{MR2170277,MR2499272} extended Mess Theorem to higher dimension. Among other thing, he reduced the geometrical problem of classifying globally hyperbolic Cauchy-maximal Cauchy-compact $\mathbb E^{1,N}$-manifolds to affine representations of the fundamental group of a hyperbolic manifold into the affine isomorphisms group of $N+1$ dimensional Minkowski space; 
\item by replacing the Cauchy-compactness by Cauchy-completeness,  Barbot \cite{barbot_globally_2004} describes more general links between globally hyperbolic Cauchy-maximal Cauchy-complete $\mathbb{E}^{1,N}$-manifolds and representations of $\pi_1(\Sigma)$ into the group $\isom_0(\mathbb E^{1,N})$ as a consequence he is able again to reduce in a satisfactory way the geometrical classification of such manifolds to representations of the fundamental group of a $N$-manifold into the affine isomorphisms group of $\mathbb{E}^{1,N}$;

\item  Bonsante and Seppi \cite{MR3493421} obtained a result for closed $\Sigma$
 and $\mass{]0,2\pi]}$-mani\-folds with $s$ singular lines of given angles under the
additional hypothesis that $\Sigma$ could be chosen convex; noting $\Sigma^*$ the surface $\Sigma$ with $s$ punctures, they showed one can parameterize globally
hyperbolic Cauchy-maximal $\mass{]0,2\pi]}$-manifolds homeomorphic to $\Sigma\times \RR$ admitting exactly $s$ massive particles of given angles $\beta_1,\cdots,\beta_s$
by the tangent bundle of Teichmüller space of $\Sigma^*$.  
\end{itemize}  
We prove another generalization closely related to the result of Bonsante and Seppi.
\begin{theo}\label{theo:princ_intro} Let $\Sigma^*$ be a surface of genus $g$ with exactly $s$ marked points such that $2-2g-s<0$. The deformation space of globally hyperbolic Cauchy-maximal $\mass{0}$-manifolds homeomorphic to $\Sigma\times \RR$ with exactly $s$ marked singular lines can be identified to the tangent bundle of Teichmüller space of $\Sigma^*$ .
\end{theo}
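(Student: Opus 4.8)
The plan is to adapt the scheme of Mess, in the form developed by Barbot for the Cauchy-complete setting and by Bonsante--Seppi for massive particles, replacing the elliptic holonomy of a massive particle by the parabolic holonomy characteristic of an extreme BTZ line, and using the BTZ-extension procedure as the Lorentzian counterpart of cusp compactification. Concretely, I would construct a correspondence in both directions between globally hyperbolic Cauchy-maximal $\mass{0}$-manifolds homeomorphic to $\Sigma\times\RR$ with $s$ marked singular lines and pairs consisting of a point of $\Teich(\Sigma^*)$ together with a tangent vector there, and then verify that the two assignments are mutually inverse.

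\textbf{From a manifold to its holonomy.} Starting from such an $M$, first excise the singular locus to obtain the regular part $\reg(M)$, an ordinary $\mass{}$-manifold. The local model $\mass{0}$ shows that a small loop encircling a singular line has \emph{parabolic} linear holonomy with a null fixed direction; moreover, removing the $s$ singular lines turns the compact Cauchy surface $\Sigma$ into the $s$-punctured surface $\Sigma^*$, each puncture becoming a complete cuspidal end. Hence $\reg(M)$ is a Cauchy-complete globally hyperbolic $\mass{}$-manifold with Cauchy surface $\Sigma^*$, to which the embedding theorem of Mess and Barbot applies: the developing map $\D$ is an embedding onto a convex domain $\Omega$, and the holonomy $\rho\colon\pi_1(\Sigma^*)\to\isom_0(\mass{})$ has discrete and faithful linear part $\bar\rho\colon\pi_1(\Sigma^*)\to\mathrm{PSL}_2(\RR)$ sending each peripheral loop to a parabolic. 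Such a $\bar\rho$ is exactly the holonomy of a complete finite-volume hyperbolic structure on $\Sigma^*$, that is a point of $\Teich(\Sigma^*)$, which is nonempty precisely because $2-2g-s<0$.

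\textbf{The translation part as a tangent vector.} Writing $\rho(\gamma)=(\bar\rho(\gamma),\tau_\gamma)$, the assignment $\gamma\mapsto\tau_\gamma$ is a cocycle in $Z^1(\pi_1(\Sigma^*),\RR^3_{\bar\rho})$, where $\RR^3$ carries the $\SO_0(1,2)$-action identified with the adjoint action on $\mathfrak{sl}_2(\RR)$; a global translation of $\D$ alters $\tau$ by a coboundary, so only the class $[\tau]\in H^1$ is intrinsic. The requirement that each end actually close up to an extreme BTZ line constrains $\tau$ on each peripheral parabolic, cutting out the parabolic cohomology $H^1_{\mathrm{par}}(\pi_1(\Sigma^*),\mathfrak{sl}_2(\RR)_{\mathrm{Ad}\,\bar\rho})$. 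By Weil's identification of group cohomology with infinitesimal deformations, this space is canonically $T_{[\bar\rho]}\Teich(\Sigma^*)$ and has the expected dimension $6g-6+2s$. Thus $M$ determines a well-defined point of $T\Teich(\Sigma^*)$.

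\textbf{Reverse direction, and the main obstacle.} Conversely, given $[\bar\rho]\in\Teich(\Sigma^*)$ and a cocycle $\tau$ representing a tangent vector, I would set $\rho=(\bar\rho,\tau)$ and build the unique maximal $\rho$-invariant regular domain $\Omega_\rho$ (in the sense of Mess--Bonsante), whose quotient $\rho\backslash\Omega_\rho$ is a regular Cauchy-maximal globally hyperbolic $\mass{}$-manifold with Cauchy surface $\Sigma^*$. The crux is then the BTZ-extension: I would show that each parabolic end admits exactly one completion by an extreme BTZ singular line, producing a singular $\mass{0}$-manifold $M$ in which filling the $s$ ends recompactifies $\Sigma^*$ to $\Sigma$; Cauchy-compactness of $M$ follows, and maximality of $\Omega_\rho$ gives Cauchy-maximality. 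Uniqueness of $\Omega_\rho$ and uniqueness of the BTZ-extension then make the two constructions mutually inverse. \emph{The main obstacle} I anticipate lies precisely in this parabolic end analysis: in contrast with the elliptic massive-particle case of Bonsante--Seppi, a parabolic peripheral holonomy yields a degenerate cuspidal geometry, and one must control the shape of $\Omega_\rho$ near such an end, prove existence and uniqueness of the BTZ-completion there, and verify that these constraints match exactly the peripheral conditions defining the parabolic cohomology---this is the step on which the clean identification with $T\Teich(\Sigma^*)$ ultimately rests.
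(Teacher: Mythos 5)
Your overall architecture coincides with the paper's: pass to $\reg(M)$, invoke Barbot's theory of Cauchy-complete regular spacetimes to get a discrete faithful linear part with parabolic peripherals, identify the tangency condition on peripheral translation parts with Goldman's description of $T_{[\bar\rho]}\Teich(\Sigma^*)$ (your parabolic-cohomology condition $\tau(c_i)\in\mathrm{Im}(\rho_L(c_i)-1)=u_i^\perp$ is exactly the paper's ``tangent'' condition), and recover injectivity from uniqueness statements. However, two steps you treat as immediate are where the paper's actual work lies, and one of them is wrong as you state it. You write that removing the $s$ singular lines turns the compact Cauchy surface into one with complete cuspidal ends, ``hence $\reg(M)$ is Cauchy-complete.'' This fails: deleting the points where a compact spacelike Cauchy surface meets the BTZ lines yields an \emph{incomplete} surface, and Barbot's results additionally require $\reg(M)$ to be Cauchy-\emph{maximal}. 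The paper's entire Section \ref{sec:reg_BTZ} exists to repair this: one first proves each BTZ line admits a spear neighborhood (Lemma \ref{lem:tube}, which itself uses Cauchy-maximality of $M$), then performs surgery replacing the disk the Cauchy surface cuts out by a complete cusp running to future infinity along the singular line (Lemmas \ref{lem:add_BTZ_param}, \ref{lem:curve_extension}, \ref{lem:surface_complete}), verifies via an intersection-number criterion that the modified surface is still a Cauchy surface (Lemmas \ref{lem:intersect_number_compact}, \ref{lem:intersect_number_complete}), and finally establishes absolute maximality through the gluing Lemma \ref{lem:extension_hausdorff} (Proposition \ref{prop:abs_max}). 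None of this appears in your proposal.

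The second gap is the one you honestly flag as ``the main obstacle,'' but flagging it is not closing it: for surjectivity you propose to quotient the maximal regular domain $\Omega_\rho$ and then prove that each parabolic end admits a unique extreme-BTZ completion, and you give no argument for existence of that filling or for Cauchy-compactness of the result. The paper takes a genuinely different route that avoids analyzing the asymptotic shape of $\Omega_\rho$ near a parabolic end altogether: given an admissible $\rho$ it \emph{directly} builds a Cauchy-compact globally hyperbolic $\mass{0}$-manifold with holonomy $\rho$ by gluing polyhedra over an ideal triangulation, using a twisted developing map $\widehat\D$ arranged to be affine near the would-be singular lines so that spear neighborhoods modeled on $\mass{0}$ appear by construction (section \ref{sec:polyedron}, Proposition \ref{prop:surjective}); Cauchy-maximality then comes from Choquet-Bruhat--Geroch. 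Injectivity in the paper is Barbot's Remark 4.19 combined with Corollary \ref{cor:common_reg}, which rests on the maximal BTZ-extension Theorem \ref{theo:BTZ_ext} --- itself a substantial Choquet-Bruhat--Geroch-style argument (maximal common sub-extensions, Hausdorff gluings, inductive limits) that your appeal to ``uniqueness of the BTZ-extension'' presupposes but does not supply. In short: right skeleton, correct identification of the hard points, but the two load-bearing arguments (completeness/maximality of the regular part, and realization of every admissible holonomy by a Cauchy-compact singular spacetime) are missing, and the paper's polyhedral construction shows how the second can be obtained without the cusp-shape analysis your route would require.
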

This result can be associated to Meusberger and Scarinci work on quantization of 2+1 gravity \cite{MR3523528}, indeed their results apply to the regular part of Cauchy-compact $\mass{0}$-manifolds and thus should translate simply to the manifolds considered in the present work.

One should not be surprised by the above result.
Indeed, let $\Sigma^*$  be a surface of genus $g$ with $s$ marked points, by a result of Troyanov and Hulin \cite{MR1005085,MR1166122}, the Teichmüller space of $\Sigma^*$ can be naturally identified with the deformation space of hyperbolic metrics on $\Sigma$ admitting exactly $s$ conical singularities of given angles $\beta_1,\cdots,\beta_s \in [0,2\pi]$; being understood that $\beta_i=0$ means the corresponding point is a cusp. 
 Extreme BTZ singularities are the Lorentzian equivalent of hyperbolic cusps while massive particles are the Lorentzian equivalent of hyperbolic conical singularities: \begin{itemize}
 \item the holonomy of an extreme BTZ singularity is parabolic, the same as a cusp, while the holonomy of a massive particle is elliptic, the same as a conical singularity;
 \item Let $\alpha>0$ (resp. $\alpha=0)$, denote by $\H_\alpha$ the singular hyperbolic plane with exactly one conical singularity of angle $\alpha$ (resp. exactly one cusp) namely: 
  $(\mathbb D^2,\d s_ \alpha^2)$ with $$\d s_\alpha^2 = \left\{ \begin{matrix}4 \frac{\d r^2+\sinh(r)^2 \d \theta^2}{(1-r^2)^2}& \text{if } \alpha>0 \\ \frac{\d r^2+r^2\d \theta^2}{r^2\log(r)^2} 
  & \text{if } \alpha=0 \end{matrix}\right. .$$ One checks that 
 $(\RR_ +^*\times \reg(\H_\alpha)  , - \d T^2+ T^2 \d s_\alpha^2 ) $ is isomorphic to $\reg(I^+(p))$ for some $p\in \sing(\mass{\alpha})$ where $I^+(p):=\{q\in \mass{\alpha} ~|~ q\gg p\}$.
 
\end{itemize}

\subsubsection*{Strategy of the proof}

The general strategy is not original, namely we consider the map $\mathrm{Hol} \circ \reg$ which associates to a  globally hyperbolic Cauchy-maximal and Cauchy-com\-plete $\mass{0}$-manifold the holonomy of its regular part and we construct an inverse to this map. Since the scope of most elements of proof  are  restricted neither to $\mass{0}$-manifolds nor to surfaces of finite type, without lengthning of proofs and as long as deformation spaces are not involved, we work in the setting of $\mass{\geq0}$-manifolds and general metrizable surfaces.

We give ourselves $\Sigma$ a metrizable surface and define the surface $\Sigma^*$ complement of a subset $S$ of at most countable cardinal $s$. 
Consider $M$ a globally hyperbolic Cauchy-maximal and Cauchy-compact $\mass{0}$-manifold homeomorphic to $\Sigma\times \RR$. 

The first step, see section \ref{sec:reg_BTZ}, is to prove the regular part of $M$ is globally hyperbolic Cauchy-maximal and Cauchy-complete.  On the one hand, we prove in section \ref{sec:surgery} technical Lemmas regarding neighborhood of BTZ-lines which allow to construct complete Cauchy-surfaces of the regular part.  On the other hand, we study causal and gluing properties of BTZ-extensions in section \ref{sec:gluing} to pave the way toward absolute maximality of the $\reg(M)$. These technical results are combined in section \ref{sec:Abs_max}  to prove $\reg(M)$ is Cauchy-complete and Cauchy-maximal. 

This allows us to proceed in section \ref{sec:holonomy} with our second step under the additional assumptions that $\Sigma$ is closed of genus $g$, $S$ finite and $2-2g-s<0$: using Barbot's analysis of Cauchy-complete $\mass{}$-manifold as well as classical results on Teichmüller space, we describe in section \ref{sec:admissible_rep} the holonomy of $\reg(M)$ and conclude $\mathrm{Hol}\circ \reg$ defines a well defined map from the deformation space of globally hyperbolic $\mass{0}$-manifolds homeomorphic to  $\Sigma\times \RR$ with exactly $s$ marked singular lines, to the tangent bundle of the Teichmüller space  of $\Sigma^*$.
We call the holonomy given by a point of this moduli space  {\it admissible} and, in section \ref{sec:polyedron}, we construct globally hyperbolic Cauchy-compact $\mass{0}$-manifold of arbitrary admissible holonomy via polyedron gluings; Choquet-Bruhat-Geroch Theorem then gives a Cauchy-maximal such  $\mass{0}$-manifold.

The third step, section \ref{sec:extension_BTZ_2}, achieve the proof of the injectivity of the map $\mathrm{Hol} \circ \reg$: we need to prove  two globally hyperbolic Cauchy-compact Cauchy-maximal $\mass{0}$-manifolds are isomorphic if and only if their regular parts are isomorphic. 
To this end, we introduce the notion {\it BTZ-extension}:

\begin{defou}[BTZ-extension, BTZ-embedding]\

Let $M$ be a $\mass{\geq 0}$-mani\-fold. A  BTZ-extension of  $M$ is an embedding of $M\xrightarrow{\iota} N$ where $N$ is a $\mass{\geq 0}$-manifold and $\iota$ an a.e $\mass{}$-morphism and such that the complement of the image of $\iota$ only contains singular points of type $\mass{0}$.

Such a map $\iota$ is called a BTZ-embedding.
\end{defou}      
 We then prove a result sufficient for our purpose.
 
 \begin{theo} Let $M$ be globally hyperbolic $\mass{\geq 0}$-manifolds, there exists a globally hyperbolic BTZ-extension $M\xrightarrow{\iota} N$ which is maximal among such extensions. Moreover, $M\xrightarrow{\iota}N$ is unique up to equivalence. We  call this extension
the maximal BTZ-extension of $M$ and denote it by 
  $\mathrm{Ext}_{BTZ}(M)$.   
\end{theo}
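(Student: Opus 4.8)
The plan is to organize all globally hyperbolic BTZ-extensions of $M$ into a partially ordered set and to exhibit a maximum. The first observation I would exploit is that a BTZ-embedding changes nothing on the regular part: since $\iota$ is an embedding, hence a local homeomorphism onto its image, the property recalled above gives $\iota(\reg(M))\subseteq\reg(N)$, and since the points of $N\setminus\iota(M)$ are all of type $\mass{0}$, hence singular, one has $\reg(N)=\iota(\reg(M))$ and $\sing(N)=\iota(\sing(M))\sqcup(N\setminus\iota(M))$. Thus every globally hyperbolic BTZ-extension of $M$ shares the same regular part, and the only freedom is how many BTZ-lines are attached, and along which ends. I would order the extensions by $(\iota_1,N_1)\le(\iota_2,N_2)$ whenever there is a BTZ-embedding $\psi\colon N_1\to N_2$ with $\psi\circ\iota_1=\iota_2$, and call two extensions equivalent when such a $\psi$ is an isomorphism.

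The heart of the argument is to describe the addable BTZ-points intrinsically, in terms of $M$ alone. Using the technical lemmas on neighborhoods of BTZ-lines of section \ref{sec:surgery} and the causal and gluing results of section \ref{sec:gluing}, I would show that any $b\in N\setminus\iota(M)$ lying on a BTZ-line admits a neighborhood a.e.-isomorphic to a neighborhood of a point of $\sing(\mass{0})$, so that a punctured neighborhood of $b$ is a neighborhood of a ``BTZ-end'' already living inside $\reg(M)$. This lets me define the set $\mathcal B(M)$ of BTZ-ends of $M$ as equivalence classes of such punctured model neighborhoods, equivalently of future causal curves leaving $M$ while converging, in the model coordinate $r\to 0$, to a BTZ-line. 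The key local rigidity to establish is that each BTZ-end is filled by exactly one BTZ-point: the developing map identifies a regular neighborhood of the end with $\reg(\mass{0})$ up to $\isom_0(\mass{})$, and the completion $\reg(\mass{0})\cup\sing(\mass{0})=\mass{0}$ is canonical, so there is no choice in attaching the line. I expect this rigidity, together with the uniqueness of the type of a singular point and the local-diffeomorphism property of singular charts announced earlier, to carry most of the load.

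With $\mathcal B(M)$ in hand I would construct a candidate maximal extension $N_{\max}$ by attaching one BTZ-line for every element of $\mathcal B(M)$, the attaching charts being dictated by the local model of the previous step. The verifications are then: (i) $N_{\max}$ is a genuine singular $\mass{\geq 0}$-manifold, i.e.\ second countable and Hausdorff --- there are at most countably many BTZ-lines to attach because $M$ is second countable, each attached line is itself second countable, and Hausdorffness reduces to separating distinct BTZ-ends, which I would deduce from strong causality of $M$; (ii) $N_{\max}$ is globally hyperbolic --- taking a smooth spacelike Cauchy-surface $\Sigma$ of $M$, I would show via the gluing lemmas of section \ref{sec:gluing} that $\Sigma$ together with the single point where each new BTZ-line meets it is again a Cauchy-surface, every inextendible causal curve of $N_{\max}$ either remaining in $\iota(M)$ or running into a BTZ-line and being met exactly once. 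Alternatively one may invoke Zorn's lemma: every chain of globally hyperbolic BTZ-extensions admits its direct limit as an upper bound, the limit being globally hyperbolic by the same Cauchy-surface argument, and the two routes agree precisely because $\mathcal B(M)$ is exhausted.

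Maximality of $N_{\max}$ is then immediate once (i) and (ii) hold: a further globally hyperbolic BTZ-extension would attach a line along some BTZ-end, but all of $\mathcal B(M)$ is already filled and the rigidity of the second step leaves nothing to add. For uniqueness I would prove that an isomorphism between the regular parts of two globally hyperbolic BTZ-extensions extends uniquely across the BTZ-lines: a BTZ-point is determined by its punctured neighborhood in $\reg(M)$, so the common identification of regular parts through $M$ matches the added BTZ-points of the two extensions bijectively and extends to an a.e.\ $\mass{}$-isomorphism. Applying this to two maximal extensions yields $N_1\simeq N_2$ over $M$, and applying it to an arbitrary extension against $N_{\max}$ produces the comparison map making $N_{\max}$ the maximum, so that $\BTZext(M):=N_{\max}$ is well defined up to equivalence. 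The step I expect to be the main obstacle is the preservation of global hyperbolicity under the completion: controlling the causal structure after attaching possibly countably many BTZ-lines, and checking that no new inextendible causal curve escapes the extended Cauchy-surface and that no closed causal curves or non-Hausdorff points are created. This is exactly what the causal and gluing analysis of section \ref{sec:gluing} is meant to supply.
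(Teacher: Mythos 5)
Your overall architecture (an intrinsically defined set of ``fillable'' BTZ-ends, a candidate $N_{\max}$ built by attaching one line per end, then maximality and uniqueness by local rigidity) differs from the paper, which never characterizes addable BTZ-points intrinsically: it instead shows the family of globally hyperbolic BTZ-extensions of $M$ is right filtered and takes the inductive limit. But your route has a genuine gap at exactly the point you describe as ``carrying most of the load''. The claim that each BTZ-end is filled by exactly one point, and that fillings coming from two different extensions agree, is not a local statement about the canonical completion $\reg(\mass{0})\cup\sing(\mass{0})=\mass{0}$: a priori two extensions $M\to N_1$, $M\to N_2$ could attach BTZ-points over ``the same'' region of $\reg(M)$ in incompatible ways, and the obstruction is precisely Hausdorffness of the glued space $(N_1\coprod N_2)/M$. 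The paper devotes Propositions \ref{prop:pgcd} and \ref{prop:ppcm} to this: one first builds the maximal \emph{common sub}-extension $\pgcd$, then shows the set $C$ of unseparated points of the gluing is empty, using the identity $I^+(p)=\mathrm{Int}\bigl(\bigcap_{N}\bigcup_{n\geq N}I^+(a_n)\bigr)$, convergence of decreasing sequences bounded below (this needs \emph{global hyperbolicity} of the target, not the strong causality of $M$ you invoke for separating ends), the rigidity Proposition \ref{prop:isom_strong}, analytic continuation of the comparison map over $C$, and finally maximality of $\pgcd$ to force $C=\emptyset$, with future-distinguishability (Proposition \ref{prop:FZenon_injectif}) identifying unseparated points. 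None of this is supplied by your appeal to the developing map: the developing map of $\reg(M)$ need not be injective near an end, and the ``model coordinate $r\to 0$'' only exists once an extension chart is given, which is what is to be compared.

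A second concrete problem is your verification that $N_{\max}$ is globally hyperbolic by extending a Cauchy-surface $\Sigma$ of $M$ across the new lines. An attached BTZ-line has no reason to meet $\overline\Sigma$ in a single point (or at all, without surgery), and the surgery tools of section \ref{sec:surgery} that would repair $\Sigma$ rest on spear neighborhoods, whose existence (Lemma \ref{lem:tube}) requires Cauchy-\emph{maximality} --- a hypothesis absent from the theorem, which assumes only global hyperbolicity. The paper avoids Cauchy-surfaces altogether: causality of any BTZ-extension of a globally hyperbolic manifold comes from Corollary \ref{cor:GH_causal}, and compactness of diamonds $J^+(p)\cap J^-(q)$ in the limit comes from causal convexity of each stage (Lemma \ref{lem:BTZext_caus_conv}), Geroch-type arguments then giving global hyperbolicity. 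Your Zorn-style alternative has the same issue plus one more: Zorn yields only a maximal element, and to promote it to a maximum (hence uniqueness up to equivalence) you again need the family to be directed, i.e.\ the pushforward Proposition \ref{prop:ppcm} --- the very point left unproven. Your countability discussion, by contrast, is essentially sound and parallels the paper's use of Richards' theorem together with second countability of each singular line.
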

 Theorem \ref{theo:princ_intro} then follows. 

We add a complement about the good properties of the maximal BTZ-extension with respect to Cauchy-completeness. Indeed, along the way to prove that the regular part of a Cauchy-maximal Cauchy-compact $\mass{0}$-manifold is Cauchy-complete, we developped methods that allow to prove a converse statement that complete nicely the framework of BTZ-extensions. 

	\begin{theo} \label{theo:BTZ_Cauchy-completness_intro}
				Let $M$ be a globally hyperbolic $\mass{\geq0}$-manifold. The following are then equivalent:			
				\begin{enumerate}
				\item $\reg_{>0}(M)$ is Cauchy-complete and Cauchy-maximal;
				\item there exists a BTZ-extension of $M$ which is Cauchy-complete and Cau\-chy-maximal;
				\item $\BTZext(M)$ is Cauchy-complete and Cauchy-maxi\-mal.
				\end{enumerate}
				\end{theo}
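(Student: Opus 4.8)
The plan is to exploit the fact that a BTZ-extension alters a manifold only along its type-$0$ lines, leaving the positive-type locus untouched. Concretely, for any BTZ-extension $\iota : M \hookrightarrow N$ the complement of $\iota(M)$ consists of BTZ points, so the type-$>0$ points of $N$ are exactly those of $\iota(M)$ and $\reg_{>0}(N) = \reg_{>0}(M)$; applying this to $N = \BTZext(M)$ as well, the three manifolds in the statement share the same regular part, which I denote $R := \reg_{>0}(M)$. With this observation the equivalence reduces to comparing Cauchy-completeness and Cauchy-maximality of $R$ with those of the BTZ-filled manifolds, and I would organize the proof as the cycle $(3)\Rightarrow(2)\Rightarrow(1)\Rightarrow(3)$.

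The implication $(3)\Rightarrow(2)$ is immediate, since $\BTZext(M)$ is itself a BTZ-extension of $M$ and may serve as the manifold required in $(2)$. For $(2)\Rightarrow(1)$, suppose $N$ is a globally hyperbolic BTZ-extension of $M$ that is Cauchy-complete and Cauchy-maximal, and let $\Sigma$ be a complete spacelike Cauchy-surface of $N$. The first task is to produce a complete spacelike Cauchy-surface of $R = \reg_{>0}(N)$. One cannot simply delete the BTZ points from $\Sigma$: a surface crossing a BTZ line transversally meets it at finite distance, so its complement would be incomplete. Instead I would invoke the neighborhood surgery Lemmas of section \ref{sec:surgery} to deform $\Sigma$ near each BTZ line into a surface with a cusp end, using the local model in which $\reg(I^+(p))$ is a future cone over the hyperbolic cusp $\reg(\H_0)$, where the BTZ line sits at infinite distance; the resulting surface is a complete Cauchy-surface of $R$. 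For the Cauchy-maximality of $R$ I would argue by contradiction: a strict Cauchy-extension $R\hookrightarrow R'$ would, after refilling the BTZ lines through the BTZ-extension machinery and the causality and gluing results of section \ref{sec:gluing}, yield a strict Cauchy-extension of $N$, contradicting its maximality.

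For $(1)\Rightarrow(3)$, assume $R$ is Cauchy-complete and Cauchy-maximal. The maximal BTZ-extension $\BTZext(M)$ exists and is globally hyperbolic by the preceding theorem, and $\reg_{>0}(\BTZext(M)) = R$. Starting from a complete Cauchy-surface $\Sigma_R$ of $R$, I would close up each cusp end by adjoining the corresponding BTZ point to obtain a candidate Cauchy-surface $\Sigma$ of $\BTZext(M)$, and verify via the causal analysis of section \ref{sec:gluing} that $\Sigma$ meets every inextendible causal curve exactly once, including those asymptotic to, or running along, a lightlike BTZ line, which yields Cauchy-completeness. For Cauchy-maximality I would show that any Cauchy-extension of $\BTZext(M)$ restricts to a Cauchy-extension of its regular part $R$; since $R$ is Cauchy-maximal this restriction is trivial, and the uniqueness clause of the maximal BTZ-extension theorem then forces the extension of $\BTZext(M)$ itself to be trivial.

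The main obstacle is the two-way transfer of Cauchy-maximality across the BTZ-filling, together with the reconciliation of the two completeness pictures. The delicate point is that Cauchy-developments of $R$ live among regular $\mass{}$-manifolds whereas those of the filled manifolds live among singular $\mass{\geq0}$-manifolds, so one must show that passage to the regular part and formation of the maximal BTZ-extension carry Cauchy-extensions to Cauchy-extensions bijectively. This rests entirely on the uniqueness of the maximal BTZ-extension and on the surgery and gluing Lemmas controlling causality at the lightlike BTZ lines: they must guarantee both that no inextendible causal curve escapes through a BTZ line without meeting the Cauchy-surface, and that metric completeness of the regular slice corresponds exactly to the absence of such extra causal curves.
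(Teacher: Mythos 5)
Your overall architecture coincides with the paper's: the observation that any BTZ-extension leaves $\reg_{>0}$ untouched, so that all three manifolds share the same regular part $R$; the cycle $(3)\Rightarrow(2)\Rightarrow(1)\Rightarrow(3)$; and your treatment of $(2)\Rightarrow(1)$, which is exactly the content of Proposition \ref{prop:reg_complete} (cusp surgery on a complete Cauchy surface via Lemmas \ref{lem:blunt_spear_neighborhood} and \ref{lem:curve_extension}, with Cauchy-maximality transferred back through the gluing Lemma \ref{lem:extension_hausdorff}). Up to that point your sketch is sound and matches the paper.

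The gap is in $(1)\Rightarrow(3)$, in two respects. First, ``closing up each cusp end by adjoining the corresponding BTZ point'' is not an available operation: by Lemma \ref{lem:surface_complete}, a complete spacelike graph near a BTZ line satisfies $\tau_\Sigma(\r,\theta)\to+\infty$ as $\r\to 0$, so the cusp end of a complete Cauchy surface of $R$ is asymptotic to the singular line at future infinity and has no adherent point on it; one must instead truncate the cusp inside a spear neighborhood and glue in a compact spacelike disk crossing the line, as in part (i) of Lemma \ref{lem:curve_extension}. Second, and more fundamentally, any such surgery presupposes a spear neighborhood around each BTZ line of $\BTZext(M)$, and nothing in hypothesis (1) provides one: Lemma \ref{lem:tube} produces spear neighborhoods only in Cauchy-\emph{maximal} manifolds, and the Cauchy-maximality of $\BTZext(M)$ is precisely what is to be proved, so your argument is circular at its starting point. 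The paper's proof devotes its Step 1 to breaking this circle: it passes to the Cauchy-maximal extension $M_2$ of $M_1=\BTZext(M)$, applies Lemma \ref{lem:tube} there, then uses Cauchy-maximality of $R$ --- via an explicit globally hyperbolic manifold $N$ assembled from a blunt spear, whose Cauchy surface is recognized through Lemma \ref{lem:intersect_number_complete} --- to force the blunt spear into $R$, and finally BTZ-maximality of $M_1$ to pull the full spear interior into $M_1$. Only with these spears in hand can the surgery and the intersection-count argument (Corollary \ref{cor:intersect}) show that a complete Cauchy surface of $R$ remains a Cauchy surface of $\reg_{>0}(M_2)$, whence $\reg_{>0}(M_2)=R$ by Cauchy-maximality of $R$ and $M_2=M_1$ by BTZ-maximality; your closing assertion that any Cauchy-extension of $\BTZext(M)$ ``restricts to a Cauchy-extension of $R$'' is exactly this nontrivial step, and it does not follow from the uniqueness clause of the maximal BTZ-extension alone.
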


\subsection{Acknowledgments}
This work has been part of a PhD thesis supervised by Thierry Barbot  at Université d'Avignon et des Pays de Vaucluse and is part of a project that has received funding from the European Research Council (ERC) under the European Union's Horizon 2020 research and innovation programme (grant agreement ERC advanced grant 740021--ARTHUS, PI: Thomas Buchert).

\subsection{Preliminaries}
Before diving into the proof, we first strengthen the basis of the theory of singular $\mass{}$-manifold we consider. Firstly, we describe the isomorphism group of the model spaces we consider and provide essential analyticity statement. Secondly, we review general causality results which are assumed throughout the present work.

\subsubsection{Isometries and Analyticity} \label{sec:isometries}
Let us begin by an almost everywhere version of a classical analyticity Lem\-ma.
Let $(G,X)$ be an analytical structure with $X$ Hausdorff and locally connected.
In the present work, for $M,N$ singular $(G,X)$-manifolds,  an a.e. $(G,X)$-morphism is a continuous map $f:M\rightarrow N$ such that there exists an open dense and locally connected in $M$ (resp. $N$) subset $\U\subset \reg(M)$ (resp. $\V\subset \reg(N)$) such that $f_{|\U}^{|\V}$ is a $(G,X)$-morphism. Recall that an open $\U$ of a topological space $M$ is locally connected in $M$ if for all connected open $\V$, the intersection $\V\cap \U$ is connected.

\begin{lemou} \label{lem:liouville} Let $M \xrightarrow{f,g} N$ be two a.e. $(G,X)$-morphisms with $M$ connected. If $f$ and $g$ agree on a non-empty open subset $\mathcal W$ of $M$, then they agree on the whole $M$.
\end{lemou}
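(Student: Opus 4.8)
The plan is to reduce the statement to the classical analytic-continuation property for honest $(G,X)$-morphisms, and then to transport the equality off the regular locus by density and continuity. First I would isolate a common regular domain. Let $\U_f\subset\reg(M)$, $\V_f\subset\reg(N)$ (resp. $\U_g,\V_g$) be the dense, locally connected in $M$ (resp. $N$) open sets on which $f$ (resp. $g$) is a genuine $(G,X)$-morphism, and set $\U:=\U_f\cap\U_g$. I claim $\U$ is again open, dense and locally connected in $M$. Openness and density are immediate, since a finite intersection of open dense subsets is open dense. For local connectedness, take a connected open $\V\subset M$ and write $\V\cap\U=(\V\cap\U_f)\cap\U_g$; here $\V\cap\U_f$ is connected (local connectedness of $\U_f$) and open, whence $(\V\cap\U_f)\cap\U_g$ is connected (local connectedness of $\U_g$ applied to the connected open $\V\cap\U_f$). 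On $\U$ both $f$ and $g$ restrict to $(G,X)$-morphisms with values in $\reg(N)$.

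Next I would set $A:=\{x\in M : f\equiv g \text{ on some open neighborhood of } x\}$. By construction $A$ is open, and it is non-empty because $\mathcal{W}\subseteq A$. Since $M$ is connected, it suffices to prove that $A$ is closed; then $A=M$, which is exactly the assertion that $f=g$ on all of $M$.

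To prove $A$ closed, fix $p\in\overline A$ and choose, using that $M$ is locally connected (being locally modelled on the locally connected space $X$), a connected open neighborhood $\Omega$ of $p$. Then $\Omega\cap A$ is open and non-empty, and by density of $\U$ the open set $\Omega\cap A\cap\U$ is non-empty as well; on it $f=g$. Now $\Omega\cap\U$ is a \emph{connected} $(G,X)$-manifold on which $f$ and $g$ are $(G,X)$-morphisms into $\reg(N)$ agreeing on the non-empty open subset $\Omega\cap A\cap\U$. The classical analyticity lemma — two $(G,X)$-morphisms out of a connected $(G,X)$-manifold that coincide on a non-empty open set coincide everywhere, which itself follows from the rigidity of the analytic action (two elements of $G$ agreeing on a non-empty open subset of $X$ are equal) applied chart by chart — therefore yields $f=g$ on all of $\Omega\cap\U$. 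Finally $\Omega\cap\U$ is dense in $\Omega$, so the closed set $\{f=g\}\cap\Omega$ contains a dense subset of $\Omega$ and hence equals $\Omega$; thus $f\equiv g$ on the neighborhood $\Omega$ of $p$, i.e. $p\in A$. This shows $A$ is closed and finishes the proof.

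The main obstacle is the interplay with the singular locus: because $f$ and $g$ are $(G,X)$-morphisms only on the dense open sets $\U_f,\U_g$, the classical lemma cannot be invoked on $M$ directly. The two points requiring care are (i) checking that $\U=\U_f\cap\U_g$ stays locally connected in $M$, since this is precisely what makes $\Omega\cap\U$ connected and so permits applying the classical lemma on a single connected piece rather than on a disconnected union of components; and (ii) passing equality from the dense regular part back to the (possibly singular) point $p$, which is handled by continuity of $f,g$ together with the density of $\U$ in $\Omega$.
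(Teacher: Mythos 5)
Your proof is correct and, at its core, it is the paper's own argument: restrict both maps to a common dense regular domain that is locally connected in $M$, apply the classical unique-continuation property of $(G,X)$-morphisms on a connected piece, and propagate equality to all of $M$ by density and continuity (the set $\{f=g\}$ being closed since the target is Hausdorff). Two remarks. First, your verification that $\U=\U_f\cap\U_g$ is again dense, open and locally connected in $M$ is a genuine gain in rigor: the paper simply posits a single $\U$ working simultaneously for $f$ and $g$, and your two-step argument ($\V\cap\U_f$ is connected \emph{and open}, then apply local connectedness of $\U_g$ to that set) is exactly what is needed to justify this. Second, the open/closed argument via the set $A$ is an unnecessary detour, and it is the one place where you lean on something not actually available: you invoke local connectedness of $M$ ``being locally modelled on the locally connected space $X$,'' but that holds only on $\reg(M)$ --- at singular points $M$ is by definition \emph{not} modelled on $X$, and the definition of a singular $(G,X)$-manifold used here only requires a second countable Hausdorff space. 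The lapse is harmless, though, because you only need \emph{one} connected open neighborhood $\Omega$ of each $p\in\overline{A}$, and $\Omega=M$ qualifies: applying the defining property of ``locally connected in $M$'' with $\V=M$ shows that $\U$ itself is connected, so the classical lemma gives $f=g$ on all of $\U$ in a single stroke and density finishes --- which is precisely the paper's shorter proof, to which your argument collapses under that choice. So the proposal is sound, with the local machinery buying nothing beyond what the global statement already provides.
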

\begin{proof}
Let $\U\subset \reg(M)$ be a dense open subset of $M$ locally connected in $M$  and let $\V\subset \reg(N)$ be a locally connected in $N$ dense open subset such that $f_{|\U}^{|\V},g_{|\U}^{|\V}: \U\rightarrow \V$ are $(G,X)$-morphisms. Since $f_{|\U}^{|\V},g_{|\U}^{|\V}$ agree on $\mathcal W\cap \U$, by analyticity of $(G,X)$-morphism and connectedness of $M $ hence of $\U=\U\cap M$, $f$ and $g$ agree on the whole $\U$. Since $f$ and $g$ are continuous and $\U$ is dense, then $f=g$ on $M$.
\end{proof}

 Notice that, for any $\alpha\geq 0$, the group of isomorphisms of $\mass{\alpha}$ contains the rotations-translations around and along the singular axis of $\mass{\alpha}$; 
if $\alpha=0$, hyperbolic isometries 
$$\fonction{h_\ell}{\mass{0}}{\mass{0}}{(\tau,r,\theta)}{\left(\ell \tau - \frac{\ell^2-1}{2\ell}r,\ \frac{r}{\ell},\  \ell\theta\right)},\quad\quad \ell \in \N^*$$
also acts on $\mass{0}$. However, such isometries are injective iff $\ell=1$ and $h_\ell$ is the identity. 

We define for $\alpha\geq 0$:
$$\isom(\E^{1,2}_\alpha) = \{(t,r,\theta)\mapsto (t+t_0,r,\theta+\theta_0): (t_0,\theta_0) \in \RR^2\}.$$

These definitions are justified by the next Proposition.

  \begin{propou} \label{prop:isom} Let $\alpha,\beta\in \R_+$  with $\alpha\neq 2\pi$, and let $\U$ be an open connected
	  subset of $\E^{1,2}_\alpha$ containing a singular point
	  and let $\phi:\U\rightarrow \E^{1,2}_\beta$ be an  almost everywhere $\mass{}$-morphism. 
	  
	 If $\phi$ is a local homeomorphism then $\alpha=\beta$ and  
	  $\phi$ is the restriction of an element of $\isom(\E^{1,2}_\alpha)$. 
 \end{propou}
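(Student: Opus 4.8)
The plan is to push everything onto the regular locus, detect the singularity homotopically, and then reconstruct $\phi$ from its developing data. Since $\phi$ is a local homeomorphism, the general fact that a local-homeomorphism a.e.\ $\mass{}$-morphism restricts to an honest $\mass{}$-morphism between regular loci gives $\phi(\reg(\U))\subseteq\reg(\E^{1,2}_\beta)$, and tells us that $\phi_{|\reg(\U)}$ is a genuine $\mass{}$-morphism, in particular orientation- and time-orientation-preserving. Fix a singular point $p\in\U$ (one exists by hypothesis) and a neighbourhood $W$ of $p$ on which $\phi$ is a homeomorphism onto its image; since $\sing$ is a $1$-submanifold, the regular part $W\cap\reg(\U)$ is a ball minus a properly embedded axis arc, so $\pi_1(W\cap\reg(\U))$ is infinite cyclic, generated by a meridian $m$ encircling the singular line once.

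The first and main step is to show $\phi(p)\in\sing(\E^{1,2}_\beta)$. Suppose not. Shrinking $W$, the image $W'=\phi(W)$ is then an open ball contained in $\reg(\E^{1,2}_\beta)$, hence simply connected and carrying the trivial, injectively developable $\mass{}$-structure, with $\phi(\,\mathrm{axis}\cap W)\subseteq\reg$. If the meridian holonomy $\rho_\alpha(m)$ is nontrivial --- which holds for $\alpha=0$ (parabolic) and for $\alpha>0$, $\alpha\notin 2\pi\Z$ (rotation by $\alpha$) --- then $\phi(m)$ is freely homotopic inside the regular ball $W'$ to a constant, so $\rho_\beta(\phi_* m)=\mathrm{id}$; as $\phi_{|\reg}$ conjugates $\rho_\alpha(m)$ to $\rho_\beta(\phi_* m)$, this forces $\rho_\alpha(m)=\mathrm{id}$, a contradiction. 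In the remaining case $\alpha\in 2\pi\Z_{\ge 2}$ the holonomy is trivial, so there is a single-valued developing map $\D_\alpha$ on $W\cap\reg(\U)$; writing $\D_\beta\circ\phi=A\circ\D_\alpha$ for the linear part $A\in\isom_0(\mass{})$ of $\phi$, and using that $\D_\beta$ is injective on the regular ball $W'$ while $\D_\alpha$ is the non-injective angle-multiplying map $\theta\mapsto(\alpha/2\pi)\theta$, gives again a contradiction. Hence $\phi(p)$ is singular; as $p$ was arbitrary and both singular loci are lines, $\phi$ carries the singular line locally homeomorphically onto the singular line, and preservation of orientation and time-orientation yields $\phi_* m=m'$, the image meridian with degree $+1$.

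It remains to identify $\alpha,\beta$ and $\phi$. On the universal cover of $\reg(\U)$ one has $\D_\beta\circ\tilde\phi=A\circ\D_\alpha$ with equivariance $\rho_\beta(\phi_*\gamma)=A\,\rho_\alpha(\gamma)\,A^{-1}$. Letting $r\to 0$ shows $A$ maps the developed singular locus of $\E^{1,2}_\alpha$ onto that of $\E^{1,2}_\beta$; since $\rho_\alpha(m)$ and $\rho_\beta(m')$ must have the same type and elliptic is never conjugate to parabolic, $\alpha$ and $\beta$ are simultaneously $0$ or both positive. In the massive case both developed axes are the timelike axis, so $A$ fixes it setwise and is therefore a rotation about it composed with a time translation (no boost survives); substituting the explicit $\D_\alpha,\D_\beta$ into $\D_\beta\circ\tilde\phi=A\circ\D_\alpha$ and imposing $\phi_* m=m'$ (that $\tilde\phi$ advances $\tilde\theta$ by exactly $2\pi$ when $\tilde\theta$ does) forces $\alpha=\beta$ and $\tilde\phi(t,r,\tilde\theta)=(t+t_0,r,\tilde\theta+\theta_0)$, descending to an element of $\isom(\E^{1,2}_\alpha)$. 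The BTZ case $\alpha=\beta=0$ is identical in spirit: $A$ preserves the developed lightlike singular line and the parabolic holonomy, the non-injective isometries $h_\ell$ are excluded because $\phi$ is a homeomorphism, and one is left with a $\tau$-translation composed with a rotation, again an element of $\isom(\E^{1,2}_0)$. Having shown $\phi$ coincides with some $g\in\isom(\E^{1,2}_\alpha)$ on a nonempty open subset of $\reg(\U)$, Lemma~\ref{lem:liouville} together with connectedness of $\U$ gives $\phi=g$ on all of $\U$.

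I expect the genuine obstacle to be the second step above --- certifying that a singular point cannot be sent into the regular locus. There the hypotheses are weakest (only an a.e.\ $\mass{}$-morphism that happens to be a local homeomorphism, with no control at the singularity itself), so one must extract a contradiction purely from the holonomy, respectively the non-injectivity of the developing map, rather than from any direct metric comparison across the axis; once the singular line is known to be preserved, the remainder is bookkeeping with the explicit developing maps and the Liouville-type rigidity of Lemma~\ref{lem:liouville}.
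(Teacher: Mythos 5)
Your proof is correct and follows essentially the same route as the paper: compare the two developing maps on (a cover of) the regular locus to produce a conjugating isometry, show it preserves the developed singular axis and is therefore a rotation--translation (with the non-injective $h_\ell$ excluded in the BTZ case), let the degree-one behaviour of the meridian under the local homeomorphism force $\alpha=\beta$ and the explicit form of $\phi$, and globalize by the analyticity Lemma \ref{lem:liouville}. The only substantive difference is packaging: the paper lifts $\phi$ to the branched covers $\mass{\infty}$ and $\mass{0\infty}$ and identifies $\widetilde\phi$ as an element of $\isom(\mass{\infty})$, whereas you work directly on the universal cover of the regular part with explicit angle bookkeeping, and you additionally prove the singular-to-singular step (covering the trivial-holonomy angles $\alpha\in 2\pi\Z$, $\alpha>2\pi$, via non-injectivity of the developing map) which the paper treats as an immediate consequence of the a.e.\ $(G,X)$-structure formalism.
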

\begin{corou}
For $\alpha\in \RR_+$, $\isom(\mass{\alpha})$ is indeed the group of isometries of $\mass{\alpha}$.
\end{corou}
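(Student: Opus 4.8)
The plan is to work on the regular locus, where $\phi$ is an honest local $\mass{}$-isomorphism, and to recover both the type of the singular point and the singular axis from the two natural invariants of a punctured neighbourhood of the axis --- the linear holonomy and the winding (injectivity defect) of the developing map --- before invoking rigidity to globalise. Since $\phi$ is a local homeomorphism, the cited property of a.e. $\mass{}$-morphisms gives $\phi(\reg(\mathcal U))\subset\reg(\mass{\beta})$ and that $\phi$ restricts to a local $\mass{}$-isomorphism $\phi_0$. I would fix the singular point $p\in\mathcal U$, a small connected neighbourhood $W\ni p$ on which $\phi$ is a homeomorphism onto its image, and let $\Delta=\sing(\mass{\alpha})\cap W$ be the axis, so that $\reg(W)=W\setminus\Delta$ has $\pi_1\cong\ZZ$ generated by a small loop $\gamma$ around $\Delta$. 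As recalled in the preamble, $\rho_\alpha(\gamma)$ is an elliptic rotation of angle $\alpha$ for $\alpha>0$ and a nontrivial parabolic for $\alpha=0$.

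The first substantial step is that $\phi$ sends $\Delta$ into $\sing(\mass{\beta})$. Suppose not: after shrinking, $V=\phi(W)$ is an all-regular ball and $\phi_0$ realises $\reg(W)$ as $V\setminus C$, where $C=\phi(\Delta)$ is an arc of the flat chart $V$. Every loop of $V\setminus C$ bounds in the simply connected $V$ and hence has trivial holonomy; equivariance of $\phi_0$ then forces $\rho_\alpha(\gamma)=\mathrm{id}$, which already excludes $\alpha=0$ and every $\alpha\notin2\pi\ZZ$. For the residual values $\alpha\in2\pi\ZZ_{\ge2}$ the holonomy is trivial but the developing chart of a neighbourhood of $\Delta$ is $(\alpha/2\pi)$-to-one onto its image, so no such neighbourhood embeds into $\mass{}$; this is incompatible with $\reg(W)\cong V\setminus C$, which does embed as a subset of the chart $V\subset\mass{}$. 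Hence $\phi(\Delta)\subset\sing(\mass{\beta})$, and applying the same to a local inverse shows $\phi$ maps $\Delta$ homeomorphically onto an arc of a singular line.

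Knowing that $\phi$ preserves the axis, $\alpha=\beta$ follows from the same invariants: for $\alpha>0$ the cone angle is recovered from the elliptic holonomy angle $\alpha\bmod2\pi$ together with the $\mass{}$-isomorphism invariant winding $\lceil\alpha/2\pi\rceil$ of the developing map, both preserved by $\phi_0$, while for $\alpha=0$ the parabolic holonomy type already forces $\beta=0$. To finish, rigidity of $\mass{}$-structures gives a unique $g\in\isom_0(\mass{})$ comparing, along a lift $\widetilde{\phi_0}$ over the connected $\reg(\mathcal U)$, the developing map of $\reg(\mathcal U)$ with that of $\reg(\mass{\alpha})$. Because $\phi$ maps the singular locus to the singular locus, $g$ must preserve the developed axis $\Delta$ of $\mass{}$; an orientation- and time-orientation-preserving isometry of $\mass{}$ stabilising the timelike line $\Delta$ is exactly a rotation about $\Delta$ composed with a translation along it, i.e. an element of $\isom(\mass{\alpha})$ (this also matches the centraliser of $\rho_\alpha(\gamma)$ whenever the latter is nontrivial, using that $\phi_{0*}\gamma=\gamma$ by orientation-preservation). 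Thus $g$ descends to $\mass{\alpha}$ and agrees with $\phi$ on the dense set $\reg(\mathcal U)$, and Lemma \ref{lem:liouville} upgrades this to $\phi=g$ on all of $\mathcal U$.

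The main obstacle is precisely the middle step. The holonomy only sees the cone angle modulo $2\pi$ and is even trivial when $\alpha\in2\pi\ZZ$, so neither ``singular maps to singular'' nor the sharp equality $\alpha=\beta$ can be read off the holonomy alone; both genuinely require the finer winding/injectivity information of the developing map (equivalently the metric cone angle), and the elliptic, parabolic, and trivial-holonomy regimes must be treated together rather than by a single conjugacy computation.
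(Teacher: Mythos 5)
Your route is sound for $\alpha>0$ and in fact runs parallel to the paper's proof of Proposition \ref{prop:isom} (from which the corollary is immediate): what you call the ``winding'' or injectivity defect of the developing map is exactly what the paper formalises by lifting $\phi$ to the branched universal cover $\mass{\infty}$, comparing $\D_\infty\circ\widetilde\phi$ with $\D_\infty$ via a unique $g\in\isom(\mass{})$, showing $g$ preserves the developed axis because two distinct singular points are sent into it, and extracting the relation $\alpha=k\beta$, reduced to $k=1$ by the local-homeomorphism hypothesis, at the descent stage rather than from an a priori invariant. Your explicit handling of the trivial-holonomy angles $\alpha\in 2\pi\ZZ_{\ge 2}$ via the multi-sheetedness of the descended developing map is correct and, if anything, more detailed than the paper's; the ``winding'' invariant itself is asserted rather than defined, but the $\mass{\infty}$ bookkeeping shows it can be made rigorous.

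The genuine gap is the case $\alpha=0$, which is precisely the extreme BTZ model the corollary is needed for in this paper. Your globalisation step classifies orientation- and time-orientation-preserving isometries of $\mass{}$ stabilising ``the timelike line $\Delta$'', but for $\alpha=0$ the developed axis $\Delta=\D_0(\sing(\mass{0\infty}))$ is \emph{lightlike}, and the stabiliser of a lightlike line in $\isom_0(\mass{})$ is strictly larger than the rotation-translations: it contains the one-parameter family of boosts, which appear in the paper as the hyperbolic isometries $h_\ell$ and are exactly why $\isom(\mass{0\infty})$ is defined as generated by rotation-translations \emph{together with} the $h_\ell$, $\ell\in\RR_+^*$. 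So ``$g$ stabilises $\Delta$'' does not by itself place $g$ in the developed image of $\isom(\mass{0})$, and your parenthetical consistency check fails exactly here: a boost does not centralise the parabolic holonomy (conjugating a parabolic by a boost rescales its parameter), so for $\alpha=0$ the centraliser of $\rho_0(\gamma)$ is strictly smaller than the stabiliser of $\Delta$ and the two do not ``match''. Ironically, the centraliser is also the repair: since $\rho_0(\gamma)$ is a nontrivial parabolic and $g$ conjugates it to itself, $g$ must be a parabolic about $\Delta$ composed with a translation along $\Delta$, hence descends into $\isom(\mass{0})$ --- promote your parenthesis from consistency check to argument. Alternatively, follow the paper's \emph{mutatis mutandis} route: allow a boost factor $h_\ell$ in the lifted comparison and observe, as the paper does right after defining $h_\ell$, that $h_\ell$ induces a well-defined map on $\mass{0}$ (where $\theta\in\RR/2\pi\ZZ$) only for integral $\ell$ and an injective local homeomorphism only for $\ell=1$. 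As written, however, your proof does not establish the corollary for $\alpha=0$.
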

\begin{corou}
	Let $M$ be a $\mass{\geq 0}$-manifold and let $x\in \sing(M)$. There exists exactly one $\alpha\geq 0$ such that $x$ is locally modeled on $\mass{\alpha}$.
\end{corou}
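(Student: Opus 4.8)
The existence part is immediate from the definitions: a $\mass{\geq 0}$-manifold is by definition a singular $\mass{}$-manifold all of whose singular points have type in $\RR_+$, so there is at least one $\alpha\geq 0$ for which $x$ is locally modeled on $\mass{\alpha}$. The whole content is therefore uniqueness, and the plan is to reduce it to Proposition~\ref{prop:isom}, whose only standing hypothesis is that the source parameter be $\neq 2\pi$. So the first thing I would record is that a \emph{singular} point can never have type $2\pi$: since $\mass{2\pi}$ carries the flat metric $-\d t^2+\d r^2+r^2\d\theta^2$ it is globally regular (i.e.\ $\sing(\mass{2\pi})=\varnothing$). Hence an embedding a.e.\ $\mass{}$-morphism $f$ of a neighborhood of $x$ into $\mass{2\pi}$ would be a local homeomorphism whose inverse is again an a.e.\ $\mass{}$-morphism and a local homeomorphism; applying the functoriality property ``$\varphi(\reg)\subset\reg$'' of the preliminaries to $f^{-1}$ would force $x\in\reg(M)$, contradicting $x\in\sing(M)$. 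Thus every type of a singular point is automatically $\neq 2\pi$.

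Next I would set up the comparison. Suppose $x$ has type $\alpha$ and type $\beta$, both necessarily $\neq 2\pi$ by the previous remark, and fix embedding a.e.\ $\mass{}$-morphisms $f:W_\alpha\to\mass{\alpha}$ and $g:W_\beta\to\mass{\beta}$ on neighborhoods of $x$. Let $W$ be the connected component of $x$ in $W_\alpha\cap W_\beta$, which is open since a manifold is locally connected. The same functoriality argument applied to the inverse embedding $(f_{|W})^{-1}$ shows that $f(x)\in\sing(\mass{\alpha})$: were $f(x)$ regular, pulling it back along $(f_{|W})^{-1}$ (an a.e.\ $\mass{}$-morphism and local homeomorphism) would place $x$ in $\reg(M)$, again a contradiction. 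Consequently $\U:=f(W)$ is an open connected subset of $\mass{\alpha}$ containing the singular point $f(x)$.

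Finally I would form the transition map $\phi:=g\circ (f_{|W})^{-1}:\U\to\mass{\beta}$. As a composition of embedding a.e.\ $\mass{}$-morphisms it is itself an a.e.\ $\mass{}$-morphism and a local homeomorphism, so all the hypotheses of Proposition~\ref{prop:isom} are in place: $\alpha\neq 2\pi$, $\U$ is open connected containing a singular point, and $\phi$ is an a.e.\ $\mass{}$-morphism that is a local homeomorphism. The Proposition then yields directly $\alpha=\beta$, which is the asserted uniqueness. The only genuinely delicate steps are the two invocations of the functoriality of a.e.\ morphisms --- that an embedding a.e.\ $\mass{}$-morphism and its inverse send regular points to regular points --- since it is precisely this that lets me exclude type $2\pi$ and guarantee that $f(x)$ is singular, thereby matching the hypotheses of Proposition~\ref{prop:isom}; everything else is formal bookkeeping of compositions of embeddings.
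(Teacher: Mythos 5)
Your proof is correct and follows exactly the route the paper intends: the corollary is stated there without proof as an immediate consequence of Proposition~\ref{prop:isom}, applied to the transition map $g\circ(f_{|W})^{-1}$ between two model charts. Your two preparatory observations --- that a point of $\sing(M)$ can never have type $2\pi$, and that the chart must send $x$ into $\sing(\mass{\alpha})$, both via the functoriality property $\varphi(\reg(M))\subset\reg(N)$ applied to the inverse embedding --- are precisely the details the paper leaves implicit, and you verify them correctly.
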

\begin{corou} Let $\phi:M\rightarrow N$ an a.e. $\mass{}$-morphism with $M,N$ two $\mass{\geq0}$-manifolds. 

If $\phi$ is a local homeomorphism, then $\phi$ is a local diffeomorphisms.
\end{corou}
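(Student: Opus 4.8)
The plan is to argue pointwise, treating $x\in\reg(M)$ and $x\in\sing(M)$ separately, where the smooth structures on $M$ and $N$ are understood to be those for which every singular chart (a homeomorphism of an open set onto an open subset of some model $\mass{\alpha}\cong\RR^3$) is a diffeomorphism. At a regular point the statement is already recorded in the excerpt: since $\phi$ is a local homeomorphism one has $\phi(\reg(M))\subset\reg(N)$ and $\phi_{|\reg(M)}^{|\reg(N)}$ is an honest $\mass{}$-morphism, hence a local isometry of flat Lorentzian manifolds; a local isometry that is a local homeomorphism is a local diffeomorphism, so no further argument is needed on $\reg(M)$.

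The substance of the proof is therefore at a singular point $x\in\sing(M)$. By the previous Corollary $x$ carries a unique type $\alpha\geq 0$, and $\alpha\neq 2\pi$ since otherwise $x$ would be modeled on ordinary Minkowski space $\mass{}=\mass{2\pi}$ at an axis point and hence be regular. Fix a singular chart $\psi\colon\U\to\U'\subset\mass{\alpha}$, a homeomorphism of a neighborhood $\U$ of $x$ onto a connected open set $\U'$ containing the singular point $\psi(x)\in\{r=0\}$. First I would show that $\phi(x)$ is singular. If instead $\phi(x)\in\reg(N)$, choose a regular chart $\chi\colon\mathcal{W}\to\mass{}$ with $\phi(x)\in\mathcal{W}\subset\reg(N)$ and shrink $\U$ so that $\phi(\U)\subset\mathcal{W}$. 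Then $\Phi:=\chi\circ\phi\circ\psi^{-1}\colon\U'\to\mass{2\pi}$ is an a.e. $\mass{}$-morphism (charts being a.e. isomorphisms and a.e. morphisms composing) which is a local homeomorphism whose source $\U'$ is connected and contains a singular point; Proposition \ref{prop:isom} would then force $\alpha=2\pi$, a contradiction. Hence $\phi(x)\in\sing(N)$, of some type $\beta$ with $\beta\neq 2\pi$.

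It then remains to apply Proposition \ref{prop:isom} in the genuinely singular situation. Taking a singular chart $\chi\colon\mathcal{W}\to\mathcal{W}'\subset\mass{\beta}$ around $\phi(x)$ and shrinking $\U$ so that $\phi(\U)\subset\mathcal{W}$, the map $\Phi:=\chi\circ\phi\circ\psi^{-1}\colon\U'\to\mass{\beta}$ is again an a.e. $\mass{}$-morphism and a local homeomorphism with $\U'$ connected and meeting $\{r=0\}$. The Proposition gives $\alpha=\beta$ and identifies $\Phi$ with the restriction of an element of $\isom(\mass{\alpha})$. Every such element has the form $(t,r,\theta)\mapsto(t+t_0,r,\theta+\theta_0)$, a $t$-translation composed with a rotation about the axis, which is a smooth diffeomorphism of $\mass{\alpha}\cong\RR^3$; thus $\Phi$ is a diffeomorphism onto its image. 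Since $\phi=\chi^{-1}\circ\Phi\circ\psi$ on $\U$ and the charts $\psi,\chi$ are diffeomorphisms for the smooth structures chosen above, $\phi$ is a local diffeomorphism at $x$, which together with the regular case finishes the proof.

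I expect the single delicate point to be excluding the mixed behavior in which a singular point maps to a regular point; the clean way around it is the observation that ordinary Minkowski space is the model $\mass{2\pi}$, so that this exclusion is itself an instance of Proposition \ref{prop:isom} rather than a separate computation. The remaining verifications---that elements of $\isom(\mass{\alpha})$ are smooth and that the a.e.-morphism property is stable under pre- and post-composition with charts---are routine.
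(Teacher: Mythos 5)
Your proof is correct and matches the paper's intent: the paper states this corollary without proof, as an immediate consequence of Proposition \ref{prop:isom}, and your chart-by-chart argument---smoothness on $\reg(M)$ from the restriction being a genuine $\mass{}$-morphism, exclusion of the singular-to-regular case by applying the Proposition with $\beta=2\pi$, and identification of $\phi$ in singular charts with a smooth rotation-translation of $\mass{\alpha}$---is exactly the intended reasoning. Nothing essential is missing.
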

\begin{corou}A $\mass{\geq 0}$-manifold $M$ is smooth $3$-manifold and its singular locus is a closed 1-submanifold. Furthermore, for any $\alpha\in \RR_+$, $\sing_\alpha(M)$ is a closed 1-submanifold.

\end{corou}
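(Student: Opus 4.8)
The plan is to exhibit an explicit atlas on $M$ built from the model charts, check that its transition maps are smooth by invoking Proposition \ref{prop:isom} and its corollaries, and then read off both submanifold claims from the local model $\mass{\alpha}$, in which the singular locus is the coordinate axis $\{r=0\}$.

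First I would fix, for each $p\in M$, a connected chart. If $p$ is regular I take an ordinary $\mass{}$-chart $\varphi_p\colon U_p\to\mass{}$; if $p$ is singular of type $\alpha$ (which is well defined by the corollary asserting uniqueness of the type) I take the embedding $\varphi_p\colon U_p\hookrightarrow\mass{\alpha}$ furnished by the definition of type $\alpha$, shrunk so that $U_p$ is connected. Each model $\mass{\alpha}$ is homeomorphic to $\RR^3$ and each $\varphi_p$ is a local homeomorphism onto an open subset (so the image is open, by invariance of domain); hence $M$ is locally Euclidean of dimension $3$, and being Hausdorff and second countable by definition it is a topological $3$-manifold. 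It then remains to show that the transition maps $\varphi_q\circ\varphi_p^{-1}$, which are a.e. $\mass{}$-morphisms and local homeomorphisms between open subsets of the models, are smooth.

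I would split this verification according to the overlap. On a component of $U_p\cap U_q$ meeting $\reg(M)$ only, the transition restricts to an a.e. $\mass{}$-morphism between open subsets of $\reg(\mass{\alpha(p)})$ and $\reg(\mass{\alpha(q)})$ which is a local homeomorphism; by the corollary stating that such maps are local diffeomorphisms (equivalently, because on the regular locus it is locally the restriction of an element of $\isom_0(\mass{})$) it is smooth. On a component containing a singular point, Proposition \ref{prop:isom} applies directly: it forces $\alpha(p)=\alpha(q)=:\alpha$ and identifies the transition with the restriction of an element of $\isom(\mass{\alpha})$, i.e. a map $(t,r,\theta)\mapsto(t+t_0,r,\theta+\theta_0)$, which is a translation along and rotation around the axis and hence a smooth diffeomorphism of $\RR^3$. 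Thus every transition is smooth and $M$ becomes a smooth $3$-manifold.

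For the singular locus I would first note that these charts preserve types: a local homeomorphism that is an a.e. $\mass{}$-morphism sends $\reg$ into $\reg$ (recorded in the preliminaries), and by uniqueness of the type it sends a singular point of type $\alpha$ to one of type $\alpha$, the same holding for $\varphi_p^{-1}$. Hence in a singular chart $\varphi_p(\sing(M)\cap U_p)=\{r=0\}\cap\varphi_p(U_p)$, an open piece of a straight line, and since the $\isom(\mass{\alpha})$ transitions preserve this axis these charts form a submanifold atlas exhibiting $\sing(M)$ as a smooth $1$-submanifold, closed because $\reg(M)$ is open. Finally, all singular points in one such chart carry the same $\alpha$, so the type is locally constant along $\sing(M)$; therefore $\sing_\alpha(M)$ is open and closed in $\sing(M)$, a union of components of the $1$-submanifold $\sing(M)$, and closed in $M$. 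The only genuinely delicate point is the bookkeeping of the second paragraph, namely arranging connected overlaps so that Proposition \ref{prop:isom} is applicable and confirming the charts are open embeddings; beyond this no idea is needed past the cited results.
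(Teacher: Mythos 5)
Your proof is correct and follows exactly the route the paper intends: the corollary is stated as an immediate consequence of Proposition \ref{prop:isom}, and your expansion --- model charts whose transition maps are, by that Proposition, restrictions of elements of $\isom(\mass{\alpha})$ (hence smooth rotation-translations preserving the axis $\{r=0\}$), giving the smooth atlas, the $1$-submanifold structure of $\sing(M)$, and local constancy of the type for $\sing_\alpha(M)$ --- is precisely the intended argument. The one delicate point you rightly flag (charts being open embeddings so that Proposition \ref{prop:isom} and invariance of domain apply) is handled the same way in the paper's framework of a.e.\ $(G,X)$-structures.
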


Before proving Proposition \ref{prop:isom} we introduce two singular manifolds that simplify the argumentation. Define the $\mass{}$-manifold \begin{eqnarray*}
	\mass{\infty}&:=& \left((\RR\times \RR_+\times \RR)/\sim, -\d t^2+\d r^2+r^2\d\theta^2\right) \\ && \text{with} \quad  (t,r,\theta)\sim (t',r',\theta') \text{ iff } r=r'=0 \text{ and } t=t'
\end{eqnarray*}

which comes with its natural projection $\varpi_\alpha$ on each $\mass{\alpha}$ with $\alpha>0$. 
For $\alpha=2\pi$, $\mass{\alpha}=\mass{}$ and $\D_\infty:=\varpi_{2\pi}$ is an a.e. $\mass{}$-morphism. 
We can obviously extend our definitions of $\isom(\mass{\alpha})$ to $\alpha = \infty$.
The situation can be summed up by the following commutative diagram (where $\iota$'s are the inclusions):
$$\xymatrix{
\reg(\mass{\infty})~\ar@{^{(}->}[r]^{~~~~\iota_\infty}\ar@{->>}[d]^{\varpi_\alpha}&\mass{\infty} \ar[r]^{\D_\infty}\ar@{->>}[d]^{\varpi_\alpha}& \mass{} \\ 
\reg(\mass{\alpha})~\ar@{^{(}->}[r]^{~~~~\iota_\alpha} & \mass{\alpha} 
}
$$
The manifold $\reg(\mass{\infty})$ is simply connected and, for $\alpha\neq 2\pi$, the map $\reg(\varpi_\alpha)$ is a covering, $\reg(\mass{\infty})$ is thus the universal covering of $\reg(\mass{\alpha})$;
the map $\D_\infty \circ \iota_\infty$ being a $\mass{}$-morphism, it is then the developing map of $\reg(\mass{\alpha})$. Futhermore, if $\D:\mass{\infty}\rightarrow \mass{}$ is another a.e. $\mass{}$-morphism  whose restriction to $\reg(\mass{\infty})$ is a $\mass{}$-morphism,  then there exists $g\in \isom(\mass{})$ such that $g \circ\D\circ \iota_\infty =  \D_\infty\circ \iota_\infty$. The image of $\iota_\infty$ is dense thus $g\circ \D = \D_\infty$ and we recover the usual uniqueness statement for developing maps.  
To summarize, the singular manifold $\mass{\infty}$ is the "universal" branched covering of $\mass{\alpha}$ for $\alpha>0$; using  the terminology of \cite{MR0123298}, $\mass{\infty}$ is the completion of the spread $\reg(\mass{\infty})\xrightarrow{\iota_\alpha\circ \varpi_\alpha}\mass{\alpha}$ and the developing map extends continuously to this spread. 

We define similarly the "universal" covering of $\mass{0}$ branched on $\sing(\mass{0})$:
\begin{eqnarray*}
\mass{0\infty}&:=& \left((\RR\times \RR_+\times \RR)/\sim, -\d \tau^2+\d \r^2+\r^2\d\theta^2\right)  \\&& \text{with} \quad  (\tau,\r,\theta)\sim (\tau',\r',\theta') \text{ iff }  \r=\r'=0 \text{ and } \tau=\tau'
\end{eqnarray*}
together with a natural projection $\varpi_0: \mass{0\infty} \rightarrow \mass{0}$.  
Again one has the commutative diagram: \\
\begin{minipage}[t]{0.405\textwidth}
$$\xymatrix{
\reg(\mass{0\infty})~\ar@{^{(}->}[r]^{~~~~\iota_\infty}\ar@{->>}[d]^{\varpi_0}&\mass{0\infty} \ar[r]^{\D_0}\ar@{->>}[d]^{\varpi_0}& \mass{} \\ 
\reg(\mass{0})~\ar@{^{(}->}[r]^{~~~~\iota_0} & \mass{0} 
} 
$$
\end{minipage}
\begin{minipage}[t]{0.59\textwidth}
$$
\text{where} ~~
 \fonction{\D_0}{\mass{0\infty} }{\mass{}}
{\begin{pmatrix}\tau \\ \r \\ \theta \end{pmatrix}}{
\begin{pmatrix} \tau+\frac{1}{2}\r\theta^2   \\ \tau +\frac{1}{2}\r\theta^2-\r \\ -\r\theta \end{pmatrix}
}
$$
\end{minipage}
We note that 
$$\D_0\left(\mass{0\infty}\right) = J^+(\Delta) \quad  \text{where} \quad \Delta = \RR\begin{pmatrix}1 \\ 1 \\ 0 \end{pmatrix}.$$
The same way as before
$\reg(\mass{0\infty})$ is the universal cover of $\reg(\mass{0})$, the map $\D_0\circ \iota_\infty$ is the developing map of $\reg(\mass{0})$ and furthermore, any other developing map $\D:\Reg(\mass{0\infty})\rightarrow \mass{}$ extends continuously to $\mass{0\infty}$ and there exists $g\in \isom(\mass{})$ such that $g\circ \D = \D_0$. The group $\isom(\mass{0\infty})$ is defined as the group generated by rotation-translations  around and  along $\{r=0\}$ and by hyperbolic isometries $h_\ell$  with $\ell \in \RR_+^*$.

	  \begin{proof}[Proof of Proposition \ref{prop:isom}] 	  
	  To begin with, $\phi$ is a local homeomorphism and an a.e $\mass{}$-morphism thus sends $\sing(\U)\neq \emptyset$ into $\sing(\mass{\beta})$; then $\phi$ certainly sends some simple loop $\gamma$ around $\sing(\mass{\alpha})$ in $\U$ to some loop $\phi(\gamma)$ in $\mass{\beta}$. If $\alpha>0$ (resp. $\alpha=0$), the holonomy of $\gamma$ is elliptic (resp. parabolic), then so is the holonomy of $\phi(\gamma)$, thus  $\beta>0$ (resp. $\beta=0$).
 We prove the Proposition for $\alpha>0$, the proof will work the same way for $\alpha=0$ {\it mutatis mutandis}.
 	  	  
	  One can assume without loss of generality that $\phi$ is an homeomorphism on its image and that $\U$ has the form  $\U = \{t \in ]-\varepsilon,\varepsilon[,\ r \leq r_0\} \subset \mass{\alpha}$. Since $\varpi_\alpha^{-1}(\U)$ is simply connected, the map $\phi$ lifts to a map $\widetilde \phi: \varpi_\alpha^{-1}(\U) \rightarrow \mass{\infty}$ which 
	  sends the singular and regular part of $\varpi_\alpha^{-1}$ into the singular and regular part of $\mass{\infty}$ respectively. Since the continuous maps $\D_\infty\circ \widetilde \phi$ and $\D_{\infty|\varpi_\alpha^{-1}(\U)}$ 
	  (restricted to the regular part of their domains are both developing maps $\reg(\U)$ and since the regular part of $\varpi_\alpha^{-1}(\U)$ is dense in $\varpi_\alpha^{-1}(\U)$, there exists $g\in \isom(\mass{})$ such that $g\circ \D_\infty\circ \widetilde \phi = \D_{\infty|\varpi_\alpha^{-1}(\U)}$. 

Let $\Delta = \D_\infty(\sing(\mass{\infty}))$, consider 	$p,q\in \sing(\mass{\infty})$ distinct and such that $\varpi_\alpha(p)$ and
$\varpi_\alpha(q)$ are  in $\sing(\U)$.  Since 
$p,q\in \sing(\mass{\infty})$ then $\D_\infty(p),\D_\infty(q)\in \Delta$
 and $\widetilde\phi(p),\widetilde\phi(q) \in \sing(\mass{\infty})$ and then $\D_\infty(\widetilde\phi(p))$ and 
$\D_\infty(\widetilde\phi(q))$ are also both in $\Delta$. Furthermore, $g\D_\infty(\widetilde\phi(p)) = \D_\infty(p)$ and $g\D_\infty(\widetilde\phi(q)) = \D_\infty(q)$ thus $g$ sends two distinct points of $\Delta$ into $\Delta$; the isometry $g$ is affine and $\Delta$ is a line thus $g\Delta=\Delta$.

Since the direction of $\Delta$ is timelike, $g$ is an elliptic isometry of axis $\Delta$ with translation part in the direction of $\Delta$; therefore there exists $\widetilde g  \in\isom(\mass{\infty})$ such that $g\circ \D_\infty = \D_\infty \circ \widetilde g$.
We then have $\D_\infty\circ(\widetilde g \circ \widetilde \phi) = {\D_\infty}_{|\omega_{\alpha}^{-1}(\U)}$  and $\widetilde g\widetilde \phi = \widetilde h_{|\varpi_\alpha^{-1}(\U)}$ where $\widetilde h(t,r,\theta) = (t,r,\theta+2k\pi))$ for some $k\in \ZZ$. 
Finally, $\widetilde \phi$  is the restriction of an element of $\isom(\mass{\infty})$.
	For such an element of $\isom(\mass{\infty})$ to induce  a map $\mass{\alpha}\supset\U\rightarrow \mass{\beta}$, then $\alpha$ must be a integral multiple of $\beta$; furthermore for the induced map to be a local homeomorphism one must have $\alpha=\beta$.

\end{proof}

Proposition \ref{prop:isom} can be refined to obtain the stronger following useful Proposition.
\begin{propou}\label{prop:isom_strong}
Let $\alpha,\beta\in \R_+$  with $\alpha\neq 2\pi$, and let $\U$ be an open connected
	  subset of $\E^{1,2}_\alpha$ containing a singular point
	  and let $\phi:\reg(\U)\rightarrow \E^{1,2}_\beta$ be a $\mass{}$-morphism. 
	  
	 If $\phi$ is a injective then $\alpha=\beta$ and  
	  $\phi$ is the restriction of an element of $\isom(\E^{1,2}_\alpha)$. 
\end{propou}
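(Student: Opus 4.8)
The plan is to follow the scheme of Proposition~\ref{prop:isom}: lift $\phi$ to the universal branched cover, identify the lift with an isometry there, and read off the conclusion, the only new input being that injectivity must replace the hypothesis that $\phi$ be defined on the singular locus. I treat the case $\alpha>0$; the case $\alpha=0$ runs \emph{mutatis mutandis} with $\mass{0\infty}$, $\D_0$ and parabolic holonomies in place of $\mass{\infty}$, $\D_\infty$ and elliptic ones, the one genuine difference being that $\isom(\mass{0\infty})$ also contains the hyperbolic maps $h_\ell$, which injectivity eliminates exactly as it eliminates multiple winding below (recall $h_\ell$ is injective iff $\ell=1$). First I would reduce to a convenient neighbourhood. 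Since $\reg(\U)$ is a connected $3$-manifold (removing the $1$-submanifold $\sing(\U)$ from the connected $\U$ does not disconnect it), Lemma~\ref{lem:liouville} lets me shrink $\U$ to a product neighbourhood $\U'=\{|t|<\varepsilon,\ r\le r_0\}$ of a singular point: once I show that $\phi$ agrees on $\reg(\U')$ with the restriction of some $\iota\in\isom(\mass{\alpha})$ (with $\alpha=\beta$), analytic continuation forces $\phi=\iota$ on all of $\reg(\U)$. As in Proposition~\ref{prop:isom}, a loop $\gamma_0$ around $\sing(\U')$ has nontrivial elliptic holonomy (rotation by $\alpha$), so $\phi(\gamma_0)$ is noncontractible in $\reg(\mass{\beta})$ with elliptic holonomy; hence $\beta>0$ and $\phi(\gamma_0)$ winds some $k\in\N^*$ times around $\sing(\mass{\beta})$.

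Next I lift. Write $\widetilde{\reg(\U')}\subset\reg(\mass{\infty})$ for the universal cover, with deck transformation $T_\alpha:(t,r,\theta)\mapsto(t,r,\theta+\alpha)$, and lift $\phi$ to $\widetilde\phi:\widetilde{\reg(\U')}\to\reg(\mass{\infty})$, the universal cover of the target $\reg(\mass{\beta})$. Equivariance reads $\widetilde\phi\,T_\alpha=T_{k\beta}\,\widetilde\phi$, where $T_{k\beta}:(t,r,\theta)\mapsto(t,r,\theta+k\beta)$ is the $k$-th power of the deck transformation of $\reg(\mass{\beta})$. Since $\D_\infty\circ\widetilde\phi$ is again a developing map of $\reg(\U')$, there is $g\in\isom(\mass{})$ with $\D_\infty\circ\widetilde\phi=g^{-1}\circ\D_\infty$. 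Using $\D_\infty\circ T_s=R_s\circ\D_\infty$, where $R_s$ is the rotation of angle $s$ about the timelike line $\Delta=\D_\infty(\sing(\mass{\infty}))$, the equivariance relation yields $g\,R_{k\beta}\,g^{-1}=R_\alpha$. Both sides being nontrivial elliptic isometries, comparison of their fixed axes gives $g\Delta=\Delta$. This is the step where, lacking two singular points in the domain to map into $\Delta$, I must extract the invariance of $\Delta$ from the holonomy conjugacy instead.

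Because $\Delta$ is timelike and $g$ preserves orientation and time-orientation while fixing $\Delta$ setwise, $g$ is a rotation about $\Delta$ composed with a translation along $\Delta$; hence $g$ lifts to $\widetilde g\in\isom(\mass{\infty})$ with $g\circ\D_\infty=\D_\infty\circ\widetilde g$. Then $\D_\infty\circ\widetilde\phi=\D_\infty\circ\widetilde g^{-1}$, and since $\D_\infty$ restricted to the regular part is the universal covering of $\mass{}\setminus\Delta$, the two lifts $\widetilde\phi$ and $\widetilde g^{-1}$ differ by a deck transformation $\theta\mapsto\theta+2\pi m$. In particular $\widetilde\phi\in\isom(\mass{\infty})$ is a $(t,\theta)$-translation, so the equivariance $\widetilde\phi\,T_\alpha=T_{k\beta}\,\widetilde\phi$ now gives $\alpha=k\beta$ exactly. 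Finally I invoke injectivity: the angular part of the induced map on regular parts is the covering $\RR/\alpha\Z=\RR/k\beta\Z\to\RR/\beta\Z$, which is $k$-to-one; as $\phi$ is injective this forces $k=1$, whence $\alpha=\beta$ and $\widetilde\phi$ descends to an element $\iota\in\isom(\mass{\alpha})$ with $\phi=\iota|_{\reg(\U')}$. Lemma~\ref{lem:liouville} then propagates this to all of $\reg(\U)$.

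I expect the main obstacle to be obtaining $g\Delta=\Delta$, which in Proposition~\ref{prop:isom} was free from the action on singular points but here must come from the conjugacy $g\,R_{k\beta}\,g^{-1}=R_\alpha$; the second delicate point is the bookkeeping converting injectivity into $k=1$ (equivalently $\alpha\Z=\beta\Z$). This latter step is precisely what detects the non-injective hyperbolic maps $h_\ell$ in the extreme-BTZ case $\alpha=0$, where the lift $\widetilde\phi$ may a priori contain a hyperbolic factor and injectivity is exactly what pins it down to $\ell=1$.
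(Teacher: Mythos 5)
Your proposal is correct, but it is not the route the paper takes. The paper never lifts $\phi$: it first reduces to causally convex diamond neighbourhoods $J^-(p)\cap J^+(q)$, then extends $\phi$ continuously and injectively across the singular line by the purely causal formula $\widetilde\phi(x)=\inf \phi(\reg(I^+_\U(x)))$, and verifies in five claims that this extension is well defined, is the limit of $\phi$ along $\ll$-decreasing sequences, maps singular points to singular points (via loops of nontrivial holonomy accumulating on the image point), is increasing and injective (via radial geodesics ending on the singular line), and is continuous (via uniform convergence of geodesics); invariance of domain then upgrades $\widetilde\phi$ to a local homeomorphism on $\U$ and the statement reduces to Proposition \ref{prop:isom}. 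You bypass the extension entirely and rerun the branched-covering argument of Proposition \ref{prop:isom} on $\reg(\U)$ alone, replacing its two uses of the singular locus: where the paper obtains $g\Delta=\Delta$ by sending two singular points into $\Delta$, you extract it from the holonomy conjugacy $g R_{k\beta} g^{-1}=R_\alpha$ by comparing fixed axes; and where the paper converts ``local homeomorphism at a singular point'' into $\alpha=\beta$, you convert injectivity into $k=1$ via the $k$-to-one angular covering $\RR/k\beta\Z\rightarrow \RR/\beta\Z$ — which, as you note, is exactly the mechanism that kills the non-injective $h_\ell$ ($\ell\geq 2$) in the case $\alpha=0$, since $h_\ell$ conjugates the $\theta$-translation by $2\pi$ to the one by $2\pi\ell$ and the descended angular map $\theta\mapsto \ell\theta+c$ is $\ell$-to-one. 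Your route is shorter and more algebraic, avoiding all the delicate causal and geodesic-limit arguments; what the paper's route buys in exchange is the continuous injective extension of $\phi$ over the singular line itself, a by-product in the spirit of how such morphisms are exploited later (e.g.\ in Lemma \ref{lem:extension_hausdorff}). One caveat, which you share with the paper rather than introduce: both your conjugacy step and the paper's Claim 3 presuppose that loops around the singular line have nontrivial holonomy, which fails for $\alpha\in 2\pi\N$ with $\alpha\neq 2\pi$ (e.g.\ $\alpha=4\pi$, where $R_\alpha$ is the identity and your fixed-axis comparison yields nothing); since the paper's own proofs gloss over exactly the same degenerate angles, this does not put your argument at a disadvantage relative to the paper's, but it is worth flagging if you intend the statement for all $\alpha\in\R_+\setminus\{2\pi\}$.
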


\begin{proof} Using the same argumentation as in Proposition \ref{prop:isom}, we show that $\alpha=0$ (resp. $\alpha>0$) implies $\beta=0$ (resp. $\beta>0$). One only need to prove the result for each $\U$ in a basis of neighborhoods of a singular point of $\mass{\alpha}$. Subsets of the form $J^-(p)\cap J^+(q)$ form a basis of the topology of $\mass{\alpha}$, they are all globally hyperbolic and causally convex in $\mass{\alpha}$. 

From Proposition \ref{prop:isom} and by Theorem of invariance of domain, proving $\phi$ extends continuously and injectively to $\U$ is sufficient. 
Notice that $\phi$ is an injective local homeomorphism thus a $\mass{}$-isomorphism on its image.
Let  $$\fonction{\widetilde \phi}{\U}{\mass{\beta}}{x}{\inf \phi(\Reg(I^+_\U(x)))},$$
\item Claim 1: $\widetilde \phi$ is well defined and $\widetilde \phi_{|\Reg(\U)} = \phi$. 

Let $x\in \U$, $y \in \Reg(I^+_\U(x))$ and let $c: [0,1]\rightarrow \U$ be a finite length timelike geodesic from $y$ to $x$ in $\Reg(\U)$ (which exists since $\U$ is globally hyperbolic), its image $\phi\circ c$ is a finite length timelike geodesic in $\Reg(\mass{\alpha})$. The infimum is realized by $\lim_{1^-}\phi\circ c$ and if $x\in \Reg(\U)$ then $\lim_{1^-} c = x\in \Reg(\U)$ so  $\widetilde \phi(x)=\lim_{1^-}\phi\circ c = \phi(x)$.

\item  Claim 2: $\displaystyle \widetilde \phi(x) = \lim_{n\rightarrow +\infty} \phi(x_n)$ for any $\ll$-decreasing sequence $x_n\xrightarrow{n\rightarrow +\infty}x\in \U$.

Let $(x_n)_{n\in\N}$ be a $\ll$-decreasing sequence converging toward some $x\in \U$.  For all $x'$ in $\phi(\Reg(I^+_\U(x)))$, the past set $I^-(\phi^{-1}(x'))$ is a neighborhood of $x$ and $\exists N\in \N$,  $\forall n\in \N$, $\phi^{-1}(x')\gg x_n$; therefore,  $\exists N\in \N,\forall n\geq N,x'\geq \phi(x_n)\geq \widetilde \phi(x)$. Taking a sequence $(x'_n)_{n\in\N}$ converging toward the infimum of $\phi(\Reg(I^+_\U(x)))$ we obtain $\widetilde \phi(x) = \lim_{n\rightarrow +\infty} \phi(x_n)$.

\item Claim 3: $\widetilde \phi$ sends singular points to singular points.

Let $x\in \sing(\U)$ and  let $x_n\xrightarrow x$ be a $\ll$ decreasing sequence  in $\Reg(\U)$, for every $n\in \N$ let  $\gamma_n$ be a closed loop of non trivial holonomy in $I^-(x_n)\cap I^+(x)$. Then, for every $n\in\N$, $\phi(\gamma_n) \in I^-(\phi(x_n))\cap I^+(\widetilde \phi(x))$. Since $\lim_{n\rightarrow +\infty}\phi(x_n)=\widetilde \phi(x)$, we deduce that in any neighborhood of  $\widetilde \phi$ there exists a closed loop of non trivial holonomy. Therefore, $\widetilde \phi(x)\in\sing(\mass{\beta})$.

\item Claim 4: $\widetilde \phi$ is increasing and injective.

 Since $x\mapsto I^+(x)$ is increasing, $\widetilde \phi$ is non-decreasing. 

Let $x,y\in \U$ such that $\widetilde \phi(x)=\widetilde \phi(y)$. If  $x\in \Reg(\U)$, then $y\in \Reg(\U)$ and $\phi(x)=\widetilde \phi(x)=\widetilde \phi(y)=\phi(y)$ so $x=y$ by injectivity of $\phi$. 
If $x\in \sing(\U)$ then $y\in \sing(\U)$ and then  either $x\leq y$ of $y\geq x$. Assume without loss of generality that $x\leq y$.  

By contradiction, assume $x<y$. Since $\widetilde \phi$ is non-decreasing, for all $z\in \sing(\U)$ with $x\leq z\leq y$, $\widetilde \phi(x)=\widetilde\phi(z)=\widetilde \phi(y)$. One can thus choose a smaller $\U$ such that $\widetilde \phi$ is constant on $ \sing(\U)$. 
Consider $p\in I^+_\U(x)$, then $I^-_\U(p)$ is a neighborhood of $x$ and thus contains some singular $z>x$. Consider two past time-like geodesics $c_x,c_z:[0,1]\rightarrow \mass{\alpha}$ with $c_x(0)=c_z(0)=p$, $c_x(1)=x$ and $c_z(1)=z$. By causal convexity of $\U$, these geodesics lie in $\U$.  Notice these two geodesics only intersect at $p$. Consider their image $\widetilde \phi\circ c_x$ and $\widetilde \phi \circ c_z$; both are geodesics of $\mass{\beta}$ intersecting initially, at $\widetilde \phi(p)$ and finally at $\widetilde\phi(x)=\widetilde\phi(z)$. Notice that the geodesics $\mass{\beta}$ ending on $\sing(\mass{\beta})$ are exactly the radial rays in the cylindrical coordinates $(t,r,\theta)$ or $(\tau,\r,\theta)$ (depending on whether $\beta>0$ or $\beta=0$). Thus, $\widetilde \phi \circ c_x = \widetilde \phi \circ c_z $, hence $c_x\cap \reg(\U)=c_z\cap\reg(\U)$ by injectivity of $\widetilde\phi$ on $\reg(\U)$. Finally, $x=z$; contradiction.

\item Claim 5: {$\widetilde \phi$ is continuous.}

 Since $\phi$ is continuous, is suffices to prove $\widetilde \phi$ is continuous on $\sing(\U)$. Let $p\in \sing(\U)$, consider a $\ll$-decreasing sequence $x_n\xrightarrow{n\rightarrow +\infty}x$ and a $\leq$-increasing sequence $y_n\xrightarrow{n\rightarrow +\infty} x$ such that for all $n\in \N$, $x\in \mathrm{Int}(J^+(y_n))$; if $\alpha>0$ take $y_n\in I^-(p) \neq 0$, if $\alpha=0$ any $y_n\in J^-(p)$ has the wanted property. We already proved that $\phi(x_n)~\xrightarrow{n\rightarrow+\infty}~\widetilde\phi(x)$;  proving $\phi(y_n)\xrightarrow{n\rightarrow +\infty}\widetilde\phi(x)$ will then be enough since $\widetilde \phi$ is increasing.

Now consider some $q\in I^+_\U(p)$, the unique geodesic $c$ from $q$ to $p$ and the sequence of past timelike geodesics $c_n$ from $q$ to $y_n$ for $n\in\N$. For all $n\in \N$, $\phi\circ c_n: [0,1] \rightarrow \mass{\beta}$ is a geodesic and since $(\phi \circ c_n)'(0) \xrightarrow{n\rightarrow +\infty} (\phi \circ c)'(0)$ the sequence $(\phi \circ c_n)_{n\in\N}$ converges uniformly toward $\phi \circ c$. In particular: 
$$\lim_{n\rightarrow +\infty} \widetilde \phi(y_n)=\lim_{n\rightarrow +\infty}\lim_{t\rightarrow 1^-} \phi\circ c_n(t) = \lim_{t\rightarrow 1^-}\lim_{n\rightarrow +\infty} \phi\circ c_n(t)= \widetilde \phi(p).$$


Finally, $\widetilde \phi$ is an injective and continuous extension of $\phi$ to $\U$. It is thus an a.e. $\mass{}$-morphism and a local homeomorphism from $\U$ to $\mass{\beta}$,  Proposition \ref{prop:isom} applies. 
 
\end{proof}

\subsubsection*{Causality of $\mass{\geq0}$-manifolds}

The notion of causal and chronological orders on $\mass{\alpha}$ gives rise to a sheaf with value in the dual category of doubly preordered sets:  
let $\U\subset \mass{\alpha}$ be an open subset of $\mass{\alpha}$, for all $x,y\in \U$,  $x\leq_\U y$ (resp. $x \ll _\U y$) iff there exists a future causal (resp. chronological) curve from $x$ to $y$ in $\U$. Such a sheaf is a {\it causal structure}. 
Clearly, on any $\mass{\geq0}$-manifold $M$, there exists a unique causal structure on $M$ which induces on each chart the causal structure of $\mass{\alpha}$. 
As in \cite{oneil}, the causal future of a point $p\in M$ is $ J^+(p):= \{x\in M ~|~ x\geq p\}$; the causal past $J^-(p)$, chronological past/future $I^\pm(p)$ are defined in similar ways. As for  smooth Lorentzian manifolds, the chronological past and future are always open (possibly empty) subset of $M$.  
Both (strong) causality and global hyperbolicity thus make sense in a $\mass{\geq 0}$-manifold. 

The causal structure around singularities differs slightly from the one of Minkowski space.
 Firstly, with $p$ a singular point of $\mass{0}$, the chronological past of $p$ is empty $I^-(p)=\emptyset$ and its causal past is exactly the singular half-line below $p$: $J^-(p) = \{\r=0,\tau\leq \tau(p)\}$.  This generalizes to any $\mass{\geq0}$-manifolds as follows.
	 \begin{lemou} \label{lem:past_BTZ}
	    Let $M$ a $\mass{\geq0}$-manifold then
	    \begin{itemize}
	    \item a connected component of $\sing(M)$ is an inextendible causal curve;
	    \item every causal curve $c$ of $M$ decomposes into its BTZ part and its non BTZ part $c=\sing_0(c)\cup \reg_{>0}(c)$. Furthermore, $\sing_0(c)$ and $\reg(c)$ are connected causal curves and $$\forall x\in \Reg_{>0}(c),\quad \sing_0(c) \subset J^-(x).$$
	  \end{itemize}
	\end{lemou}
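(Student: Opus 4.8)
The plan is to handle the two bullets separately, the first being structural and the second causal, and to lean throughout on the corollaries of Proposition~\ref{prop:isom} (namely that $\sing(M)$ is a closed $1$-submanifold and each $\sing_\alpha(M)$ is closed) together with the local BTZ causal structure recalled just above the Lemma.

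For the first bullet I would first record that the type is locally constant along $\sing(M)$: around a singular point of type $\alpha$ a chart into $\mass{\alpha}$ shows every nearby singular point is again of type $\alpha$, so $\sing_\alpha(M)\cap L$ is open in a component $L$, and being also closed, connectedness forces $L\subset\sing_\alpha(M)$ for a single $\alpha$. In such a chart the singular axis $\{r=0\}$ is a timelike line when $\alpha>0$ and a lightlike line when $\alpha=0$ (read off from the model metrics $\d s_\alpha^2$, resp.\ $\d s_0^2$), with future direction that of increasing $t$ (resp.\ $\tau$); since the transition maps lie in $\isom(\mass{\alpha})$ and preserve the time-orientation, these local future directions glue and $L$ is a causal curve. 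Inextendibility is then topological: a connected component of the closed submanifold $\sing(M)$ is closed in $M$, hence properly embedded, so $L$ is diffeomorphic either to $\RR$ (and its parametrization leaves every compact set at both ends, leaving no endpoint to extend from) or to $S^1$ (a periodic, hence inextendible, causal curve).

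For the second bullet the key local input is that at a BTZ point $p$ one has $I^-(p)=\emptyset$ and $J^-(p)$ equal to the singular half-line below $p$. Fixing a causal curve $c$ on an interval $I$, parametrized toward the future, I set $A=\{s:c(s)\in\sing_0(M)\}$ and $B=\{s:c(s)\in\reg_{>0}(M)\}$, a partition of $I$, and prove $B$ is up-closed. If not, pick $s_1\in B$ and a later $s_2\in A$, and let $s^\ast=\inf\{s\in[s_1,s_2]:c(s)\in\sing_0(M)\}$; since $\sing_0(M)$ is closed and $s_1\notin A$ we get $s_1<s^\ast$ with $c(s^\ast)=:p''\in\sing_0(M)$ approached from below by a sequence $c(s_n)\in\reg_{>0}(M)$. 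Choosing a causally convex chart $\U$ isomorphic to a BTZ neighborhood of $p''$, continuity of $c$ at $s^\ast$ keeps $c|_{[s_n,s^\ast]}$ inside $\U$ for $n$ large, so $c(s_n)\in J^-_\U(p'')$, which in the model is exactly the singular half-line, forcing $c(s_n)\in\sing_0(M)$ — a contradiction. Hence $B$ is an up-set and $A$ a down-set, so $\sing_0(c)$ and $\reg_{>0}(c)$ are sub-intervals of $c$, i.e.\ connected causal curves, with $A$ entirely before $B$. Finally, for $x=c(s_x)$ with $s_x\in B$ and any $y=c(s_y)$ with $s_y\in A$ one has $s_y<s_x$, so $c|_{[s_y,s_x]}$ witnesses $y\le x$, giving $\sing_0(c)\subset J^-(x)$.

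The main obstacle is precisely this localization step: converting the purely local fact that $J^-(p'')$ is the singular half-line in $\mass{0}$ into a statement about the global causal curve $c$, when no global causality hypothesis (not even strong causality) is assumed on $M$. Everything hinges on using continuity of $c$ to trap the tail $c|_{[s_n,s^\ast]}$ inside one causally convex chart, after which the model computation applies verbatim. A secondary point to be careful about is reading the correct causal cone at BTZ points — the asymmetry $I^-(p)=\emptyset$ versus a genuine future cone — since it is exactly this feature (absent at massive-particle points) that produces the ordering ``$\sing_0(c)$ comes before $\reg_{>0}(c)$''.
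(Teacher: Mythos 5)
Your proof is correct and takes essentially the same route as the paper: both bullets rest on $\sing_0(M)$ being a closed, locally causal $1$-submanifold together with the model computation $J^-(p)=\{\r=0,\ \tau\leq\tau(p)\}$ in $\mass{0}$, the paper organizing the connectedness step as a clopen set of past parameters $J=\{t\leq t_0 \mid c([t,t_0])\subset\sing_0(M)\}$ where you instead argue by contradiction at the first singular time after a regular one --- a purely cosmetic difference. Note only that your appeal to causal convexity of the chart is superfluous: as your own argument shows, trapping the tail $c|_{[s_n,s^\ast]}$ in the chart by continuity already reduces everything to the past relation computed inside the chart, where the model fact applies.
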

	\begin{proof}
	  A connected component of $\sing_0(M)$ is a closed, connected, locally causal, 1-dimen\-sional submanifold; therefore, 
	  it is an inextendible causal curve. 
	  
	  Let $c:I\rightarrow M$ be a causal curve, we identify $c$ and $c(I)$. If $\sing_0(c)=\emptyset$ or $\reg_{>0}(c)=\emptyset$, there is nothing to prove; 
	  we thus assume $\sing_0(c)\neq\emptyset$ and $\reg_{>0	}(c)\neq \emptyset$. 
	  
	  Let $t_0\in I$ such that $c(t)\in \sing_0(M)$ and let $J:=\{t\in I~|~t\leq t_0, ~ c([t,t_0])\subset \sing_0(M)\}$. Since $
	  \sing_0(M)$	 is closed, so is $J$ and since $J^-(p)\subset 
	  \sing_0(\mass{0})$, it is also open in $I\,\cap\, ]-\infty,t_0]$. Therefore, $J=I\,\cap\, ]-\infty,t_0]$ and the result follows.

	\end{proof}

Geroch Theorem, Choquet-Bruhat-Geroch are still true for $\mass{\geq 0}$-manifolds as well as the smooth spacelike splitting Theorem; their proof though require some extra work due to the causality around singular lines. More precisely.

\begin{itemize}
\item \underline{Geroch Theorem}: a proof by conformal deformation can be found in \cite{Particules_1} for $\mass{]0,2\pi]}$-manifolds, another proof working for $\mass{\geq0}$-manifolds can be obtained by slightly generalizing the original proof of Geroch. Let $M$ be a time-oriented Lorentzian manifolds, a {\it time function} on $M$ is a map $T: M\mapsto \RR$ which is increasing on future causal curves. A time function $T$ is furthermore {\it Cauchy}  if the restriction of $T_{|c}: c \rightarrow \RR$ to any inextendible causal curve $c$ of $M$ is surjective. Geroch proved his Theorem by considering $x\mapsto\ln\left(\frac{\mu(I^-(x))}{\mu(I^+(x))}\right)$ where $\mu$ is any measure absolutely continuous with respect to the Lebesgue measure.  In \cite{thesis}, the author consider $x\mapsto\ln\left(\frac{\mu^-(J^-(x))}{\mu^+(J^+(x))}\right)$ where $\mu^+$ is absolutely continuous with respect to Lebesgue measure and $\mu^- = \mu^+ + \alpha $ with $\alpha $ absolutely continuous with respect to the 1D Lebesgue measure on the BTZ lines of $M$.

\item \underline{Choquet-Bruhat-Geroch}: The proof of Choquet-Bruhat-Geroch Theorem is based upon two  key ingredients: first, the local existence and uniqueness of solutions to Einstein's equation \cite{MR0053338,MR2527641}; second, a causal analysis of the boundary of a globally hyperbolic domain of a globally hyperbolic Lorentzian manifold.  

  Proposition \ref{prop:isom} gives the geometrical equivalent to  local uniqueness.  Then, one can follow the lines of the causal analysis done by Sbierski \cite{sbierski_geroch} taking some care around singularities especially those of type $\alpha>2\pi$ and $\alpha=0$. Key arguments of Sbierski fails {\it as is}  around such singularities but one can easily correct them \cite{thesis}.  
\item \underline{Smooth spacelike splitting Theorem}: First conjectured by Seifert \cite{Seifert1977}, the first accepted proof for Lorentzian manifolds is given by Bernal and Sanchez \cite{sanchez_smooth}. The proof of Bernal and Sanchez does not apply to $\mass{\geq 0}$-manifolds but the recent work of Bernard and Surh \cite{MR3783554} does. 
They consider manifolds endowed with a {\it convex cone field}, in the context of a $\mass{\geq0}$-manifold $M$, it is the field that $\mathcal C$ associate to a point $p\in M$ the future cone in the tangent space of $M$ at $p$. Fathi and Siconolfi \cite{MR2887877} proved that if $\mathcal C$ is continuous, global hyperbolicity implies the existence of a smooth Cauchy time-function on $M$. However, one can check that the future cone field is \textbf{not} continuous around any singular line in a $\mass{\geq0}$-manifold $M$. Bernard and Surh relaxed the continuity hypothesis  and replaced it by assuming the cone field to be closed ie the cone above a point $p$ is the union of the limits of the cone field toward $p$. 
This property is true for $\mass{\geq0}$-manifolds.
 
\end{itemize}

The singularities we are considering also modify some know causal properties. For instance, let $M$ is $\mass{\geq0}$-manifold, $I^-(p)=\emptyset$ for all $p\in \sing_0(M)$  therefore, $M$ is not past distinguishing if it admits at least one extreme BTZ white hole. However, the classical property is true for future distinguishability.  The proof is the same as in \cite{MR0469146}, we reproduce it here to stress the use of causal language as well as the fact the proof rely only on the Causal convexity formulation of strong causality and both facts that $I^{\pm}$ is open and non-empty.

			\begin{propou} \label{prop:FZenon_injectif}Let $M$ be a $\mass{\geq0}$-manifold. If $M$ is strongly causal, then
			the maps $$\fonctionn{M}{\mathcal{P}(M)}{x}{I^+(x)}\quad \quad \fonctionn{M\setminus\sing_0(M)}{\mathcal{P}(M)}{x}{I^-(x)}$$
			are injective.
			\end{propou}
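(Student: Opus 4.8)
The plan is to treat the two maps by time-duality and concentrate on $x\mapsto I^+(x)$; the past map is then handled by the very same argument applied to the reversed causal structure, the restriction to $M\setminus\sing_0(M)$ being exactly what guarantees $I^-(x)\neq\emptyset$ (recall that $I^-(p)=\emptyset$ for $p\in\sing_0(M)$). This is the only place where the BTZ points intervene. So throughout I assume $I^+$ is open and non-empty, as recalled just above the statement.

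First I would record the elementary fact that $p\in\overline{I^+(p)}$ for every $p\in M$: since $I^+(p)\neq\emptyset$ there is a future timelike curve issuing from $p$, and its points close to $p$ lie in $I^+(p)$ and converge to $p$. Hence if $I^+(p)=I^+(q)$ then $p,q\in\overline{I^+(p)}=\overline{I^+(q)}$, which produces two sequences: $q_n\to q$ with $p\ll q_n$ (witnessing $q\in\overline{I^+(p)}$) and $p_m\to p$ with $q\ll p_m$ (witnessing $p\in\overline{I^+(q)}$).

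The heart of the argument is to manufacture an ``almost closed'' causal curve and contradict strong causality. Fix an index $m$ with $p_m$ close to $p$; since $I^-(p_m)$ is open and contains $q$, and $q_n\to q$, for all large $n$ one has $q_n\in I^-(p_m)$, that is $q_n\ll p_m$. Combined with $p\ll q_n$ this yields, by concatenation, a future causal curve $c$ from $p$ to $p_m$ passing through $q_n$. Now I invoke strong causality in its causal-convexity form: choose a causally convex open neighborhood $W$ of $p$ with $q\notin\overline W$ (possible since $M$ is Hausdorff, locally compact, and strongly causal). For $m$ large we have $p_m\in W$, so both extremities $p,p_m$ of $c$ lie in $W$; but for $n$ large $q_n$ lies near $q\notin\overline W$, so $q_n\notin W$ and $c$ leaves $W$. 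This contradicts the causal convexity of $W$, forcing $p=q$.

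I expect the main obstacle to be organizational rather than deep: ensuring that the two separately-produced sequences can be fed into a \emph{single} causal segment (the openness of $\ll$, i.e. of $I^-(\cdot)$, is exactly what supplies the cross-term $q_n\ll p_m$), and being careful that the only analytic inputs are the openness and non-emptiness of $I^{\pm}$ together with the causal-convexity characterization of strong causality. In particular no appeal to spacelike Cauchy surfaces or to the ambient geometry is needed, beyond the fact, established earlier, that $I^-$ vanishes precisely on $\sing_0(M)$. The same four steps, read in reversed time, give the injectivity of $x\mapsto I^-(x)$ on $M\setminus\sing_0(M)$.
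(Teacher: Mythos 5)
Your proof is correct and takes essentially the same route as the paper: both arguments use only the causal-convexity formulation of strong causality, the openness and non-emptiness of $I^{\pm}$, and Hausdorffness, and both manufacture the chronological chain $p\ll q_n\ll p_m$ (the paper's $x\ll b\ll a$, with $b$ near $y$ and $a$ near $x$) to clash with causal convexity of a small neighborhood of $p$. The only difference is presentational: the paper uses causal convexity positively to force the middle point $b$ into $\V_x$ and concludes that every pair of neighborhoods of $x$ and $y$ meet, contradicting Hausdorffness, whereas you first separate $q$ from the causally convex $W$ and read the same configuration as a causal segment with both endpoints in $W$ that exits $W$.
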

			\begin{proof}
			Let $x,y\in M$ such that $I^+(x)= I^+(y)$. Let $\U_x$ and $\U_y$ two open neighborhoods of $x$ and $y$ respectively. Since $M$ is strongly causal, there exists neighborhoods $\V_x$ and $\V_y$ of $x$ and $y$ respectively, which are causally convex in $M$. Let $a\in \V_x\cap I^+(x)$, since $I^+(x)=I^+(y)$, then $a\in I^+(y)$ and $y\in I^-(a)$. However, $I^-(a)\cap \V_y$ is an open neighborhood of $y$ and thus contains some $b\in I^+(y)\cap I^-(a)\cap \V_y$. 
Again, $I^+(x)=I^+(y)$ so $b\in I^+(x)\cap I^-(a)\subset J^+(x)\cap J^-(a)\subset \V_x$ by causal convexity of $\V_x$. 
Therefore, $b\in \V_x \cap \V_y \subset \U_x\cap \U_y$ and $\U_x\cap \U_y\neq \emptyset$.  
$\U_x$ and $\U_y$ are arbitrary and $M$ is Hausdorff so $x=y$.

The proof works {\it mutatis mutandis} for $I^-$.
			\end{proof}

\section{The regular part of Cauchy-compact $\mass{0}$-manifolds}
\label{sec:reg_BTZ}

 We begin by an easy remark.
 \begin{remark}\label{rem:reg_GH} Let $N$ be a globally hyperbolic $\mass{\geq0}$-manifold, the complement of the BTZ lines $\reg_{>0}(N)$ is globally hyperbolic.
 \end{remark}
 \begin{proof} Let $N^*:=\reg_{>0}(N)$.
 By Geroch Theorem it is enough to check that $N^*$ is causal (which is true since $N$ is causal) and that for any $p,q\in N^*$, the diamond $J^+_{N^*}(p)\cap J^-_{N^*}(q)$ is compact.  
 Let $p,q\in N^*$ and let $r\in J^+_{N}(p)\cap J^-_{N}(q)$ and let $c$ be a future causal curve from $p$ to $q$ containing $r$. If $r\notin N^*$, then $r\in \sing_0(N)$ and by Lemma \ref{lem:past_BTZ}, $q\in \sing_0(N)$ which contradict the hypothesis. 
 Therefore, $J^+_{N^*}(p)\cap J^-_{N^*}(q) = J^+_{N}(p)\cap J^-_{N}(q)$ which is compact since $N$ is globally hyperbolic.
 \end{proof}
 
We are interested in Cauchy-compact Cauchy-maximal $\mass{0}$-manifolds and the aim of this section in to show Proposition \ref{prop:abs_max} ie that under these additional assumptions, $\reg(M)$ is also Cauchy-complete and absolutely maximal in the sense of Barbot \cite{barbot_globally_2004}.

Section \ref{sec:surgery} below is devoted to the proof of Lemma \ref{lem:curve_extension} which, together with technical results regarding BTZ-extensions obtained in section \ref{sec:gluing}, allows us to prove the wanted description of the complement of the BTZ lines of a Cauchy-compact Cauchy-maximal $\mass{0}$-manifold.
\subsection{Cauchy-surfaces surgery in spear neighborhoods}\label{sec:surgery}
    
   Considering the model space $\mass{0}$, it is easy to relate Cauchy surfaces of $\mass{0}$ to Cauchy-surfaces of its regular part $\reg(\mass{0})$. Indeed, consider a Cauchy-surface $\Sigma$ of $\mass{0}$ and let $\T:=\{r\leq \r_0\}$ be the tube of radius $\r_0$ around the singular line of $\mass{0}$. To construct a Cauchy-surface of $\reg(\mass{0})$ from $\Sigma$, it suffices to cut $\Sigma$ along the boundary of $\T$ and replace the piece we took out by a piece of cusp going toward future infinity near the singular line. Converserly, starting from a Cauchy-surface $\Sigma^*$ of $\reg(\mass{0})$, one can construct a Cauchy-surface of $\mass{0}$ by cutting $\Sigma^*$ along $\T$ and replacing the interior piece by a spacelike disk that cut the BTZ line. 
   
   This technical section develop the tools and criterion to ensure the above procedure is feasible, that it can be done in a way preserving Cauchy-complete\-ness and that we keep control on the intersection with causal curves. The tube $\T$ of the example does not exists in general since a BTZ-line has no reason to be complete in the past \footnote{Corollary \ref{cor:abs_max} below shows it's never the case for Cauchy-compact $\mass{0}$-manifold whose fundamental group is anabelian if it has at least one BTZ-line. Indeed, for a Cauchy-complete Cauchy-maximal $\mass{}$-manifold $M$ with anabelian fundamental group, the image of the developing map is a convex of Minkowski space bounded by infinitely many lightlike planes, in particular, it admits a spacelike support plane. Hence, it may contain either a complete future geodesic ray or a complete past geodesic ray  but never both.}. This remark leads to the introduction of spear neighborhoods.
   
   \begin{defou}[Spear neighborhood] Let $M$ be a $\mass{\geq0}$-manifold and let $\Delta$ be a BTZ-line.

   A spear neighborhood around $\Delta$ of vertex $p\in \Delta$ and radius $R>0$ is a neighborhood of $]p,+\infty[$ such that there exists a singular $\mass{}$-isomorphism 
   $\phi: \U\rightarrow \V\subset \mass{0}$ with 
   $$\phi(p)=(-2R,0,0),\quad  \V:= J^+(\phi(p)) \cap \{\r\leq R\}\subset \mass{0}$$ 
   The boundary of $\U$ is the union of the light cone $\phi^{-1}(\{\r = 2(\tau-R),\r<R\})$ and a half-tube $\phi^{-1}(\{\r=R, \tau\geq 0\})$. The former is called the \textbf{head} and the latter the \textbf{shaft} of $\U$.

  \end{defou}
  \begin{defou}[Blunt spear neighborhood]
  Let $M^*$ be a $\mass{>0}$-manifold.
  
   A blunt spear neighborhood of radius $R>0$ in $M^*$ is a subset $\U$ whose interior is dense and locally connected in $\U$ such that there exists a singular $\mass{}$-isomorphism 
   $\phi: \U^*\rightarrow \V^*\subset \mass{0}$ with 
   $$ \V^*:= \reg(J^+(p) \cap \{\r\leq R\})\subset \reg(\mass{0}) \quad\text{ for some } p \in \sing(\mass{0}).$$ 
   
   Such a $\V^*$ is the regular part of a unique spear $\V$ in $\mass{0}$, the \textbf{head} and the \text{shaft} of $\V^*$ are respectively the head and the shaft of $\V$.
  \end{defou}
 Notice that such neighborhoods may not exists, for instance if $M$ is a past half  $\{\tau<0\}$  of $\mass{0}$.  Lemma \ref{lem:tube} shows Global hyperbolicity and Cauchy-maximality are sufficient for a BTZ line  to admit a spear neighborhood.
 The construction of a blunt spear neighborhood require more work and unecessary for our main purpose. We thus postpone the construction of blunt neighborhoods to when it will be actually useful ie in the proof of Proposition \ref{prop:reg_complete}.

    \begin{lemou} \label{lem:tube}
	  Let $M$  be a globally hyperbolic $\mass{\geq0}$-manifold.
	 If  $M$ is Cauchy-maximal then, for any  BTZ line $\Delta$ and any $p\in \Delta$, there exists a spear neighborhood around $\Delta$ of vertex $p$.
	\end{lemou}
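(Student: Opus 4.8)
The plan is to grow the spear out of $p$ along the future singular ray, developing the $\mass{0}$-structure and using Cauchy-maximality to keep a fixed radius all the way to the future end of the ray; throughout write $\V_R:=J^+((-2R,0,0))\cap\{\r\le R\}\subset\mass{0}$ for the model spear. First I would fix a germ of the spear at the vertex. Since $p\in\sing_0(M)$ is of type $0$, by definition a neighborhood of $p$ embeds into $\mass{0}$, and (because $I^-(p)=\emptyset$) one may arrange this into a $\mass{}$-isomorphism from a small truncated spear $\V_R\cap\{\tau<c\}$ onto a neighborhood $\U_0$ of an initial segment of the ray, sending the vertex to $p$ and the model singular ray to $\Delta$. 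Proposition \ref{prop:isom_strong} makes this local picture rigid: any $\mass{}$-isomorphism between such sub-spears is the restriction of an element of $\isom(\mass{0})$.

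Next I would extend this embedding upward. Let $T^\ast$ be the supremum of those $T$ for which $\V_R\cap\{\tau<T\}$ admits a $\mass{}$-isomorphism onto an open neighborhood of the corresponding segment of the ray in $M$ prolonging the germ $\U_0$. By the rigidity of Proposition \ref{prop:isom_strong} any two such prolongations coincide on their common domain, and Lemma \ref{lem:liouville} lets them glue; hence they assemble into a single maximal embedding $\Phi\colon\V_R\cap\{\tau<T^\ast\}\to M$. Proving the lemma then amounts to showing $T^\ast=+\infty$, i.e. that $\Phi$ never degenerates.

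That the \emph{height} is unbounded is a purely causal matter and does not really use maximality: $\Delta$ is an inextendible causal curve by Lemma \ref{lem:past_BTZ}, so a Cauchy time function of the globally hyperbolic $M$ (Geroch) restricts to a surjection along $\Delta$, whence every ray point at finite $\tau$ lies in $M$. Around such a point $M$ is again modeled on $\mass{0}$, and by Proposition \ref{prop:isom_strong} that local chart glues with $\Phi$; so $T^\ast<\infty$ would let one push $\Phi$ past $\tau=T^\ast$, a contradiction.

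The crux, and the step I expect to be the main obstacle, is keeping the radius equal to $R$: a priori the local model around a high ray point only furnishes a tube of some small radius, so injectivity of $\Phi$ at radius $R$ could fail, or the radius-$R$ tube could fail to sit inside $M$. Global hyperbolicity handles injectivity: each truncated spear is causally convex, so by future-distinguishability (Proposition \ref{prop:FZenon_injectif}) the causally convex image of $\Phi$ cannot wrap onto itself, and $\Phi$ is injective wherever the tube fits. Cauchy-maximality handles the fitting: if the radius-$R$ tube were obstructed at some finite height, the still-uncovered part of $\V_R$ is itself a globally hyperbolic piece of $\mass{0}$ that can be grafted onto $M$ along the already-embedded collar, yielding a strictly larger globally hyperbolic $\mass{0}$-manifold into which $M$ Cauchy-embeds and contradicting Cauchy-maximality. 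This last gluing/maximality argument is the heart of the matter; geometrically it expresses that, for $R$ small, $\V_R$ is exactly the maximal Cauchy development of the extreme-BTZ initial data carried by a small spacelike disk transverse to $\Delta$, so a maximal $M$ is forced to contain it.
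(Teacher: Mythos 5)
Your overall germ (glue a model spear into the spacetime and invoke Cauchy-maximality) is the same as the paper's, but the step you yourself flag as the heart of the matter has a genuine gap as stated. You propose to graft the still-uncovered part of $\V_R$ onto $M$ along the embedded collar, ``yielding a strictly larger globally hyperbolic $\mass{0}$-manifold into which $M$ Cauchy-embeds''. This does not work: if the obstruction at finite height is a collision (the tube wrapping onto itself, meeting another singular line, or its limit points already existing in $M$ in a different guise), the pushout of $M$ with the missing piece is \emph{non-Hausdorff}, hence not a $\mass{\geq0}$-manifold at all, and Cauchy-maximality — which only quantifies over genuine manifolds — yields no contradiction. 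Hausdorffness of exactly such gluings is a substantive issue this paper has to work for elsewhere (Lemma \ref{lem:extension_hausdorff}, the unseparated-point analysis in Proposition \ref{prop:ppcm}). The paper's proof of Lemma \ref{lem:tube} is designed around this: it does \emph{not} keep all of $M$. It takes a chart tube around $p$ whose top disk $\Pi=\phi^{-1}(\{\tau=R/2,\r<R\})$ is acausal, deletes the whole future $J^+(\Pi)$ to form $M_0$, and glues the \emph{entire infinite} model spear $\S$ to $M_0$ in one shot; the cut removes everything the spear could collide with, so $M_2=(M_0\coprod\S)/\sim$ is a manifold, and a decomposition of past-inextendible causal curves at their crossing of $\Pi$ shows the time-function level surface $\Sigma$ through $p$ is still a Cauchy surface of $M_2$. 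Note that $M$ does \emph{not} embed in $M_2$ (its portion $J^+(\Pi)$ was discarded); maximality enters through the common piece: both $M$ and $M_2$ are Cauchy-extensions of $M_0$, so Cauchy-maximality of $M$ (uniqueness of the maximal extension) embeds $M_2$ into $M$, delivering the spear. Your sketch is missing both the Hausdorffness device and the verification that a Cauchy surface stays Cauchy after surgery.

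A second, independent error is your claim that unboundedness of the height ``is a purely causal matter and does not really use maximality'' because a Cauchy time function is surjective along $\Delta$. Counterexample: $N=\mass{0}\setminus J^+(q)$ with $q\in\sing(\mass{0})$ is causally convex in $\mass{0}$, hence globally hyperbolic; its BTZ ray stops at finite model parameter $\tau=0$, yet every Cauchy time function of $N$ is surjective along this (inextendible) ray. Surjectivity of $T$ along $\Delta$ constrains $T$, not the chart coordinate: the axis of $\mass{0}$ is lightlike, $\tau$ is merely an affine-type parameter along it, and $T\circ\Phi$ can diverge as $\tau\to T^\ast<\infty$ while the segment is future-inextendible in $M$. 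So even extending the bare ray already requires Cauchy-maximality, and your incremental $T^\ast$/continuity machinery cannot be decoupled from the grafting step; the paper's one-shot cut-and-glue makes that machinery unnecessary altogether.
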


      \begin{proof}[Proof of Lemma \ref{lem:tube}]
      Let $\Delta$ be a BTZ-line of $M$ and $p\in \Delta$ arbitrary; let $T$ be a time function whose level sets are $C^1$ submanifolds. Let $\Sigma:=T^{-1}(T(p))$ the level set of $T$ through $p$. 
     
      There exists a neighborhood $\overline \U$ of $p$ isomorphic via some a.e. isomorphism $\phi:\overline \U\rightarrow \mass{0}$ to 
      $$\{\tau \in [\tau_1,R/2,], \r \leq R \} \subset \mass{0} $$
      for some positive real numbers $R,\tau_1>0$ and such that $\tau\circ \phi(p)=0$. Since $M$ is globally hyperbolic, in particular $M$ is strongly causal and one can take this neighborhood small enough so that the surface $\phi^{-1}(\{\tau=R/2,\r<R\})$ is acausal in $M$.
      Without loss of generality, since $\Sigma$ is acausal and transverse to $\Delta$, we can assume $R>0$ small enough so that $\phi(\Sigma\cap \overline \U)$ is a graph of some function $\tau_\Sigma: \mathbb D_R \rightarrow ]-\tau_1,R/2[$. This way, denoting $\Pi=\{\tau=R/2,\r<R\}$  the disk bounding $\phi(\overline \U)$, we have: 
      \begin{itemize}
       \item $\Sigma\subset M_0:=M\setminus J^+\left(\phi^{-1}(\Pi)\right) $;
       
       \item the spear $\S$ of $\mass{0}$ of radius $R$ of vertex $\phi(p)$ is such that its head is in $\overline \U$ and the bottom boundary of its shaft is $\Pi$;
       
        \item the gluing $M_2=\left(M_0\coprod \S \right)/\sim$ is a $\mass{\geq0}$-manifold with  $x\sim y$ if $x\in \U, y\in \S$ and $\phi(x)=y$.
      \end{itemize}

    
     Consider  a past inextendible causal curve in $M_2$ say $c$,
     it can be decomposed into two parts: $c_0=c\cap M_0$ and  $c_1=c\setminus M_0$. The boundary of $M_0$ in $M_2$ is exactly $\Pi$, since $\Pi$ can only be crossed by a past causal curve from the  $\T\setminus M_0$ side to the $M_0$ side, both $c_0$ and $c_1$ are connected.   
     Notice that $\r\circ c_1$ and $\tau\circ c_1$ are both  decreasing, 
     $c_1$ thus has a past end point in $\Pi$ and $c_0$ is then non-empty.     
     $c_1$ does not intersect $\Sigma$ and $c_0$ interests $\Sigma$ exactly once thus $c$ interests $\Sigma$ exactly once.   
     
     Finally, $\Sigma$ is a Cauchy-surface of $M_2$ and $M_0\rightarrow M_2$ is a Cauchy-embedding, since $M$ is Cauchy-maximal  
       $M_2$ embeds into $M$. The spear $\S$ thus embeds in $M$ around $\Delta$.

      \end{proof}

   The following three Lemmas will be useful tools to construct Cauchy-surfaces  around BTZ lines. Lemmas \ref{lem:add_BTZ_param} and \ref{lem:blunt_spear_neighborhood} show that one can choose the spear neighborhood so that a given spacelike Cauchy surface intersects it along a smooth curve on the shaft. Finally, Lemma \ref{lem:curve_extension} allows to extends such a smooth curve on the boundary of the shaft to a smooth surface in the interior of the spear.

   We denote by $\mathbb D_R$ the Euclidean disc of radius $R>0$ whose center of $\mathbb D_R$ is denoted by $O$ and denote by  $\mathbb D_R^*$ the punctured  Euclidean disc of radius  $R>0$.
     We will identify frequently $\mathbb D_R$ with its embedding $\{\tau=0,r\leq R\}$ in $\mass{0}$.

	\begin{lemou}\label{lem:add_BTZ_param}
        Let $M$ be a globally hyperbolic  $\mass{\geq0}$-manifold, let $\Sigma$ be Cauchy-surface of $M$ and let a BTZ line $\Delta$ of $M$ admitting a spear neighborhood.  
	
        Then,  for all  $p\in \Delta\cap I^-(\Sigma)$ , there exists a spear neighborhood $\phi:\mathcal S \rightarrow \mass{0}$ around $\Delta$ of vertex $p$ and some radius $R>0$ such that 
        $\Sigma\cap \S$ lie in the shaft of $\Delta$ and 
        $\phi(\Sigma\cap \mathcal S)\subset \mass{0}$ is the graph of a smooth positive function $\tau_\Sigma: \mathbb D_R\rightarrow \RR_+^*$. 
    \end{lemou}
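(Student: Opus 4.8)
The plan is to realise $\Sigma\cap\S$ as the graph of the $\tau$-coordinate over the disc $\mathbb D_R$, exploiting two features of the model $\mass{0}$: the coordinate field $\partial_\tau$ is future lightlike (one reads off $g(\partial_\tau,\partial_\tau)=0$ from $\d s_0^2=-2\d\tau\,\d r+\d r^2+r^2\d\theta^2$), so the fibres of the projection $\pi\colon(\tau,r,\theta)\mapsto(r,\theta)$ onto $\mathbb D$ are future-directed causal lines; and a spear $J^+(\text{vertex})\cap\{\r\le R\}$ is a future cone truncated by a thin cylinder, whose head $\partial J^+(\text{vertex})\cap\{\r\le R\}$ shrinks to the vertex as $R\to0$. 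First I would fix, using the hypothesis and restricting to a future sub-ray if necessary, one spear chart $\phi\colon\S_0\to\mass{0}$ of vertex $p$ and radius $R_0$; translating the $\tau$-coordinate one checks that for $R\le R_0$ the sets $\S_R:=\phi^{-1}\big(J^+(\phi(p))\cap\{\r\le R\}\big)$ form a nested family of spear neighbourhoods of the \emph{same} vertex $p$, each isometric to the model spear of radius $R$, and whose heads converge to $p$ as $R\to0$. The goal is then to show $\Sigma\cap\S_R$ is a smooth graph over $\mathbb D_R$ for $R$ small.

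For the graph property and smoothness I would argue locally. Since $\Sigma$ is a Cauchy surface it is acausal, and each fibre of $\pi$ is a causal curve, so every fibre meets $\Sigma$ at most once; hence $\pi|_{\Sigma\cap\S_R}$ is injective and $\Sigma\cap\S_R$ is a graph over a subset of $\mathbb D_R$. For smoothness, $\Sigma$ is spacelike while the fibre direction $\partial_\tau$ is causal, so $\partial_\tau\notin T_x\Sigma$ and $\d\pi$ is injective on $T_x\Sigma$ at every regular point $x$; at the crossing point $q=\Sigma\cap\Delta$ the same conclusion follows from transversality of $\Sigma$ to the BTZ line $\Delta$ (recall that $M$ is a smooth $3$-manifold and $\sing(M)$ a smooth submanifold). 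Thus $\pi|_{\Sigma\cap\S_R}$ is a local diffeomorphism, and being injective it is a diffeomorphism onto its open image, producing a smooth function $\tau_\Sigma$ on that image with $\tau_\Sigma(O)=\tau(q)$.

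The substantive step, and the one I expect to be the main obstacle, is surjectivity onto all of $\mathbb D_R$, i.e.\ that $\Sigma$ sits \emph{above the head} over the whole thin disc. Here I would use that $p\in I^-(\Sigma)$ and that $I^-(\Sigma)$ is open: since the head of $\S_R$ converges to $p$ as $R\to0$, for $R$ small enough the entire head lies in $I^-(\Sigma)$. Fix $(r_0,\theta_0)\in\mathbb D_R$ and let $c$ be the fibre over it inside $\S_R$, a future-inextendible causal curve starting on the head; extend $c$ to a past-inextendible causal curve of $M$, which then meets $\Sigma$ exactly once, at a point $x_*$. Along an inextendible causal curve the points before $x_*$ lie in $I^-(\Sigma)$ and those after in $I^+(\Sigma)$; as the starting point of $c$ already lies in $I^-(\Sigma)$, the crossing $x_*$ occurs on $c$ itself, strictly above the head and hence inside the shaft of $\S_R$. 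This proves $\pi(\Sigma\cap\S_R)=\mathbb D_R$ and that $\Sigma\cap\S_R$ avoids the head, i.e.\ lies in the shaft. Finally, reading $\tau_\Sigma$ in the self-normalised chart $\phi_R$ (vertex at $\tau=-2R$) its central value is $\tau(q)+2R_0-2R$, which is positive for $R$ small since $\tau(q)>-2R_0$; shrinking $R$ further if needed keeps $\tau_\Sigma>0$ over $\mathbb D_R$ by continuity, giving $\tau_\Sigma\colon\mathbb D_R\to\RR_+^*$ and completing the proof.
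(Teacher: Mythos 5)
Your injectivity and smoothness step is sound and essentially the paper's: both exploit that the vertical lines $\{\r=r_0,\theta=\theta_0\}$ are future lightlike, so the acausal Cauchy surface $\Sigma$ meets each at most once and is transverse to them (and to $\Delta$ at the crossing point), making $\Sigma\cap\S$ locally a smooth graph. However, your proof has a genuine gap at the very first step. The hypothesis only provides a spear neighbourhood of \emph{some} vertex $q\in\Delta$, while the conclusion demands one of vertex exactly $p$, for an arbitrary prescribed $p\in\Delta\cap I^-(\Sigma)$. Your reduction ``restricting to a future sub-ray if necessary'' only treats the case where $q$ lies at or below $p$, in which case the sub-spear $J^+(\phi(p))\cap\{\r\le R\}$ of the given spear indeed works. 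If $q$ lies in the \emph{future} of $p$ --- which nothing excludes --- no restriction of the given spear even contains $p$, and one must \emph{extend} the spear downwards along the compact BTZ segment $[p,q]$. This is where the paper does real work: it covers $[p,q]$ by finitely many tube charts, invokes Proposition \ref{prop:isom} to know that all transition maps are rotation-translations of $\mass{0}$, normalises them to be the identity, and glues the charts into a single map $\Phi$ whose image contains a shaft $\{\r<R',\,\tau>\alpha\}$ around the segment, from which a spear of vertex $p$ is extracted. Without this step your nested family $\S_R$ of spears of vertex $p$ does not exist in general.

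There is a second, smaller gap in your surjectivity argument: you assert without proof that the fibre $c$ over $(r_0,\theta_0)$ is future-inextendible \emph{in $M$}. It is a future-complete null geodesic of the model spear, but the embedded spear need not be a closed subset of $M$, so a priori $c$ could have a future endpoint in $\overline{\S}\setminus\S$; if it did, the unique crossing $x_*$ of the inextendible extension with $\Sigma$ could occur beyond that endpoint rather than on $c$, and your conclusion $\pi(\Sigma\cap\S_R)=\mathbb D_R$ would fail. The claim is true but needs an argument --- for instance that $\tau$ is an infinite affine parameter and the charts of the flat structure are affine, so a convergent tail at a regular endpoint is impossible, together with the description of $J^-$ of BTZ points and Lemma \ref{lem:past_BTZ} to exclude singular endpoints. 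It is worth noting that the paper's surjectivity mechanism avoids inextendibility questions entirely: it first obtains the graph over the whole disc $\mathbb D_R$ by transversality at $w=\Sigma\cap\Delta$ after shrinking $R$, and then uses the family of broken future null curves $c_{\theta_0,\r_0}$ covering the spear, each of which already meets the graph once and meets $\Sigma\cap\S$ at most once by acausality; hence $\Sigma\cap\S$ \emph{is} the graph, with no curve of $M$ ever extended. Your Cauchy-crossing route is a legitimate alternative, but as written both the vertex relocation and the inextendibility claim must be supplied before it is complete.
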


\begin{lemou}\label{lem:blunt_spear_neighborhood}
        Let $M$ be a globally hyperbolic  $\mass{>0}$-manifold, let $\Sigma$ be Cauchy-surface of $M$ and let $\phi_0: \mathcal S_0\rightarrow \mass{0}$ be a blunt spear neighborhood in $M$.  
	
	Then, there exists a blunt spear neighborhood $\phi:\mathcal S \rightarrow \Reg(\mass{0})$ such that 
	\begin{itemize}
	\item $\mathcal S\subset \mathcal S_0$, $\phi = \phi_{0|\mathcal S}$ and $\inf(\phi(\S))=\inf(\phi(\S_0))$;
	\item  $\Sigma\cap   \mathcal S$ lie in the shaft of $\mathcal S$ and $\phi(\Sigma\cap \mathcal S)\subset \Reg(\mass{0})$ is the graph of a smooth function $\tau_\Sigma: \mathbb D_R^*\rightarrow \RR_+^*$.  	
	\end{itemize}
     \end{lemou}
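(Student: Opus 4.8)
The plan is to mirror the proof of Lemma~\ref{lem:add_BTZ_param}, transporting everything through $\phi_0$ into $\reg(\mass{0})$ and producing $\mathcal S$ by shrinking only the radius while keeping the vertex $p$ fixed; the sole new feature, due to the absence of an actual BTZ line in $M$, is that the resulting graph lives over the punctured disc $\mathbb{D}_R^*$ rather than over $\mathbb{D}_R$. Concretely, I would search for $R\in(0,R_0]$ and set $\mathcal S:=\phi_0^{-1}\big(\reg(J^+(p)\cap\{\r\le R\})\big)$, so that automatically $\mathcal S\subset \mathcal S_0$, $\phi:=\phi_{0|\mathcal S}$, and $\inf\phi(\mathcal S)=p=\inf\phi(\mathcal S_0)$ (the infimum of the future cone in $\mass{0}$ being its vertex $p$, which is common to both). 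It then remains to choose $R$ so that $\phi_0(\Sigma\cap\mathcal S)$ is a smooth graph $\tau_\Sigma:\mathbb{D}_R^*\to\RR_+^*$ lying in the shaft $\{\tau>0\}$.

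First the graph property. In the coordinates $(\tau,\r,\theta)$ of $\mass{0}$ the vector field $\partial_\tau$ is lightlike, so the fibres $\ell_{\r_0,\theta_0}:=\{(\tau,\r_0,\theta_0)\}$ of the projection $\pi:(\tau,\r,\theta)\mapsto(\r,\theta)$ are future causal curves. Taking $\Sigma$ smooth and spacelike (smooth spacelike splitting, valid for $\mass{\geq0}$-manifolds via Bernard--Surh), $\Sigma$ is acausal, hence meets each $\ell_{\r_0,\theta_0}$ at most once and is transverse to it; therefore $\pi$ restricts to an injective immersion on $\phi_0(\Sigma\cap\mathcal S_0)$, and this set is a smooth graph $\tau=\tau_\Sigma(\r,\theta)$ over its open image $\Omega\subset\mathbb{D}_{R_0}^*$.

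It remains to show that, for $R$ small, $\Omega\supset\mathbb{D}_R^*$ and $\tau_\Sigma>0$ there. Since the head $\reg(J^+(p)\cap\{\r\le R\}\cap\{\tau\le0\})$ shrinks to the vertex $p$ as $R\to0$, and $p\notin M$ while $\Sigma$ is closed, for $R$ small enough the head is disjoint from $\Sigma$; being connected it lies entirely in $I^-(\Sigma)$ or in $I^+(\Sigma)$, and I would rule out the latter using that the shaft reaches the future of $\Sigma$ (each fibre $\ell_{\r_0,\theta_0}$ is future-inextendible and tends to $\tau\to+\infty$, hence by the Cauchy property enters $I^+(\Sigma)$), so that the head, lying in the causal past of the whole shaft, is in $I^-(\Sigma)$. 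Granting this, each fibre $\ell_{\r_0,\theta_0}$ with $0<\r_0\le R$ enters the spear through a head point in $I^-(\Sigma)$ and exits toward $I^+(\Sigma)$; extending it to an inextendible causal curve of $M$ and invoking that $\Sigma$ meets the latter exactly once, the crossing must occur on the shaft part, at a height $\tau_\Sigma(\r_0,\theta_0)>0$. This yields $\Omega\supset\mathbb{D}_R^*$ together with $\tau_\Sigma>0$; smoothness is inherited from $\Sigma$, and $\mathcal S$ defined above meets all requirements.

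The main obstacle is precisely the last paragraph: without a BTZ line to anchor the surface (as in Lemma~\ref{lem:add_BTZ_param}, where $\Sigma$ crosses $\Delta$ in one point and $p\in I^-(\Sigma)$ is assumed), I must argue purely from global hyperbolicity that the head sits in $I^-(\Sigma)$ and that every fibre of the shaft is crossed by $\Sigma$ \emph{inside} the spear. The delicate regime is $\r\to0$: there the crossing height $\tau_\Sigma(\r,\theta)$ is expected to blow up to $+\infty$ (the Cauchy surface develops a cusp toward the missing singular ray, exactly as described in the introduction), which is harmless for a graph over the \emph{punctured} disc but is what prevents any continuous extension to the centre and forces the codomain $\mathbb{D}_R^*$.
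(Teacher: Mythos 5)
There is a genuine gap, and it sits exactly where you yourself flag the difficulty. Your first two steps (the choice of $\mathcal S$ by shrinking the radius at fixed vertex, and the graph-over-its-open-image argument via acausality of $\Sigma$ and transversality to the lightlike fibres of $\pi$) are fine and consistent with the paper, which in fact proves only Lemma \ref{lem:add_BTZ_param} and declares the present proof ``essentially the same''. But your third step fails twice. First, the claim that the head is disjoint from $\Sigma$ for $R$ small ``since the head shrinks to the vertex $p\notin M$ and $\Sigma$ is closed'' does not follow: closedness of $\Sigma$ only forbids a limit point \emph{in} $M$, and a sequence of points $x_n\in\Sigma$ lying on the heads of radius $R_n\to 0$ need not converge in $M$ at all, precisely because $p$ is missing; whether $\Sigma$ can accumulate on the missing vertex and axis is the whole content of the lemma (the cusp behaviour), not a consequence of point-set topology. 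Tellingly, your argument never uses the hypothesis that $M$ is a $\mass{>0}$-manifold, which must enter somewhere. Second, the resolution of your dichotomy is a non sequitur: if the head were contained in $I^+(\Sigma)$, then the whole spear (head and all fibres) would lie in $I^+(\Sigma)$, and each fibre would still ``enter'' $I^+(\Sigma)$; no contradiction is produced, so ``head $\subset J^-(\text{shaft})$ and the shaft meets $I^+(\Sigma)$'' cannot yield head $\subset I^-(\Sigma)$. The positivity $\tau_\Sigma>0$ and the location of all crossings in the shaft are likewise asserted rather than proved, and they too amount to the statement you are trying to establish.

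The missing idea --- the genuine input replacing the anchor point $w=\Sigma\cap\Delta$ used in the proof of Lemma \ref{lem:add_BTZ_param} --- is that in a $\mass{>0}$-manifold the broken causal curves of the blunt spear which run along the head from the missing vertex (or radially out of the missing axis) and then turn up a vertical fibre are \emph{past-inextendible in $M$}: if such a curve $c$ had a past endpoint $x\in M$, then, arguing as in Lemma \ref{lem:extension_hausdorff} via $I^+(x)=\mathrm{Int}\bigl(\bigcap_{N}\bigcup_{n\geq N} I^+(c(s_n))\bigr)$, the future $I^+(x)$ would contain the image of the open spear near the vertex, so every neighborhood of $x$ would carry loops of nontrivial parabolic holonomy, forcing $x$ to be of type $0$ as in Proposition \ref{prop:isom_strong} and contradicting $\sing_0(M)=\emptyset$. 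Once these curves are known to be inextendible and trapped in the spear, each must meet $\Sigma$ exactly once and \emph{inside} the spear; since they cover the blunt spear, this simultaneously rules out your scenario ``$\mathcal S_0\subset I^+(\Sigma)$'', gives the head-disjointness, and shows every fibre carries a crossing. Finally, because for each $\theta$ the curves issued from arbitrarily high points of the missing axis must also be crossed, while spacelikeness gives $\partial_\r\tau_\Sigma<1/2$ by Lemma \ref{lem:surface_causal}, the crossing heights diverge as $\r\to 0$; this is what yields the shaft condition and $\tau_\Sigma>0$ after shrinking $R$. Without this inextendibility argument, which is where the $\mass{>0}$ hypothesis does its work, the proof does not close.
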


   \begin{lemou}\label{lem:curve_extension} Let $\mathcal S$ be a spear of vertex $(-2R,0,0)$ and radius $R$  in $\mass{0}$ and 
    let $\tau_{\Sigma}^R: \partial \mathbb D_R \rightarrow \R_+$ be a smooth positive function. Then 
    \begin{enumerate}[(i)]
     \item there exists a piecewise smooth function $\tau_\Sigma: \mathbb D_R\rightarrow \R_+$ extending $\tau_\Sigma^R$ which graph is spacelike and compact.
     \item there exists a piecewise smooth function $\tau_\Sigma:\mathbb D_R^*\rightarrow \R_+$ extending $\tau_\Sigma^R$ which graph is spacelike and complete.
    \end{enumerate}
    \end{lemou}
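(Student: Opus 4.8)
The plan is to realise the extension as the graph $\{\tau=f(\r,\theta)\}$ of a function over the disc, where $(\r,\theta)$ are the shaft coordinates of $\mass{0}$, so that $\mathbb D_R$ is parametrised by polar coordinates $(\r,\theta)$ and $\partial\mathbb D_R=\{\r=R\}$. A direct computation with the metric $-2\,\d\tau\,\d\r+\d\r^2+\r^2\d\theta^2$ shows that the induced metric of such a graph, in the coordinates $(\r,\theta)$, is
\[
\begin{pmatrix} 1-2f_\r & -f_\theta \\ -f_\theta & \r^2\end{pmatrix},
\]
so the graph is spacelike exactly when $f_\r<\tfrac12$ and $(1-2f_\r)\,\r^2>f_\theta^2$. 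The crucial feature is the null cross-term $-2\,\d\tau\,\d\r$: by making the radial slope $f_\r$ very negative one inflates the coefficient $1-2f_\r$ without bound, which makes it possible to dominate any prescribed angular variation $f_\theta$. This is the mechanism that drives the whole construction.

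I would build $f$ on three concentric regions, fixing once and for all an intermediate radius $R'\in(0,R)$. On the outer annulus $\{R'\le \r\le R\}$ I set $f(\r,\theta)=B(\r)\,\tau_\Sigma^R(\theta)+L(R-\r)$, where $B$ is smooth and decreasing with $B(R)=1$, $B(R')=0$, and $L>0$ is a large constant chosen last. This matches $\tau_\Sigma^R$ on $\partial\mathbb D_R$ and damps the angular dependence to zero at $\r=R'$. Since $\tau_\Sigma^R$, $B'$ and $(\tau_\Sigma^R)'$ are bounded on the compact circle, one has $f_\r=B'\tau_\Sigma^R-L\le C_1-L$ and $f_\theta=B\,(\tau_\Sigma^R)'$ bounded by some $K$ (with $C_1,K$ independent of $L$); hence for $L$ large enough $f_\r<\tfrac12$ and $(1-2f_\r)\r^2-f_\theta^2\ge (1+2L-2C_1)R'^2-K^2>0$ throughout the annulus, where $\r\ge R'>0$ is exactly what keeps the $\r^2$ factor bounded below. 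Thus the graph is spacelike on $\{R'\le\r\le R\}$, and at $\r=R'$ the function reduces to the constant $c_0:=L(R-R')>0$.

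It remains to fill in the inner disc radially, which is where the two items diverge. For (i) I simply set $f\equiv c_0$ on $\{\r\le R'\}$: the corresponding surface is the slice $\{\tau=c_0,\ \r\le R'\}$, whose induced metric is $\d\r^2+\r^2\d\theta^2$, i.e.\ the flat Euclidean disc; it is therefore a smooth spacelike disc crossing the BTZ line at its centre, and glued to the outer annulus it yields a piecewise smooth, spacelike, compact graph over $\mathbb D_R$. For (ii), on $(0,R']$ I take a radial $f(\r)=g(\r)$ with $g(R')=c_0$, $g'<0$ and $g(\r)\to+\infty$ as $\r\to0^+$ fast enough that $\int_0^{R'}\sqrt{1-2g'}\,\d\r=+\infty$; for instance $g(\r)=c_0+\r^{-1}-R'^{-1}$, which gives $1-2g'=1+2\r^{-2}$. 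Then $g'<0<\tfrac12$ keeps the graph spacelike, the surface runs off to future infinity along the singular line, and the induced metric $(1-2g')\,\d\r^2+\r^2\d\theta^2$ is complete at the puncture since the radial distance $\int_0^{R'}\sqrt{1-2g'}\,\d\r$ to $\r=0$ is infinite while the $\theta$-circles have length $2\pi\r\to0$; this is precisely a cusp end.

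Finally I would check that the graphs so constructed lie in the spear $\mathcal S$: since $\tau=f\ge 0$ everywhere and $f$ increases towards the axis while the head boundary of $\mathcal S$ decreases towards the vertex, the whole graph stays in $J^+$ of the vertex. The only genuinely delicate point is the spacelike estimate on the transition annulus; everything else is either the elementary flat-disc observation (for the compact case) or the standard fact that a radial metric $a(\r)\,\d\r^2+\r^2\d\theta^2$ is complete at $\r=0$ iff $\int_0\sqrt{a}\,\d\r=\infty$ (for the complete case). I expect the main obstacle to be essentially bookkeeping: choosing $B$, $L$ and $g$ compatibly so that the piecewise definition is continuous, stays positive, and remains spacelike across the gluing radii $\r=R'$ and $\r=R$.
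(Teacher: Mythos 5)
Your proof is correct, and its engine is the same as the paper's: your matrix computation of the induced metric is exactly the criterion of Lemma \ref{lem:surface_causal} (your condition $(1-2f_\r)\r^2>f_\theta^2$ is the paper's $\delta=1-2\partial_\r\tau_\Sigma-\left(\r^{-1}\partial_\theta\tau_\Sigma\right)^2>0$), and both constructions exploit the null cross-term $-2\,\d\tau\,\d\r$ by taking a steeply negative radial slope to inflate $1-2f_\r$. For item (i) your construction (damp the angular data across an annulus, cap with a flat disc crossing the axis) is essentially the paper's, which takes $\tau_\Sigma=\left(\frac{2\r-R}{R}\right)^2\tau^R_\Sigma(\theta)+M\left(\frac1\r-\frac1R\right)$ on $[R/2,R]$ and the constant $M/R$ inside. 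For item (ii) you genuinely diverge: the paper keeps the angular dependence all the way to the puncture via the single formula $\tau_\Sigma=\tau^R_\Sigma(\theta)+M\left(\frac1\r-\frac1R\right)$ with $M=1+\max\left|\partial_\theta\tau^R_\Sigma\right|^2$, which gives $\delta>1/\r^2$, and then invokes part (1) of Lemma \ref{lem:surface_complete} --- a nontrivial curve-by-curve length analysis --- to deduce completeness; you instead confine the angular dependence to the compact annulus $\{R'\le\r\le R\}$ (where the estimate is uniform precisely because $\r\ge R'>0$) and attach a rotationally symmetric cusp $g(\r)=c_0+\r^{-1}-R'^{-1}$, for which completeness reduces to the elementary radial criterion $\int_0\sqrt{1-2g'}\,\d\r=+\infty$. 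Your route buys a more elementary completeness argument, avoiding the quantitative hypothesis $\delta\ge C^2/\r^2$ and the associated curve analysis, at the price of a three-region gluing with corners at $\r=R'$ and $\r=R$ (harmless, since the lemma only asks for piecewise smoothness, and the paper's own solutions also have a corner); note, however, that this does not make Lemma \ref{lem:surface_complete} redundant in the paper, since its part (2) (any complete spacelike graph has $\tau_\Sigma\to+\infty$ at the puncture) is what is used later via Lemma \ref{lem:intersect_number_complete} and in Proposition \ref{prop:reg_complete} and Theorem \ref{theo:BTZ_Cauchy-completness} --- a property your explicit cusp of course satisfies directly, so the downstream Cauchy-surface arguments go through with your surfaces as well.
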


    \begin{proof}[Proofs of Lemmas  \ref{lem:add_BTZ_param} and \ref{lem:blunt_spear_neighborhood}]

We only prove Lemma \ref{lem:add_BTZ_param} since the proof of Lemma \ref{lem:blunt_spear_neighborhood} is essentially the same.
       
      Let $\phi:\mathcal S\rightarrow \mass{0}$ be a spear neighborhood of some vertex $q\in \Delta$ and some radius $R$ around $\Delta$. Let $p \in \Delta\cap I^-(\Sigma)$, 
      if $q$  is in the past of $p$, then $p\in \mathcal S$, the spear of $\mass{0}$ 
      of radius $R$ and vertex $\phi(p)$ is included in $\phi(\mathcal S)$ to obtain a spear neighborhood around $\Delta$ of vertex $p$; if $q$ is in the future of $q$, the BTZ segment $[p,q]=J^-(q)\cap J^+(p)$ is compact and thus covered by finitely many open charts $(\phi_i,\U_i)_{i\in\lsem 1,n\rsem}$ with $\phi_i(\U_i)=\{\tau\in ]\alpha_i,\beta_i[,\r < R_i\}\subset \mass{0}$. Take $R' = \min(R,R_1,\cdots,R_n)$, from Proposition \ref{prop:isom}, the change of chart from $\U_i$ to $\U_j$ and from $\U_i$ to $\mathcal S$, wherever defined are restriction of rotation-translation of $\mass{0}$ along the singular axis, one can thus certainly choose the $\phi_i$ and $\phi$ in such a way that the change of charts are the identity of $\mass{0}$. We can thus define $\Phi: \mathcal S \cup \bigcup_{i=1}^n \U_i \rightarrow \mass{0}$ by gluing together the $\phi_i$ and $\phi$. The image of $\Phi$ then contains a shaft $\{r<R',\tau>\alpha\}\subset \mass{0}$ containing $\Phi(p)=(\tau_0,0,0)$ with $\tau_0>\alpha$. The spear neighborhood of vertex $\Phi(p)$ and radius $R'$ in $\mass{0}$ is thus in the image of $\Phi$. Finally, $\Delta$ admit a spear neighborhood of vertex $p$ and some radius $R'$. Without loss of generality we may assume that $R'=R$ and that $\Phi(p)=(-R/2,0,0)$.
      We denote by $(\Phi,\mathcal S')$ the spear neighborhood of vertex $p$ constructed.

      Let $w:=\Sigma \cap \Delta $, since $w>p$ and since $\Phi(\Sigma\cap \S')$ is a spacelike 
      surface, hence transverse to the vertical lines $\{\r=\r_0, \theta=\theta_0\}$ in $\Phi(\S')$ for all $(\r_0,\theta_0)\in \mathbb D_R$, there exists $\U$ a neighborhood of $w$ on which $\inf(\tau\circ \Phi(\U\cap \Sigma))>\tau\circ \Phi(p)$ and $\Phi(\U\cap\Sigma)$ is the graph of some smooth function $\tau_\Sigma : \mathbb D_\varepsilon\rightarrow \RR$.
      Without loss of generality, we can choose $R$ small enough so that  $\tau_\Sigma$ is positive and defined over $\mathbb D_R$. 
      
      Consider future lightlike, hence causal, broken geodesics in $\mass{0}$ of the form 
      $$\fonction
      {c_{\theta_0,\r_0}}{
      \RR_+}
      {\mass{0}}{s}{\left\{\begin{array}{ll}\displaystyle ((s-R)/2,s,\theta_0) & \text{if}~~ s\leq \r_0\\ \displaystyle ((s-R)/2,\r_0,\theta_0) & \text{if}~~s> \r_0  \end{array} \right.}$$
      with $\theta_0 \in \R/2\pi\Z$ and $\r_0\in [0,R]$.
     Each $c_{\theta_0,\r_0}$ is a causal curve of $\Phi(\S')$ and thus intersects $\Phi(\Sigma\cap\S')$ at most once. Furthermore, these curves cover $\Phi(\S')$; therefore, $\Phi(\Sigma\cap\S')$ is exactly the graph of $\tau_\Sigma$.

    \end{proof}

   The proof of Lemma \ref{lem:curve_extension} require some preliminary  analysis. Lemma \ref{lem:surface_causal}  provide an effective caracterisation of spacelike graphs in spear neighborhoods. Then Lemma \ref{lem:surface_complete} provides a satisfactory description of graphs in spear neighborhoods that are metrically complete.

     \begin{lemou} \label{lem:surface_causal} Let $\Sigma$ be a piecewise smooth (possibly with boundary) submanifold of $\mass{0}$ which is the graph of some piecewise smooth map $\tau_\Sigma: D \rightarrow \R$ with $D\subset \{\tau=0\}\subset \mass{0}$.
     Then $\Sigma$ is spacelike iff on each smooth domain of $\tau_\Sigma$  $$\displaystyle 1-2\frac{\partial \tau_\Sigma}{\partial \r}-\left(\frac{1}{\r}\frac{\partial \tau_\Sigma}{\partial \theta}\right)^2 >0.$$

      \end{lemou}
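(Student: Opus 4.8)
The plan is to reduce spacelikeness to positive-definiteness of the first fundamental form and then to read off the stated inequality from a one-line computation of the induced metric. Recall that a submanifold of a Lorentzian manifold is spacelike exactly when every tangent vector is spacelike, i.e. when the restriction of the ambient quadratic form to each tangent plane is positive definite. Since $\tau_\Sigma$ is only piecewise smooth and spacelikeness is a pointwise condition on the smooth locus, I would fix a smooth domain of $\tau_\Sigma$ and work there, where $\Sigma$ is parametrized by $\Phi(\r,\theta)=(\tau_\Sigma(\r,\theta),\r,\theta)$ with $(\r,\theta)$ ranging over the corresponding part of $D\subset\{\tau=0\}$, away from the axis $\{\r=0\}$.

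First I would record the metric of $\mass{0}$, namely $\d s_0^2=-2\,\d\tau\,\d\r+\d\r^2+\r^2\,\d\theta^2$, whose matrix in the frame $(\partial_\tau,\partial_\r,\partial_\theta)$ is
$$g=\begin{pmatrix} 0 & -1 & 0 \\ -1 & 1 & 0 \\ 0 & 0 & \r^2 \end{pmatrix}.$$
The coordinate tangent fields of the graph are $\Phi_\r=\frac{\partial \tau_\Sigma}{\partial \r}\,\partial_\tau+\partial_\r$ and $\Phi_\theta=\frac{\partial \tau_\Sigma}{\partial \theta}\,\partial_\tau+\partial_\theta$. Writing $a=\frac{\partial \tau_\Sigma}{\partial \r}$ and $b=\frac{\partial \tau_\Sigma}{\partial \theta}$ and substituting into $g$, the cross term $-2\,\d\tau\,\d\r$ contributes $-2a$ to the $\r\r$-entry and $-b$ off the diagonal, while $\r^2\,\d\theta^2$ contributes the $\theta\theta$-entry, giving the induced metric
$$h=\begin{pmatrix} 1-2a & -b \\ -b & \r^2 \end{pmatrix}.$$

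Finally I would invoke the positive-definiteness criterion for the symmetric matrix $h$. On the shaft one has $\r>0$, hence $h_{\theta\theta}=\r^2>0$; the identity $h_{\r\r}h_{\theta\theta}=\det h+b^2$ then shows that $\det h>0$ already forces $h_{\r\r}>0$, so by Sylvester's criterion $h$ is positive definite if and only if $\det h=(1-2a)\r^2-b^2>0$. Dividing this inequality by $\r^2>0$ turns it into
$$1-2\frac{\partial \tau_\Sigma}{\partial \r}-\left(\frac{1}{\r}\frac{\partial \tau_\Sigma}{\partial \theta}\right)^2>0,$$
which is exactly the claimed condition, and the chain of equivalences is reversible. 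I do not expect a genuine obstacle here: the whole content is the computation of $h$. The only points deserving a word of care are the piecewise-smooth bookkeeping — the criterion is applied on each smooth piece separately — and the restriction to $\r>0$, which is harmless because spacelikeness is tested only where $\Sigma$ is a smooth hypersurface of $\reg(\mass{0})$, i.e. off the singular axis.
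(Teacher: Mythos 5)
Your proof is correct and follows essentially the same route as the paper: both compute the first fundamental form $\d s_\Sigma^2=(1-2\frac{\partial\tau_\Sigma}{\partial\r})\,\d\r^2-2\frac{\partial\tau_\Sigma}{\partial\theta}\,\d\r\,\d\theta+\r^2\,\d\theta^2$ of the graph and reduce spacelikeness to positive definiteness of this $2\times 2$ form on each smooth domain. The only cosmetic difference is that you check definiteness via the determinant and Sylvester's criterion, while the paper completes the square, writing $\d s_\Sigma^2=\delta\,\d\r^2+\left(\frac{1}{\r}\frac{\partial\tau_\Sigma}{\partial\theta}\,\d\r-\r\,\d\theta\right)^2$ and concluding directly that spacelikeness is equivalent to $\delta>0$.
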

      \begin{proof}      
            On a given smooth domain of $\tau_\Sigma$, writing   $\delta =1-2\frac{\partial \tau_\Sigma}{\partial \r}-\left(\frac{1}{\r}\frac{\partial \tau_\Sigma}{\partial \theta}\right)^2$,  
            a direct computation yields: 
            $$\d s_\Sigma^2 =\delta \d \r^2 + \left(\frac{1}{\r}\frac{\partial\tau_\Sigma }{\partial \theta}\d \r-\r \d \theta \right)^2.$$
      $\Sigma$ is then spacelike iff $\delta>0$

%
      \end{proof}

      \begin{lemou}\label{lem:surface_complete}Let $\Sigma^*$ be the graph of some piecewise smooth map $\tau_\Sigma: \mathbb D_R^*\rightarrow \RR$ with $R>0$; then:
      \begin{enumerate}

      \item $\Sigma^*$ is spacelike and complete  if there exists $C>0$ such that
	$$ 1-2\frac{\partial \tau_\Sigma}{\partial \r}-\left(\frac{1}{\r}\frac{\partial \tau_\Sigma}{\partial \theta}\right)^2\geq\frac{C^2}{\r^2}$$ 
	wherever this expression is well defined.
      \item  If $\Sigma$ is spacelike and complete then, 
		$$ \lim_{(\r,\theta) \rightarrow 0}\tau_\Sigma(\r,\theta) = +\infty$$
      \end{enumerate}
      \end{lemou}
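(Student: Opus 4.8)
The plan is to work throughout with the two forms of the induced metric coming from Lemma~\ref{lem:surface_causal}: expanding the square one has
$$\d s_\Sigma^2 = \delta\,\d\r^2 + \left(\tfrac1\r\tfrac{\partial\tau_\Sigma}{\partial\theta}\,\d\r - \r\,\d\theta\right)^2 = \left(1-2\tfrac{\partial\tau_\Sigma}{\partial\r}\right)\d\r^2 - 2\tfrac{\partial\tau_\Sigma}{\partial\theta}\,\d\r\,\d\theta + \r^2\d\theta^2,\qquad \delta = 1-2\tfrac{\partial\tau_\Sigma}{\partial\r}-\left(\tfrac1\r\tfrac{\partial\tau_\Sigma}{\partial\theta}\right)^2 .$$
From this I extract two elementary facts used repeatedly: first $\d s_\Sigma^2 \ge \delta\,\d\r^2$, since the remaining summand is a square; second, along any radial ray $\theta=\text{const}$ one has $\d s_\Sigma^2 = (1-2\partial_\r\tau_\Sigma)\,\d\r^2$. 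I will also use that the Riemannian distance $d$ on $\Sigma^*$ induces its manifold topology, so that convergence in $(\Sigma^*,d)$ is the same as convergence of the coordinates in $\mathbb D_R^*=\{0<\r\le R\}$; in particular the only way completeness can fail is that some finite-length curve runs into the puncture $\r=0$ (the outer circle $\r=R$ being a compact boundary, harmless).

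For item (i), spacelikeness is immediate from Lemma~\ref{lem:surface_causal}, as $\delta\ge C^2/\r^2>0$ wherever defined. For completeness the plan is to show every Cauchy sequence stays in a compact set. Using $\d s_\Sigma^2\ge\delta\,\d\r^2\ge (C^2/\r^2)\,\d\r^2$, any piecewise smooth curve satisfies $\mathrm{length}\ge C\,\bigl|\ln\r_{\mathrm{end}}-\ln\r_{\mathrm{start}}\bigr|$, whence $d(p,q)\ge C\,|\ln\r(p)-\ln\r(q)|$. Along a Cauchy sequence $(\ln\r_k)$ is therefore Cauchy in $\R$, so the $\r_k$ are bounded away from $0$; the sequence then lies in the graph over a compact annulus $\{\varepsilon\le\r\le R\}$, which is compact, and a Cauchy sequence contained in a compact subset converges. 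This yields completeness.

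For item (ii), I first pin down the pointwise radial behaviour. Since $\Sigma$ is spacelike, $\delta>0$ forces $\partial_\r\tau_\Sigma<\tfrac12$, so for each fixed $\theta$ the map $\r\mapsto\tau_\Sigma(\r,\theta)-\tfrac\r2$ is monotone and $\ell(\theta):=\lim_{\r\to0}\tau_\Sigma(\r,\theta)$ exists in $(-\infty,+\infty]$. Arguing by contraposition on the radial ray at $\theta$: if $\ell(\theta)<+\infty$ then $\int_0^R(1-2\partial_\r\tau_\Sigma)\,\d\r = R-2\bigl(\tau_\Sigma(R,\theta)-\ell(\theta)\bigr)$ is finite, so by Cauchy--Schwarz the radial length $\int_0^R\sqrt{1-2\partial_\r\tau_\Sigma}\,\d\r$ is finite. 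That ray is then a finite-length curve running into the puncture with no endpoint in $\mathbb D_R^*$, contradicting completeness. Hence $\ell(\theta)=+\infty$ for every $\theta$.

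The step I expect to be the main obstacle is upgrading this pointwise blow-up to the joint limit $\lim_{(\r,\theta)\to0}\tau_\Sigma=+\infty$, since a priori the ``low points'' on shrinking circles could drift in $\theta$. The plan is to exploit that the convergence is \emph{monotone}: $g_\r(\theta):=\tau_\Sigma(\r,\theta)-\tfrac\r2$ increases to $+\infty$ as $\r\downarrow0$ for each $\theta$, and each $g_\r$ is continuous on the compact circle $\R/2\pi\Z$. A Dini-type argument then closes the gap: for any $K$ the sets $\{g_\r>K\}$ are open and increase as $\r$ decreases, and their union over $\r$ covers the circle by pointwise divergence; compactness gives a single $\r_0$ with $\{g_{\r_0}>K\}$ the whole circle, and monotonicity propagates $g_\r>K$ to all $\r\le\r_0$. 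Thus $g_\r\to+\infty$ uniformly, hence so does $\tau_\Sigma$, which is exactly the assertion. I expect the only delicate points to be the justification of the fundamental theorem of calculus across the finitely many non-smooth loci of the piecewise smooth $\tau_\Sigma$ (handled by telescoping over smooth pieces) and the identification of the intrinsic and coordinate topologies used to convert finite length into a non-convergent Cauchy sequence.
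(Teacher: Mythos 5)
Your proof is correct, but it takes a genuinely different route from the paper's in both parts. For completeness in item (i), the paper verifies the curve-extendibility criterion: given a finite-length curve $\gamma$ it derives the same logarithmic estimate you use (getting $\r_\gamma(a)\ge \r_\gamma(0)e^{-\ell/C}>0$), but it must then also prove convergence of $\theta_\gamma$, via the cross term $\bigl(\frac{1}{\r}\frac{\partial\tau_\Sigma}{\partial\theta}\,\d\r-\r\,\d\theta\bigr)^2$ of the induced metric and a supremum of $\bigl|\frac{\partial\tau_\Sigma}{\partial\theta}\bigr|$ over a compact annulus, before concluding that $\gamma$ extends. Your Cauchy-sequence argument short-circuits this: once $d(p,q)\ge C\,|\ln\r(p)-\ln\r(q)|$ confines a Cauchy sequence to the graph over a compact annulus, compactness finishes the proof with no angular estimate at all; the price is the (standard, and implicitly used by the paper as well) identification of the length-metric topology with the coordinate topology, plus continuity of $\tau_\Sigma$. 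For item (ii), the paper does not establish radial blow-up first: it takes an arbitrary sequence $(\r_n,\theta_n)\to 0$ with $\r_{n+1}<\r_n/2$ and threads a single inextendible spiral through all its points with $|\theta'_\gamma|\le 2\pi/\r_n$, so that $\r^2\theta_\gamma'^2$ stays bounded, the spacelike inequality becomes the pointwise bound $\tau'_\gamma\le 5/2$, and infinite length forces $\int_0^R|\tau'_\gamma|=+\infty$, hence $\tau\to+\infty$ along the spiral and in particular along the given sequence. Your route --- radial graphs have length $\int_0^R\sqrt{1-2\partial_\r\tau_\Sigma}\,\d\r$, finite by Cauchy--Schwarz whenever $\ell(\theta)<+\infty$, contradicting completeness; then the Dini upgrade exploiting monotonicity of $\r\mapsto\tau_\Sigma(\r,\theta)-\r/2$ --- is more structural and yields a marginally stronger conclusion (divergence uniform on the circles $\{\r=\mathrm{const}\}$). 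The only caveat is the one you flag yourself: monotonicity and the fundamental theorem of calculus must hold along every ray, including rays meeting (or lying in) the non-smooth locus; under the usual reading of piecewise smooth (smooth on the closures of the pieces) the one-sided radial derivatives satisfy the non-strict inequality $\partial_\r\tau_\Sigma\le 1/2$, so the telescoping works for every $\theta$, and this level of care is no worse than what the paper itself glosses over when it integrates $\tau'_\gamma$ along its spiral.
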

      \begin{proof}We use the same notations as in the proof of Lemma \ref{lem:surface_causal}. 
      \begin{enumerate}
       \item

      Let $C>0$  be such as $\delta>\frac{C^2}{\r^2}$. 
      It suffices to prove that a finite length curve in $\Sigma^*$ is extendible.
      Let $\gamma:\R\rightarrow \Sigma$ be  a finite length piecewise smooth curve on 
      $\Sigma$. Write $\gamma(s)=(\tau_\Sigma(\r_\gamma(s),\theta_\gamma(s)),\r_\gamma(s),\theta_\gamma(s))$ for $s\in \R$ and $\ell$ its length.
      Since $$\int_{\R} |\r_\gamma'(s)| \d s\leq \frac{R}{C} \int_{\R} |\r_\gamma'(s)|\frac{C}{\r(s)}\d s \leq \frac{R}{C} \ell <+\infty,$$ 
      then  $\r_\gamma$ converges as $s\rightarrow +\infty$, let
         $\r_\infty:= \lim_{s\rightarrow +\infty} \r_\gamma(s)$. 
      
      For all $a\in \R$, 
      $$\ell \geq \left|\int_0^a C\frac{|\r_\gamma'(s)|}{\r_\gamma(s)}\d s \right|\geq C \left|\ln\left(\frac{\r_\gamma(0)}{\r_\gamma(a)}\right) \right|$$ 
      thus $$\forall a \in \R, ~~\r_\gamma(a)\geq \r_\gamma(0) e^{-\ell/C}>0$$ and thus $r_\infty>0$. 
 
      Take $A >0$ such as $\forall s\geq A, \r_\gamma(s)\in [\r_*,\r^*]$ with $\r_*=\r_\infty/2$ and $\r^*=(\r_\infty+R)/2$
      then for all $b\geq a \geq  A$:
      \begin{eqnarray*}\ell&\geq& \int_{[a,b]} \r_\gamma \left|\frac{\partial \tau_\Sigma}{\partial \theta}\frac{\r_\gamma'}{\r_\gamma^2}-\theta_\gamma' \right| \\
	&\geq&  \int_{[a,b]} \r_*\left(\left|\theta_\gamma' \right|-\left|\frac{\partial \tau_\Sigma}{\partial \theta}\frac{\r_\gamma'}{\r_\gamma^2}\right|\right)\\
	&\geq&  \r_*\int_{[a,b]}
	  \left|\theta_\gamma' \right|
	    -\r_*
	    \left(\sup_{ \mathbb D_{\r^*}\setminus \mathbb D_{\r_*}}\left|\frac{\partial \tau_\Sigma}{\partial \theta}\right|\right) \  
	    \int_{[a,b]}\left|\frac{\r_\gamma'}{\r_\gamma^2}\right|   \\
 \int_{a}^{+\infty}|\theta_\gamma'| &\leq& \frac{1}{\r_*}\left(\ell+\left(\sup_{ \mathbb D_{\r^*}\setminus \mathbb D_{\r_*}}\left|\frac{\partial \tau_\Sigma}{\partial \theta}\right|\right) \frac{\ell}{\r_*} \frac{R}{C} \right)< +\infty
 \\
      \end{eqnarray*}
    so that $\theta_\gamma(s)$ converges as $s\rightarrow +\infty$. 
	Since $\T$ is closed, so is $\Sigma$ and $\tau(\r,\theta)$ thus converges in $\Sigma$. 
	; the curve $\gamma$ is then extendible.
	We conclude that $\Sigma$ is complete.

   \item    Since $\Sigma$ is spacelike,  Lemma \ref{lem:surface_causal} ensures that 
      $1-2\frac{\partial \tau_\Sigma}{\partial \r}-
      \left(\frac{1}{\r} \frac{\partial \tau_\Sigma}{\partial \theta}\right)^2 \geq 0$
      on $\mathbb D_R^*$ wherever well defined. Consider a sequence $(\r_n,\theta_n)\rightarrow 0$, we assume $\r_{n+1}<\frac{1}{2}\r_{n}$, one can construct
      an inextendible piecewise continuously differentiable curve $\gamma=(\tau_\gamma,\r_\gamma,\theta_\gamma):\,]0,R]\xrightarrow{~~} \Sigma$ such that
      \begin{itemize}
       \item  $\forall s\in ]0,R],~\r_\gamma(s)=s$ ;
       \item  $\forall n\in \N,~\theta_\gamma(\r_n)=\theta_n$ ;
       \item  $\forall n\in\N, \forall \r\in ]\r_n,\r_{n+1}[,~ |\theta'_\gamma(\r)|\leq \frac{2\pi}{\r_n} $.
      \end{itemize}		
      Writing $\ell$ the length of $\gamma$, we have:
      $$
      \ell = \int_{0}^{R} \sqrt{1+r^2\theta_\gamma'(r)^2-2 \tau'_\gamma(r)}\d r
       \leq  \int_{0}^{R} \sqrt{5-2 \tau'_\gamma(r)} \d r .
      $$
      The integrand is well defined since  $1+r^2\theta_\gamma'(r)^2-2 \tau_\gamma'(r)>0$. 
      We deduce in particular that $\tau_\gamma'\leq 5/2$ and thus $-\tau_\gamma'\geq |\tau'_\gamma|-5$. By completeness of $\Sigma$, the length $\ell$ of $\gamma$ is infinite thus 
      $\int_{0}^{R}\sqrt{|\tau_\gamma'|}=+\infty$ and thus $\int_0^{R}|\tau'_\gamma|=+\infty$.
      Finally, 
      $$\lim_{r\rightarrow 0}\tau_\gamma (r)=\tau(R)-\int_0^R\left(\tau_\gamma'\right) \geq \int_0^{R}\left(|\tau_\gamma'| -5\right)+\tau(R)=+\infty$$ 
thus    $\lim_{n\rightarrow +\infty}{\tau_\Sigma(\r_n,\theta_n)}=+\infty$.

        \end{enumerate}
      \end{proof}
    \begin{remark}
     For Lemma \ref{lem:curve_extension}, we only need the first part of Lemma \ref{lem:surface_complete} but the second will be useful for proving our constructions give Cauchy-surfaces. 
    \end{remark}

      \begin{proof}[Proof of Lemma \ref{lem:curve_extension}]
      	  \begin{enumerate}[(i)]
	   \item Define $\displaystyle\tau_\Sigma (\r,\theta)= \tau_\Sigma^R(\theta)+M\left(\frac{1}{\r}-\frac{1}{R}\right)$
		 with $\displaystyle M=1+\max_{ \R/2\pi\Z}\left| \frac{\partial \tau_\Sigma^R}{\partial \theta}\right|^2$.
		 \\Then: $ \frac{\partial \tau_\Sigma}{\partial \theta} = \frac{\partial \tau_\Sigma^R}{\partial \theta}$ and 
		 $\frac{\partial \tau_\Sigma}{\partial \r} = -\frac{M}{ \r^2}$. 
		 So that: 
		 $$
		 \delta=1-\left(-\frac{M}{ \r^2} \right) -\frac{1}{\r^2}\left(\frac{\partial \tau_\Sigma^R}{\partial \theta} \right)^2
		 =1+ \frac{M-\left(\frac{\partial \tau_\Sigma^R}{\partial \theta} \right)^2}{\r^2}
		 >\frac{1}{\r^2}
		 $$
		  Therefore, the graph  of $\tau_\Sigma$ is spacelike and complete  by Lemma \ref{lem:surface_complete}.
		  
	  \item Define $$\tau_\Sigma(\r,\theta)=\left\{\begin{array}{ll}
	                                               \left(\frac{2\r-R}{R}\right)^2\tau^R_\Sigma(\theta)+M\left(\frac{1}{\r}-\frac{1}{R}\right)& \text{If } \r\in [R/2,R] \\
	                                               \frac{M}{R}& \text{If } \r\in[0,R/2]
	                                              \end{array}
	                                              \right.$$
	        where $M$ is big enough so that $\delta>0$ for  $\r\in[R/2,R]$.
	       Therefore, the graph  of $\tau_\Sigma$ is spacelike and compact by Lemma \ref{lem:surface_causal}.
	  \end{enumerate}
       
      \end{proof}

    Throughout the present work, given a globally hyperbolic $\mass{\geq0}$-manifold $M$, we will use Lemmas \ref{lem:add_BTZ_param}, \ref{lem:blunt_spear_neighborhood} and \ref{lem:curve_extension} to construct complete Cauchy-surfaces of $\reg_{>0}(M)$ from compact Cauchy-surfaces of $M$, and conversely. The missing ingredient is an efficient way to prove a spacelike surface constructed via Lemma  \ref{lem:curve_extension} is not only spacelike but also a Cauchy-surface. Lemmas \ref{lem:intersect_number_compact} and \ref{lem:intersect_number_complete} give a satisfactory criterion.
    
    \begin{lemou}\label{lem:intersect_number_compact}
      Let $\S := \{\r\leq R\}\cap J^+(p)$ be a spear neighborhood in $\mass{0}$, let $\Sigma$ be a spacelike surface in $\S$  and let $c$ be a future causal curve in $\S$ which is a closed subset of $\S$ and whose restriction to the interior  $\mathrm{Int}(\S)$ of $\S$ is non empty and inextendible. 
      
      Assume $\Sigma$ si the graph of some function $\tau_\Sigma:\mathbb D_R\rightarrow \R$ in the shaft of $\S$.
      Then the cardinal of the intersection $c\cap \Sigma$  depends only on the  position of $\sup(c)\in \S\cup\{+\infty\}$ relative to the circle $\C:=\Sigma\cap \partial \S$: 
      $$|c\cap\Sigma|= \left\{ \begin{array}{ll}
                              1 &  \text{ if } \sup(c) \in J^+(\C)\cup\{+\infty\}\\
                              0& \text{ otherwise }
                             \end{array}
\right. .$$

    \end{lemou}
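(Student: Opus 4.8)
The plan is to reduce the whole statement to the study of a single function. Setting $f(\tau,\r,\theta):=\tau-\tau_\Sigma(\r,\theta)$ on $\{\r\le R\}$ (extending $\tau_\Sigma$ to be independent of $\tau$), we have $\Sigma=\{f=0\}\cap\S$. The first step I would carry out is to prove that $f$ is a time function on all of $\S$, i.e.\ strictly increasing along every future causal curve. The quantity $\delta=1-2\frac{\partial\tau_\Sigma}{\partial\r}-\left(\frac1\r\frac{\partial\tau_\Sigma}{\partial\theta}\right)^2$ depends only on $(\r,\theta)$, and since $\Sigma$ is a graph over the \emph{whole} disc $\mathbb D_R$, Lemma~\ref{lem:surface_causal} forces $\delta>0$ not merely on $\Sigma$ but on all of $\{\r\le R\}$. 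A direct computation with the inverse metric then gives $g(\nabla f,\nabla f)=-\delta<0$ and $(\nabla f)^\tau=\frac{\partial\tau_\Sigma}{\partial\r}-1<0$ (using $\delta>0\Rightarrow\frac{\partial\tau_\Sigma}{\partial\r}<\frac12$), so $\nabla f$ is past-directed timelike everywhere; hence $\d f(\dot c)=g(\nabla f,\dot c)>0$ for every future causal $\dot c\neq0$. On the singular axis one argues separately with $f=\tau-\tau_\Sigma(O)$, increasing because $\tau$ is. Consequently $f\circ c$ is continuous and strictly increasing, so $c$ meets $\Sigma=\{f=0\}$ at most once, and exactly when $0$ lies in the image of $f\circ c$.

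The second step controls the past end of $c$ via the monotonicity of $\r$. The tubes $\{\r=\mathrm{const}\}$ are null: $\nabla\r=-\partial_\tau$ is past-directed null, so $g(\nabla\r,\dot c)\ge0$ and $\r$ is non-decreasing along future causal curves of $\S$. Running $c$ backwards, $\tau$ and $\r$ decrease and stay bounded ($-2R\le\tau$, $0\le\r\le R$), so the past extremity $c_-$ exists and, because $c\cap\mathrm{Int}(\S)$ is past-inextendible, lies on the head of $\S$ or at the vertex $p$. As $\Sigma$ sits in the shaft strictly above the head (where $\tau<0\le\tau_\Sigma$), one gets $f(c_-)<0$; thus $\min_c f<0$ always. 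Here the hypothesis that $c$ genuinely enters $\mathrm{Int}(\S)$ is exactly what rules out the degenerate curve running up a single generator of the shaft.

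The third step treats the future end. Let $\r_\infty:=\lim\r$ along $c$ (well defined and $\le R$ by monotonicity). If $\r_\infty<R$, the curve cannot accumulate on the shaft, and being future-inextendible in the interior and staying above the head it has no finite endpoint either, so $\tau\to+\infty$ and $\sup(c)=+\infty$; as $\tau_\Sigma$ is bounded ($\Sigma$ compact) this forces $f\to+\infty$, and with $\min_c f<0$ the intermediate value theorem yields $|c\cap\Sigma|=1$. If instead $\r_\infty=R$ is attained at a finite parameter, the future endpoint $q=\sup(c)$ lies on the shaft with $\r(q)=R$ and $\sup_c f=f(q)$. To finish I must identify $J^+(\C)$ on the shaft: since $\r$ is non-decreasing, any future causal curve issued from $\C$ stays at $\r=R$, and on the null cylinder $\{\r=R\}$ causal curves have constant $\theta$ (tangent $(\dot\tau,0,\dot\theta)$ has square $R^2\dot\theta^2$); hence $J^+(\C)\cap\{\r=R\}=\{(\tau,R,\theta):\tau\ge\tau_\Sigma(R,\theta)\}=\{f\ge0\}$. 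Thus $q\in J^+(\C)\iff f(q)\ge0$, which together with $\min_c f<0$ gives $c\cap\Sigma\neq\emptyset\iff q\in J^+(\C)$, the intersection being a single point by the first step. Assembling the three cases yields the stated dichotomy.

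The gradient computations are immediate; the genuine care lies in the boundary and endpoint bookkeeping of the third step. In particular, I expect the main obstacle to be justifying that a \emph{finite} future endpoint must sit on the shaft and not on the head — for which one uses that $\mathrm{Int}(\S)\subset I^+(p)$, so $c$ stays in $I^+(p)$ toward the future and never returns to $\partial J^+(p)$ — together with the proper handling, via Lemma~\ref{lem:past_BTZ}, of a curve $c$ that meets the singular axis (its BTZ part lying in the past, so the future extremity is reached on the regular part). It is this causal-boundary analysis, rather than any estimate, that I anticipate as the delicate point.
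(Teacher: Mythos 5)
Your proof is correct, but it takes a genuinely different route from the paper's. The paper argues topologically: it adjoins a point $+\infty$ at future causal infinity so that $\S\cup\{+\infty\}$ becomes a closed ball, then invokes intersection theory — the oriented intersection number of $c$ with $\Sigma$ depends only on the homotopy class of $c$ with fixed endpoints, there is a single such class in the ball, $(\S\cup\{+\infty\})\setminus\Sigma$ has exactly two components $[p]$ and $[+\infty]$, and a future causal curve crosses the spacelike $\Sigma$ always positively — so the count depends only on where $\inf(c)$ (forced into the head, hence into $[p]$, by inextendibility of $c\cap\mathrm{Int}(\S)$) and $\sup(c)$ lie. You instead make everything monotone: $f=\tau-\tau_\Sigma$ satisfies $g(\nabla f,\nabla f)=-\delta<0$ by Lemma \ref{lem:surface_causal} (your observation that $\delta$ is $\tau$-independent, so spacelikeness of the graph propagates to all of $\{\r\leq R\}$, is the right one), $\nabla f$ is past-directed, so $f\circ c$ is strictly increasing — giving at most one intersection with no appeal to intersection theory — while $\nabla\r=-\partial_\tau$ past null yields the monotonicity of $\r$ (the same fact the paper uses in the proof of Lemma \ref{lem:tube}) that drives your endpoint bookkeeping, and the dichotomy reduces to the sign of $f$ at $\sup(c)$ via your identification $J^+(\C)\cap\{\r=R\}=\{f\geq0\}$ on the null cylinder. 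What each buys: your argument is more elementary and handles the merely piecewise smooth $\Sigma$ and possibly non-transverse (null) crossings without the orientation/transversality machinery the paper invokes somewhat informally; the paper's soft argument transfers almost verbatim to the blunt-spear Lemma \ref{lem:intersect_number_complete} (only the winding around the BTZ line and the hypothesis $\tau_\Sigma\to+\infty$ need comment), whereas your scheme would have to redo the past-endpoint analysis there — indeed the hypothesis $\lim\tau_{\Sigma^*}=+\infty$ in that lemma plays exactly the role of your ``$f<0$ at the past end''. Two details worth making explicit in a final write-up: both proofs implicitly use that $\Sigma$ lies strictly above the head (your ``$\tau<0\le\tau_\Sigma$'' on the head), which is the intended meaning of ``in the shaft''; and in your case $\r_\infty<R$ you should state that if $\tau$ stayed bounded, monotone $\tau,\r$ plus bounded variation of $\theta$ would produce a limit point in $\mathrm{Int}(\S)$, belonging to $c$ by closedness and contradicting inextendibility — this is what forces $\tau\to+\infty$, hence $\sup(c)=+\infty$ and $f\to+\infty$.
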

       \begin{lemou}\label{lem:intersect_number_complete}
      Let $\S^* := \{0<\r\leq R\}\cap J^+(p)$ be a blunt spear neighborhood in $\mass{0}$, let $\Sigma^*$ be a spacelike surface in $\S^*$  and let $c^*$ be a future causal curve in $\S^*$ which is a closed subset of $\S^*$ and whose restriction to $\mathrm{Int}(\S^*)$ the interior of $\S^*$ is non empty and inextendible. 
      
      Assume $\Sigma^*$ is the graph of some function $\tau_{\Sigma^*}:\mathbb D^*_R\rightarrow \R$ in the shaft of $\S^*$ such that $\lim_{(\r,\theta)\rightarrow 0}\tau(\r,\theta)=+\infty$.
      Then the cardinal of the intersection $c^*\cap \Sigma^*$  depends only on the  position of $\sup(c^*)\in \S^*\cup\{+\infty\}$ relative to the circle $\C^*:=\Sigma^*\cap \partial \S^*$: 
      $$|c^*\cap\Sigma^*|= \left\{ \begin{array}{ll}
                              1 &  \text{ if } \sup(c^*) \in J^+(\C^*)\cup\{+\infty\}\\
                              0& \text{ otherwise }
                             \end{array}
\right. .$$

    \end{lemou}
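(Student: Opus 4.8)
The plan is to run the same argument as in the compact case (Lemma \ref{lem:intersect_number_compact}), the only new ingredient being that the role played there by the vertex of the spear is now played by the behaviour of $\tau_{\Sigma^*}$ near the puncture. Everything rests on the observation that the height function $F(\tau,\r,\theta):=\tau-\tau_{\Sigma^*}(\r,\theta)$ is \emph{strictly increasing} along future causal curves: then $\Sigma^*=\{F=0\}$ is crossed at most once, and the number of crossings is decided by the signs of $F$ at the two ends of $c^*$.

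First I would record two elementary features of the model metric $\d s_0^2=-2\,\d\tau\,\d\r+\d\r^2+\r^2\,\d\theta^2$. Rewriting the causality condition $\d s_0^2(\dot c)\le 0$ as $2\,\dot\tau\,\dot\r\ge\dot\r^2+\r^2\dot\theta^2\ge 0$ shows that a nonzero causal vector has $\dot\tau\neq 0$, so $\tau$ is a time function and, along a future causal curve, $\r$ is non-decreasing. Next, since $\Sigma^*$ is spacelike, Lemma \ref{lem:surface_causal} gives $\delta>0$, which is exactly the statement that the gradient of $F$ is past-directed timelike; hence $\d F(\dot c)>0$ for every nonzero future causal $\dot c$, and $F$ is strictly increasing along $c^*$. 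In particular $|c^*\cap\Sigma^*|\le 1$, and it remains to locate the two ends of $c^*$ and read off the sign of $F$ there.

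For the past end I would use that $\r$ decreases towards the past, so $\inf(c^*)$ is attained either on the head $\partial J^+(p)$ or along a branch with $\r\to 0$. Since $\Sigma^*$ is spacelike it lies in $\mathrm{Int}(J^+(p))$, strictly to the future of the null head, so $F<0$ there; and along a past branch with $\r\to 0$ the coordinate $\tau$ stays bounded above (it decreases to the past) while $\tau_{\Sigma^*}\to+\infty$ by hypothesis, so again $F\to-\infty$. Either way $F<0$ at the past end. For the future end, $\r$ is non-decreasing and bounded by $R$, so $\sup(c^*)$ is either $+\infty$ — in which case $\tau\to+\infty$ while $\tau_{\Sigma^*}$ stays bounded on any annulus $\{\r_*\le\r\le R\}$ that the (eventually $\r\ge\r_*>0$) curve sweeps, whence $F\to+\infty$ — or a point $q$ of the shaft $\{\r=R\}$. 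On the shaft $\partial_\tau$ is the null generator and $\r$ cannot exceed $R$, so $J^+_{\S^*}(\C^*)\cap\{\r=R\}=\{\tau\ge\tau_{\Sigma^*}(R,\theta)\}$, giving the equivalence $q\in J^+(\C^*)\iff F(q)\ge 0$.

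Combining these with strict monotonicity of $F$ concludes: if $\sup(c^*)\in J^+(\C^*)\cup\{+\infty\}$ then $F$ passes from a negative value to a nonnegative one (or to $+\infty$) and vanishes exactly once, so $|c^*\cap\Sigma^*|=1$; otherwise $F$ stays negative and $|c^*\cap\Sigma^*|=0$. The main obstacle, and the only genuine departure from Lemma \ref{lem:intersect_number_compact}, is the control of the past end as $\r\to 0$: one must be certain that a past branch approaching the puncture does so with $\tau$ bounded above. This is precisely where the two standing hypotheses are used together — that $c^*$ is causal, so $\tau$ is monotone and bounded at the past end, and that $\tau_{\Sigma^*}\to+\infty$, so the surface recedes to the future faster than any causal curve can reach it near the puncture.
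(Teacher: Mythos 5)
Your proof is correct, and it takes a genuinely different route from the paper. The paper handles Lemmas \ref{lem:intersect_number_compact} and \ref{lem:intersect_number_complete} together by soft topology: it adds a point at future causal infinity, invokes homotopy invariance of the oriented intersection number, observes that a future causal curve always crosses a spacelike surface positively, and reduces the count to which of the two components of the complement of $\Sigma^*$ contains each endpoint of $c^*$; in the blunt case it only has to note that the winding number around the BTZ line does not affect the intersection number, and that $\lim_{(\r,\theta)\to0}\tau_{\Sigma^*}=+\infty$ forces the past endpoint into the component of $p$. You replace all of this by the height function $F=\tau-\tau_{\Sigma^*}$: the computation $g(\nabla F,\nabla F)=-\delta$, so that $\delta>0$ from Lemma \ref{lem:surface_causal} makes $\nabla F$ past timelike, is correct, as are the elementary facts $\dot\tau>0$ and $\dot\r\ge0$ along future causal vectors and the identification $J^+(\C^*)\cap\{\r=R\}=\{\tau\ge\tau_{\Sigma^*}(R,\theta)\}$ (the only causal directions tangent to the cylinder $\{\r=R\}$ being the null generators $\partial_\tau$); your endpoint analysis then uses the hypothesis $\tau_{\Sigma^*}\to+\infty$ exactly where the paper does, to place the past end in $\{F<0\}$. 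What your approach buys: it is more elementary and self-contained (no intersection theory, no compactification), the non-simple-connectedness of the blunt spear never arises, it proves acausality of the graph as a by-product, and it gives the compact lemma by the same words. Two small repairs are needed, both routine: first, the paper's causal curves are only locally order-increasing, not assumed differentiable, so the inequality $\d F(\dot c)>0$ should be replaced by the observation that in a chart $x\leq y$ means $y-x$ is a future causal vector, and integrating $\d F$ along the straight segment (defined almost everywhere even where $\tau_{\Sigma^*}$ is only piecewise smooth) gives $F(x)<F(y)$, whence $F\circ c^*$ is strictly increasing; second, you should record explicitly that a past branch cannot end on the shaft (otherwise $c^*\cap\mathrm{Int}(\S^*)=\emptyset$, contrary to hypothesis) and that in the borderline case $F(\sup(c^*))=0$ the crossing indeed belongs to $c^*$ because $c^*$ is a closed subset of $\S^*$ --- the same closedness fact the paper states as ``$\sup(c)\in c$''.
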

        
    \begin{corou}\label{cor:intersect} Under the hypotheses of Lemmas \ref{lem:intersect_number_compact} and \ref{lem:intersect_number_complete}; if $\reg(c)=c^*$, $\reg(\S)=\S^*$ and $\C=\C^*$
    then $|c\cap \Sigma| = |c^*\cap\Sigma^*|$.
    \end{corou}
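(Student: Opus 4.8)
The plan is to reduce Corollary~\ref{cor:intersect} directly to the two preceding Lemmas, by checking that the data controlling the intersection numbers coincide in the compact and blunt settings. Both Lemma~\ref{lem:intersect_number_compact} and Lemma~\ref{lem:intersect_number_complete} compute the intersection number as a function of a single piece of information: the position of the future supremum of the causal curve relative to the boundary circle ($\C$, resp. $\C^*$). Since $\C=\C^*$ by hypothesis, it suffices to prove that $\sup(c)$ and $\sup(c^*)$ agree as elements of $\S\cup\{+\infty\}$; the equivalence $\sup(c)\in J^+(\C)\cup\{+\infty\}\Leftrightarrow \sup(c^*)\in J^+(\C^*)\cup\{+\infty\}$ then yields $|c\cap\Sigma|=|c^*\cap\Sigma^*|$.

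The key step is therefore to show $\sup(c)=\sup(c^*)$. Here I would invoke Lemma~\ref{lem:past_BTZ}: since $c^*=\reg(c)$, the curve $c$ decomposes as $\sing_0(c)\cup\reg(c)$ with $\sing_0(c)\subset J^-(x)$ for every $x\in\reg(c)=c^*$. Thus the BTZ part of $c$ lies entirely in the causal past of $c^*$, so $c$ and $c^*$ have exactly the same upper bounds along the chain order of the causal curve, and hence the same supremum (both equal to $+\infty$ if $c$ is future-inextendible in $\S$). Note that $c^*$ is non-empty by the standing hypotheses of Lemma~\ref{lem:intersect_number_complete}. Moreover, because $J^-(p)=\{\r=0,\tau\leq\tau(p)\}$ for a BTZ point $p$, no regular point lies in the causal past of a singularity; consequently $\sup(c^*)$, being above the regular points of $c^*$, is either a regular point of $\partial\S^*\subset\partial\S$ or $+\infty$. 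In either case it is literally the same point of $\S\cup\{+\infty\}$ as $\sup(c)$, so the comparison against $\C=\C^*$ through $J^+$ is identical in both pictures.

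Combining, Lemma~\ref{lem:intersect_number_compact} gives $|c\cap\Sigma|=1$ precisely when $\sup(c)\in J^+(\C)\cup\{+\infty\}$ and $0$ otherwise, while Lemma~\ref{lem:intersect_number_complete} gives $|c^*\cap\Sigma^*|=1$ precisely when $\sup(c^*)\in J^+(\C^*)\cup\{+\infty\}$ and $0$ otherwise; with $\sup(c)=\sup(c^*)$ and $\C=\C^*$ these conditions coincide, which is the claim. I expect the only genuine subtlety to be the justification that deleting the BTZ part leaves the future supremum unchanged and regular, that is, that the surgery neither creates nor hides a crossing at the singular point where $\Sigma$ meets the BTZ line. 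This is exactly where Lemma~\ref{lem:past_BTZ} together with the explicit description of $J^-$ of a BTZ point do all the work; no delicate estimates are needed, since the entire combinatorial content has already been packaged into the supremum criterion supplied by the two Lemmas.
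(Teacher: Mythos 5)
Your proposal is correct and is essentially the argument the paper intends: Corollary \ref{cor:intersect} is stated without a separate proof precisely because Lemmas \ref{lem:intersect_number_compact} and \ref{lem:intersect_number_complete} reduce both counts to the position of the supremum of the causal curve relative to the common circle $\C=\C^*$. Your use of Lemma \ref{lem:past_BTZ} (the BTZ part of $c$ precedes its regular part) together with the description of $J^-$ of a BTZ point to identify $\sup(c)=\sup(c^*)$ as a regular point or $+\infty$ is exactly the intended glue, and it also justifies the remaining small point that membership in $J^+(\C)$ computed in $\S$ agrees with membership in $J^+(\C^*)$ computed in $\S^*$, since future causal curves issued from the regular circle never meet the singular line.
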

 \begin{remark}\label{rem:curve_spear} The hypotheses on the causal curve $c$ in the previous Lemmas may seem tedious but notice that they are satisfied by the connected components of the intersection of an inextendible causal curve of some $\mass{\geq0}$-manifold $M$ with some (blunt) spear neighborhood $\S$ in $M$. 
    \end{remark}
    \begin{proof}[Proof of Lemmas \ref{lem:intersect_number_compact} and \ref{lem:intersect_number_complete}]
     Add a point $+\infty$ at the future causal infinity so that $\S\cup\{+\infty\}$ is diffeomorphic to a closed ball in $\RR^3$ and $\sup(c)=+\infty$ if $\lim\tau\circ c = +\infty$. Four facts are worth noticing: 
     \begin{itemize}
      \item the oriented intersection number of $c$ with $\Sigma$ only depends on the homotopy class with fixed extremities of $c$ \cite{MR2680546};
      \item there is only one homotopy class of curves whose ends are any two given points of $\S\cup \{+\infty\}$; 
      \item $(\S\cup\{+\infty\})\setminus\Sigma$ has exactly two path connected components, the one of $p$ denoted $[p]$ and the one of $+\infty$ denoted $[+\infty]$;
      \item since $c$ is future causal and $\Sigma$ is spacelike, $c$ intersects $\Sigma$  always with positive orientation;
      \item $c$ is a closed subset thus $\sup(c)\in c$.
     \end{itemize}
      Therefore, this intersection number only depends on the connected components of $\inf(c)$ and $\sup(c)$ in $[p]$ or $[+\infty]\cup\Sigma$. Since $c$ is causal and $c\cap \mathrm{Int}(\S)$ is inextendible, on the one hand, $\inf(c)$ is in the head of $\S$ and thus in the connected component of $p$, on the other hand $\sup(c)$ is in the boundary of the shaft of $\S$ or $+\infty$. Therefore, either $\sup(c)\in J^+(\C)\cup\{+\infty\}$ in which case $c$ intersects $\Sigma$ exactly once or $\sup(c)\in J^-(\C)$ and  $c$ does not intersect $\Sigma$.
      
      The proof of Lemma \ref{lem:intersect_number_complete} is similar with two differences.
      \begin{itemize}
       \item The blunt spear is not simply connected. This is easily corrected noticing  that, with given end points, the intersection number does not depends on the number of turns around the BTZ line. Again, the intersection number only depends on the fixed end points.
       \item The assumption $\lim_{(\r,\theta)\rightarrow 0}\tau(\r,\theta)=+\infty$ ensures the past end point of the causal curve  is in the connected component of $[p]$ since  the infimum of $c$  could be on the singular line.
      \end{itemize}

    \end{proof}

\subsection{Gluing and causal properties of BTZ-extensions}
\label{sec:gluing}
We prove in this section several technical results regarding BTZ-extensions that will prove useful. The first Lemma allows to perform easily some gluings.

	\begin{lemou} \label{lem:extension_hausdorff}
	Let $M_0$ be a $\mass{\geq0}$-manifold. 	
					Let $i:M_0\rightarrow M_1$ a BTZ extension and $j:M_0\rightarrow M_2$ an embedding which is an almost everywhere $\mass{}$-morphism  with $M_2$ a $\mass{\geq0}$-manifold.
					Define $M_3:=(M_1\coprod M_2)/M_0$ the pushforward of $i$ and $j$ in the category of topological space, the following diagram commutes:
						$$\xymatrix{
							M_0 \ar[r]^{i}\ar[d]_j & M_1\ar[d]^{\pi_1} \\
							M_2\ar[r]^{\pi_2}& M_3
							}$$
			with $\pi_1$ and $\pi_2$ the natural projections.
			
					If $M_2$ is globally hyperbolic and $\sing_0(M_2)=j(\sing_0(M_0))$, then $M_3$ is a $\mass{\geq0}$-manifold.
	
					\end{lemou}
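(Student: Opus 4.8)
The plan is to realise $M_3$ as a manifold obtained by gluing $M_1$ and $M_2$ along an \emph{open} set, and then to reduce the whole statement to the single point that $M_3$ is Hausdorff. First I would record that the diagram commutes by the very construction of the topological pushout, and that $\pi_1,\pi_2$ are injective, the only identifications being between $M_1$ and $M_2$ along $M_0$. Since $M_0,M_1,M_2$ are topological $3$-manifolds (Corollary to Proposition \ref{prop:isom}) and $i,j$ are continuous injections, the theorem of invariance of domain shows $i$ and $j$ are open embeddings; consequently $\pi_1,\pi_2$ are open embeddings, $M_3=\pi_1(M_1)\cup\pi_2(M_2)$ is covered by two open second countable pieces, hence is locally Euclidean and second countable. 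The gluing homeomorphism is $\psi:=j\circ i^{-1}:i(M_0)\to j(M_0)$, an a.e. $\mass{}$-morphism which is a local homeomorphism, hence by the Corollaries to Proposition \ref{prop:isom} an $\mass{}$-isomorphism on regular parts. Therefore the a.e. $\mass{}$-structures of $M_1$ and $M_2$ agree on the overlap and glue to an a.e. $\mass{}$-structure on $M_3$, with $\reg(M_3)=\pi_1(\reg(M_1))\cup\pi_2(\reg(M_2))$ dense and locally connected and every singular point of type $\ge 0$. Thus, granting Hausdorffness, $M_3$ is a $\mass{\ge0}$-manifold.

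Next I would reduce Hausdorffness to one configuration. If two distinct points of $M_3$ both lie in $\pi_1(M_1)$ (resp. both in $\pi_2(M_2)$) they are separated using that $M_1$ (resp. $M_2$) is Hausdorff together with injectivity and openness of $\pi_1$ (resp. $\pi_2$); any point of the glued locus $\pi_1(i(M_0))=\pi_2(j(M_0))$ lies in both $\pi_1(M_1)$ and $\pi_2(M_2)$, so it falls under these cases. The only remaining possibility is a non-separable pair $\pi_1(a),\pi_2(b)$ with $a\in M_1\setminus i(M_0)$ and $b\in M_2\setminus j(M_0)$. By the definition of a BTZ-extension $a$ is a singular point of type $\mass{0}$, whereas the hypothesis $\sing_0(M_2)=j(\sing_0(M_0))$ forces $b\notin\sing_0(M_2)$, i.e. $b$ is regular or a massive particle of type $\beta>0$. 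Non-separability means exactly that there is a net $u_\mu\to a$ in $M_1$ with $\psi(u_\mu)\to b$ in $M_2$; since $\reg(M_1)\subset i(M_0)$ is dense, I may take the $u_\mu$ regular, so $v_\mu:=\psi(u_\mu)\in\reg(M_2)$.

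The heart of the proof is to contradict this configuration using the local geometry at the BTZ point $a$. In a tube chart $\phi:\U\to\mass{0}$ around the BTZ line through $a$, the $u_\mu$ have radial coordinate $r_\mu=r(u_\mu)\to 0$. For each $\mu$ let $\gamma_\mu$ be the coordinate circle $\theta\mapsto(\tau(u_\mu),r_\mu,\theta)$ through $u_\mu$; it lies in $\reg(M_1)\subset i(M_0)$, has length $2\pi r_\mu\to 0$ for the induced metric, and generates the fundamental group of the regular part of the tube, so its holonomy is parabolic. As $\psi$ is an isometry on regular parts it preserves both length and holonomy, so $\psi(\gamma_\mu)$ is an embedded loop in $\reg(M_2)$ of length $2\pi r_\mu\to 0$, of parabolic holonomy, and containing $v_\mu\to b$; hence $\psi(\gamma_\mu)$ shrinks to the point $b$. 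But a loop of sufficiently small diameter near a regular point lies in a simply connected regular chart, hence is contractible with trivial holonomy, while a short loop near a point of type $\beta>0$ has elliptic or trivial holonomy; in neither case can it be parabolic. Therefore the $\psi(\gamma_\mu)$ must encircle BTZ lines accumulating at $b$, forcing $b\in\overline{\sing_0(M_2)}=\sing_0(M_2)$, a contradiction. This excludes the bad configuration and proves $M_3$ Hausdorff.

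This last step is the main obstacle: everything else is formal, and the content lies in showing that an accumulation of regular points onto an \emph{added} BTZ point of $M_1$ cannot be matched, through $\psi$, by an accumulation onto a non-BTZ point of $M_2$. I expect the hypothesis that $M_2$ is globally hyperbolic to enter a purely causal variant of this step -- using $I^-(a)=\emptyset$ against $I^-(b)\neq\emptyset$ for the non-BTZ point $b$, and compactness of causal diamonds in $M_2$ to pull a chronological predecessor of $b$ back into $J^-(a)=\sing_0$ -- but the holonomy argument above already closes the gap. As a consistency check, note that the alternative conclusion $b\in\sing_0(M_2)$ would give $b=j(s)$ with $s\in\sing_0(M_0)$, whence $u_\mu\to a=i(s)\in i(M_0)$ by continuity of $j^{-1}$ and $i$, contradicting $a\notin i(M_0)$.
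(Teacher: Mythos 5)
Your formal reductions are fine (openness of the projections, gluing of the a.e.\ structures, reduction of Hausdorffness to a mixed pair $a\in M_1\setminus i(M_0)$, $b\in M_2\setminus j(M_0)$, and the observation that $a\in\sing_0(M_1)$ while $b\notin\sing_0(M_2)$): this matches the paper, which likewise treats everything except Hausdorffness as immediate. The gap is in your key step. From ``$\psi(\gamma_\mu)$ has length $2\pi r_\mu\to 0$ and contains $v_\mu\to b$'' you conclude that the loops shrink to $b$, but this inference is false in Lorentzian signature: the arclength of a spacelike curve does not control its diameter. In $\mass{}$ the curve $s\mapsto\varepsilon(\cosh s,\sinh s,0)$, $s\in[0,S]$, is spacelike of length $\varepsilon S$, yet its endpoints are at Euclidean distance about $\varepsilon e^{S}$; with $S=\log(1/\varepsilon^2)$ the length tends to $0$ while the endpoints separate arbitrarily. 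Since $\psi$ is a Lorentzian (not Riemannian) local isometry, and since $\psi$ does not a priori extend continuously to $a$ --- that is precisely what is at stake --- nothing prevents the points of $\psi(\gamma_\mu)$ other than $v_\mu$ from escaping every fixed neighborhood of $b$, e.g.\ along nearly lightlike directions. Your subsequent dichotomy (short loops near regular or conical points have trivial or elliptic holonomy) is correct but only bites once the image loop is trapped in a single chart around a limit point, which is exactly what you have not established. Your closing remark that ``the holonomy argument above already closes the gap'' without global hyperbolicity is therefore not right; indeed your argument never uses global hyperbolicity of $M_2$, which should be a warning sign.

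The paper closes exactly this hole with the causal mechanism you sketch but set aside. It first shows $j\circ i^{-1}\bigl(I^+(p)\bigr)\subset I^+(q)$ for a non-separated pair (writing $p=a$, $q=b$), by expressing $I^+$ of a limit through the sets $\mathrm{Int}\bigl(\bigcap_N\bigcup_{n\geq N} I^+(\cdot)\bigr)$, where global hyperbolicity of $M_2$ gives the equality for $q$. It then takes a $\ll$-decreasing sequence $i(b_n)\to p$; the images $j(b_n)$ are decreasing and bounded below in $J^+(q)$, so by global hyperbolicity they converge to some $q'\in M_2$. Only now is containment automatic: a diamond $\mathcal W=I^+_{\V'}(\psi_p(p))\cap I^-_{\V'}(\psi_p\circ i(b_n))$ near the BTZ point has its image under the transition map inside a fixed chart $\U\rightarrow\mass{\alpha}$ around $q'$ (choosing $n$ with $I^+(q')\cap I^-(j(b_n))\subset\U$), and the parabolic-holonomy argument --- the same one you use --- forces $\alpha=0$, hence $q'\in\sing_0(M_2)=j(\sing_0(M_0))$, hence $p=i\circ j^{-1}(q')\in i(M_0)$ and $\pi_1(p)=\pi_2(q)$. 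So your holonomy obstruction is the right endgame, but the missing idea is the monotone-limit/causal-convexity step that pins the loop inside one chart; without it the argument is circular (the loops fail to shrink precisely when $\psi$ fails to extend).
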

					\begin{proof}
						Assume $M_2$ globally hyperbolic and $\sing_0(M_2)=j(\sing_0(M_0)$.	It is sufficient to prove $M_3$ is Hausdorff.
						
						Let $p\in M_1$ and $q\in M_2$ such that for all neighborhood 
						$\U$ of $p$ and all neighborhood $\V$ of $q$ :
						$$i^{-1}(\U)\cap j^{-1}(\V)\neq \emptyset.$$
						We shall prove that $\pi_1(p)=\pi_2(q)$.

						Consider a sequence $(a_n)\in M_0^\N$ such that $$   \lim_{n\rightarrow +\infty}i(a_n)=p \quad \text{et}\quad  \lim_{n\rightarrow +\infty}j(a_n)=q .$$
						Since $i$ is a BTZ-extension, for all $x\in M_1, I^+(x)\subset i(M_0)$; and since $M_2$ is globally hyperbolic
						$$ I^+(p) \subset \mathrm{Int}\left(\bigcap_{N\in \N } \bigcup_{n\geq N} I^+(i(a_n))\right) \quad\quad I^+(q) = \mathrm{Int}\left(\bigcap_{N\in \N } \bigcup_{n\geq N} I^+(j(a_n))\right)  .$$

Therefore: 
						\begin{eqnarray*}
						 j\circ i^{-1}(I^+(p))&= & j\circ i^{-1}\left\{ \mathrm{Int}\left(\bigcap_{N\in \N}\bigcup_{n\geq N} I^+(i(a_n))\right)\right\}\\
						 &=& \mathrm{Int}\left(\bigcap_{N\in \N}\bigcup_{n\geq N}j(I^+(a_n)\right)\\
						 &\subset& \mathrm{Int}\left(\bigcap_{N\in \N}\bigcup_{n\geq N} I^+(j(a_n)\right)\\
						 &=& I^+(q).
						\end{eqnarray*}
			
						Consider a decreasing sequence $(b_n)_{n\in \N}\in M_0^\N$ such that $i(b_n) \xrightarrow{n\rightarrow +\infty} p$.
						The sequence $\left(j(b_n)\right)_{n\in \N}$ is decreasing with values in $J^+(q)$; since $M_2$ is globally hyperbolic, $(j(b_n))_{n\in \N}$ converges toward a limit, say $q'\in J^+(q)$.
						
						Let $\U\xrightarrow{\phi} \U'\subset \mass{\alpha}$ be a chart neighborhood of $q'$ and let $\V\xrightarrow{\psi}\V'\subset \mass{0}$ a chart neighborhood of $p$. We assume furthermore that $\U$ is causally convex in $M_2$ and $\V'$ causally convex in $\mass{0}$. 
Let $n\in\N$ such that  $i(b_n)\in \V$, $j(b_n)\in \U$ and $I^+(q')\cap I^-(j(b_n))\subset \U$. Let  $\mathcal W:=I^+_{\V'}(\psi(p))\cap I^-_{\V'}(\psi\circ i(b_n))$, the map:
						$$ \fonction{f}{\mathcal W}{\mass{\alpha}}{x}{\phi\circ j\circ i^{-1}\circ \psi^{-1}(x)}$$
						is well defined, $\mathcal W$ is the regular part of a neighborhood of some point of $\sing(\mass{0})$. The open $\mathcal W$ thus contains a non trivial loop of $\mass{0}$ of parabolic holonomy, its image is thus a non trivial loop of parabolic holonomy in $\mass{\alpha}$. Therefore, $\alpha=0$ and
						$q'\in\sing_0(M_2)=\sing_0(j(M_0))$, thus $q'\in j(M_0)$ and thus
						
						$$i\circ j^{-1}(q')=\lim_{n\rightarrow +\infty} i(b_n) = p.$$ 
						 Finally, $p\in i(M_0)$ so that
						 $$q=\lim_{n\rightarrow +\infty}j(a_n)=\lim_{n\rightarrow +\infty}j\circ i^{-1}\circ i(a_n) = j\circ i^{-1}(p)$$
						 and $\pi_1(p)=\pi_2(q)$.

					\end{proof}

We now prove two causal Lemmas regarding BTZ-extensions. The first gives tools to exploit the following easy fact.
		\begin{remark}  Let $M_0 \xrightarrow{\iota} M_1$ be an embedding of $\mass{\geq0}$-manifold.
		 If $M_1$ is globally hyperbolic and $\iota(M_0)$ is causally convex in $M_1$, then $M_0$ is globally hyperbolic.
		\end{remark}

\begin{lemou}\label{lem:BTZext_caus_conv}
	Let $M_0\xrightarrow{\iota} M_1$ be a BTZ-extension of $\mass{\geq0}$-manifolds. 
	\begin{enumerate}[(a)]
	\item $\iota(M_0)$ is causally convex in $M_1$ if and only if  $$\forall x\in \sing_0(M_0), ~~\sing_0\left[J^+(\iota(x))\right] \subset \iota(M_0).$$ 
	\item If $M_0$ is globally hyperbolic then $\iota(M_0)$ is causally convex in $M_1$.
	
	\end{enumerate}
\end{lemou}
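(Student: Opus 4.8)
The plan is to prove (a) directly from the causal decomposition of Lemma \ref{lem:past_BTZ} together with the causal structure of BTZ points, and then to deduce (b) from (a) by a compactness argument driven by global hyperbolicity of $M_0$. Throughout I use the two defining features of a BTZ-extension: the added points $M_1\setminus\iota(M_0)$ are all of type $\mass{0}$, so in particular $\reg_{>0}(M_1)\subset\iota(M_0)$; and, by invariance of domain, $\iota(M_0)$ is open in $M_1$. For the implication ``condition $\Rightarrow$ causal convexity'' in (a), I would take a future causal curve $c$ in $M_1$ with both endpoints in $\iota(M_0)$ and split it, via Lemma \ref{lem:past_BTZ}, as $c=\sing_0(c)\cup\reg_{>0}(c)$ with the BTZ part preceding the regular part. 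The regular part lies in $\reg_{>0}(M_1)\subset\iota(M_0)$ for free. If $p:=\inf(c)$ is regular, the decomposition forces $\sing_0(c)=\emptyset$ and $c$ is entirely regular; if $p=\iota(x)$ is BTZ with $x\in\sing_0(M_0)$, then every point of $\sing_0(c)$ is a BTZ point lying in $J^+(\iota(x))$, hence in $\sing_0[J^+(\iota(x))]\subset\iota(M_0)$ by hypothesis. In either case $c\subset\iota(M_0)$.

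For the converse implication in (a) I would argue by contraposition. Suppose there is $x\in\sing_0(M_0)$ and a BTZ point $y\in\sing_0[J^+(\iota(x))]\setminus\iota(M_0)$. The key geometric input, which I would check in the model $\mass{0}$ (where the metric $-2\,\d\tau\,\d r+\d r^2+r^2\d\theta^2$ shows that any future timelike direction at a singular point has $\dot r>0$), is that $I^+(y)$ meets $\reg(M_1)$. Choosing a regular $q\in I^+(y)$, which forces $q\in\iota(M_0)$, and concatenating a causal curve from $\iota(x)$ to $y$ (available since $y\in J^+(\iota(x))$) with a timelike curve from $y$ to $q$, I obtain a causal segment with extremities $\iota(x),q\in\iota(M_0)$ passing through the interior point $y\notin\iota(M_0)$, contradicting causal convexity.

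For (b) I would use (a) and argue by contradiction. If the condition of (a) fails, there are $x\in\sing_0(M_0)$ and a point $z\in\partial\iota(M_0)$ on the singular line through $\iota(x)$, approached from below by $\iota(w_n)\to z$ with $w_n\in\sing_0(M_0)$ and $\iota(x)\le\iota(w_n)<z$; the segment $[\iota(x),\iota(w_n)]$ stays below $z$, hence in $\iota(M_0)$, giving $x\le_{M_0}w_n$. Because $\iota$ is a homeomorphism onto its open image and $\iota(w_n)\to z\notin\iota(M_0)$, the sequence $w_n$ leaves every compact subset of $M_0$. Now pick a regular $q\in I^+(z)$, so $q=\iota(q_0)$; from $\iota(w_n)\le z\ll q$ we get $\iota(w_n)\ll_{M_1}q$. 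Since a timelike curve issuing from a BTZ point immediately enters $\reg(M_1)\subset\iota(M_0)$ and stays there, such a curve lifts through $\iota$, yielding $w_n\ll_{M_0}q_0$. Thus $w_n\in J^+_{M_0}(x)\cap J^-_{M_0}(q_0)$ for all large $n$, a compact set by global hyperbolicity of $M_0$, contradicting that the $w_n$ escape to infinity.

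The main obstacle I anticipate is the lifting step in (b): transferring a causal relation from $M_1$ back to $M_0$ is only legitimate once one knows the connecting curve avoids $M_1\setminus\iota(M_0)$. This is exactly where the lightlike nature of singular lines is decisive — a strictly timelike curve cannot run along a BTZ line, so it leaves a BTZ point into the regular region at once — combined with the fact that every added point is of type $\mass{0}$, so the regular locus is trapped inside $\iota(M_0)$. I would therefore isolate the model statement ``every point of $I^+$ of a BTZ point of $\mass{0}$ is reached by a timelike curve whose interior lies in $\reg(\mass{0})$'' as a short computation and invoke it in both the converse of (a) and the lifting argument of (b).
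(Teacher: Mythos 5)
Your proof is correct and takes essentially the same route as the paper: part (a) matches the paper's argument step for step (the decomposition of Lemma \ref{lem:past_BTZ} for sufficiency; for necessity, hitting a regular point of $I^+$ of the offending BTZ point and using convexity of the concatenated causal segment), and part (b), like the paper, reduces to verifying the criterion of (a) via global hyperbolicity --- your contradiction with a sequence $w_n$ escaping every compact set while trapped in the compact diamond $J^+_{M_0}(x)\cap J^-_{M_0}(q_0)$ is the same compactness argument the paper packages as closedness of $\left\{t_0~|~c([0,t_0])\subset M_0\right\}$ using the diamond $J^+_{M_0}(c(0))\cap J^-_{M_0}(q')$, with Hausdorffness forcing the limit point to agree. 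Your explicit isolation of the model fact that timelike curves leave a BTZ point of $\mass{0}$ immediately (so $I^+$ of any point avoids $\sing_0$, and the causal-then-timelike relation lifts through $\iota$ into $M_0$) is a step the paper uses only implicitly, and making it explicit is a sound refinement rather than a deviation.
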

\begin{proof}
Without loss of generality, we assume $M_0\subset M_1$ and $\iota$ is the natural inclusion.
\begin{enumerate}[(a)]
\item Assume $\forall x\in \sing_0(M_0)$, $J^+_{M_1}(x) \cap \sing_0(M_1) \subset M_0$ and consider $p,q\in M_0$ and a future causal curve $c$ of $M_1$ from $p$ to  $q$. By Lemma \ref{lem:past_BTZ}, $c$ decomposes into a BTZ part and a non-BTZ part, the former in the past of the latter. 
If $\sing_0(c)=\emptyset$, then $\sing_0(c)\subset M_0$ trivially. If $\sing_0(c)\neq \emptyset$, then $p\in\sing_0(M_0)$ thus $\sing_0(c)\subset M_0$ by hypothesis. In any case,  $\sing_0(c)\subset M_0$.
Furthermore, $\reg_{>0}(M_1)\subset M_0$, thus $c\subset M_0$.

Assume $M_0$ causally convex in $M_1$ and consider some $p\in \sing_0(M_0)$.  Let $q\in \sing_0(J^+(p))$ and let $c:[0,1]\rightarrow M_1$ be a future causal curve in $M_1$  such that $c(0)=p$ and $c(1)=q$. 
Choose some $q'\in I^+(q)\neq \emptyset$ and extend $c$ to a future causal curve $\widetilde c:[0,2] \rightarrow M_1$ such that $\widetilde c(2)=q'$. Since, $I^+(q)\subset M_0$, both $p$ and $q'$ are in $M_0$ and by causal convexity of $M_0$, $\widetilde c \subset M_0$. In particular, $q=\widetilde c(1)$ is in $M_0$.

\item Assume $M_0$ globally hyperbolic. We consider $p\in \sing_0(M_0)$ and some future causal curve $c:[0,1]\rightarrow M_1$ from $c(0)=p$ to $c(1)\in \sing_0\left[J^+(\iota(x))\right]$. 
Define $I:= \{t_0\in[0,1]~|~ c([0,t_0]) \subset M_0\}$. On the one hand, $M_0$ being open, so is $I$. On the other hand, take any $q'\in I^+(c(t_0))\subset M_0$, the past $J^-_{M_0}(q')$ of $q'$ contains $c([0,t_0[)$. Therefore, $c([0,t_0[)\subset J^-_{M_0}(q')\cap J^+_{M_0}(c(0))$ and by global hyperbolicity of $M_0$, the set $J^-_{M_0}(q')\cap J^+_{M_0}(c(0))$ is compact and  $\lim_{t\rightarrow t_0^-} c(t)\in M_0$. 
The interval $I$ is thus closed. Finally, $I=[0,1]$ and $q\in M_0$.

\end{enumerate}
\end{proof}

This second causal Lemma gives a simple criterion regarding the causality of BTZ-extensions. 
\begin{lemou} Let $M_0\xrightarrow{\iota}M_1$ be a BTZ-extension of $\mass{\geq0}$-manifolds.

If $M_0$ is strongly causal then $M_1$ is causal.
\end{lemou}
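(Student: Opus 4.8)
The plan is to argue by contradiction: assuming $M_1$ is not causal, I would produce a nonconstant closed future causal curve $\gamma$ in $M_1$ and derive a contradiction with the strong causality of $M_0$. Two facts drive the argument. First, since the complement of $\iota(M_0)$ consists only of BTZ points, one has $\reg_{>0}(M_1)\subset \iota(M_0)$, and $\iota$ restricts to a time-orientation-preserving isomorphism between the regular loci (by the corollaries to Proposition \ref{prop:isom}); consequently $\reg_{>0}(M_1)$, being isomorphic to an open subset of $M_0$, is itself strongly causal (a neighbourhood causally convex in $M_0$ is a fortiori causally convex in any open subset containing it), hence causal. Second, by Lemma \ref{lem:past_BTZ} every causal curve splits into a BTZ initial segment followed by a regular segment, the former lying in the past of the latter.

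The first step is a reduction. Reparametrising $\gamma$ as a loop $c:[0,1]\to M_1$ with $c(0)=c(1)$ based at a prescribed point, I would apply Lemma \ref{lem:past_BTZ}: the set of parameters at which $c$ is BTZ is a connected initial segment $[0,s]$. If $c$ meets $\sing_0(M_1)$ at all, basing the loop at such a point forces both $0$ and $1$ to lie in this segment, whence $s=1$ and $c$ is entirely BTZ; otherwise $c$ avoids $\sing_0(M_1)$ and lies in $\reg_{>0}(M_1)$. Thus $\gamma$ is either wholly regular or wholly BTZ. In the regular case $\gamma\subset \reg_{>0}(M_1)\subset \iota(M_0)$, so $\iota^{-1}\circ\gamma$ is a nonconstant closed future causal curve in $M_0$, contradicting the causality of $M_0$ (which follows from strong causality).

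The remaining, and genuinely delicate, case is when $\gamma$ lies entirely in $\sing_0(M_1)$. Then $\gamma$ is contained in a single connected component $\Delta$ of $\sing_0(M_1)$, which by Lemma \ref{lem:past_BTZ} is an inextendible causal curve; since it carries a nonconstant causal loop it must be a circle, i.e.\ a closed (null) causal geodesic. The plan here is to exploit that a whole tubular neighbourhood of $\Delta$ with $\Delta$ removed lies in $\reg_{>0}(M_1)\subset \iota(M_0)$, and to construct, for a regular point $z$ arbitrarily close to $\Delta$, a future causal curve that shadows $\Delta$ once around and returns into every neighbourhood of $z$ while necessarily leaving a fixed neighbourhood of $z$ (it must travel all the way around $\Delta$). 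Such a curve witnesses a failure of strong causality at $z\in \iota(M_0)$, contradicting the strong causality of $M_0$.

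The main obstacle is precisely the construction of this shadowing curve: it requires a careful analysis of the light cones of the extreme BTZ model $\mass{0}$ near the singular axis, where the metric $-2\d\tau\,\d r+\d r^2+r^2\d\theta^2$ makes the axis null and forces the radial coordinate to be non-decreasing along future causal curves. One has to show that along a future causal null segment of $\Delta$ the neighbouring regular cones admit causal curves tracking the axis with $r$ only mildly increasing, so that going once around the closed axis yields a causal curve returning arbitrarily close to its start. When the holonomy along $\Delta$ is a pure future null translation, this shadow closes up exactly, producing a closed causal curve already in $\reg_{>0}(M_1)$; when the holonomy also rotates, it returns only approximately, which is exactly the strong causality violation sought. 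Either way the strong causality of $M_0$ is contradicted, completing the proof.
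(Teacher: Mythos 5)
Your overall strategy coincides with the paper's: by Lemma \ref{lem:past_BTZ} a closed future causal curve $c$ in $M_1$ lies either entirely in $\reg_{>0}(M_1)\subset\iota(M_0)$ (contradicting causality, hence strong causality, of $M_0$) or entirely in $\sing_0(M_1)$, and you handle the regular case exactly as the paper does. The genuine gap is in the singular case, which you yourself flag as ``the main obstacle'': you never construct the shadowing curve, you only state what ``one has to show''. The missing idea --- and the one the paper actually uses --- is to first put a standard model on a whole tube around the closed BTZ circle: covering $c$ by finitely many charts and invoking Proposition \ref{prop:isom} (which forces all transition maps to be rotation-translations of $\mass{0}$), one glues the charts into a single isomorphism of a neighbourhood $\U$ of $c$ with $\langle\gamma\rangle\backslash\{(\tau,\r,\theta)\in\mass{0}~|~\r<\r^*\}$, where $\gamma:(\tau,\r,\theta)\mapsto(\tau+\tau_0,\r,\theta+\theta_0)$ with $\tau_0>0$. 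Note that this identification is also what you would need merely to speak of ``the holonomy along $\Delta$'' as a null translation possibly composed with a rotation, so it cannot be bypassed by a soft tubular-neighbourhood argument.

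Once the tube model is in place, the ``careful analysis of the light cones'' you anticipate evaporates, and your guiding intuition there is in fact off: you expect the shadowing curve to track the axis with $r$ ``only mildly increasing'', but in the metric $-2\,\d\tau\,\d\r+\d\r^2+\r^2\d\theta^2$ the coordinate field $\partial_\tau$ is null \emph{everywhere} (there is no $\d\tau^2$ term), not only on the axis. Hence the vertical line $t\mapsto(t,\r_1,\theta_1)$ at any constant radius $\r_1\in\,]0,\r^*[$ is already a future lightlike curve in the regular part, and its projection to the quotient tube is the desired witness with no estimate needed: it is closed if $\theta_0\in\mathbb{Q}\pi$ (so $\iota(M_0)$ is not causal) and otherwise passes infinitely many times through every neighbourhood of each of its points (so $\iota(M_0)$ is not strongly causal). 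Your dichotomy ``pure null translation closes up exactly / rotation returns only approximately'' is also slightly wrong as stated, since a rotation angle commensurable with $2\pi$ closes the curve exactly as well; the correct dichotomy is rationality of $\theta_0/\pi$. In sum, your proposal has the right architecture, matching the paper's, but its central construction is left unexecuted, and the near-axis causal geometry it gestures at is misread in a way the explicit model computation would have corrected.
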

\begin{proof}
By contradiction, assume then exists a close future causal curve $c$ in $M_1$. By Lemma \ref{lem:past_BTZ}, either $c\subset \reg_{>0}(M_1)$ or $c\subset \sing_0(M_1)$. 

If $c\subset \reg_{>0}(M_1)$, then $c$ is a closed future causal curve in $\iota(M_0)$ and $M_0$ is thus not causal, hence not strongly causal.

If $c\subset \sing_0(M_1)$, from a finite covering of $c$ by charts and using Lemma \ref{prop:isom}, one can construct a neighborhood  $\U$ of $c$ isomorphic to $\mathcal T:=\langle\gamma\rangle \backslash\{(\tau,\r,\theta)\in \mass{0}~|~ \r<\r^*\}$ for some $\r^*>0$ and $\gamma\in \isom(\mass{0})$ of the form $\gamma: (\tau,\r,\theta)\mapsto (\tau+\tau_0,\r,\theta+\theta_0)$ with $\tau_0>0$. 
In particular the future causal curve $c: t \mapsto (t,\r_1,\theta_1)$ for some $\r_1\in ]0,\r^*[$ and some $\theta_1>0$ is in $\iota(M_0)$ and is either closed (if $\theta_0\in \mathbb Q\pi$) or passes infinitely many times in any neighborhood of any of its points (if $\theta_0\not\in \mathbb Q\pi$); in particular $\iota(M_0)$ is not strongly causal.
\end{proof}
\begin{corou}\label{cor:GH_causal} Any BTZ-extension of a globally hyperbolic $\mass{\geq0}$-manifold is causal.
\end{corou}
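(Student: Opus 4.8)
The plan is to deduce this immediately from the preceding Lemma, whose hypothesis is strong causality rather than global hyperbolicity. So the only thing I would need to supply is the implication ``globally hyperbolic $\Rightarrow$ strongly causal'' in the setting of $\mass{\geq0}$-manifolds. This is the standard inclusion in the causal hierarchy alluded to in the introduction (following \cite{MR0469146,MR2436235}): global hyperbolicity sits strictly above strong causality. Concretely, given a compact diamond $J^+(p)\cap J^-(q)$ and the absence of closed (or almost-closed) causal curves forced by the definition of global hyperbolicity, one obtains a basis of causally convex neighborhoods, which is exactly strong causality. I would either cite this as part of the causal hierarchy already invoked, or note that the standard smooth-Lorentzian argument transfers verbatim since the causal structure of a $\mass{\geq0}$-manifold behaves like the Lorentzian one away from the BTZ lines, and the BTZ lines themselves only create the failure of \emph{past}-distinguishability (cf. Proposition~\ref{prop:FZenon_injectif}), not of strong causality.

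Granting this, the argument is a single step. Let $M_0$ be a globally hyperbolic $\mass{\geq0}$-manifold and let $M_0\xrightarrow{\iota}M_1$ be any BTZ-extension. Since $M_0$ is globally hyperbolic it is strongly causal, so the hypothesis of the preceding Lemma is met, and the Lemma yields that $M_1$ is causal. That is all the corollary asserts.

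The only point requiring a modicum of care is the verification that the causal hierarchy behaves as expected near the singular locus, i.e.\ that the presence of BTZ lines does not sneak strong causality below global hyperbolicity. But since the failure of distinguishability introduced by BTZ singularities is confined to the past (a point $p\in\sing_0$ has $I^-(p)=\emptyset$, whence the loss of past-distinguishability noted before Proposition~\ref{prop:FZenon_injectif}), and strong causality is a symmetric, local condition on the existence of causally convex neighborhoods, no such pathology arises. I do not expect any genuine obstacle here; the corollary is essentially a bookkeeping consequence of the Lemma once the standard hierarchy implication is recorded.
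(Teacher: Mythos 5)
Your proposal is correct and matches the paper's (implicit) argument: the corollary is stated without proof precisely because it is the preceding Lemma combined with the standard hierarchy implication that global hyperbolicity implies strong causality, an implication the paper freely uses elsewhere in the singular setting (e.g.\ in the proofs of Lemma~\ref{lem:tube} and Proposition~\ref{prop:pgcd}). Your additional care about BTZ lines only breaking \emph{past}-distinguishability, not strong causality, is consistent with the paper's discussion around Proposition~\ref{prop:FZenon_injectif}.
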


\subsection{Absolute maximality}
\label{sec:Abs_max}

Instead of absolute maximality in the sense of Barbot \cite{barbot_globally_2004}, we use a slightly stronger notion of $\mass{A}$-maximality. Let $A\subset \RR_+$ and let $M$ be a $\mass{A}$-manifold. We says that $M$ is $\mass{A}$-maximal  if for every globally hyperbolic $\mass{A}$-manifold $N$ and for every a.e. $\mass{}$-morphism $\varphi:M\rightarrow N$  we have:
 $$ \varphi \text{~is an embedding} \quad \Rightarrow \quad \varphi \text{~is an isomorphism}. $$
 
We first prove the following proposition which is classical in the Lorentzian setting.
\begin{propou}\label{prop:cauchy_abs_max}Let $A\subset \RR_+$ and let $M$ be a globally hyperbolic Cauchy-compact  $\mass{A}$-manifold. The following are equivalent:
\begin{enumerate}[(i)]
\item $M$ is Cauchy-maximal  \item $M$ is $\mass{A}$-maximal.
\end{enumerate}

\end{propou}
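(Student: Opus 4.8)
The plan is to prove the two implications separately, reducing the nontrivial one to the statement that a \emph{connected} compact spacelike hypersurface of a globally hyperbolic $\mass{\geq0}$-manifold is automatically a Cauchy-surface. Throughout I identify an embedding with its image and use that, by the corollaries of Proposition \ref{prop:isom}, an a.e. $\mass{}$-morphism which is a local homeomorphism is a local isometry of smooth $\mass{\geq0}$-manifolds respecting singular loci and types; in particular every embedding occurring below is an isometry onto an open $\mass{A}$-submanifold, and I assume the manifolds connected (the general case being treated componentwise).

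For $(ii)\Rightarrow(i)$ I would start from the Cauchy-maximal extension $\iota:M\to\overline M$ furnished by the generalized Choquet-Bruhat-Geroch theorem, the only point to check being that $\overline M$ is again a $\mass{A}$-manifold so that $\mass{A}$-maximality applies to $\iota$. Since $\iota$ is a Cauchy-embedding, $\iota(\Sigma)$ is a compact Cauchy-surface of $\overline M$, where $\Sigma\subset M$ is a compact Cauchy-surface. By Lemma \ref{lem:past_BTZ} every connected component of $\sing(\overline M)$ is an inextendible causal curve, hence meets the Cauchy-surface $\iota(\Sigma)\subset\iota(M)$ exactly once. As $\iota$ preserves types and the type is constant along each singular line, every singular line of $\overline M$ carries the type of its intersection with $\iota(M)$, which lies in $A$; thus $\overline M$ is a globally hyperbolic $\mass{A}$-manifold. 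Since $\iota$ is an embedding and an a.e. $\mass{}$-morphism, $\mass{A}$-maximality forces $\iota$ to be an isomorphism, i.e. $M$ is Cauchy-maximal.

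For $(i)\Rightarrow(ii)$, let $\varphi:M\to N$ be an embedding and a.e. $\mass{}$-morphism into a globally hyperbolic $\mass{A}$-manifold $N$, and let $\Sigma$ be a compact connected spacelike Cauchy-surface of $M$. It suffices to prove that $\varphi(\Sigma)$ is a Cauchy-surface of $N$: then $\varphi$ is a Cauchy-embedding, and since $M$ is Cauchy-maximal every Cauchy-embedding out of $M$ is an isomorphism, so $\varphi$ is an isomorphism and $(ii)$ holds. The heart of the matter is therefore that the connected compact spacelike hypersurface $\varphi(\Sigma)$ is a Cauchy-surface of $N$.

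I would prove this as follows. Applying the generalized Geroch and smooth spacelike splitting theorems to $N$, write $N\cong S\times\RR$ with $S$ a smooth spacelike Cauchy-surface and the fibres $\{x\}\times\RR$ inextendible causal curves, arranged so that the singular lines of $N$ lie among the fibres; let $\rho:N\to S$ be the projection. Because $\varphi(\Sigma)$ is spacelike it is transverse to the causal fibres at regular points, and where it crosses a singular line it is transverse to that causal line as well, the local picture being exactly the one controlled by Lemmas \ref{lem:intersect_number_compact} and \ref{lem:intersect_number_complete} and Corollary \ref{cor:intersect}; hence $\rho|_{\varphi(\Sigma)}:\varphi(\Sigma)\to S$ is a local homeomorphism. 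Being also proper (its domain is compact) and onto the connected $S$, it is a finite covering. To see it has degree one, consider the set of points of $\varphi(\Sigma)$ that are highest in the $\RR$-coordinate within their fibre: as distinct points of a fibre have distinct heights, this set is open and closed in $\varphi(\Sigma)$, hence equals the connected $\varphi(\Sigma)$, so $\rho|_{\varphi(\Sigma)}$ is injective. Thus $\rho|_{\varphi(\Sigma)}$ is a homeomorphism, $\varphi(\Sigma)$ is an achronal spacelike graph over the Cauchy-surface $S$, and therefore a Cauchy-surface of $N$. The main obstacle is precisely the behaviour at the singular crossings: the future cone field is not continuous along a singular line, so the transversality of $\varphi(\Sigma)$ to the fibres and the fact that a fibre meets $\varphi(\Sigma)$ exactly once near a crossing cannot be read off from naive continuity but must be extracted from the local models, which is what the intersection-number lemmas provide; the global degree-one step, by contrast, is soft once the covering property is in place.
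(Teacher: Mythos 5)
Your overall architecture for $(i)\Rightarrow(ii)$ coincides with the paper's: reduce everything to showing that the image of a compact Cauchy-surface is a Cauchy-surface of $N$, project it along the causal fibres of a spacelike splitting of $N$, obtain a finite covering of $S$ by properness, and kill the degree by an extremal-sheet argument (your clopen set of ``highest points'' is the same device as the paper's section $\psi(x)=\inf\,(\{x\}\times\RR)\cap\Sigma_M$, phrased with $\sup$ instead of $\inf$; both are correct). Your treatment of $(ii)\Rightarrow(i)$ is in fact more careful than the paper's, which disposes of this direction with ``by definition'': your check that the Choquet--Bruhat--Geroch extension $\overline M$ is still an $\mass{A}$-manifold --- via Lemma \ref{lem:past_BTZ} (singular lines are inextendible causal curves, hence meet the Cauchy-surface $\iota(\Sigma)\subset\iota(M)$) together with the constancy of the type along a connected singular line --- is exactly what makes that one-liner legitimate, and it is sound.

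The genuine gap is the final sentence of your $(i)\Rightarrow(ii)$: ``$\varphi(\Sigma)$ is an achronal spacelike graph over the Cauchy-surface $S$, and therefore a Cauchy-surface of $N$.'' Two nontrivial steps are compressed here, and they constitute the last third of the paper's proof. First, achronality is asserted but never proved: spacelike only means locally achronal, and a compact spacelike hypersurface in a globally hyperbolic spacetime need not be achronal in general; the paper derives it from the surjectivity of $\pi_1(\Sigma_M)\rightarrow\pi_1(N)$ (which your homeomorphism $\rho|_{\varphi(\Sigma)}$ does furnish, since $S\hookrightarrow N$ is a homotopy equivalence) combined with Lemma 45, p.~427 of O'Neill, whose proof must moreover be checked to survive the singularities. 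Second, even granting achronality, a graph meeting each fibre $\{x\}\times\RR$ exactly once is not yet a Cauchy-surface: an arbitrary inextendible causal curve is not a fibre, and in this singular setting one cannot quote off-the-shelf smooth results of the type ``compact achronal edgeless implies Cauchy'' --- the paper explicitly warns that standard causal arguments fail around types $\alpha=0$ (where $I^-$ of a singular point is empty) and $\alpha>2\pi$, both allowed here since $A\subset\RR_+$ is arbitrary. The paper closes this step by intersection theory: all inextendible causal curves of $N$ are homotopic with fixed ideal endpoints, a timelike fibre has intersection number $1$ with $\Sigma_M$, and causal curves cross the spacelike surface always with the same sign, whence exactly one crossing for every inextendible causal curve. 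A secondary inaccuracy: you invoke Lemmas \ref{lem:intersect_number_compact}, \ref{lem:intersect_number_complete} and Corollary \ref{cor:intersect} to control the singular crossings, but those lemmas concern spear neighbourhoods in $\mass{0}$ only, whereas $A$ may contain massive-particle types $\alpha>0$; at such crossings the local homeomorphism property of the projection follows, as in the paper, simply from transversality of the spacelike surface to the causal fibres, not from the spear-neighbourhood machinery.
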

\begin{proof} 
If $M$ is $\mass{A}$-maximal, in particular  $M$ is Cauchy-maximal by definition. 
Assume now $M$ is Cauchy-maximal, let $N$ a globally hyperbolic $\mass{A}$-manifold and  $\iota:M\rightarrow N$ be an embedding. We assume  without loss of generality that $M\subset N$ and that $M$ and $N$ are connected and orientable.

Let $\Sigma_M$ be a spacelike Cauchy surface of $M$  and let $N \simeq \Sigma_N\times \RR$ be a smooth spacelike splitting of $N$. Let $\pi$ be the natural projection $\pi:\Sigma_N\times \RR\rightarrow \Sigma_N$ and $T$ the natural projection $T:\Sigma_N\times \RR\rightarrow \R$. Since the splitting is spacelike, the gradient of $T$ is timelike whenever it is well defined ie on $N\setminus \sing(N)$; in particular the fibers above $\Sigma_N\setminus \pi(\sing(N))$ are timelike.

Since $\Sigma_M$ is spacelike, the (causal) fibers of $\pi$ are transverse to $\Sigma_M$ and the restriction of the projection $\phi:=\pi_{|\Sigma_M}:\Sigma_M\rightarrow \Sigma_N$ is a local diffeomorphism; furthermore $\Sigma_M$ is compact and locally compact so $\phi$ is proper; therefore, $\phi$ is a covering. Consider the map $\psi : \Sigma_N \rightarrow \Sigma_M,x \mapsto \inf~(\{x\}\times \RR) \cap \Sigma_M$; it is well defined since $\Sigma_M$ is compact and it is a section of $\phi$. Let $x\in \Sigma_N$, let $\U\subset \Sigma_M $ a neighborhood of $\psi(x)$   and let $\V \subset \Sigma_N$ a neighborhood of $x$ such that $\phi_{|\U}^{|\V}$ is an homeomorphism. 
Since $\Sigma_M$ is a submanifold and $\phi$ a covering, we can choose $\U$ small enough so that there exists $\varepsilon>0$ such that $$\forall x\in \U, \forall y\in \phi^{-1}(\phi(x)),\quad x\neq y \Rightarrow |T(x)-T(y)|\geq \varepsilon$$
Assume by contradiction that for all open $\V'\subset \V$ neighborhood of $x$, there exists a $ x'\in \V'$ such that $\psi_{|\V'}(x') \neq (\phi_{|\U}^{|\V})^{-1}(x')$; there thus exists a sequence $(y_n)_{n\in\N} \in \Sigma_M^\N $ such that $\phi(y_n)\xrightarrow{n\rightarrow +\infty} x$ and $$\forall n\in\N,\quad  T(y_n)\leq  T\circ (\phi_{|\U}^{|\V})^{-1}\circ \phi(y_n)-\varepsilon$$ for some $\varepsilon>0$ small enough.  By compactness of $\Sigma_M$ one may assume $(y_n)_{n\in\N}$ converges toward some $y\in \Sigma_M$. We have $\phi(y) = x$ and $T(y)\leq T(\psi(x))-\varepsilon$. This contradicts the definition of $\psi$. 
As a consequence, $\psi_{|\V'} = (\phi_{|\U'}^{|\V'})^{-1}$ for some $\V'$ neighborhood of $x$ small  enough and some $\U'$ neighborhood of $\psi(x)$ small enough; we thus deduce that $\psi$ is a continuous section of $\phi$ (in particular injective), a local homeomorphism and, since $\Sigma_M$ and $\Sigma_N$ are compact connected, a covering (in particular surjective) hence an homeomorphism and so is $\phi$.  In particular, $\phi_*:\pi_1(M)\rightarrow \pi_1(N)$ is onto;
from Lemma 45 p427 of \cite{oneil} (which proof applies {\it as is} to $\mass{\geq0}$-manifolds), $\Sigma_M$ is achronal.

Since $\sing(N)$ is a 1-submanifold of $N$, then $\Sigma_N\setminus \pi(\sing(N))$ is non-empty and the fiber of $\pi$ above some $p \in \Sigma_N\setminus \pi(\sing(N))$ is timelike. In particular, there exists an inextendible timelike curve in $N$.

On the one hand, inextendible causal curves of $N$ are all homotopic (with  fixed end points "at future and past infinity"); on the other hand the intersection number of an inextendible timelike curve with $\Sigma_M$ is $1$. Furthermore, a future causal curve always intersects $\Sigma_M$ in the same direction. Therefore, 
 by standard intersection theory results \cite{MR2680546}, every inextendible causal curves intersect $\Sigma_M$ exactly once. 
$\Sigma_M$ is thus a Cauchy-surface.

Finally, $N$ is a Cauchy extension of $M$ and by Cauchy-maximality of $M$, we have $M=N$.

\end{proof}

We now prove the main result of this section.
\begin{propou}\label{prop:abs_max} We give ourselves $A\subset \RR_+$ and $A^*:=A\setminus \{0\}$.

 Let $M$ be a globally hyperbolic Cauchy-compact $\mass{A}$-manifold. 
 If $M$ is Cauchy-maximal then $\reg_{>0}(M)$ is globally hyperbolic $\mass{A^*}$-maximal and Cau\-chy-complete.
\end{propou}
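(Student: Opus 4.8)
The plan is to establish the three conclusions separately, reusing the spear toolkit of Section~\ref{sec:surgery} and the gluing and causality Lemmas of Section~\ref{sec:gluing}. Global hyperbolicity of $\reg_{>0}(M)$ is immediate from Remark~\ref{rem:reg_GH}, so the real work lies in Cauchy-completeness and $\mass{A^*}$-maximality. Throughout I fix a smooth compact spacelike Cauchy surface $\Sigma$ of $M$ and, for each BTZ line $\Delta_i$ of $M$, a spear neighborhood provided by Lemma~\ref{lem:tube}.

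For Cauchy-completeness I would build a complete spacelike Cauchy surface of $\reg_{>0}(M)$ by surgery on $\Sigma$. Using Lemma~\ref{lem:add_BTZ_param} I may choose the spear $\S_i$ around each $\Delta_i$ so that $\Sigma$ meets the shaft along the graph of a smooth positive function over $\mathbb D_R$; I then discard the compact disc $\Sigma\cap\S_i$ and glue in, via Lemma~\ref{lem:curve_extension}(ii), a spacelike graph over $\mathbb D_R^*$ extending the same boundary circle $\C$ and running off to $+\infty$ near the cusp. By Lemma~\ref{lem:surface_complete} this replacement is spacelike and metrically complete, so the resulting surface $\widehat\Sigma\subset\reg_{>0}(M)$ is complete. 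That $\widehat\Sigma$ is a Cauchy surface I would check by a counting argument: given an inextendible causal curve of $\reg_{>0}(M)$, extend it to a causal curve $c$ of $M$ (picking up BTZ endpoints if needed), which meets $\Sigma$ exactly once; outside the spears $\widehat\Sigma$ and $\Sigma$ coincide, while inside each blunt spear Corollary~\ref{cor:intersect} matches the blunt count (governed by Lemma~\ref{lem:intersect_number_complete}) against the count for $\Sigma$ in the full spear. Hence $\widehat\Sigma$ is met exactly once.

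For $\mass{A^*}$-maximality I would argue by gluing. Let $\varphi\colon\reg_{>0}(M)\to N$ be an embedding and a.e.\ $\mass{}$-morphism into a globally hyperbolic $\mass{A^*}$-manifold $N$. Since $\reg_{>0}(M)$ carries no BTZ line, $\sing_0(N)=\emptyset=\varphi(\sing_0(\reg_{>0}(M)))$ and $N$ is globally hyperbolic, so Lemma~\ref{lem:extension_hausdorff} applies with $M_0=\reg_{>0}(M)$, $M_1=M$, $M_2=N$, producing a Hausdorff $\mass{A}$-manifold $M_3:=M\cup_{\reg_{>0}(M)}N$. As a set $M_3=\sing_0(M)\sqcup N$, whence $\reg_{>0}(M_3)=N$ and $M_3$ is a BTZ-extension of $N$; in particular $M_3$ is causal by Corollary~\ref{cor:GH_causal}. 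The inclusion $M\hookrightarrow M_3$ is an embedding and a.e.\ $\mass{}$-morphism, so \emph{once $M_3$ is known to be globally hyperbolic} the $\mass{A}$-maximality of $M$ — equivalent to its Cauchy-maximality by Proposition~\ref{prop:cauchy_abs_max} — forces $M\to M_3$ to be an isomorphism. Then $N=\reg_{>0}(M_3)=\reg_{>0}(M)$, i.e.\ $\varphi$ is onto, hence an isomorphism.

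The crux, and the step I expect to be the main obstacle, is the global hyperbolicity of $M_3$. Since $M_3$ is already causal it suffices, by the Geroch-type theorem for $\mass{\geq0}$-manifolds recalled above, to show that diamonds $J^+(p)\cap J^-(q)$ are compact. Because $N$ is globally hyperbolic its image is causally convex in $M_3$ by Lemma~\ref{lem:BTZext_caus_conv}(b); this settles the case $p,q\in N$ and, via Lemma~\ref{lem:past_BTZ}, lets me decompose any causal curve of $M_3$ into a BTZ part (in $\sing_0(M)$, in the past) followed by a regular part in $N$. The delicate case is $p\in\sing_0(M)$, $q\in N$: working in a spear chart around the BTZ line through $p$, a future causal curve issued from $p$ either runs up to the cusp or exits the shaft $\{\r=R\}$ at arbitrarily large $\tau$, so the BTZ segment $\{s\in\Delta:p\leq s\leq q\}$ is a compact segment and the entry points into $N$ of causal curves from $[p,q]\cap\Delta$ form a compact set $E$; then $J^+(p)\cap J^-(q)\cap N\subset J^+_N(E)\cap J^-_N(q)$ is compact by global hyperbolicity of $N$. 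The care required here — that $M$ is \emph{not} causally convex in $M_3$, so one cannot simply transport ``$\Sigma$ is Cauchy in $M$'', and that the spear chart must be used to bound the causal future of a BTZ point inside the glued manifold — is what makes this the technical heart of the argument.
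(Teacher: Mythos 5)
Your overall architecture coincides with the paper's proof step for step: global hyperbolicity of $\reg_{>0}(M)$ via Remark~\ref{rem:reg_GH}; Cauchy-completeness by exactly the surgery the paper performs (Lemma~\ref{lem:tube}, then Lemma~\ref{lem:add_BTZ_param}, then the complete graph of Lemma~\ref{lem:curve_extension}, with the Cauchy property checked through Remark~\ref{rem:curve_spear} and Corollary~\ref{cor:intersect}); and maximality by gluing $N$ to $M$ along $\reg_{>0}(M)$ via Lemma~\ref{lem:extension_hausdorff}, reducing everything to global hyperbolicity of the pushout through Proposition~\ref{prop:cauchy_abs_max}, getting causality from Corollary~\ref{cor:GH_causal}, and treating diamonds by cases with the mixed case $p\in\sing_0(M)$, $q\in N$ as the crux. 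Up to that point the proposal is correct and faithful to the paper.

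In that crux, however, there is a genuine gap. Your covering of the diamond is $\left([p,q]\cap\Delta\right)\cup\left(J^+_N(E)\cap J^-_N(q)\right)$, with $E$ the ``entry points into $N$'' of causal curves from the BTZ segment. First, such entry points do not exist as stated: a causal curve leaving the line has no first regular point, so the only compact candidate your construction actually produces is the set of crossings with the shaft $\{\r=R\}$. With that choice the inclusion $J^+(p)\cap J^-(q)\cap N\subset J^+_N(E)\cap J^-_N(q)$ is \emph{false}: in the spear chart the metric $-2\d\tau\d\r+\d\r^2+\r^2\d\theta^2$ forces $\dot\tau\dot\r\geq 0$ along causal curves, so $\r$ is non-decreasing along future causal ones (the paper records this monotonicity in the proof of Lemma~\ref{lem:tube}); hence the regular diamond points with $\r<R$ --- and the diamond contains such points accumulating on the BTZ segment --- are never in the causal future of the shaft. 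They lie in $J^-_N(q)$, but global hyperbolicity of $N$ alone cannot compactify this region, precisely because in $N$ the cusp end left by the excised line is non-compact. The paper closes this hole with a second compact piece on the $M$ side: taking $\C$ the head--shaft circle and $K'=J^+(\C)\cap J^-(q)\cap \partial\S$ (compact because $J^+_N(\C)\cap J^-_N(q)$ is compact, by global hyperbolicity of $N$, compactness of $\C$ and causal convexity of $N$ from Lemma~\ref{lem:BTZext_caus_conv}), and splitting any past causal curve from $q$ to $p$ at its last crossing of $K'$, it obtains $J^+(p)\cap J^-(q)\subset \left[J^-_N(q)\cap J^+_N(K')\right]\cup\left[J^-_M(K')\cap J^+_M(p)\right]$, where the second term --- which is what captures the in-spear part of the diamond, including your BTZ segment --- is compact by global hyperbolicity of $M$. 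Your hard case never invokes global hyperbolicity of $M$, and that missing ingredient is not cosmetic: without it the portion of the diamond inside the spear is simply unaccounted for.
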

\begin{corou}\label{cor:abs_max}
 Let $M$ be a globally hyperbolic Cauchy-compact $\mass{0}$-mani\-fold. 
 If $M$ is Cauchy-maximal then $\Reg(M)$ is globally hyperbolic Cauchy-complete and absolutely maximal in the sense of Barbot \cite{barbot_globally_2004}.
\end{corou}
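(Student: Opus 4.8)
The plan is to establish the three conclusions in turn. Global hyperbolicity of $\reg_{>0}(M)$ is nothing but Remark~\ref{rem:reg_GH}, once one notes that removing the BTZ lines leaves an $\mass{A^*}$-manifold. For Cauchy-completeness I would manufacture, by surgery, a complete spacelike Cauchy surface of $\reg_{>0}(M)$ from a compact one of $M$. Fix a compact spacelike Cauchy surface $\Sigma$ of $M$. Each BTZ line is an inextendible causal curve (Lemma~\ref{lem:past_BTZ}) transverse to the spacelike $\Sigma$, so $\sing_0(M)\cap\Sigma$ is finite and there are finitely many BTZ lines. As $M$ is globally hyperbolic and Cauchy-maximal, Lemma~\ref{lem:tube} supplies a spear neighborhood around each; shrinking radii makes them pairwise disjoint, and Lemma~\ref{lem:add_BTZ_param} lets me arrange that $\Sigma$ meets each spear $\S_\Delta$ inside its shaft as the graph of a smooth positive $\tau_\Sigma\colon\mathbb D_R\to\RR_+^*$. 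I then excise the interior disks and, over each punctured disk $\mathbb D_R^*$, reglue the complete spacelike graph extending the boundary data $\tau_\Sigma|_{\partial\mathbb D_R}$ furnished by Lemma~\ref{lem:curve_extension}(ii). The resulting $\Sigma^*$ is a piecewise-smooth spacelike surface contained in $\reg_{>0}(M)$, since the new pieces run to $+\infty$ near the punctures and so avoid the BTZ lines, and it is complete, being a compact core glued along the circles $\C_\Delta$ to complete ends.

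To see $\Sigma^*$ is a Cauchy surface I would show that every inextendible causal curve $c^*$ of $\reg_{>0}(M)$ is the regular part $\reg_{>0}(c)$ of an inextendible causal curve $c$ of $M$: future-inextendibility transfers because a regular future causal curve cannot reach a BTZ line, and past-inextendibility can only fail when $c^*$ limits onto a BTZ point, which one reattaches along the BTZ line below it (Lemma~\ref{lem:past_BTZ}). Since $\Sigma$ is Cauchy, $|c\cap\Sigma|=1$. Outside the spears $\Sigma^*=\Sigma$ and the BTZ part of $c$ misses the BTZ-free set $\Sigma\setminus\bigcup_\Delta\S_\Delta$; inside each spear, Corollary~\ref{cor:intersect}—whose hypotheses hold by Remark~\ref{rem:curve_spear} together with the blow-up $\tau\to+\infty$ of Lemma~\ref{lem:surface_complete}—equates the intersection counts of $c$ and $c^*=\reg_{>0}(c)$ with $\Sigma$ and $\Sigma^*$. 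Hence $|c^*\cap\Sigma^*|=|c\cap\Sigma|=1$, so $\Sigma^*$ is a complete Cauchy surface and $\reg_{>0}(M)$ is Cauchy-complete.

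For $\mass{A^*}$-maximality let $\varphi\colon\reg_{>0}(M)\to N$ be an embedding and a.e. $\mass{}$-morphism into a globally hyperbolic $\mass{A^*}$-manifold $N$. Since $\sing_0(\reg_{>0}(M))=\emptyset=\sing_0(N)$ and $N$ is globally hyperbolic, Lemma~\ref{lem:extension_hausdorff} shows the gluing $M_3:=(M\coprod N)/\reg_{>0}(M)$ is a $\mass{A}$-manifold; moreover $N\hookrightarrow M_3$ is a BTZ-extension of the globally hyperbolic $N$, so $M_3$ is causal by Corollary~\ref{cor:GH_causal}. Granting that $M_3$ is in fact globally hyperbolic, the natural inclusion $M\hookrightarrow M_3$ is an embedding a.e. $\mass{}$-morphism between $\mass{A}$-manifolds, so the $\mass{A}$-maximality of $M$—which holds by Proposition~\ref{prop:cauchy_abs_max}, $M$ being globally hyperbolic, Cauchy-compact and Cauchy-maximal—forces $M=M_3$. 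As the only identifications defining $M_3$ are along $\reg_{>0}(M)$, this in turn forces $\varphi$ to be onto, and an onto embedding a.e. $\mass{}$-morphism is a local homeomorphism, hence an isomorphism (Proposition~\ref{prop:isom} and its corollaries). The absolute-maximality statement of Corollary~\ref{cor:abs_max} then follows from Barbot's framework \cite{barbot_globally_2004}.

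I expect the main obstacle to be precisely the global hyperbolicity of $M_3$, that is, the compactness of its causal diamonds once the BTZ lines of $M$ have been reattached to $N$. This is where the causal study of BTZ-extensions (Lemma~\ref{lem:BTZext_caus_conv}) and the Cauchy-completeness established above must be combined: one controls an inextendible causal curve of $M_3$ by splitting off its BTZ past part (Lemma~\ref{lem:past_BTZ}) and confining its regular part to $N$, and uses the completeness of $\Sigma^*$ to show that $\varphi(\reg_{>0}(M))$ is causally convex in $N$ so that diamonds remain compact. The delicate point is that $\varphi$ is not assumed to be a Cauchy-embedding, so $N$ may a priori extend $\reg_{>0}(M)$ in the regular directions; ruling this out is exactly what the completeness and maximality of $\reg_{>0}(M)$ are meant to achieve.
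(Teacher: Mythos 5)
Your first two steps are correct and coincide with the paper's proof of Proposition~\ref{prop:abs_max}: global hyperbolicity of $\reg_{>0}(M)$ is Remark~\ref{rem:reg_GH}, and the complete Cauchy surface $\Sigma^*$ is built exactly as in the paper (Lemma~\ref{lem:tube}, Lemma~\ref{lem:add_BTZ_param}, the complete graph of Lemma~\ref{lem:curve_extension}, with the Cauchy property checked via Remark~\ref{rem:curve_spear}, Lemma~\ref{lem:surface_complete} and Corollary~\ref{cor:intersect}); your observation that inextendible causal curves of $\reg_{>0}(M)$ are regular parts of inextendible causal curves of $M$, using the decomposition of Lemma~\ref{lem:past_BTZ}, is a legitimate way to make the paper's terse citation precise. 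The set-up of the maximality step is also the paper's: glue $M_3:=(M\coprod N)/\reg_{>0}(M)$ via Lemma~\ref{lem:extension_hausdorff}, invoke $\mass{A}$-maximality of $M$ from Proposition~\ref{prop:cauchy_abs_max}, and conclude $\varphi$ is onto. The gap is that the one genuinely hard point, global hyperbolicity of $M_3$, is left as ``granting that'', and this is precisely the substance of the paper's argument, not a routine verification.

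Moreover, the strategy you sketch to fill it would not go through as stated. Causal convexity of $\varphi(\reg_{>0}(M))$ in $N$ is not available: Lemma~\ref{lem:BTZext_caus_conv} applies only to BTZ-extensions, and $\reg_{>0}(M)\hookrightarrow N$ is not one, since the complement of the image in $N$ contains no BTZ points ($N$ is a $\mass{A^*}$-manifold); an embedded globally hyperbolic open subset need not have causally convex image. Nor does the completeness of $\Sigma^*$ enter the paper's maximality proof. What the paper actually does, after getting causality of $M_3$ from Corollary~\ref{cor:GH_causal}, is a case analysis on diamonds $J^+(p)\cap J^-(q)$: if $p\in N$ one uses causal convexity of $N$ in $M_3$, which \emph{is} given by Lemma~\ref{lem:BTZext_caus_conv}(b) because $N\rightarrow M_3$ is a BTZ-extension of the globally hyperbolic $N$; if $q\in\sing_0(M_3)$ the diamond is a BTZ segment by Lemma~\ref{lem:past_BTZ}; and in the remaining mixed case $p\in\sing_0(M)$, $q\in N$, one chooses a spear neighborhood $\S$ of vertex $p$ with $q\notin\S$ (this is where Cauchy-maximality of $M$ is used, via Lemma~\ref{lem:tube}), notes that every past causal curve from $q$ to $p$ must cross the compact set $K':=J^+(\C)\cap J^-(q)\cap\partial\S\subset M$, where $\C$ is the circle separating head and shaft, and obtains $J^+(p)\cap J^-(q)\subset\left[J^-_N(q)\cap J^+_N(K')\right]\cup\left[J^-_M(K')\cap J^+_M(p)\right]$, a union of two compacta by global hyperbolicity of $N$ and of $M$ separately. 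Without this splitting argument (or an equivalent), the $\mass{A^*}$-maximality, hence the absolute maximality in Barbot's sense, remains unproven.
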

 
\begin{proof}[Proof of Proposition \ref{prop:abs_max}]
 Assume $M$ is Cauchy-maximal and denote $M\setminus \sing_0(M)$ by $M^*$.
 Let $\Sigma$ be a Cauchy-surface of $M$, for each BTZ line $\Delta$ of $M$, one can apply successively Lemma \ref{lem:tube} to show $\Delta$ admits a spear neighborhood, Lemma \ref{lem:add_BTZ_param} to construct a spear neighborhood $\mathcal S_\Delta$ around $\Delta$ whose vertex in the past of $\Sigma$ and such that $\Sigma\cap \mathcal S_\Delta$ is a  smooth spacelike circle in the shaft of $\mathcal S_\Delta$; and finally the point  $(ii)$ of Lemma \ref{lem:curve_extension} to extend this circle to obtain a Cauchy-surface of $\mathcal S_\Delta \setminus \Delta$.   
 Since $M$ is Cauchy-compact, it admits finitely many BTZ-lines and one can take the spear neighborhoods $\mathcal S_\Delta$ disjoint with $\Delta$ running across the BTZ lines of $M$. We can then construct a surface $\Sigma^*$ equal to $\Sigma$ outside the spear neighborhoods $\mathcal S_\Delta$ and equal to the graph obtained by the point $(ii)$ of Lemma \ref{lem:curve_extension} inside each $\mathcal S_\Delta$. 

  The surface $\Sigma^*$ is metrically complete and, by Remark \ref{rem:curve_spear} and Corollary \ref{cor:intersect} applied to the in each spear neighborhood, a Cauchy-surface of $M^*$. 

  We now prove $M^*$ is $\mass{A^*}$-maximal. Let $N$ be a globally hyperbolic $\mass{A^*}$-manifold and let $\iota$ be an embedding $M^*\xrightarrow{~\iota~} N$.  Since $0\notin A^*$, $\sing_0(M^*)=\sing_0(N)=\emptyset$; then, by Lemma \ref{lem:extension_hausdorff}, the pushforward $\overline M$ of $\iota$ and the natural inclusion $M^*\rightarrow M$ is a $\mass{A}$-manifold.  By Proposition \ref{prop:cauchy_abs_max}, $M$ is $\mass{A}$-maximal so  we only need to prove that $\overline M$ is globally hyperbolic to obtain that the embedding $M\rightarrow \overline M$ is surjective and thus that $\iota$ is surjective and an isomorphism. In the following argumentation, the past and future $J^\pm$ are taken in $\overline M$ if not specified otherwise.

 \begin{itemize}
  \item Notice that $N\rightarrow \overline M$ is a BTZ-extension thus, by Corollary \ref{cor:GH_causal}, $\overline M$ is causal. 
  \item Let $p\leq q$ in $\overline M$. If $p\in N$, in particular $p\notin \sing_0(\overline M)$ and $q\notin \sing_0(\overline M)$; therefore $q\in N$ and, by Lemma \ref{lem:BTZext_caus_conv},
$J^+(p)\cap J^-(q) = J^+_N(p) \cap J^-_N(q)$ which is compact. 
We thus assume $p\in \sing_0(\overline M)=\sing_0(M)$. If $q$ is a BTZ point then, by Lemma \ref{lem:past_BTZ}, $J^+(p)\cap J^-(q) = J^+_M(p) \cap J^-_M(q)$ which is compact.

 We are then left with the case $p\in \sing_0(M)$ and $q\in N$. 
Consider a spear neighborhood $\mathcal S$ in $M$ of vertex $p$ such and let $\mathcal C$ be common boundary of the shaft and the head of this spear neighborhood. The spear $\S$ can be chosen in such a way that $q\notin \S$.

Since no past causal curve can enter $\S$ via its head and $J^+(\C)$ contains the boundary of the shaft of $\S$; then define $K := J^+(\mathcal C)\cap J^-(q)$. Since $\C\subset N$, by causal convexity of $N$, we have
 $$K := J^+(\mathcal C)\cap J^-(q)=J^+_{N}(\mathcal C)\cap J^-_{N}(q).$$ By global hyperbolicity of $N$ and compactness of $\C$, we deduce that $K$ is compact. Define $K':=K\cap \partial S\subset M$, since $\partial S$ is closed, then $K'$ is compact.   Therefore  $J^-_M(K')\cap J^+_M(p)$ is compact by global hyperbolicity of $M$. 
Now, consider a past causal curve $c:[0,1]\rightarrow \overline M$ from $q$ to $p$. Let $t_0 = \max c^{-1}(K')$, since $q\notin \S$, $t_0$ is well defined and positive.
On the one hand, $c([t_0,1])\subset J^-(q)\cap J^+(p)\cap \S\subset M$ thus 
$c([t_0,1]) \subset J^-_M(K')\cap J^+_M(p)$. On the other hand, $c(t_0)\in K'$ so $c([0,t_0])\subset J^+_N(K')\cap J^-_N(q)$ which compact.
Finally,
$$ J^+(p)\cap J^-(q) \subset \left[J^-_N(q)\cap J^+_N(K')\right] ~\cup~  \left[J^-_M( K' )\cap J^+_M(p)\right],$$
however, the reverse inclusion is trivial and
each term of the union is compact; $ J^+(p)\cap J^-(q) $ is thus compact. 
 \end{itemize}

\end{proof}

\section{The holonomy of a globally hyperbolic Cau\-chy-compact Cauchy-maximal $\mass{0}$-manifold}
\label{sec:holonomy}
Let $\Sigma$ be a genus $g$ closed surface, $S\subset \Sigma$ be a marking of $\Sigma$ with $\#S=s$ and $2g-2+s>0$. As before, we write $\Sigma^* = \Sigma \setminus S$. Let $M$ be a globally hyperbolic Cauchy-compact Cauchy-maximal $\mass{0}$-manifold with a Cauchy-surface homeomorphic to $\Sigma$ and with exactly $s$ BTZ lines. These notations will be used throughout the section. Our objective is to obtain a characterise the holonomy of such a manifold $M$.


\subsection{Admissible representations and holonomy}
\label{sec:admissible_rep}

Define $M^*:=\reg(M)$ and let $\rho$ be the holonomy of $M^*$. Let $a_1,\cdots,a_g$, $b_1, \cdots,b_g$, $c_1,\cdots,c_s$ 
be generators of $\pi_1(\Sigma^*)$  such that $(a_i,b_i)_{i\in \lsem 1,g\rsem}$ are interior each associated to a handle of $\Sigma^*$ and $(c_i)_{i\in \lsem 1,s\rsem}$ are peripheral each associated to a puncture. 
We split $\rho$  into its linear part $\rho_L: \pi_1(\Sigma)\rightarrow \GG$ and its translation $\rho$-cocycle $\tau: \pi_1(\Sigma)\rightarrow \mass{0}$.

\begin{defou}[Tangent translation part] Let $\phi$ be an affine parabolic isometry of $\mass{}$, we note $\phi_L$ its linear part and $\tau_\phi$ its translation part.
We say $\tau_\phi$ is tangent if $\tau_\phi$ is normal to the direction of line of fixed points of $\phi_L$.
 
\end{defou}
\begin{defou}[Admissible representation ] \ \\ Let $\Gamma = \left\langle a_1,b_1,\cdots,a_g,b_g, c_1,\cdots,c_s \left |  \prod_{i=1}^g [a_i,b_i] \prod_{j=1}^s c_j=1\right.\right\rangle$
be a marked surface group.
A marked representation $\rho:\Gamma \rightarrow \isom(\E^{1,2})$ is admissible if 
\begin{itemize}
 \item its linear part $\rho_L: \Gamma \rightarrow \GG$ is discrete and faithful;
 \item $\rho_L(c_i)$ is parabolic for all $i\in \lsem 1,s\rsem$;
 \item its translation part $\tau_\rho(c_i)$ is tangent for every $i\in \lsem 1,s\rsem$.
\end{itemize}
\end{defou}

By Proposition \ref{prop:abs_max}, $M^*$ is a globally hyperbolic Cauchy-complete $\mass{}$-maximal $\mass{}$-manifold with anabelian fundamental group; Barbot \cite{barbot_globally_2004} proved that the linear part $\rho_L$ of the holonomy of such a manifold is discrete and faithful. In particular, since in addition $\pi_1(\Sigma)$ is finitely generated, the holonomy of interior generators are hyperbolic \cite{MR1177168} and, again from Barbot \cite{barbot_globally_2004} the holonomy of each of the $s$ peripheral generators is either parabolic or hyperbolic.
 Since $M$ admits exactly $s$ BTZ lines, the holonomy of the peripheral generators is given by the holonomy around the BTZ lines which is parabolic. Furthermore, Barbot also shows (see \cite{barbot_globally_2004}  section 7.3) that if the holonomy of some loop is parabolic, then  the translation part is tangent. 
This discussion can be summarized.

 \begin{propou}  
The holonomy of a globally hyperbolic Cauchy-compact $\mass{0}$-mani\-fold is admissible.
 \end{propou}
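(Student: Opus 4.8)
The plan is to reduce the statement entirely to Barbot's classification of Cauchy-complete Cauchy-maximal flat spacetimes, after first placing $M^* = \reg(M)$ into his framework. The first step is to apply Proposition~\ref{prop:abs_max} with $A = \{0\}$, so that $A^* = \emptyset$: this shows that $M^*$ is a genuine (regular) globally hyperbolic $\mass{}$-manifold which is Cauchy-complete and $\mass{}$-maximal, equivalently absolutely maximal in the sense of Barbot by Corollary~\ref{cor:abs_max}. Next I would identify the topology and fundamental group: by Lemma~\ref{lem:past_BTZ} each of the $s$ BTZ lines is an inextendible causal curve, and removing these properly embedded lines from $M \cong \Sigma \times \RR$ yields $M^* \cong \Sigma^* \times \RR$ where $\Sigma^* = \Sigma \setminus S$. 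Hence $\pi_1(M^*) = \pi_1(\Sigma^*) = \Gamma$ with the marked presentation above, the peripheral classes $c_i$ being precisely the homotopy classes of small loops encircling the $i$-th singular line.

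With $M^*$ so identified, and because $2g - 2 + s > 0$ forces $\Gamma$ to be anabelian, Barbot's structural theorem \cite{barbot_globally_2004} applies and yields at once that the linear part $\rho_L \colon \Gamma \to \GG$ is discrete and faithful, which is the first admissibility condition. For the second condition I would argue in two stages. Since $\Gamma$ is finitely generated and $\rho_L$ is discrete and faithful into $\GG \simeq \mathrm{PSL}_2(\RR)$, the interior generators $a_i, b_i$ necessarily have hyperbolic image \cite{MR1177168}, while Barbot's analysis shows each peripheral $\rho_L(c_i)$ is either parabolic or hyperbolic; to decide the latter dichotomy I would compute the holonomy of $c_i$ in the local BTZ model, where $c_i$ is a small loop around the singular axis of $\mass{0}$ and the holonomy of the induced $\mass{}$-structure on $\reg(\mass{0})$ is parabolic, as recorded in the Preliminaries. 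Since $M$ has exactly $s$ BTZ lines, one for each puncture, every $\rho_L(c_i)$ is parabolic.

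The third condition, tangency of the translation parts $\tau_\rho(c_i)$, is again supplied by Barbot \cite{barbot_globally_2004} (section~7.3), who shows that whenever the holonomy of a loop in such a spacetime is parabolic its translation part is normal to the direction of the fixed line of its linear part, i.e.\ tangent in the sense defined above. Assembling the three conditions shows $\rho$ is admissible. The step I expect to carry the real weight is not any computation of mine but the correct invocation of Barbot's results: all the genuinely hard analysis --- discreteness and faithfulness of the linear holonomy, the parabolic/hyperbolic dichotomy for peripheral elements, and the tangency statement --- lives in \cite{barbot_globally_2004}, so the main obstacle is verifying that $M^*$, after Proposition~\ref{prop:abs_max}, really satisfies the hypotheses of that work (Cauchy-completeness, maximality, anabelian $\pi_1$) and that the peripheral generators $c_i$ are correctly matched with the loops whose holonomy is forced to be parabolic by the presence of an actual BTZ line.
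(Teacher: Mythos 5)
Your proposal is correct and follows essentially the same route as the paper: Proposition~\ref{prop:abs_max} (with Corollary~\ref{cor:abs_max}) places $M^*=\reg(M)$ in Barbot's framework, discreteness and faithfulness of $\rho_L$ plus the peripheral dichotomy come from \cite{barbot_globally_2004}, parabolicity of the $\rho_L(c_i)$ is forced by the holonomy around the $s$ BTZ lines, and tangency of $\tau_\rho(c_i)$ is Barbot's section~7.3. Your extra care in identifying $M^*\cong\Sigma^*\times\RR$ and matching the peripheral generators to loops around the singular lines only makes explicit what the paper leaves implicit.
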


  The marked representation $\rho_L : \Gamma\rightarrow \GG$ is thus the holonomy of a unique marked finite volume complete hyperbolic surface with exactly $s$ cusps; from Troyanov and Hulin \cite{MR1166122}, $\rho_L$ is thus a point of the Teichmüller space of $\Sigma^*$.
Denote by $\mathfrak{so}(1,2)$  the Lie algebra of $\mathrm{SO}_0(1,2)$.
From Goldman \cite{MR762512}, we learn that the tangent space  of Teichmüller space above $\rho_L$ is the set of cocycles $\tau\in H^1(\rho_L,\mathfrak{so}(1,2))$ 
such that for each $i\in \lsem 1,s\rsem$, $\tau(x_i)$ is normal to the line of fixed points of $\rho_L(c_i)$ for the Killing bilinear form.
 One notices  that the Killing bilinear form of $\mathfrak{so}(1,2)$ is of signature $(1,2)$ so that $H^1(\rho_L,\mathfrak{so}(1,2))$ can be understood as the set of $\tau:
 \pi_1(\Sigma^*)\rightarrow \mass{}$ such that $\rho_L+\tau$ is an affine representation  of $\pi_1(\Sigma^*)$ (see for instance \cite{MR2499272} section 3.8). Hence, the condition given by Goldman on a $\tau$ in the tangent space above $\rho_L$  is equivalent to the statement that $\tau$ is tangent. 

\begin{propou}[\cite{MR762512,MR1166122}]
 The tangent bundle of the Teichmüller space of $\Sigma^*$ identifies with the set of equivalence classes of admissible representations of $\pi_1(\Sigma^*)$ into $\isom(\mass{})$.
\end{propou}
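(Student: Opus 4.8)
The plan is to exhibit an explicit bijection by decomposing each admissible representation $\rho$ into its linear part $\rho_L$ and its translation cocycle $\tau_\rho$, and to match these two pieces with the base and the fibre of $T\Teich(\Sigma^*)$ respectively. First I would send $\rho$ to the pair $(\rho_L,[\tau_\rho])$. By the admissibility hypotheses $\rho_L:\Gamma\rightarrow\GG$ is discrete, faithful and parabolic on each peripheral generator, so by the cited theorem of Troyanov and Hulin \cite{MR1166122} its conjugacy class is exactly a point of $\Teich(\Sigma^*)$; this handles the base. For the fibre, $\tau_\rho$ is a $1$-cocycle in $Z^1(\rho_L,\mass{})$, and since the adjoint action of $\GG$ on $\mathfrak{so}(1,2)$ equipped with its Killing form is isomorphic to the standard action of $\GG$ on $\mass{}$ (as recalled in the discussion above), this is the same datum as an element of $Z^1(\rho_L,\mathfrak{so}(1,2))$. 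The tangency of $\tau_\rho(c_i)$ is precisely Goldman's normality condition \cite{MR762512}, so $[\tau_\rho]$ lies in the subspace of $H^1(\rho_L,\mathfrak{so}(1,2))$ that Goldman identifies with $T_{\rho_L}\Teich(\Sigma^*)$.

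Next I would verify that this assignment descends to equivalence classes and yields a genuine point of the \emph{bundle}. Conjugating $\rho$ by $(A,v)\in\isom(\mass{})=\GG\ltimes\mass{}$ replaces $\rho_L$ by $A\rho_L A^{-1}$ and replaces $\tau_\rho$ by $A_*\tau_\rho+(1-A\rho_L(\cdot)A^{-1})v$, where $A_*\tau_\rho:=A\circ\tau_\rho$ is a cocycle for the new linear part. The conjugacy class of $\rho_L$ is unchanged, so the base point is well defined; the term $(1-A\rho_L(\cdot)A^{-1})v$ is a coboundary and dies in cohomology, while the remaining twist $\tau_\rho\mapsto A_*\tau_\rho$ is exactly the canonical identification between the two cocycle descriptions of the single tangent space $T_{\rho_L}\Teich(\Sigma^*)=T_{A\rho_L A^{-1}}\Teich(\Sigma^*)$. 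Hence $(\rho_L,[\tau_\rho])$ defines a well-defined point of $T\Teich(\Sigma^*)$, and it is precisely this intertwining of the two quotients — conjugacy of linear parts on the one hand, coboundaries twisted by $A_*$ on the other — that makes the fibrewise identification independent of the chosen linear representative.

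Finally I would establish bijectivity. Surjectivity is immediate: given $\rho_L\in\Teich(\Sigma^*)$ and $X\in T_{\rho_L}\Teich(\Sigma^*)$, choose a cocycle $\tau$ representing $X$ under Goldman's identification; the tangency condition holds by construction, so $\rho_L+\tau$ is admissible and maps to $(\rho_L,X)$. For injectivity, suppose two admissible representations have the same image; after conjugating by an element of $\GG$ we may assume their linear parts coincide, and then their translation cocycles are cohomologous, so their difference is a coboundary $(1-\rho_L(\cdot))v$, which exhibits the two representations as conjugate by the pure translation $(1,v)$. The only delicate point, and the place where care is genuinely needed, is the bookkeeping of the two equivalence relations: one must check that conjugacy in $\isom(\mass{})$ restricts on linear parts to exactly the relation defining $\Teich(\Sigma^*)$ and induces on cocycles exactly the coboundary relation together with the linear twist above. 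It is this compatibility, rather than any single deep computation — the deep inputs being supplied by Goldman and by Troyanov--Hulin — that carries the proof.
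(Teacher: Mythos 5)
Your proposal is correct and takes essentially the same route as the paper: the paper likewise splits $\rho$ into its linear part and translation cocycle, invokes Troyanov--Hulin to identify the conjugacy class of $\rho_L$ with a point of $\Teich(\Sigma^*)$, invokes Goldman to identify the fibre with cohomology classes in $H^1(\rho_L,\mathfrak{so}(1,2))$ satisfying the normality condition on peripheral generators, and uses the signature-$(1,2)$ Killing form to convert these into translation cocycles valued in $\mass{}$ for which Goldman's condition becomes exactly tangency. The only difference is one of detail, not of method: you make explicit the conjugation bookkeeping (the formula $\tau_\rho\mapsto A_*\tau_\rho+(1-A\rho_L(\cdot)A^{-1})v$, well-definedness on equivalence classes, and bijectivity) that the paper leaves implicit behind its citations.
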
 

We can sum up these two properties as follow.

\begin{propou}  \label{prop:hol_map}
The holonomy of a globally hyperbolic Cauchy-compact $\mass{0}$-manifold is admissible and
 the map 
$$ \M_{g,s}(\mass{0})\xrightarrow{~~\mathrm{Hol}\,\circ\, \Reg~~} T\T_{g,s}$$ is well defined.
\end{propou}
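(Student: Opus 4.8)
The plan is to read this proposition as the synthesis of the two results established immediately above, together with a check that the assignment descends to equivalence classes. There are really two things to prove: first, that $\Hol(\reg(M))$ is an admissible representation; and second, that the induced map $\M_{g,s}(\mass{0})\to T\T_{g,s}$ does not depend on the chosen representative of a deformation class, so that it is genuinely well defined.

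For the first point I would simply assemble the preceding results. Starting from $M$ a globally hyperbolic Cauchy-compact Cauchy-maximal $\mass{0}$-manifold with a Cauchy-surface homeomorphic to $\Sigma$ and exactly $s$ BTZ lines, Proposition \ref{prop:abs_max} guarantees that $M^*:=\reg_{>0}(M)$ is globally hyperbolic, Cauchy-complete and $\mass{}$-maximal, with anabelian fundamental group $\pi_1(\Sigma^*)$. Barbot's analysis, invoked in the admissibility proposition just above, then yields exactly the three defining conditions: the linear part $\rho_L$ is discrete and faithful; each peripheral generator $c_i$ has parabolic linear holonomy, being conjugate to the holonomy of a loop around a BTZ line; and the corresponding translation cocycle $\tau$ is tangent. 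Hence $\rho=\rho_L+\tau$ is admissible, which is the first assertion of the proposition.

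For the second point I would combine admissibility with the Goldman--Troyanov--Hulin identification recalled above: an admissible $\rho$ determines the Teichmüller point $\rho_L\in\T_{g,s}$ of the finite-volume complete hyperbolic structure with $s$ cusps it defines, while the tangency condition places $\tau$ in the distinguished subspace of $H^1(\rho_L,\mathfrak{so}(1,2))$ identified with the fiber $T_{\rho_L}\T_{g,s}$. Thus $(\rho_L,\tau)$ is a point of $T\T_{g,s}$, and $\Hol\circ\reg$ lands where claimed. The remaining, and only genuinely new, task is independence of the representative. Here I would argue that a marking-preserving $\mass{}$-isomorphism $\Phi\colon M\to M'$ restricts to a marking-preserving isomorphism $\reg(M)\to\reg(M')$: by the corollaries to Proposition \ref{prop:isom}, an a.e. $\mass{}$-morphism that is a local homeomorphism is a local diffeomorphism preserving the regular/singular decomposition. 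Such an isomorphism of regular parts lifts to the universal covers intertwining the two developing maps, and therefore conjugates the holonomies by a single $g\in\isom(\mass{})$. Since the developing--holonomy pair is unique up to precisely this conjugation, and since conjugate admissible representations give the same Teichmüller point and the same tangent vector, the two images in $T\T_{g,s}$ coincide.

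I expect the main (though still mild) obstacle to be this last bookkeeping step: matching the equivalence relation defining $\M_{g,s}(\mass{0})$ --- which involves both the marking of the surface and the labelling of the $s$ singular lines --- with conjugation of representations, and confirming that the marking data quotiented out in passing to $T\T_{g,s}$ is exactly the data a marking-preserving isomorphism is allowed to permute. The substantive geometric input (Cauchy-completeness and maximality of $\reg(M)$ via Proposition \ref{prop:abs_max}, and the parabolicity and tangency of the peripheral holonomy via Barbot) is already in place, so this concluding proposition amounts to assembling those pieces and verifying functoriality of $\Hol\circ\reg$ on equivalence classes.
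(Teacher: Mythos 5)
Your proposal is correct and follows essentially the same route as the paper: Proposition~\ref{prop:hol_map} is stated there precisely as a summary of the two preceding propositions (admissibility via Proposition~\ref{prop:abs_max} and Barbot, and the Goldman--Troyanov--Hulin identification of $T\T_{g,s}$ with conjugacy classes of admissible representations), with no separate proof given. Your explicit verification that the map descends to deformation classes --- marked isomorphisms conjugate the developing--holonomy pairs by a single element of $\isom(\mass{})$, and conjugation is exactly what is quotiented out in $T\T_{g,s}$ --- is left implicit in the paper, since its second proposition already speaks of \emph{equivalence classes} of admissible representations, so your addition is a harmless and correct elaboration of the same argument.
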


\subsection{Globally hyperbolic $\mass{0}$-manifold of given holonomy}
\label{sec:polyedron}
Let $\Sigma$ be a genus $g$ closed surface, $S\subset \Sigma$ be a marking of $\Sigma$ with $\#S=s$ and $2g-2+s>0$. As before, we write $\Sigma^* = \Sigma \setminus S$ and we assume $s>0$. We denote by $$\Gamma:= \left\langle a_1,b_1,\cdots,a_g,b_g, c_1,\cdots,c_s \left |  \prod_{i=1}^g [a_i,b_i] \prod_{j=1}^s c_j=1\right.\right\rangle$$ a presentation of $\pi_1(\Sigma^*)$.
The previous subsection identified the holonomy of a globally hyperbolic Cauchy-compact $\mass{0}$-manifold homeomorphic to $\Sigma\times \RR$ with $s$ BTZ lines:  it is an admissible representation of $\Gamma$ into the group of affine isometries of Minkowski space.
Starting from such an admissible representation $\rho: \Gamma \rightarrow \isom(\mass{})$, our goal is to construct a globally hyperbolic Cauchy-complete spacetime of given admissible holonomy.  

Denote the linear part of $\rho$ by $\rho_L:\Gamma\rightarrow \GG$, by
   $\mathbb K$ the Klein model of the hyperbolic plane (resp. $\partial \mathbb K$ its boundary) in the projectivisation of $\mass{}$, namely the set future timelike (resp. lightlike) rays from the origin of Minkowski space $\mass{}$ which is parametrized by the futur timelike (resp. lightlike) vectors whose  $t$ coordinate is $1$;  we denote by $\H$ the hyperboloid model of the hyperbolic plane in Minkowski, namely the set of future timelike vectors of norm -1. The group $\GG$ acts on $\H$, $\mathbb K$ and $\partial\mathbb K$ via the usual  matrix multiplication.

 Let $\mathcal T:= \left(T_i\right)_{i\in \lsem 1,n\rsem}$ be an ideal triangulation of $\rho_L\backslash \mathbb K$, and denote by $\widetilde{\mathcal T}:= \left(\widetilde T_{i,\gamma}\right)_{i\in \lsem 1,n\rsem,\gamma\in\Gamma}$ a lift of $\mathcal T$ to an ideal triangulation of $\mathbb K$. In what follows, the ideal triangles contains their vertices.  Let $(u_{i})_{i\in \lsem 1,s\rsem} \in (\partial \mathbb K)^s$ such that $\rho_L(c_i)\in \stab(u_i)$ and such that for all triangle $T_{i} = [u_{i_1}u_{i_2}u_{i_3}]$ the triplet $(u_{i_1},u_{i_2},u_{i_3})$ is a direct base of the vector space underlying $\mass{}$. 
 Define  $\Lambda := \{\rho_L(\gamma) u_i:i\in \lsem1,s\rsem, \gamma \in \Gamma \}\subset \partial\mathbb K$  the infinity set of $\Gamma$.  
 For each $i\in \lsem 1,s\rsem$, define $\Delta_{i}:= \fix(\rho(c_i))$ the line of fixed point of the affine isometry $\rho(c_i)$ in Minkowski space and choose some $p_i \in \Delta_i$.  Since $\rho_L$ is faithful, the $u_i$ are distinct and so are the $\Delta_i$ for $i\in \lsem 1,s\rsem$.

Consider the simplicial complex $\Stilde = \left(\Stilde_{i,\gamma}\right)_{i\in \lsem 1,n\rsem,\gamma\in \Gamma}$ given by the triangulation $\widetilde{\mathcal T}$ of the set $\mathbb K\cup \Lambda$ with $\Stilde_{i,\gamma}$ the simplex associated to $T_{i,\gamma}$ and define $\widetilde{\S}^*$ as the complement of the 0-facets of $\Stilde$. We parameterize each simplex by a standard simplex $T=\{\alpha\in [0,1]^3~|~\sum_{i=1}^3\alpha_i = 1\}$ the quotient $\S := \rho_L\backslash \Stilde$ is homeomorphic to $\Sigma$. A simple way to construct a singular $\mass{}$-manifold with the wanted holonomy is to define 
 a  $\rho$-equivariant local homeomorphism $\mathcal \D:  \R_+^*\times \Stilde^* \rightarrow \mass{}$. The group $\Gamma$  acts on the a.e. $\mass{}$-structure pulled back by $\D$ via a.e. $\mass{}$-morphism and the quotient $M:=\Gamma\backslash(\RR_+^*\times \Stilde)$ is a singular $\mass{}$-manifold. The regular part of $M$ is $\reg(M)=\Gamma\backslash(\RR_+^*\times \Stilde^*)$ and its holonomy is $\rho$.
 As a natural choice for $\D$, fix some $\kappa \in \RR_+$, then for  $ t\in \RR_+^*$ and $\alpha \in \Stilde_{i,\gamma}$ for some $(i,\gamma)\in\lsem1,n\rsem\times \Gamma$, we define 
 \begin{eqnarray*}
   \D(t,\alpha)& := &\rho(\gamma)\left[(\kappa+t)\left\{\alpha_1u_{i_1}+\alpha_2  u_{i_2}+\alpha_3u_{i_3}\right\}
  +\alpha_1 p_{i_1} + \alpha_2p_{i_2}+\alpha_3 p_{i_3}\right]
\end{eqnarray*}
This map is piecewise smooth, its image is the intersection of a half-space with a ruled domain foliated transversally by totally geodesic triangles. For each $i\in \lsem 1,n\rsem$ and  for $t$ big enough, the affine part becomes negligible, hence, for $\kappa$ big enough, on each 3-facet, the map $\D$ has a non singular Jacobian matrix and preserves orientation. By compactness of $\Sigma$, we can choose a uniform $\kappa$, we thus obtain a local homeomorphism on the complement of the 1-facets and the pullback of the $\mass{}$-structure of $\mass{}$ by $\D$ defines a $\mass{}$-structure on $\RR_+^*\times \Stilde^*$ hence on $M := \rho\backslash(\RR_+^*\times \Stilde^*)$.  Moreover, the signature of the induced metric on  the $t=cte$ leaves is given by the signature of the Gram  matrix $$ \begin{pmatrix} \langle e_1|e_1\rangle & \langle e_1|e_2\rangle \\ \langle e_1|e_2\rangle &\langle e_2|e_2\rangle \end{pmatrix}\quad \text{where} \quad \left\{ \begin{matrix} e_1 = (t+\kappa)(u_{i_2}-u_{i_1})+p_{i_2}-p_{i_1} \\  e_2 = (t+\kappa)(u_{i_3}-u_{i_1})+p_{i_3}-p_{i_1}  \end{matrix}\right. . $$
For $\kappa$ big enough  the affine part becomes negligible and the signature becomes $(+,+)$ as for any triplet of future lightlike vectors. So for $\kappa$ big enough, the $t=cte$ leaves are spacelike and the coordinate $t$ is a $\rho$-equivariant time function.  Finally, provided we can prove the singular lines are locally isomorphic to a "reasonable" model space like $\mass{\omega}$ for some $\omega\geq 0$, we can check $M$ is globally hyperbolic by showing  that $t$ is a Cauchy-time function, ie that for any inextendible causal curve $c$, its restriction $T\circ c : \RR \rightarrow \RR_+^*$ is surjective. Indeed, for an inextendible causal curve $c$ assume $t_0=\inf t\circ c>0$; by compactness of $\S$ there exists some $x\in \S$ such that $p=(t_0,x)$ is an accumulation point of $c$.  Consider a chart neighborhood $\U$ of $p$; since the surface $t=t_0$ is a spacelike it is locally acausal and we can choose $\U$ small enough so that it is a Cauchy-surface of $\U$. Then, any inextendible causal curve of $\U$ goes through the spacelike surface $t=t_0$, in particular $c\cap \U$ intersects the surface $t=t_0$ which contradicts the definition of $t_0$. We may proceed the same way to prove the supremum of $t\circ c$ is $+\infty$.

However, we didn't prove the singular points are locally modeled on $\mass{0}$ and the form of the map $\D$ makes this delicate. To avoid this difficulty, we twist the developing map above to have spear neighborhoods around the singular lines; the idea is to force $\widehat \D$ being affine in the neighborhood of the singular lines.

Fix $\kappa\in \RR_+$ and consider for $(i,\gamma) \in \lsem 1,n\rsem\times \Gamma$ and $(t,\alpha,\beta)\in \RR_+^*\times \Stilde_{i,\gamma}\times \Stilde_{i,\gamma}$
 \begin{eqnarray*}
   \P(t,\alpha,\beta)& := &\rho(\gamma)\left[t\left(\alpha_1u_{i_1}+\alpha_2  u_{i_2}+\alpha_3u_{i_3}\right) 
  +\kappa\left(\beta_1u_{i_1}+\beta_2  u_{i_2}+\beta_3u_{i_3}\right)\right.\\&&\quad \quad +\left. \beta_1 p_{i_1} + \beta_2p_{i_2}+\beta_3 p_{i_3}\right].
\end{eqnarray*}
Note that for all $t$ and $\alpha$ in some $\Stilde_{i,\gamma}$ we have  $\D(t,\alpha) = \P(t,\alpha,\alpha)$.
Choose any continuous piecewise smooth  map  $\varphi : T \rightarrow T, \alpha\mapsto (\varphi_1(\alpha),\varphi_2(\alpha),\varphi_3(\alpha))$ such that:
\begin{enumerate}[(i)]
 \item $\varphi_i(\alpha) = 1$  if $\alpha_i\geq 2/3$ for $i\in\{1,2,3\}$;
 \item $\varphi$ is $\mathfrak S_3$-equivariant for the natural action on $T$ by permutation of coordinates (in particular $\varphi(1/3,1/3,1/3)=(1/3,1/3,1/3)$).
 \item The restriction of $\varphi$ to the  open hexagonal domain $H = \{\forall i, \alpha_i< 2/3\}$ is a diffeomorphism  $H \rightarrow T^*$ with $T^*= T\setminus\{\text{vertices}\}$. 
 \item The differential  of $\varphi$ (wherever defined) has a non-negative spectrum.
\end{enumerate}

Then consider the $\rho$-equivariant map
$$\fonction{\widehat{\D}}{\RR_+\times\Stilde}{\mass{}}{t,\alpha\in \Stilde_{i,\gamma}}{ \P(t,\varphi(\alpha),\alpha)}.$$ 
 The second item ensures $\widehat \D$ is well defined on the 2-facets of $\RR_+\times\Stilde$. As before we choose $\kappa$ big enough so that the affine part $\sum_{j=1}^3\alpha_j p_{i_j}$ is negligible. The third and fourth items garantee that the restriction of $\widehat\D$ to any of its smooth subdivision is an orientation preserving embedding, hence that $\widehat\D$ is a local homeomorphism on the complement of the 1-facets of $\RR_+^*\times \Stilde$. The map $\widehat\D$ thus induces a $\mass{}$-structure on $M:=\rho\backslash(\RR_+^*\times \Stilde^*)$. 
  As before, for $\kappa$ big enough, the leaves $t=cte$ are spacelike and the $t$ coordinate is a time function. 
  
  We now focus on proving the singular lines are locally modeled on $\mass{0}$. Consider a peripheral loop $c$, up to reordering the triangulation of $\Sigma$, we may assume the 2-cells of $\Stilde$ around the vertex  associated to $c$ (which we also denote by $c$) are given by the sequence
  $$(\Stilde_{n})_{n\in \ZZ} =   \cdots,\Stilde_{1,c^{-1}},\cdots,\Stilde_{m,c^{-1}} ,\Stilde_{1,1},\cdots,\Stilde_{m,1},\Stilde_{1,c},\cdots,\Stilde_{m,c},\cdots $$
  We call $u_c$ (resp. $p_c$) the lightlike vector (resp. the point) associated to $c$ and for each $n\in \ZZ$ we set $\widetilde T_{n} = [u_c v_{n}v_{n+1}]$ with $\widetilde T_n$ the ideal triangle of $\mathbb K$ corresponding to $\widetilde S_{n}$. The image of $\RR_+\times \{c\}$ is the lightlike line $\Delta$ directed by $u_c$ through $p_c$, denote by $\Pi_n$ the half-plane bounded by $\Delta$ and whose direction contains $v_n$.
  We finally introduce the neighborhood $\U=\bigcup_{n\in\ZZ} \U_n$ of $c\in \Stilde$ with $\U_n = \{\alpha \in \Stilde_n~|~ \alpha_c\geq 2/3 \}$. Each $\U_n$ is an affine triangle bounded by edges $e_n,e_{n+1}$ and $f_n$ where $(e_n)_{n\in \ZZ}$ are the directed edges of $\U$ containing from $c$ numerated so that $(\partial_t,\overrightarrow{e_{n}},\overrightarrow{e_{n+1}})$ is direct. 
  
  For $\kappa$ big enough, we have  $$\forall t\in\RR_+^*,\forall x\in \U, \quad  \langle\widehat\D(t,x)-p_{c}|u_c\rangle\leq0$$ 
  so that the image of $\RR_+^*\times \U$ is in the half-space $J^+(\Delta)$.
  We parameterize $J^+(\Delta)$ by $\mass{0\infty}$ via $p_c+\D_0$ with $\D_0$ defined as in section \ref{sec:isometries}:
   $$\fonction{\D_0}{\mass{0\infty} }{\mass{}}
{\begin{pmatrix}\tau \\ \r \\ \theta \end{pmatrix}}{
\begin{pmatrix} \tau+\frac{1}{2}\r\theta^2   \\ \tau +\frac{1}{2}\r\theta^2-\r \\ -\r\theta \end{pmatrix}.
}$$
  Then, the coordinates $\tau,\r,\theta$ makes sense on $J^+(\Delta)$ and in particular for any point in the image of $\widehat\D_{|\U}$.
   For $n\in\ZZ$, the map $\widehat\D_{|\RR_+\times \U_n}$ is an affine embedding and 
  the plane $\Pi_n$ is the support plane of  $\widehat\D(\RR_+\times e_n)$. 
  Therefore, the image of $\widehat\D_{|\RR_+\times \U_n}$ is the prism adjacent to $\Delta$ bounded by the half-planes $\Pi_n,\Pi_{n+1}$ and the support plane of $\widehat\D(\RR_+\times f_n)$ in the future of the support plane of $\widehat\D(\{0\}\times \U_n)$. Each of the half-planes $\Pi_n$ is a constant $\theta$ surface there thus exists a sequence $(\theta_n)_{n\in\ZZ}$ such that $\forall n\in\ZZ, \Pi_n=\{\theta=\theta_n\}$ and since $\widehat\D$ preserves orientation, $\forall n\in\ZZ,\theta_{n+1}>\theta_n$ and $\widehat \D(\RR_+\times \U_n)\subset \{\theta\in[\theta_n,\theta_{n+1}]\}$; in particular, $\widehat\D_{|\RR_+\times \U}$ is injective. The holonomy $\rho(c)$ acts by fixing $\Delta$ and sending the points $(\tau,\r,\theta)$ of $J^+(\Delta)$ to $(\tau,\r,\theta+\Theta)$ for some $\Theta\in\RR_+^*$, up to composing $\widehat \D$ by a hyperbolic isometry we may assume $\Theta=2\pi$,  moreover $\rho(c)$ sends $\Pi_n$ to $\Pi_{n+r}$ so $\forall n\in\ZZ, \theta_{n+r}-\theta_n =  2\pi$. We deduce that $\lim_{n\rightarrow \pm \infty} \theta_n=\pm\infty$. 
  With $q=\widehat \D(0,c)\in \Delta$, there thus exists a future lightlike vector $v\neq u_c$ such that for all $\varepsilon\in [0,1]$ and all $\tau_0$ the horocycle $q+\tau_0u_c+ \varepsilon\,\stab(u_c)v$ is in $\widehat\D(\RR_+\times \U)$.
  The quotient by $\rho(c)$ of the  domain covered by these horocycles is a spear neighborhood in $J^+(\Delta)/\rho(c)\simeq \mass{0}$.
  
  Finally, the line $\RR_+^*\times c \subset M$ admits a spear neighborhood and is thus  locally modeled on $\mass{0}$. We can conclude following the same arguments as before to show the $t$ coordinate is a Cauchy time function and that $M$ is globally hyperbolic. We thus proved the following Proposition.

\begin{propou}\label{prop:surjective}
  The holonomy map $$ \M_{g,s}(\mass{0}) \xrightarrow{~\reg~\circ~\Hol~} T\T_{g,s}$$
  is surjective.
\end{propou}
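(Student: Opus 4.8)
The plan is to invert $\Hol\circ\reg$ by hand: for each admissible representation $\rho$, equivalently each point of $T\T_{g,s}$, I would construct a globally hyperbolic Cauchy-compact $\mass{0}$-manifold homeomorphic to $\Sigma\times\RR$ with exactly $s$ BTZ-lines and regular holonomy $\rho$. First I would split $\rho=\rho_L+\tau_\rho$ into its linear and translation parts. Since $\rho$ is admissible, $\rho_L$ is discrete, faithful and Fuchsian with parabolic peripheral elements, hence the holonomy of a complete finite-volume hyperbolic surface; an ideal triangulation $\mathcal T$ of $\rho_L\backslash\mathbb K$ lifts to a $\rho_L$-equivariant ideal triangulation $\widetilde{\mathcal T}$ of the Klein disc $\mathbb K\subset\mass{}$, with ideal vertices given by lightlike vectors $u_i$. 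Working over the cone $\RR_+^*\times\Stilde^*$, I would build a $\rho$-equivariant map $\D$ sending a radial parameter $t$ and barycentric coordinates to $t$ times the corresponding lightlike combination of the $u_{i_j}$ plus the affine combination of chosen points $p_{i_j}\in\Delta_i=\fix(\rho(c_i))$, the latter encoding $\tau_\rho$.

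For $\kappa$ large the lightlike part dominates the affine part on each $3$-facet, so $\D$ has nonsingular, orientation-preserving Jacobian there; by Proposition \ref{prop:isom} the changes of chart are rotation-translations, and by compactness of $\Sigma$ a single $\kappa$ makes $\D$ a local homeomorphism off the $1$-skeleton. The pulled-back flat structure then descends to $M:=\rho\backslash(\RR_+^*\times\Stilde^*)$, whose regular holonomy is $\rho$ by construction. Computing the Gram matrix of two tangent vectors to a leaf $t=\mathrm{cte}$, the lightlike triple yields signature $(+,+)$ for $\kappa$ large, so the leaves are spacelike and $t$ is a $\rho$-equivariant time function.

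The main obstacle is to certify that the singular lines $\RR_+^*\times c$ carry the extreme-BTZ model $\mass{0}$ and not some other flat singularity $\mass{\omega}$; the raw map $\D$ does not make this transparent near the $0$-skeleton. My plan here is to twist $\D$ by a piecewise-smooth $\mathfrak S_3$-equivariant map $\varphi:T\to T$ that collapses to a vertex near that vertex, is a diffeomorphism on the hexagonal core and has nonnegative-spectrum differential, setting $\widehat\D(t,\alpha):=\P(t,\varphi(\alpha),\alpha)$ so that $\widehat\D$ becomes affine near each singular axis. Around a peripheral vertex $c$ with parabolic holonomy $\rho(c)$, I would parameterize $J^+(\Delta)$ by $p_c+\D_0$, express the images of the adjacent prisms $\RR_+\times\U_n$ as bounded by half-planes $\Pi_n=\{\theta=\theta_n\}$, and use the orientation-preserving monotonicity $\theta_{n+1}>\theta_n$ together with periodicity $\theta_{n+r}-\theta_n=2\pi$ to force $\theta_n\to\pm\infty$. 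A sweeping family of horocycles then fills a genuine spear neighborhood whose quotient by $\rho(c)$ sits in $J^+(\Delta)/\rho(c)\simeq\mass{0}$, identifying the singularity as $\mass{0}$ and showing $\widehat\D_{|\RR_+\times\U}$ injective.

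Finally I would promote $t$ to a Cauchy time function. Given an inextendible causal curve $c$, if $t_0:=\inf t\circ c>0$ then compactness of $\S$ provides an accumulation point $(t_0,x)$; in a small chart the spacelike leaf $t=t_0$ is a local Cauchy surface, so every inextendible causal arc of the chart crosses it, contradicting minimality of $t_0$, and symmetrically $\sup t\circ c=+\infty$. Thus $M$ is globally hyperbolic Cauchy-compact with the prescribed BTZ singularities and holonomy $\rho$, which yields the surjectivity. I expect the verification of the $\mass{0}$ model via the twisted map and the horocycle sweep-out to be the genuinely delicate point; the spacelike and Cauchy checks, while technical, are routine once the singularity type is pinned down.
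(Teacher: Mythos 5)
Your proposal reproduces the paper's own construction essentially step for step: the ruled developing map $\D$ built from the lightlike vectors $u_i$ and the points $p_i\in\fix(\rho(c_i))$ with $\kappa$ large, the twist $\widehat\D(t,\alpha)=\P(t,\varphi(\alpha),\alpha)$ by the $\mathfrak S_3$-equivariant collapse $\varphi$ to make the map affine near the singular axes, the identification of the singularity type via the half-planes $\Pi_n=\{\theta=\theta_n\}$, the periodicity $\theta_{n+r}-\theta_n=2\pi$ and the horocycle sweep-out producing a spear neighborhood in $J^+(\Delta)/\rho(c)\simeq\mass{0}$, and the accumulation-point argument showing $t$ is a Cauchy time function. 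The argument is correct and takes the same route as the paper, including correctly singling out the verification of the $\mass{0}$ model as the delicate point.
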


  \section{Maximal BTZ extension} \label{sec:extension_BTZ_2}

      We now focus on the last step of the proof of Theorem \ref{theo:princ_intro}. Namely, given two globally hyperbolic Cauchy-maximal Cauchy-compact $\mass{0}$-manifolds say $M$ and $N$, assuming $\reg(M)$ and $\reg(N)$ are isomorphic, we wish to prove that $M$ and $N$  are isomorphic. To this end, we introduce the notion of BTZ-extension.
   
	Note that, since any $\mass{\geq 0}$-manifold is a 3-manifold,  an  a.e $\mass{}$-morphism is an embedding if and only if it is injective.
        
        We prove the following Theorem a corollary of which is the wanted result.
      \setcounter{theo}{1}

      \begin{theo}[Maximal BTZ-extension] Let $M$ be globally hyperbolic $\mass{\geq 0}$-manifolds, there exists a globally hyperbolic BTZ-extension $M \xrightarrow{\iota} \BTZext(M)$ of $M$ which is maximal among such extensions. Moreover, $M\xrightarrow{\iota}  \BTZext(M)$ is unique up to isomorphism.
  
      \end{theo}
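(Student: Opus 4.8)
The plan is to reduce both existence and uniqueness to two local inputs: the rigidity of a BTZ-line attachment furnished by Proposition~\ref{prop:isom_strong}, and the causal/Hausdorff control furnished by Lemmas~\ref{lem:extension_hausdorff} and~\ref{lem:BTZext_caus_conv}. First I would record the structural features of a BTZ-extension $M\xrightarrow{\iota}N$ on which everything rests. Since $N\setminus\iota(M)$ consists only of type-$\mass{0}$ points, one has $\reg_{>0}(N)=\iota(\reg_{>0}(M))$: a BTZ-extension leaves the regular-modulo-BTZ part untouched and merely adjoins BTZ lines. Moreover, for every $x\in N$ the chronological future $I^+(x)$ contains no BTZ point (a BTZ point has empty chronological past, so nothing lies chronologically below it), whence $I^+(x)\subset\iota(M)$; this is exactly the inclusion exploited in Lemma~\ref{lem:extension_hausdorff}. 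Finally, by Corollary~\ref{cor:GH_causal} every BTZ-extension of a globally hyperbolic manifold is causal.

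For existence I would order the globally hyperbolic BTZ-extensions of $M$ (whose isomorphism classes form a set once one bounds the cardinality of the BTZ locus that can be adjoined) by declaring $(N_1,\iota_1)\preceq(N_2,\iota_2)$ when there is a BTZ-embedding $N_1\to N_2$ intertwining $\iota_1$ and $\iota_2$; by Lemma~\ref{lem:liouville} any two such intertwiners coincide, so $\preceq$ is a genuine partial order. Given a chain I would form its colimit $N_\infty$ in topological spaces. Because each transition map is an embedding restricting to the identity on the common $\reg_{>0}(M)$ and only adjoins BTZ lines, $N_\infty$ carries a natural a.e.\ $\mass{}$-structure with $\reg_{>0}(N_\infty)=\reg_{>0}(M)$, and its complement over $M$ is still a union of BTZ points, so it is a BTZ-extension; that $N_\infty$ is Hausdorff and globally hyperbolic I would prove by repeating verbatim the causal argument of Lemma~\ref{lem:extension_hausdorff}, whose only inputs are global hyperbolicity and $I^+(x)\subset\iota(M)$, both stable under the colimit. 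Zorn's lemma then produces a maximal globally hyperbolic BTZ-extension, which I name $\BTZext(M)$.

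For uniqueness, let $(N_1,\iota_1)$ and $(N_2,\iota_2)$ be two maximal globally hyperbolic BTZ-extensions. Since both fix $\reg_{>0}(M)$, each BTZ line of $N_i$ is detected by a blunt spear neighborhood inside $\reg_{>0}(M)$ (Lemma~\ref{lem:blunt_spear_neighborhood}), and by Proposition~\ref{prop:isom_strong} the completion of such a blunt spear by a BTZ line is unique. I would then show every BTZ line of $N_1$ already occurs in $N_2$: its blunt spear germ lives in the common $\reg_{>0}(M)$, so adjoining the corresponding BTZ line to $N_2$ yields a globally hyperbolic BTZ-extension of $N_2$ (Hausdorffness via Lemma~\ref{lem:extension_hausdorff} and global hyperbolicity via the causal convexity of Lemma~\ref{lem:BTZext_caus_conv}), and maximality of $N_2$ forces that line to be present. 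By symmetry $N_1$ and $N_2$ adjoin precisely the same BTZ lines with the same rigid local models, so the identity on $\reg_{>0}(M)$ extends, locally by Proposition~\ref{prop:isom_strong} and globally by Lemma~\ref{lem:liouville}, to an isomorphism $N_1\to N_2$ over $M$.

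The main obstacle is the Hausdorffness and global hyperbolicity of the colimit (for existence) and of the single-line enlargements (for uniqueness): one must rule out non-separated points when BTZ lines accumulate or are glued, and preserve the compactness of causal diamonds. This is the delicate point already isolated in Lemma~\ref{lem:extension_hausdorff}, and the crux is that the defining inclusion $I^+(x)\subset\iota(M)$ confines all the newly adjoined geometry to the BTZ locus, where Proposition~\ref{prop:isom_strong} makes the attachment rigid; carrying this through uniformly over an arbitrary chain, rather than for one gluing, is where the real work lies.
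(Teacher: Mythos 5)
There is a genuine gap, and it sits exactly where the paper's real work lies. Your Zorn argument at best produces a \emph{maximal} element of the poset of globally hyperbolic BTZ-extensions, but the theorem — and its downstream use in Corollary \ref{cor:common_reg} — requires a \emph{greatest} element: every globally hyperbolic BTZ-extension must embed into $\BTZext(M)$, and uniqueness of Zorn-maximal elements fails in a non-directed poset. Your substitute for directedness, adjoining the BTZ lines of $N_1$ to $N_2$ one at a time, leans on Lemma \ref{lem:extension_hausdorff}, but that lemma's hypothesis $\sing_0(M_2)=j(\sing_0(M_0))$ is violated in your application: gluing a spear $\S$ onto $N_2$ along the blunt spear $\S^*$ means $M_0=\S^*$ has empty BTZ locus, while $N_2$ in general carries BTZ lines of its own — and that hypothesis is precisely what excludes the dangerous configuration, namely the newly adjoined line being unseparated from a BTZ line of $N_2$ lying outside the overlap. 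The paper resolves exactly this in two steps your sketch has no counterpart for: Proposition \ref{prop:pgcd} constructs the \emph{maximal} common sub-BTZ-extension $\pgcd$, and Proposition \ref{prop:ppcm} proves Hausdorffness of the pushout $\ppcm$ by showing that the set $C$ of unseparated points of $M_1$ would, together with $\pgcd$, form a strictly larger globally hyperbolic common sub-extension (extending $\phi$ across $C$ via Proposition \ref{prop:isom_strong}, getting injectivity from future-distinguishability, Proposition \ref{prop:FZenon_injectif}, and global hyperbolicity from Lemma \ref{lem:BTZext_caus_conv}), contradicting maximality of $\pgcd$, whence $C=\emptyset$. Proposition \ref{prop:isom_strong} is a purely local rigidity statement about the model spaces; it cannot by itself rule out non-separated accumulation of distinct BTZ lines, which is a global causal phenomenon. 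Note also that Lemma \ref{lem:liouville} gives only \emph{uniqueness} of the extension $N_1\to N_2$; the existence and the bijective pairing of singular lines again require the causal analysis of Proposition \ref{prop:ppcm}.

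A secondary but real omission is second countability. The paper's definition of a (singular) $\mass{}$-manifold demands a second countable Hausdorff space, and a colimit of an uncountable chain of manifolds need not be second countable (think of the long line), so your chain colimits are not automatically legitimate upper bounds in the stated poset. The paper addresses this head-on: it takes the filtered colimit over \emph{all} extensions at once (legitimate precisely because Proposition \ref{prop:ppcm} makes the family right filtered — so the paper needs no Zorn at all), works a priori in possibly non-metrizable manifolds, and then proves second countability by associating each new BTZ line to a puncture-type end of a Cauchy surface (countably many, by Richards' theorem) and showing each line is itself second countable via its induced $(\RR,\RR)$-structure and the resulting Riemannian metric. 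Your Hausdorffness claim for chain colimits is actually fine (any two points lie in a common $N_j$, which embeds openly in the colimit), but without the directedness of Proposition \ref{prop:ppcm} and the countability argument, neither the existence of a greatest extension nor its uniqueness is established.
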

       
      In particular, if $M$ is a BTZ-maximal globally hyperbolic $\mass{\geq0}$-manifold then any BTZ-extension $M\xrightarrow{\iota}N$ into a globally hyperbolic manifold $N$ is surjective hence an isomorphism.
   \begin{corou}\label{cor:common_reg} Let $M,N$ be globally hyperbolic Cauchy-maximal Cauchy-com\-pact $\mass{0}$-manifolds: 
   $$\reg(M)\simeq \reg(N) \quad \Rightarrow{} \quad M\simeq N.$$
   \end{corou}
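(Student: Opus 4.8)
The plan is to realise each of $M$ and $N$ as the maximal BTZ-extension of its regular part and then transport the isomorphism $\reg(M)\simeq\reg(N)$ through the uniqueness clause of the Maximal BTZ-extension Theorem. Since $M$ is a $\mass{0}$-manifold, every singular point is of type $\mass{0}$, so the inclusion $\reg(M)\hookrightarrow M$ is a BTZ-embedding and, as $M$ is globally hyperbolic, a globally hyperbolic BTZ-extension of $\reg(M)$; the same holds for $N$. By the maximality clause of the theorem these extensions are dominated by $\BTZext(\reg(M))$, giving a BTZ-embedding $\phi\colon M\to\BTZext(\reg(M))$ that restricts to the identity on $\reg(M)$, and likewise for $N$. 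Thus it suffices to prove that $\phi$ is surjective, i.e. that $M$ is BTZ-maximal; the corollary will then follow formally.

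The heart of the argument --- and the step I expect to be the main obstacle --- is to show that any globally hyperbolic BTZ-extension $M\xrightarrow{\,j\,}P$ is surjective. A BTZ-extension changes nothing away from the BTZ lines, so $\reg(P)=j(\reg(M))$ and the points of $P\setminus j(M)$ are BTZ points lying on past-prolongations of the BTZ lines of $M$. I would fix a compact spacelike Cauchy-surface $\Sigma$ of $M$ (it exists since $M$ is Cauchy-compact) and argue that $j(\Sigma)$ is again a Cauchy-surface of $P$. Acausality is inherited because $j(\Sigma)\subset j(M)$ and, by Lemma~\ref{lem:past_BTZ}, a causal curve of $P$ splits as a BTZ part followed by a regular arc, the latter contained in $j(\reg(M))\subset j(M)$; hence a causal segment joining two points of $j(\Sigma)$ stays in $j(M)$, where $\Sigma$ is acausal. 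For the crossing property I would take an inextendible causal curve $c$ of $P$, decompose it by Lemma~\ref{lem:past_BTZ} into a (possibly empty) BTZ segment followed by a regular arc, and use that $j(\Sigma)$ meets each BTZ line of $M$ exactly once while the added points of $P\setminus j(M)$ lie strictly in the past of those crossing points along the BTZ lines; a short case analysis (according to whether the BTZ/regular transition occurs above or below the crossing point) then yields exactly one intersection. Granting this, $j$ is a Cauchy-embedding into a globally hyperbolic manifold, so the Cauchy-maximality of $M$ forces $j$ to be surjective, and $M$ is BTZ-maximal. The delicate point is precisely the causal bookkeeping near the newly added past BTZ points, for which the structural results of Section~\ref{sec:reg_BTZ}, in particular Corollary~\ref{cor:abs_max} (so that the relevant BTZ lines are controlled) together with Lemma~\ref{lem:past_BTZ}, are the right tools.

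With $M$, and symmetrically $N$, shown to be BTZ-maximal, I conclude as follows. Fix an isomorphism $f\colon\reg(M)\xrightarrow{\sim}\reg(N)$ and use it to view both $\reg(M)\hookrightarrow M$ and $\reg(M)\xrightarrow{f}\reg(N)\hookrightarrow N$ as globally hyperbolic BTZ-extensions of the single manifold $\reg(M)$. By the Maximal BTZ-extension Theorem both are dominated by $\BTZext(\reg(M))$, yielding BTZ-embeddings $M\to\BTZext(\reg(M))$ and $N\to\BTZext(\reg(M))$ that restrict to the identity on $\reg(M)$. Each of these is a BTZ-extension of a BTZ-maximal globally hyperbolic manifold into a globally hyperbolic manifold, hence an isomorphism. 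Composing, $M\simeq\BTZext(\reg(M))\simeq N$, which is exactly the assertion $\reg(M)\simeq\reg(N)\Rightarrow M\simeq N$.
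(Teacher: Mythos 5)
Your overall skeleton is the same as the paper's: establish that $M$ and $N$ are BTZ-maximal, then conclude through the maximality and uniqueness clauses of Theorem \ref{theo:BTZ_ext}; indeed your final paragraph is essentially the paper's own three-line proof (modulo citing Remark \ref{rem:reg_GH} for the global hyperbolicity of $\reg(M)$, which is needed before Theorem \ref{theo:BTZ_ext} can even be invoked). The difference lies in how BTZ-maximality of $M$ is obtained, and there your argument has a genuine gap. You assert that the points of $P\setminus j(M)$ are ``BTZ points lying on past-prolongations of the BTZ lines of $M$''. For the components of $\sing_0(P)$ that meet $j(M)$ this is correct and follows from Lemma \ref{lem:BTZext_caus_conv} together with Lemma \ref{lem:past_BTZ}. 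But nothing in your proposal excludes a connected component $\Delta$ of $\sing_0(P)$ entirely disjoint from $j(M)$, and this is precisely the point on which everything hinges: such a $\Delta$ is an inextendible causal curve of $P$ (first bullet of Lemma \ref{lem:past_BTZ}) contained in $P\setminus j(M)$, hence it never meets $j(\Sigma)$, so $j(\Sigma)$ would fail to be a Cauchy surface of $P$ and your appeal to Cauchy-maximality collapses. The scenario is not hypothetical: $\mass{0}$ itself is a globally hyperbolic BTZ-extension of $\reg(\mass{0})$ that creates a brand-new line, so any argument ruling it out must use Cauchy-compactness in an essential way --- yet your sketch uses compactness of $\Sigma$ only to guarantee that $\Sigma$ exists, and Corollary \ref{cor:abs_max}, which you name as the right tool, concerns Cauchy-completeness and absolute maximality of $\reg(M)$ and does not visibly forbid new lines either.

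This missing step is the real content, and it is exactly what the paper's Proposition \ref{prop:cauchy_abs_max} supplies: for a globally hyperbolic Cauchy-compact $\mass{A}$-manifold, Cauchy-maximality is equivalent to $\mass{A}$-maximality, i.e.\ maximality with respect to \emph{all} embeddings into globally hyperbolic $\mass{A}$-manifolds, BTZ-extensions being a special case. Its proof is a page of genuine work --- a proper local diffeomorphism from the compact Cauchy surface $\Sigma_M$ onto a spacelike splitting surface of the putative extension, a continuous section built by taking infima along the fibers, and an intersection-theoretic argument showing $\Sigma_M$ remains Cauchy --- and it is there that compactness kills the new-line scenario a posteriori. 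So either cite Proposition \ref{prop:cauchy_abs_max} as the paper does, after which your concluding paragraph finishes the proof, or supply an actual argument excluding new BTZ lines (for instance via admissibility of the holonomy of $\reg(M)$, which forces any loop around a new line to represent a peripheral class already attached to an existing line --- but you would then still have to show that two distinct lines of $P$ cannot carry the same peripheral class, again a nontrivial task). As written, the proposal does not prove the corollary.
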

    \begin{proof}
   Since $M$ and $N$ are Cauchy-compact and Cauchy-maximal, by Proposition \ref{prop:cauchy_abs_max}, they are $\mass{\geq0}$-maximal. In particular, $M$ and $N$ are BTZ-maximal. Furthermore, by Remark \ref{rem:reg_GH}, $\reg(M)$ and $\reg(N)$ are globally hyperbolic. Assuming $\reg(M)\simeq \reg(N)$, Theorem \ref{theo:BTZ_ext} thus gives $$M \simeq \BTZext(\reg(M)) \simeq \BTZext(\reg(N)) \simeq N.$$ 
   \end{proof}
      The proof of Theorem \ref{theo:BTZ_ext} has similarities with the one of the Choquet-Bruhat-Geroch Theorem. The proof relies on the existence of the pushfoward
of two BTZ-extensions: 
given two globally hyperbolic BTZ extensions $M_1$ et $M_2$
		of a globally hyperbolic $M_0$, we first a construct a maximal sub-BTZ-extension common $M_1\wedge M_2$ to  $M_0\rightarrow M_1$ and
		 $M_0\rightarrow M_2$ then we glue $M_1$ to $M_2$ along $M_1\wedge M_2$ to obtain $M_1\vee M_2$ the minimal BTZ-extension common to $M_1$ and $M_2$.

		 $$
			\xymatrix{
				&&& M_1\ar@{-->}[ddr] &\\ &&&&\\
			M_0\ar[ddrrr]_g\ar[uurrr]^f\ar@{-->}[rr]&&M_1\wedge M_2\ar@{-->}[uur]\ar@{-->}[ddr]& &M_1\vee M_2:=(M_1\coprod M_2)/M_1\wedge M_2\\ &&&&\\
			&&&M_2\ar@{-->}[uur]&
			}
		$$
	 
		 The key element is to prove the gluing of $M_1$ and $M_2$ is Hausdorff so that it inherits a natural $\mass{\geq0}$-structure and then to prove this  $\mass{\geq0}$-structure is globally hyperbolic. 
		
		This proves the family of the BTZ-extensions of a given globally hyperbolic $\mass{\geq0}$-manifold $M_0$ is right filtered. The inductive limit of such a family is 
		Hausdorff and naturally endowed with a $\mass{\geq0}$-structure but one still need to check it is second countable and globally hyperbolic.

		\subsection{Maximal common sub-BTZ-extension}
		\begin{defou}[Common sub-BTZ-extension] Let $M_0$ be a $\mass{\geq0}$-mani\-fold, let $M_0\xrightarrow{i} M_1$ and  $M_0 \xrightarrow{j} M_2$
			two BTZ-extensions of $M_0$.
			
			A common sub-BTZ-extension to $M_1$ and $M_2$ is BTZ-extension  $M_0\rightarrow M$  together with two BTZ-embeddings $M\xrightarrow{a} M_1$, $M \rightarrow M_2$ such that the following diagram commutes:
			
			$$
				\xymatrix{
					& &M_1\\
					M_0\ar[urr]^i \ar[drr]_j\ar[r]&M\ar[ur]\ar[dr]& \\
					&& M_2
				}		
			$$
		
		\end{defou}
		
		\begin{defou}[Morphism of common sub-extension]
			Let $M_0$ be a $\mass{\geq0}$-manifold, let $M_0\xrightarrow{i} M_1$ and $M_0 \xrightarrow{j} M_2$
			be two BTZ-extensions. Let $M$ and $M'$ two common sub-BTZ-extensions to $M_1$ and $M_2$.
			
			A morphism of sub-BTZ-extension is a $\mass{\geq0}$-morphism $M\xrightarrow{~~\phi~~} M'$such that the following diagram commutes:
			$$
				\xymatrix{
					 &M_1 &\\
					M\ar[ur]\ar[dr]\ar[rr]^\phi& &M'\ar[ul]\ar[dl]\\
					& M_2 &
				}		
			$$
			
			If $\phi$ is bijective, then $M$ and $M'$ are equivalent. 	
		\end{defou}
		\begin{defou}[Maximal common sub-BTZ-extension]
				Let $M_0$ be a $\mass{\geq0}$-manifold and let $M_0\xrightarrow{} M_1$ and $M_0 \xrightarrow{} M_2$
			two BTZ-extensions  of $M_0$. Let $M$ be a common sub-BTZ-extension to $M_1$ and $M_2$.
			
			$M$ is maximal if for all common sub-BTZ-extension $M'$ to $M_1$ and $M_2$,  there exists an injective morphism of common sub-BTZ-extension from $M'$ to $M$.
		\end{defou}

		\begin{propou}\label{prop:pgcd} Let $M_0$ be a $\mass{\geq0}$-manifold and let $M_0\xrightarrow{i} M_1$ and $M_0 \xrightarrow{j} M_2$
			two BTZ-extensions  of $M_0$. 

			If $M_0,M_1$ and $M_2$ are globally hyperbolic then there exists a maximal common sub-BTZ-extension to
			$M_1$ and $M_2$.
		\end{propou}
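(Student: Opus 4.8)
The plan is to realise $\pgcd$ as an explicit \emph{open subset of $M_1$} — the union of $\reg_{>0}(M_1)$ with those added BTZ points that also occur in $M_2$ — and then to produce the companion BTZ-embedding into $M_2$ by hand. First I would record what a BTZ-extension leaves untouched: since $i$ and $j$ only add type-$\mass{0}$ singular points, one has $\reg_{>0}(M_1)=i(\reg_{>0}(M_0))$ and $\reg_{>0}(M_2)=j(\reg_{>0}(M_0))$, so $\Phi:=j\circ i^{-1}$ restricts to a canonical a.e. $\mass{}$-isomorphism $\reg_{>0}(M_1)\to\reg_{>0}(M_2)$ preserving the causal structure. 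Moreover, because a BTZ point has empty chronological past, every $x\in M_1$ satisfies $I^+(x)\subset\reg_{>0}(M_1)$ (and likewise in $M_2$); and by Lemma~\ref{lem:BTZext_caus_conv}(b) the images $i(M_0)$ and $j(M_0)$ are causally convex, so $\Phi$ intertwines the chronological futures of $M_0$-points as computed in $M_1$ and in $M_2$. Thus $M_0$, $M_1$, $M_2$ share a common regular-plus-massive part and the only freedom lies in which BTZ points they carry.

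Next I would compare the added BTZ points through their futures. For $x\in\sing_0(M_1)$ and $y\in\sing_0(M_2)$, declare $x\sim y$ whenever $\Phi(I^+_{M_1}(x))=I^+_{M_2}(y)$. Since $M_1,M_2$ are globally hyperbolic, hence strongly causal, Proposition~\ref{prop:FZenon_injectif} shows $x\mapsto I^+(x)$ is injective, so $\sim$ is the graph of a partial bijection between $\sing_0(M_1)$ and $\sing_0(M_2)$; the causal convexity just recorded forces $i(x_0)\sim j(x_0)$ for every $x_0\in\sing_0(M_0)$. Let $P\subset\sing_0(M_1)$ be the set of matched points and set $\pgcd:=\reg_{>0}(M_1)\cup P$, which contains $i(M_0)$.

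The heart of the argument is to show $\pgcd$ is open in $M_1$ and that $\Phi$ extends to a BTZ-embedding $\pgcd\to M_2$; both reduce to the rigidity of the local $\mass{0}$-model. Around a matched point $x\sim y$ I would choose a connected $\mass{0}$-chart $\U_1\ni x$; read in charts, $\Phi$ is then an injective $\mass{}$-morphism defined on the regular part of a neighbourhood of a singular point, so Proposition~\ref{prop:isom_strong} identifies it with the restriction of an element of $\isom(\mass{0})$, which extends continuously across $x$, necessarily to $y$ by the future matching. Consequently every BTZ point of $\U_1$ is matched, so $\U_1\subset\pgcd$; this proves $\pgcd$ open, hence a $\mass{\geq0}$-manifold, with $\pgcd\hookrightarrow M_1$ a BTZ-embedding through which $i$ factors. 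Glueing the local extensions yields an injective a.e. $\mass{}$-morphism $\pgcd\to M_2$ whose image is $\reg_{>0}(M_2)$ together with the $\sim$-partners, i.e. a BTZ-embedding, so $\pgcd$ is a common sub-BTZ-extension. The same charts, combined with the inextendibility of BTZ lines and the fact that $\sing_0$ is a closed $1$-submanifold (Lemma~\ref{lem:past_BTZ} and the corollaries of Proposition~\ref{prop:isom}), give an alternative proof of openness by showing the unmatched set is closed and downward coherent along each line.

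Finally, for maximality, let $M'$ be any common sub-BTZ-extension and use its BTZ-embedding $M'\to M_1$ to view $M'$ inside $M_1$ as $\reg_{>0}(M_1)$ together with some BTZ points. Each such BTZ point maps to a BTZ point of $M_2$ as well, and since both structure maps agree with $\Phi$ on the regular part (all being maps over $M_0$), the two images have matching futures; hence the point lies in $P$, and $M'\hookrightarrow\pgcd$ is a morphism of common sub-extensions, proving maximality. I expect the continuous extension of $\Phi$ across the matched BTZ points — equivalently, the openness of $\pgcd$ — to be the only genuine difficulty, everything there hinging on Proposition~\ref{prop:isom_strong} and on the order-coherence of $\sim$ along the inextendible BTZ lines; the remaining steps are bookkeeping.
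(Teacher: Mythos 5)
Your construction is correct, but it takes a genuinely different route from the paper's proof of this proposition. The paper defines $\pgcd$ non-constructively, as the union of \emph{all} globally hyperbolic open subsets $M\subset M_1$ containing $i(M_0)$ that admit a BTZ-embedding into $M_2$ extending $j\circ i^{-1}$; the glued comparison map is then shown to be injective via Proposition \ref{prop:FZenon_injectif} (equal chronological futures force equality of points), global hyperbolicity of the union is checked directly, and maximality is immediate "by construction". You instead give an explicit pointwise description of the same object: $\pgcd=\reg_{>0}(M_1)\cup P$, where $P$ is the set of BTZ points matched by equality of futures under $\Phi=j\circ i^{-1}$, with openness of $P$ obtained from the rigidity statement of Proposition \ref{prop:isom_strong} applied to the future-restricted map read in charts. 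Interestingly, this is precisely the technique the paper deploys \emph{later}, in the proof of Proposition \ref{prop:ppcm}, to handle unseparated points of the glued space; you have in effect front-loaded that analysis into the construction of $\pgcd$. What your route buys: the maximality argument becomes transparent and applies to arbitrary common sub-BTZ-extensions $M'$ (not only globally hyperbolic ones), since $\reg_{>0}(M')=\iota'(\reg_{>0}(M_0))$ forces $b=\Phi\circ a$ on the regular part, and the openness of the images of $a,b$ in $M_1,M_2$ gives $a(I^+_{M'}(x'))=I^+_{M_1}(a(x'))$ and likewise for $b$, so every BTZ point of $M'$ is matched. Two steps you flag as bookkeeping deserve the care the paper itself gives them in Proposition \ref{prop:ppcm}: arranging the charts so that the future-restricted map actually lands in a chart of $M_2$ around $y$ (the paper's "without loss of generality" using $\Phi(I^+(x))=I^+(y)$ and small causal diamonds), and the local-to-global propagation of future-matching for the \emph{other} BTZ points of the chart, which uses $I^+_{M_1}(q)\subset\reg_{>0}(M_1)$ for regular $q$ together with the decomposition $I^+(z)=\bigl(I^+(z)\cap\U\bigr)\cup\bigcup_{q\in I^+(z)\cap\U}I^+(q)$.

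One substantive omission to record: the statement only asks for a maximal common sub-BTZ-extension, and your proof delivers that, but the paper's proof additionally establishes that $\pgcd$ is \emph{globally hyperbolic}, and this is used downstream (in the proof of Proposition \ref{prop:ppcm}, global hyperbolicity of $\pgcd$-type pieces feeds Lemma \ref{lem:BTZext_caus_conv} to get causal convexity and compact diamonds in the pushforward). Your framework can recover it with a supplement: show $P$ is upward-closed along BTZ lines, by considering the BTZ segment $c:[0,1]\rightarrow M_1$ from a matched $x$ to $z\in\sing_0(J^+_{M_1}(x))$ (Lemma \ref{lem:past_BTZ}), letting $I=\{t\,:\,c(t)\in P\}$, which is open by your local rigidity step and closed because the increasing sequence of partners $w_n$ lies in the compact $J^+_{M_2}(y)\cap J^-_{M_2}(q')$ by global hyperbolicity of $M_2$; then Lemma \ref{lem:BTZext_caus_conv}(a) gives causal convexity of $\pgcd$ in $M_1$, hence global hyperbolicity. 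Without some such addition your $\pgcd$ satisfies the proposition as stated but is not yet adequate for the role it plays in the rest of the paper.
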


		\begin{proof} Assume $M_0$, $M_1$ and $M_2$ are globally hyperbolic.
		\begin{itemize}
			\item Let $\pgcd$ the union of all the globally hyperbolic open subset $M\subset M_1$ containing $i(M_0)$ such that there exists a BTZ-embedding $M\xrightarrow{\phi_M} M_2$ which restriction to $i(M_0)$ is $j\circ i^{-1}$. The open $M_1\wedge M_2$ is well defined since we can choose $M=i(M_0)$.
				For such an $M$, the map $\phi_M$ is unique since it continuous and equal to $j\circ i^{-1}$ on $\reg(M)$ which is dense. Define $$\fonction{\phi}{\pgcd}{M_2}{x}{\phi_M(x) \mathrm{~if~} x \in M}$$
				Clearly, $i(M_0)\subset \pgcd$ and the inclusions $M_0\xrightarrow{i}\pgcd \rightarrow M_1$ are BTZ-embeddings.
				Since $j: M_0 \rightarrow M_2$ is a BTZ-embedding and $j(M_0)\subset \phi(\pgcd)$  we see that $\phi(\pgcd)$ is dense in $M_2$ and that $M_2\setminus\phi(\pgcd) \subset \sing_0(M_2)$  .
				
				Let $x,y \in \pgcd$ such that $p:=\phi(x)=\phi(y)$.
				Let $M_x$ (resp. $M_y$) an globally hyperbolic open subset of $M_1$ containing $i(M_0)$ and $x$ (resp. $y$).
				Notice that $I^+(p)\subset M_2\setminus\sing_0(M_2)$ thus
				$$\emptyset\neq I^+(p)=\phi(I^+(x))=\phi(I^+(y))\subset j(M_0);$$
				and thus $I^+(x)=I^+(y)$. Since $M_1$ is globally hyperbolic, it is strongly causal hence $x=y$, by Proposition \ref{prop:FZenon_injectif}.
				
				Finally, $\phi$ is a  BTZ-embedding.

			\item 	We now prove that $M_1 \wedge M_2$ is globally hyperbolic. Since $M_1$ is strongly causal, so is $\pgcd$. Let $p, q \in M_1 \wedge M_2$ and let $M_p\subset M_1 \wedge M_2$ be a globally hyperbolic BTZ-extension of $i(M_0)$ in  which contains $p$.
			
			Either $q \in \sing_0(M_1)$ and
					$J^+_{M_1}(p)\cap J^-_{M_1}(q)$ is a possibly empty BTZ line segment, hence compact; or $q\in i(M_0)$ and we have $$J^+_{M_1}(p)\cap J^-_{M_1}(q) \supset J^+_{\pgcd}(p)\cap J^-_{\pgcd}(q)\supset J^+_{M_p}(p)\cap J^-_{M_p}(q),$$
					observing that 
					$J^+_{M_1}(p)\cap J^-_{M_1}(q) = J^+_{M_p}(p)\cap J^-_{M_p}(q)$ which is compact by global hyperbolicity of $M_p$, thus $J^+_{\pgcd}(p)\cap J^-_{\pgcd}(q)$ is compact.

				Then $M_1\wedge M_2$ is globally hyperbolic.

			\item By construction, $\pgcd$ is maximal.
		\end{itemize}
      \end{proof}
\subsection{Minimal common over-BTZ-extension}
		\begin{defou}[Common over-BTZ-extension ] Let $M_0$ be a $\mass{\geq0}$-mani\-fold and let $M_0\xrightarrow{i} M_1$ and $M_0 \xrightarrow{j} M_2$
			two BTZ-extensions  of $M_0$. 

		A common over-BTZ-extension BTZ to $M_1$ and $M_2$ is a  $\mass{\geq0}$-manifold $M$ together with BTZ-embeddings
		$M_1\xrightarrow{a} M$; $M_2 \xrightarrow{b} M$ such that the following diagram commutes:

		$$
			\xymatrix{
				& M_1&\\
				M_0\ar[ur]^i \ar[dr]_j&&M\ar@{<-}[ul]_{a}\ar@{<-}[dl]^{b} \\
				& M_2&
			}		
		$$

		\end{defou}

		\begin{defou}[Morphism of common over-BTZ-extension ]
		Let $M_0$ be a $\mass{\geq0}$-manifold and let $M_0\xrightarrow{i} M_1$ and $M_0 \xrightarrow{j} M_2$
			two BTZ-extensions  of $M_0$.  Let $M$ and $M'$ be two over-BTZ-extensions common to $M_1$ and $M_2$.

		A morphism of over-BTZ-extensions from $M$ to $M'$ is a $\mass{\geq0}$-morphism $M\xrightarrow{~~\phi~~} M'$
		 such that the following diagram commutes:
		$$
			\xymatrix{
					&M_1 &\\
				M\ar@{<-}[ur]\ar@{<-}[dr]\ar[rr]^\phi& &M'\ar@{<-}[ul]\ar@{<-}[dl]\\
				& M_2 &
			}		
		$$

		If $\phi$ is bijective then $M$ and $M'$ are equivalent. 	
		\end{defou}

		\begin{propou}[Pushforward of two BTZ-extensions] \label{prop:ppcm}Let $M_0$ be globally hyperbolic $\mass{\geq0}$-manifold, let $M_0\xrightarrow{i} M_1$ and $M_0 \xrightarrow{j} M_2$
			be two globally hyperbolic BTZ-extensions of $M_0$. There exits a globally hyperbolic over-BTZ-extension $\ppcm$ common to
			$M_1$ et $M_2$	which is minimal among such extensions.

			Furthermore, $\ppcm$ is unique up to equivalence.
	
		\end{propou}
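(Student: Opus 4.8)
The plan is to realise the announced construction $\ppcm=(M_1\coprod M_2)/\pgcd$ as a topological pushforward and then to check, in turn, that it is Hausdorff, that it carries a globally hyperbolic $\mass{\geq0}$-structure, and that it has the required minimality and uniqueness. First I would invoke Proposition \ref{prop:pgcd} to obtain the maximal common sub-BTZ-extension $\pgcd$, globally hyperbolic, with BTZ-embeddings $a\colon\pgcd\to M_1$ and $b\colon\pgcd\to M_2$ compatible with $i$ and $j$; write $\phi:=b\circ a^{-1}$. Since $\pgcd$ is globally hyperbolic, Lemma \ref{lem:BTZext_caus_conv}(b) makes $a(\pgcd)$ and $b(\pgcd)$ causally convex in $M_1$ and $M_2$. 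As $\pgcd\to M_1$ and $\pgcd\to M_2$ are themselves BTZ-extensions, $M_1\setminus a(\pgcd)$ and $M_2\setminus b(\pgcd)$ consist only of type-$0$ points, and the BTZ-extension property yields $I^+(x)\subset a(\pgcd)$ for every $x\in M_1$ and $I^+(y)\subset b(\pgcd)$ for every $y\in M_2$. I then set $\ppcm:=(M_1\coprod M_2)/\!\sim$ with $a(z)\sim b(z)$ for $z\in\pgcd$, together with the canonical maps $M_1\to\ppcm$ and $M_2\to\ppcm$; being a pushforward of two second countable spaces along an open embedding, $\ppcm$ is second countable.

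The main obstacle is Hausdorffness. Away from the seam $\pgcd$ the space is locally Euclidean, so the images of two points can fail to be separated only when $p\in M_1\setminus a(\pgcd)$ and $q\in M_2\setminus b(\pgcd)$ admit a common approximating sequence $(x_n)\subset\pgcd$ with $a(x_n)\to p$ and $b(x_n)\to q$; necessarily $p$ and $q$ are then BTZ points. Following the scheme of Lemma \ref{lem:extension_hausdorff}, I would replace $(x_n)$ by a $\ll$-decreasing sequence lying in $I^+(p)\subset a(\pgcd)$; global hyperbolicity of $M_2$ then forces $b(x_n)$ to converge to some $q'\in J^+(q)$, and comparing chronological futures shows that $\phi$ matches $I^+(p)$ with $I^+(q)$. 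The hard step is to turn this into a violation of the \emph{maximality} of $\pgcd$: near the BTZ point $p$ every regular point already lies in $a(\pgcd)$, so $\phi$ restricts to an injective $\mass{}$-morphism on the regular part of a small BTZ-chart around $p$, which by Proposition \ref{prop:isom_strong} extends to a BTZ-embedding of the whole chart, the future-matching identifying the image of $p$ with $q$. Enlarging $a(\pgcd)$ by a causally convex such chart produces an open subset of $M_1$ that contains $i(M_0)$ and $p$, is globally hyperbolic (a causally convex open subset of a globally hyperbolic manifold is globally hyperbolic), and BTZ-embeds into $M_2$; this contradicts the maximality of $\pgcd$. Hence no such pair exists and $\ppcm$ is Hausdorff.

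With $\ppcm$ Hausdorff and second countable, the atlases of $M_1$ and $M_2$ agree on $\pgcd$ and endow $\ppcm$ with an a.e.\ $\mass{}$-structure for which $M_1\to\ppcm$ and $M_2\to\ppcm$ are BTZ-embeddings; in particular $\ppcm$ is a BTZ-extension of $M_0$ and is causal by Corollary \ref{cor:GH_causal}. To see it is globally hyperbolic I would show every diamond $J^+(p)\cap J^-(q)$ is compact by the case analysis already used in Proposition \ref{prop:abs_max}: Lemma \ref{lem:past_BTZ} peels off the BTZ part of any causal curve joining $p$ to $q$, and the causal convexity of $a(\pgcd)$ and $b(\pgcd)$ lets me write the diamond as a finite union of diamonds computed inside $M_1$ or inside $M_2$, each compact by global hyperbolicity of the factors.

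Finally, any common over-BTZ-extension $M$ carries BTZ-embeddings $M_1\to M$ and $M_2\to M$ agreeing on $\pgcd$, so the universal property of the pushforward produces a morphism of over-extensions $\ppcm\to M$, giving minimality. Two minimal over-extensions thus receive mutually inverse such morphisms; their composites fix the dense open image of $M_1$, hence are the identity by the rigidity of a.e.\ $\mass{}$-morphisms (Lemma \ref{lem:liouville}). Therefore $\ppcm$ is unique up to equivalence.
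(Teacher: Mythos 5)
Your overall architecture --- pushout of $M_1$ and $M_2$ along $\pgcd$, Hausdorffness via unseparated pairs, a contradiction with the maximality of $\pgcd$, then diamond-compactness for the glued space, and minimality from the universal property --- is the same as the paper's, and your last two steps (global hyperbolicity of $\ppcm$ using causal convexity of the two factors together with Lemma \ref{lem:past_BTZ}, and minimality/uniqueness via the pushout property and analytic rigidity) are essentially what the paper does or leaves implicit. But there is a genuine gap at the decisive step, the contradiction with maximality. You enlarge $a(\pgcd)$ by a \emph{single} causally convex chart $\U_p$ around one unseparated point $p$ and justify global hyperbolicity of the union by ``a causally convex open subset of a globally hyperbolic manifold is globally hyperbolic.'' That presupposes $\pgcd\cup\U_p$ is causally convex in $M_1$, which does not follow from causal convexity of the two pieces separately: by Lemma \ref{lem:BTZext_caus_conv}(a), causal convexity of the union is equivalent to $\sing_0\left[J^+_{M_1}(x)\right]\subset \pgcd\cup\U_p$ for every BTZ point $x$ of the union, and nothing in your argument controls the BTZ ray above $p$ beyond the (bounded) chart. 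Concretely, by Lemma \ref{lem:past_BTZ} a causal curve from $x\in\sing_0(\U_p)$ to a point of $\pgcd$ first runs along the BTZ line before entering $\reg_{>0}(M_1)\subset\pgcd$, and the BTZ segment beyond $\U_p$ consists of points that a priori lie neither in $\pgcd$ nor in $\U_p$ (typically they are \emph{further} unseparated points). So the open set you produce need not be globally hyperbolic, and the appeal to maximality of $\pgcd$ --- whose defining family consists of globally hyperbolic subsets --- does not go through.

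This is precisely the difficulty the paper's proof is organised around, and it is handled globally rather than one point at a time. The paper adjoins the whole set $C$ of unseparated points of $M_1$ at once: the local extensions $\phi_p$ furnished by Proposition \ref{prop:isom_strong} (after matching futures inside charts, as you do --- note in passing that to invoke that proposition you must first shrink so that $\phi(I^+(p)\cap\U_p)$ lies in a single chart of $M_2$) are glued into one map $\overline\phi$ on $\pgcd\cup C$ by analyticity, using that any two of the domains intersect in a set containing the connected $\pgcd$; injectivity of $\overline\phi$ comes from future-distinguishing (Proposition \ref{prop:FZenon_injectif}). The key additional lemma, absent from your proposal, is that $\sing_0\left(J^+_{M_1}(x)\right)\subset\pgcd\cup C$ for every $x\in C$: along a BTZ segment $c$ one shows the set $I=\{t_0~|~c([0,t_0])\subset\pgcd\cup C\}$ is open, and closed because a $\ll$-decreasing sequence $(q_n)$ approximating $c(t_0)$ has $\phi$-images decreasing and bounded below in $M_2$, hence convergent by global hyperbolicity of $M_2$, so $c(t_0)$ is again unseparated or in $\pgcd$. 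Only then does Lemma \ref{lem:BTZext_caus_conv}(a) yield causal convexity, hence global hyperbolicity, of $\pgcd\cup C$, and maximality of $\pgcd$ forces $C=\emptyset$. Without this closedness-along-the-BTZ-ray argument, your Hausdorffness proof fails at its crux.
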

		\begin{proof}
			
			We identify $\pgcd$ as an open subset of $M_1$ so that the natural embedding $\pgcd \rightarrow M_1$ is the natural inclusion. We then denote by $\phi$ the natural embedding $\pgcd \rightarrow M_2$ and define the topological space $\ppcm:= (M_1\coprod M_2)/\sim$ where $x\sim y$ if $\phi(x)=y$ or $\phi(y)=x$. We need to show that $\ppcm$ is Hausdorff and admits a globally hyperbolic $\mass{\geq0}$-structure such that the natural maps $M_1,M_2\xrightarrow{\pi_1,\pi_2} \ppcm$ are $\mass{\geq0}$-morphism. If one shows that the quotient is Hausdorff, the maps $\pi_1,\pi_2$ are then homeomorphism on their image and the $\mass{\geq0}$-structure of $M_1$ and $M_2$ induces two
$\mass{\geq0}$-structure on the image of $\pi_1$ and $\pi_2$. 	
Since $\pi_{1|\pgcd} = \pi_2\circ\phi$, the $\mass{\geq0}$-structures agree on the intersection of the image, namely $\pi_1(\pgcd)$, and define a $\mass{\geq0}$-structure on $\ppcm$. Therefore, one only needs to show that $\ppcm$ is Hausdorff and globally hyperbolic.
			
			 Two points $p,q$ of $\ppcm$ are unseparated if  for all neighborhoods $\U,\V$ of $p,q$ respectively the intersection $\U\cap \V $ is non empty. A point $p$ is unseparated if there exists $q\neq p$ such that $p,q$ are unseparated. 
			Define $C$ the set of  $p\in M_1$ whose image in $\ppcm$ is unseparated. We shall prove that $C$ is empty, to this end, we first show that $\pgcd \cup C$ is connected open and that $\phi$ extends injectively to $C$;  finally, we show that $\pgcd\cup C$ is
			globally hyperbolic; the maximality of $\pgcd$ will then implies that $C\neq \emptyset$.
			\begin{itemize}
				\item 	Notice that $C\subset \sing_0(M_1)\subset \overline \pgcd$.
					Let $p\in C$ and $p' \in M_2$ such that $p$ and $p'$ are unseparated in $\ppcm$. Let
					$\U_p\xrightarrow{\phi_p} \V_p \subset \mass{0}$ be a chart neighborhood of $p$ and $\U_{p'}\xrightarrow{\phi_{p'}} \V_{p'} \subset \mass{\alpha}$
					be a chart neighborhood of $p'$ for some $\alpha \in \RR_+$. Since $p$ and $p'$ are unseparated
					in $\ppcm$, there exists a sequence $(p_n)_{n\in \N} \in \reg(M_1)^\N$ such that
					$\lim_{n\rightarrow +\infty } p_n =p$ and $\lim_{n\rightarrow +\infty} \phi(p_n) = p'$.
					Consider such a sequence and notice that
					$$\forall n\in \N,\quad I^+(p_n)\subset j(M_0) ~\text{ and }~\phi\left(I^+(p_n)\right)=I^+(\phi(p_n)) $$
	so that
					\begin{eqnarray*}
						I^+(p') &=&	Int\left\{\bigcap_{N\in \N} \bigcup _{n\geq N} I^+(\phi(p_n))\right\} \\
							&=&	\phi\left(Int\left\{\bigcap_{N\in \N} \bigcup _{n\geq N} I^+(p_n)\right\}\right) \\
					I^+(p') &=& \phi(I^+(p)).
					\end{eqnarray*}
					Without loss of generality, we can assume
					$\U_p$ connected and $\phi(I^+(p)\cap \U_p) = I^+(p')\cap \U_{p'}$; this way the map
					
						$$\psi_{p'} \circ \phi \circ \psi_p^{-1}: I^+(\psi_p(p))\cap \V_p \rightarrow I^+(\psi_{p'}(p'))\cap \V_{p'}$$
						is an injective $\mass{}$-morphism.
						The future of a BTZ point in $\V_p$ is in the regular part of a neighborhood of some BTZ point of $\V_p$;
						Proposition $\ref{prop:isom_strong}$ applies thus $\alpha=0$ and $\psi_{p'} \circ \phi \circ \psi_p^{-1}$
						is the restriction of an isomorphism $\gamma_p$ of $\mass{0}$.
						
						Consider now a family $(\U_p\xrightarrow{\psi_p} \V_p, \U_{p'} \xrightarrow{\psi_{p'}} \V_{p'}, \gamma_p)_{p\in C}$ of such charts. For all $p\in C$, $\U_p\cap (\pgcd)$ is connected, the $\mass{\geq0}$-morphisms  
$$	
						\fonction
							{\phi_p}
							{(\pgcd) \cup \U_p}
							{M_2}
							{x}
							{
								\left \lbrace
									\begin{array}{ll}
										\phi(x)										& 	~\mathrm{if}~x\in \pgcd\\
										\psi_{p'}^{-1}\circ \gamma \circ \psi_p(x)	& ~\mathrm{if}~x \in \U_p
									\end{array}
								\right.
							}.
					$$				
					are then well defined.
						Each set $(\pgcd )\cup \U_p$ for $p\in C$ is open and connected, furthermore the intersection of two such domains contains $\pgcd$ which is connected therefore: 
						$$ \forall p,q\in C, \quad \left[(\pgcd )\cup \U_p\right] \cap \left[(\pgcd )\cup \U_q\right] \text{ is connected.}$$
						 By analyticity, the $\mass{\geq0}$-morphism
						$$	
						\fonction
							{\overline \phi}
							{(\pgcd) \cup \bigcup_{p\in C}\U_p}
							{M_2}
							{x}
							{
								\phi_p(x) \text{ if } x\in (\pgcd )\cup \U_p
							}
					$$
					is well defined. 
					
					For every $p\in C$ and every
					$q\in \sing_0(\U_p)$ the points $q,\overline \phi(q)$  are unseparated in $\ppcm$.
					Therefore, either $q\in C$ or $q\in \pgcd$;  in any case 
					
					$$(\pgcd)\cup C = (\pgcd)\bigcup_{p\in C} \U_p$$
					so $(\pgcd)\cup C$ is open and $\phi$ extends to $\overline \phi: (\pgcd)\cup C \rightarrow M_2$.
					
				\item Notice that if $(p,q)\in M_1\times M_2$ are unseparated then $I^+(\overline \phi(p))=\phi(I^+(p))=I^+(q)$. Since $M_2$ is globally hyperbolic, by \ref{prop:FZenon_injectif} $M_2$ is future distinguishing and $q=\overline\phi(p)$
						
				\item In order to prove that $(\pgcd) \cup C$ is globally hyperbolic, in view of Lemma \ref{lem:BTZext_caus_conv}, it suffices to prove that $\forall x\in C,\, \sing_0(J^+_{M_1}(x)) \subset (\pgcd) \cup C$.
				
				 Let $p\in C$ and $q\in \sing_0(J^+_{M_1}(p))$. Let $c:[0,1]\rightarrow M_1$ be a future causal curve from $p$ to $q$. By Lemma \ref{lem:past_BTZ}, $c([0,1])\subset \sing_0(M_1)$ and is thus a BTZ-line segment. 
				
Consider $I:= \left\{t_0\in [0,1] ~|~ c([0,t_0])\subset \pgcd \cup C\right\}$;  since $\pgcd\cup C$ is open, $I$ is open. Let $t_0 = \sup I$ and consider a decreasing sequence $(q_n)_{n\in\N} \in M_0^\N$ converging toward $c(t_0)$; notice that $q_n\in J^+_{\pgcd\cup C}(c(0))$ so that $\phi(q_n)\in J^+_{M_2}(\overline\phi\circ c(0))$. Therefore, the sequence $(\phi(q_n))_{n\in\N}$ is decreasing and bounded below in $M_2$; by global hyperbolicity of $M_2$, it converges in $M_2$. Finally,  $c(t_0)\in  \pgcd \cup C$  and $I$ is closed. 
To conclude, $I$ is both open and closed in $[0,1]$ and is thus equal to $[0,1]$; in particular, $c(1)=q \in  \pgcd \cup C$. 

$(\pgcd) \cup C$ is then globally hyperbolic.

				\item From what we proved above,
					$(\pgcd)\cup C$ is a globally hyperbolic sub-BTZ-extension of $M_0\rightarrow M_1$
					endowed with an embedding into $M_2$. 
					By maximality of $\pgcd$, we have $C=\emptyset$ and  $\ppcm$ is thus Hausdorff.
					\item The maps $M_1,M_2\xrightarrow{\pi_1,\pi_2} \ppcm$ are BTZ-embeddings and $M_1,M_2$, are globally hyperbolic; then, by Lemma \ref{lem:BTZext_caus_conv}, $\pi_i(M_i)$ is causally convex in $\ppcm$ for $i\in\{1,2\}$ and, by Corollary \ref{cor:GH_causal}, $\ppcm$ is causal.
					
					Let $p,q\in \ppcm$, observe that if $p,q$ are both in the image of $\pi_1$ or both in the image of $\pi_2$ then by causal convexity and global hyperbolicity of $M_1,M_2$, the diamond $J^+(p)\cap J^-(q)$ is compact. 
					
If either of $p,q$ is in $\reg_{>0}(\ppcm)$ then the previous observation applies, we thus assume that $p,q$ are both in $\sing_0(\ppcm)$ and, without loss of generality, that $p\in \pi_1(M_1)\cap J^-(q)$.  Consider $q'\in I^+(q)\subset \reg_{>0}(\ppcm)\subset \pi_1(M_1)$, we notice that $q\in J^-_{\ppcm}(q')$ thus, by causal convexity of $\pi_1(M_1)$, we have $$q\in J^-_{\ppcm}(q')\cap J^+_{\ppcm}(p) = J^-_{\pi_1(M_1)}(q')\cap J^+_{\pi_1(M_1)}(p).$$
In particular, $q\in \pi_1(M_1)$ and the previous observation applies again.

			\end{itemize}
		\end{proof}

\subsection{Proof of Theorem 2}

We now proceed to the proof of the Maximal BTZ-extension Theorem.
\setcounter{theo}{1}
 \begin{theo}\label{theo:BTZ_ext} Let $M$ be globally hyperbolic $\mass{\geq 0}$-manifolds, there exists a globally hyperbolic BTZ-extension $M\xrightarrow{\iota} N$ which is maximal among such extensions. Moreover, $M\xrightarrow{\iota}N$ is unique up to equivalence. We  call this extension
the maximal BTZ-extension of $M$ and denote it by 
  $\mathrm{Ext}_{BTZ}(M)$.   
\end{theo}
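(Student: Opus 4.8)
The plan is to construct $\BTZext(M)$ as the inductive limit of the filtered family of all globally hyperbolic BTZ-extensions of $M$, for which Propositions~\ref{prop:pgcd} and~\ref{prop:ppcm} have already done the essential work. First I would form the collection $\mathcal{E}$ of equivalence classes of globally hyperbolic BTZ-extensions $M\xrightarrow{\iota_i}N_i$, ordered by the existence of a morphism of BTZ-extensions. Proposition~\ref{prop:ppcm} shows that any two elements $N_1,N_2$ of $\mathcal{E}$ admit the common over-extension $N_1\vee N_2$, which is again a globally hyperbolic BTZ-extension of $M$; hence $\mathcal{E}$ is right-filtered. Before taking a limit one must check that $\mathcal{E}$ is essentially a set: a BTZ-embedding adds only singular points of type $\mass{0}$, so every $N_i$ shares the same regular part $\reg_{>0}(N_i)=\iota_i(\reg_{>0}(M))$, and by future-distinguishability (Proposition~\ref{prop:FZenon_injectif}) each added BTZ point $x$ is determined by its chronological future $I^+(x)\subset\reg_{>0}(M)$; thus the underlying set of any $N_i$ injects into $\reg_{>0}(M)\sqcup\mathcal{P}(\reg_{>0}(M))$, bounding the cardinality and making the iso-classes a genuine set.

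Next I would form the colimit $N:=\varinjlim_i N_i$ in topological spaces. Each transition map $N_i\to N_j$ is an injective a.e.\ $\mass{}$-morphism between $3$-manifolds, hence by invariance of domain an open $\mass{}$-embedding onto a causally convex subset (Lemma~\ref{lem:BTZext_caus_conv}); the charts of the $N_i$ therefore glue to a natural $\mass{\geq0}$-structure on $N$, and every $N_i\to N$ is an open $\mass{}$-embedding. Hausdorffness reduces to the pairwise situation already settled in Proposition~\ref{prop:ppcm}: two points of $N$ lift to a common stage $N_i\vee N_j$ of the filtered system, which is Hausdorff, and since $N_i\vee N_j\to N$ is an open embedding they remain separated in $N$.

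The point I expect to be the main obstacle is \emph{second countability}, since an inductive limit of manifolds is in general not second countable (it need only be separable, and connected separable manifolds such as the Pr\"ufer surface can fail to be second countable). The key is that the regular part $\reg_{>0}(M)$ is fixed along the whole system, is second countable, and is dense in $N$ (every BTZ point is a limit of regular points). Writing $N=\reg_{>0}(M)\cup\sing_0(N)$, it then suffices to prove that $\sing_0(N)$ has at most countably many connected components: each such component is a BTZ line covered by countably many charts modeled on $\mass{0}$, so it has a second-countable open neighborhood, and $N$ becomes a countable union of second-countable open sets. To bound the number of BTZ lines I would note that distinct components of $\sing_0(N)$ admit disjoint chart-neighborhoods, so each determines, via a loop of parabolic holonomy encircling it, a distinct parabolic end (cusp) of the fixed second-countable manifold $\reg_{>0}(M)$; as such a manifold has only countably many ends, $\sing_0(N)$ has countably many components. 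Equivalently, no BTZ line appears beyond past-prolongations of the countably many BTZ lines of $M$.

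Finally, global hyperbolicity, maximality and uniqueness follow formally. The limit $N$ is causal because each $M\to N_i\to N$ is a BTZ-extension of the strongly causal $M$ (Corollary~\ref{cor:GH_causal}), a closed causal curve otherwise lying in a finite stage; and for $p,q\in N$, both lying in some stage $N_i$, causal convexity of $N_i$ in $N$ (Lemma~\ref{lem:BTZext_caus_conv}) gives $J^+_N(p)\cap J^-_N(q)=J^+_{N_i}(p)\cap J^-_{N_i}(q)$, which is compact by global hyperbolicity of $N_i$; hence $N$ is globally hyperbolic. By construction $M\to N$ is a globally hyperbolic BTZ-extension receiving every $N_i$, so it is maximal; and any globally hyperbolic BTZ-extension $N\to P$ is itself a BTZ-extension of $M$, hence factors through $N$, forcing $N\to P$ to be an isomorphism. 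Uniqueness up to equivalence then follows: two maximal globally hyperbolic BTZ-extensions of $M$ admit a common over-extension by Proposition~\ref{prop:ppcm}, into which each embeds as a maximal extension, so both embeddings are isomorphisms, and we set $\BTZext(M):=N$.
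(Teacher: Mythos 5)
Your construction is, in outline, the same as the paper's: the inductive limit over the right-filtered family furnished by Proposition~\ref{prop:ppcm}, Hausdorffness checked stage-wise, causality from Corollary~\ref{cor:GH_causal}, and global hyperbolicity from causal convexity of the stages (Lemma~\ref{lem:BTZext_caus_conv}); your cardinality bound via Proposition~\ref{prop:FZenon_injectif} is a nice explicit substitute for the paper's remark that all BTZ-extensions of $M$ share the same cardinality. However, the second-countability step, which you yourself single out as the main obstacle, contains two genuine gaps.

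First, your assertion that each connected component of $\sing_0(N)$ is \emph{covered by countably many charts} is precisely what has to be proved, not an observation: in a colimit over a possibly uncountable filtered system, a closed $1$-submanifold could a priori be a long ray, and the long ray admits no countable cover by charts. The paper closes exactly this hole by noting that the $\mass{\geq0}$-structure induces on each singular line a $(\RR,\RR)$-structure (the isometries of $\mass{0}$ act by translation on $\sing_0(\mass{0})$), hence a Riemannian metric, hence metrizability and second countability of the line (alternatively, via the theory of type~I non-metrizable manifolds, as in the paper's remark). Second, your count of the components rests on the claim that a second-countable manifold has only countably many ends, which is false in general: the complement of a Cantor set in $S^2$ is a second-countable surface with uncountably many ends. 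What is true --- and what the paper actually uses, via Richards' theorem applied to a Cauchy surface $\Sigma$ of $M$ --- is that ends admitting a disc neighborhood (punctures) are isolated points of the end space, which is compact and metrizable, so there are at most countably many of them. Your argument is repairable in this spirit, since the ends you produce come equipped with cusp-type neighborhoods, but as written the counting step fails, and with it the reduction of $N$ to a countable union of second-countable open sets.
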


		\begin{proof}
			Define $\overline M:= \varinjlim N$ as the inductive limit \cite{MR1712872} in the category of possibly non-metrizable topological manifolds \cite{MR3244277} where $N$ goes through the BTZ-extensions of $M$. This inductive limit is well defined, since the family of BTZ-extensions of $M$ is right filtered by Proposition \ref{prop:ppcm} and the BTZ-extensions of $M$ all have the same cardinality. 
			
			We endow $\overline M$ with the $\mass{\geq0}$-structure induced by the (open) topological embeddings $N\rightarrow \overline M$ for $N$ BTZ-extension of $M$.
			
			\begin{itemize}
\item An inductive limit $N$ of BTZ-extensions of $M$ which is second countable is globally hyperbolic. Indeed, by Corollary \ref{cor:GH_causal}, $N$ is causal. Furthermore, for any $p,q \in N$, there exists a globally hyperbolic BTZ-extension $M_1$ of $M$  which contains $p$ and $q$. By Lemma \ref{lem:BTZext_caus_conv},  $M_1$ is causally convex in $N$. Finally, $J^+_{N}(p)\cap J^-_{N}(q) = J^+_{M_1}(p)\cap J^-_{M_1}(q) $ which is compact.			
			
				\item
					Let $\Sigma$ be a Cauchy surface of $M$. Each BTZ line of $\overline M\setminus M$ is associated to a puncture of $\Sigma$ ie an end which admits a disc neighborhood. By Richards Theorem \cite{MR143186}, a surface can only have countably many such ends. Therefore, $\sing_0(\overline M)$ has at most countably many connected components.		
				
				 Consider a connected component $\Delta$ of $\sing_0(M)$.  The $\mass{\geq 0}$-structure of $\overline M$ induces a $\left(G ,\sing(\mass{0}) \right)$-structure on $\Delta$ with $G$ the image of $\isom(\mass{0})$ in the group of homeomorphisms of $\sing_0(\mass{0})$. Since the isometries of $\mass{0}$ act by translation on $\sing_0(\mass{0})$, the singular line $\Delta$ is actually endowed with a $(\RR,\RR)$-structure ($\RR$ acting by translation on itself). 
				 Finally, $\Delta$ admits a Riemannian metric and is thus second countable. 
				 
				 There thus exists a countable family $(M_i)_{i\in I}$ of BTZ-extensions  of $M$ whose inductive limit is equal to $\overline M$  and then $\overline M$ is second countable.

			\end{itemize}

   Finally, $\BTZext(M) := \overline M $ has the wanted properties.
		\end{proof}
		
\begin{remark}
 There are other ways to prove the BTZ lines are second countable in the proof above. One can also use the fact that every possibly non metrizable 1-manifolds are type I \cite{MR3244277} to show each BTZ-lines admits a neighborhood which is a type I submanifold of $\overline M$; then use separability of $M$ to conclude using the fact that every type I separable manifolds are metrizable \cite{MR3244277}. 
\end{remark}

\subsection{A complement on Cauchy-completeness}
			\label{sec:complete_stability}
                    
                We notice that the proof of Proposition \ref{prop:abs_max} yields another meaningful result. 
                \begin{propou}\label{prop:reg_complete}
                  Let $M$ be a globally hyperbolic Cauchy-complete Cauchy-maximal $\mass{\geq0}$-manifold. 
                  Then, the complement of the BTZ-lines $\reg_{>0}(M)$ is globally hyperbolic Cauchy-maximal and Cauchy-complete. 
                  \end{propou}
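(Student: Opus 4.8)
The plan is to run the two movements of the proof of Proposition \ref{prop:abs_max} again, the only genuine change being that Cauchy-compactness of $M$ is weakened to Cauchy-completeness. Global hyperbolicity of $\reg_{>0}(M)$ is immediate from Remark \ref{rem:reg_GH}, so it remains to produce a complete Cauchy-surface and to establish Cauchy-maximality.

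For Cauchy-completeness I would start from a smooth spacelike complete Cauchy-surface $\Sigma$ of $M$ and perform the same spear surgery as before. Since $M$ is Cauchy-maximal, each BTZ line $\Delta$ admits a spear neighborhood by Lemma \ref{lem:tube}; Lemma \ref{lem:add_BTZ_param} lets me arrange that $\Sigma$ meets its shaft along a graph, and Lemma \ref{lem:curve_extension}(ii) replaces the trace by a complete spacelike graph running off to $+\infty$ towards the puncture. The one point absent from the compact case is that there may be infinitely many BTZ lines; but $\sing_0(M)$ is a closed $1$-submanifold transverse to the spacelike $\Sigma$, so $\sing_0(M)\cap\Sigma$ is discrete, hence countable and locally finite, and the spear neighborhoods can be chosen pairwise disjoint and locally finite. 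Gluing the complete graphs in yields a surface $\Sigma^*$ coinciding with $\Sigma$ off the spears; its completeness is local (that of $\Sigma$ outside the spears, that of Lemma \ref{lem:curve_extension}(ii) inside each), and it is a Cauchy-surface of $\reg_{>0}(M)$ by Remark \ref{rem:curve_spear} together with Corollary \ref{cor:intersect}.

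For Cauchy-maximality, write $M^*:=\reg_{>0}(M)$ and let $\iota:M^*\to N$ be a Cauchy-embedding into a globally hyperbolic $\mass{>0}$-manifold $N$ (the class relevant to the regular part, as in the $\mass{A^*}$-maximality of Proposition \ref{prop:abs_max}). Since $\sing_0(N)=\iota(\sing_0(M^*))=\emptyset$, Lemma \ref{lem:extension_hausdorff} produces the pushforward $\overline M:=(M\coprod N)/M^*$ as a $\mass{\geq0}$-manifold, with $N\to\overline M$ a BTZ-extension and $M\to\overline M$ an embedding. Global hyperbolicity of $\overline M$ follows verbatim as in Proposition \ref{prop:abs_max}: causality from Corollary \ref{cor:GH_causal}, and compactness of the diamonds $J^+(p)\cap J^-(q)$ from the same case analysis, the only delicate case $p\in\sing_0(M)$, $q\in N$ using a spear neighborhood of $p$ and the causal convexity of $N$ in $\overline M$ granted by Lemma \ref{lem:BTZext_caus_conv}; none of this used compactness. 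The new ingredient, replacing the appeal to Proposition \ref{prop:cauchy_abs_max}, is the claim that $M\to\overline M$ is a \emph{Cauchy}-embedding. Here $\reg_{>0}(\overline M)=N$, so $\iota(\Sigma^*)$ is a Cauchy-surface of $\reg_{>0}(\overline M)$; comparing $\Sigma$ (whose trace in a spear is a compact graph, Lemma \ref{lem:curve_extension}(i)) with $\iota(\Sigma^*)$ (whose trace is the complete graph), the two agree outside the spears, while inside each spear Corollary \ref{cor:intersect}, through Lemmas \ref{lem:intersect_number_compact} and \ref{lem:intersect_number_complete} (applicable by Remark \ref{rem:curve_spear}), equates the numbers of intersections of any inextendible causal curve with the two surfaces. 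A short case distinction — whether the curve carries a BTZ part ending on some $\Delta$, in which case it must cross $\Sigma$ at $\Sigma\cap\Delta$, or is entirely regular, in which case it is inextendible in $N$ and meets $\iota(\Sigma^*)$ exactly once — then shows that every inextendible causal curve of $\overline M$ meets $\Sigma$ once. Thus $\Sigma$ is a Cauchy-surface of $\overline M$, Cauchy-maximality of $M$ forces $M\to\overline M$ onto, and therefore $\iota$ onto.

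The hard part will be exactly this last claim that $M\to\overline M$ is a Cauchy-embedding: Proposition \ref{prop:abs_max} could bypass it via the stronger $\mass{A}$-maximality of Proposition \ref{prop:cauchy_abs_max}, whereas here one has only Cauchy-maximality of $M$ and must transport the Cauchy condition across the BTZ-surgery by hand. The subtle point within it is the bookkeeping of intersection numbers for causal curves possessing a BTZ part, which never enters $N$ and is hence invisible to $\iota(\Sigma^*)$; this is precisely where the compact-versus-complete graph dichotomy of Lemma \ref{lem:curve_extension} and the matching of Corollary \ref{cor:intersect} carry the argument. A secondary, more routine difficulty is checking that the countable family of spear neighborhoods can be chosen locally finite and disjoint, so that the surgered surface $\Sigma^*$ is genuinely smooth and complete.
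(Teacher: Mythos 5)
Your proposal is correct and takes essentially the same route as the paper's proof: global hyperbolicity via Remark \ref{rem:reg_GH}, Cauchy-completeness by rerunning the spear surgery of Proposition \ref{prop:abs_max} (Lemmas \ref{lem:tube}, \ref{lem:add_BTZ_param}, \ref{lem:curve_extension}), and Cauchy-maximality by gluing $M$ to a Cauchy-extension $N$ of $\reg_{>0}(M)$ via Lemma \ref{lem:extension_hausdorff} and then using Corollary \ref{cor:intersect} to show $\Sigma$ remains a Cauchy-surface of the glued manifold $\overline M$ --- precisely the step the paper isolates as ``we are left to show $M\rightarrow \overline M$ is a Cauchy-extension'', in place of the appeal to Proposition \ref{prop:cauchy_abs_max} that Cauchy-compactness permitted. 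Your added care about countably many BTZ lines and a locally finite disjoint choice of spear neighborhoods merely makes explicit what the paper's ``applies as is'' leaves implicit.
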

                  \begin{proof}
                   Global hyperbolicity follows from Remark \ref{rem:reg_GH} and the proof of Cauchy-complete\-ness of Proposition \ref{prop:abs_max} applies as is. Continuing along the same route, consider some Cauchy-extension $N$ of $M^*:=\reg_{>0}(M)$, the proof of Proposition \ref{prop:abs_max} shows the $\mass{\geq0}$-manifold $\overline M$ obtained by gluing $M$ to $N$ along $M^*$ using Lemma \ref{lem:extension_hausdorff} is globally hyperbolic. We are left to show $M\rightarrow \overline M$ is a Cauchy-extension.

                   Consider $\Sigma$ a Cauchy-surface of $M$ and $\Sigma^*$ the Cauchy-surface of $M^*$ obtained by applying point (ii) of Lemma \ref{lem:curve_extension} in disjoint spear neighborhoods of $\sing_0(M)$.          
                   Using Corollary \ref{cor:intersect} we deduce that $\Sigma^*$ is a Cauchy surface of $M^*$, thus a Cauchy surface of $N$ and then that  $\Sigma$ is a Cauchy surface of $\overline M$.
                  \end{proof}

                 Considering what has been done so far, a reciproque to Proposition \ref{prop:reg_complete} is within reach. 
                 
				\begin{theo} \label{theo:BTZ_Cauchy-completness}
				Let $M$ be a globally hyperbolic $\mass{\geq0}$-manifold. The following a then equivalent:			
				\begin{enumerate}[(i)]
				\item $\reg_{>0}(M)$ is Cauchy-complete and Cauchy-maximal;
				\item there exists a BTZ-extension of $M$ which is Cau\-chy-com\-plete and Cau\-chy-maxi\-mal;
				\item $\BTZext(M)$ is Cauchy-complete and Cauchy-maxi\-mal.
				\end{enumerate}
				\end{theo}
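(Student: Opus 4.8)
The plan is to establish the cycle of implications by combining the two easy directions with a single substantial one, after first recording two structural reductions. The first reduction is that the passage to $\reg_{>0}$ is insensitive to BTZ-extensions: if $M\xrightarrow{\iota}P$ is any BTZ-extension, then $\iota$ restricts to an isomorphism $\reg_{>0}(M)\to\reg_{>0}(P)$, since the complement of $\iota(M)$ consists only of type-$\mass{0}$ points and $\iota$ is a local diffeomorphism preserving types (Corollaries of Proposition \ref{prop:isom}). The second reduction is that $\BTZext(M)\simeq\BTZext(\reg_{>0}(M))$: indeed $\reg_{>0}(M)\to M\to\BTZext(M)$ is a composition of BTZ-extensions, $\BTZext(M)$ is BTZ-maximal, and any BTZ-maximal globally hyperbolic BTZ-extension of a given manifold coincides with its maximal BTZ-extension by the uniqueness and the surjectivity remark attached to Theorem \ref{theo:BTZ_ext}. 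With these in hand, (iii) $\Rightarrow$ (ii) is immediate because $\BTZext(M)$ is itself a globally hyperbolic BTZ-extension of $M$, and (ii) $\Rightarrow$ (i) follows from Proposition \ref{prop:reg_complete} applied to the Cauchy-complete Cauchy-maximal extension together with the identity $\reg_{>0}(P)=\reg_{>0}(M)$. Everything thus reduces to (i) $\Rightarrow$ (iii).

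For (i) $\Rightarrow$ (iii) I set $M^*:=\reg_{>0}(M)$, globally hyperbolic, Cauchy-complete and Cauchy-maximal by hypothesis, and $\overline M:=\BTZext(M)=\BTZext(M^*)$, globally hyperbolic with $\reg_{>0}(\overline M)=M^*$. The first and hardest step is to produce a spear neighborhood around every BTZ line of $\overline M$. Since $\overline M$ is not yet known to be Cauchy-maximal, Lemma \ref{lem:tube} is not directly available; instead I would exploit the Cauchy-completeness of $M^*$. Each BTZ line of $\overline M$ caps an end of $M^*$ along which a complete spacelike Cauchy-surface must run off to $\tau=+\infty$ (Lemma \ref{lem:surface_complete}(2)); constructing the blunt spear neighborhood of such an end in $M^*$ — the construction postponed from Section \ref{sec:surgery} and carried out in the proof of Proposition \ref{prop:reg_complete} — and adjoining the BTZ segment yields the desired spear neighborhood in $\overline M$. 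This is the main obstacle: it is precisely where Cauchy-completeness of $M^*$ is used to control the geometry near the added singular lines and to break the apparent circularity with Cauchy-maximality.

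Granting spear neighborhoods, Cauchy-completeness of $\overline M$ proceeds as in Proposition \ref{prop:abs_max} run in reverse. Starting from a complete Cauchy-surface $\Sigma^*$ of $M^*$, I would use Lemmas \ref{lem:add_BTZ_param} and \ref{lem:blunt_spear_neighborhood} to arrange that $\Sigma^*$ meets each shaft as a graph, then use point $(i)$ of Lemma \ref{lem:curve_extension} to cap each open end by a spacelike compact disk crossing the BTZ line, producing a spacelike surface $\Sigma$ in $\overline M$ that agrees with $\Sigma^*$ away from the spears. Completeness of $\Sigma$ is clear since the caps are compact, and that $\Sigma$ is a Cauchy-surface follows by comparing intersection numbers with $\Sigma^*$ via Corollary \ref{cor:intersect} and Lemmas \ref{lem:intersect_number_compact}--\ref{lem:intersect_number_complete} (Remark \ref{rem:curve_spear} guaranteeing that their hypotheses hold for the components of an inextendible causal curve inside each spear).

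Finally, for Cauchy-maximality of $\overline M$, let $\overline M\xrightarrow{\jmath}N$ be a Cauchy-embedding into a globally hyperbolic $\mass{\geq0}$-manifold. Then $\reg_{>0}(N)$ is globally hyperbolic (Remark \ref{rem:reg_GH}), $\jmath$ carries $M^*=\reg_{>0}(\overline M)$ into $\reg_{>0}(N)$, and $\jmath(\overline M)$ is causally convex in $N$ (being the image of a Cauchy-embedding), so $\jmath(M^*)=\jmath(\overline M)\cap\reg_{>0}(N)$ is causally convex in $\reg_{>0}(N)$. The crux is to check that $\jmath$ restricts to a Cauchy-embedding $M^*\to\reg_{>0}(N)$: an inextendible causal curve $c^*$ of $\reg_{>0}(N)$ either extends to an inextendible causal curve of $N$, in which case it meets the Cauchy-surface $\jmath(\Sigma)$ in its regular part, hence in $\jmath(M^*)$ (Lemma \ref{lem:past_BTZ} keeping the future part regular); or its past limits onto a BTZ line of $N$, a situation I would treat by the blunt-spear intersection count of Lemma \ref{lem:intersect_number_complete} applied to the surged surface $\jmath(\Sigma^*)$, which runs off to $\tau=+\infty$ near that line. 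Cauchy-maximality of $M^*$ then forces $\reg_{>0}(N)=\jmath(M^*)$, so that $N$ is a globally hyperbolic BTZ-extension of $\overline M$; since $\overline M=\BTZext(M^*)$ is BTZ-maximal (Theorem \ref{theo:BTZ_ext}), $\jmath$ is surjective, hence an isomorphism. This completes (i) $\Rightarrow$ (iii) and the equivalence.
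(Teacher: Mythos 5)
Your handling of (iii)$\Rightarrow$(ii), (ii)$\Rightarrow$(i), and the two structural reductions (invariance of $\reg_{>0}$ under BTZ-extension, and $\BTZext(M)\simeq\BTZext(\reg_{>0}(M))$) matches the paper, and your overall architecture for (i)$\Rightarrow$(iii) --- spear neighborhoods first, then surgery for Cauchy-completeness, then maximality via BTZ-maximality --- is also the paper's. But your Step 1 has a genuine gap. You assert that each end of $M^*$ capped by a BTZ line of $\overline M$ carries a blunt spear neighborhood, citing ``the construction postponed from Section \ref{sec:surgery} and carried out in the proof of Proposition \ref{prop:reg_complete}''; no such standalone construction exists. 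The proof of Proposition \ref{prop:reg_complete} merely recycles Proposition \ref{prop:abs_max}, whose spear neighborhoods come from Lemma \ref{lem:tube} and hence require Cauchy-maximality of the ambient manifold --- exactly what is not yet available for $\overline M$. Worse, your appeal to Lemma \ref{lem:surface_complete}(2) is not even well-posed at this stage: that lemma presupposes the surface is a graph over $\mathbb D_R^*$ in $\mass{0}$-coordinates of \emph{uniform} radius, i.e.\ it presupposes the blunt spear. Local charts around a BTZ line of $\overline M$ only give tubes whose radius may a priori shrink as one moves up the line, and nothing local rules this out; producing a half-infinite tube of uniform radius is a global statement, and it is the whole difficulty.

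The paper's device, which your proposal omits, is to introduce an auxiliary manifold: $M_2$, the Cauchy-maximal Cauchy-extension of $M_1:=\BTZext(M^*)$, which exists by the Choquet-Bruhat-Geroch theorem for $\mass{\geq0}$-manifolds. In $M_2$, Lemma \ref{lem:tube} applies and yields a genuine spear around each BTZ line; Lemmas \ref{lem:add_BTZ_param} and \ref{lem:blunt_spear_neighborhood} put the Cauchy-surfaces in graph form, Lemma \ref{lem:surface_complete} then legitimately gives $\tau_0\to+\infty$, and the gluing $N=(M_0\setminus J^+_{M_0}(\C'))\cup\reg(\mathrm{Int}(\S)\setminus J^-_{\S}(\C''))$ is shown (via Lemma \ref{lem:intersect_number_complete}) to be a Cauchy-extension of $M_0$, so Cauchy-maximality of $M_0$ forces the blunt spear into $M_0$, and BTZ-maximality of $M_1$ then forces the spear's interior into $M_1$. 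The same issue resurfaces in your final maximality step: for an arbitrary Cauchy-embedding $\jmath:\overline M\to N$, your intersection count near a BTZ line of $N$ again presumes blunt-spear coordinates in $N$, which you cannot produce since $N$ need not be Cauchy-maximal. The repair is the paper's: work directly with $M_2$ (equivalently, first replace $N$ by its Cauchy-maximal extension) and prove $M_2^*=M_0$ and $M_2=M_1$, after which Cauchy-maximality of $M_1=\BTZext(M)$ is automatic. With this detour restored, the remainder of your argument --- surgery via Lemma \ref{lem:curve_extension} and the counting Corollary \ref{cor:intersect} --- goes through exactly as in the paper's Step 2.
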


				\begin{proof}
				  $(iii)\Rightarrow (ii)$  is trivial and $(ii)\Rightarrow (i)$ is a consequence of Proposition \ref{prop:reg_complete}.  Let us prove $(i)\Rightarrow (iii)$.
				  
				  Let $M_0:=\reg_{>0}(M)$ and assume $M_0$ is a globally hyperbolic Cauchy-complete Cauchy-maximal $\mass{>0}$-manifold.
				  Consider, $M_1$ the maximal BTZ-extension of $M_0$ and $M_2$ the Cauchy-maximal extension of $M_1$. Without loss of generality, we assume $M_0\subset M_1\subset M_2$; we wish to prove $M_2=M_1$ and that $M_2$ is Cauchy-complete. To this end, define $M_2^* := \reg_{>0}(M_2)$ the complement of the BTZ-lines in $M_2$ and consider $\Sigma_0$,$\Sigma_1$ spacelike Cauchy-surfaces of $M_0$ and $M_1$ respectively.  
				  
				  \item \underline{Step 1 : every BTZ-line of $M_1$ admits a spear neighborhood.}
				  
				  Let $\Delta$ be BTZ-line of $M_2$ and let $p\in \Delta$ in the past of $\Sigma_1$. By Lemma \ref{lem:tube} there exists $\S\xrightarrow{\phi} \mass{0}$ a spear neighborhood of vertex $p$ then 
				  by Lemma \ref{lem:add_BTZ_param} this spear neighborhood can be chosen in such a way that $\phi(\Sigma \cap \S)$ is a graph of some $C^1$ function $\tau_1:\mathbb D_R\rightarrow \RR$ above the disk $\mathbb D_R$ of some radius $R>0$ in $\mass{0}$. In particular, $\Sigma_1\cap \S$ is homeomorphic to a disk. By Lemma \ref{lem:blunt_spear_neighborhood}, one can reduce the size of $\S$ in such a way $\phi(\Sigma_0\cap \S)$ is the graph of a some function $\tau_0:\mathbb D_R^*\rightarrow \RR$ in $\mass{0}$ with $\tau_0$. Since $\Sigma_0$ is complete, by Lemma \ref{lem:surface_complete} $ \lim_{(\r,\theta)\rightarrow 0}\tau_0(\r,\theta) = +\infty$. We can thus assume $\tau_0: \mathbb D_R\rightarrow \RR_+^*$
				  
				  Now, consider the spacelike circle $\C=\Sigma_0\cap \partial \S\subset M_0$ and let $\C'$ (resp. $\C''$) be a spacelike circle in the shaft of $\S$, in the future (resp. the past) of $\C$  and sufficiently close to $\C$ such that the lightlike cylinder $J^-_{\S}(\C')\cap J^+_{\S}(\C)$ (resp. $J^-_{\S}(\C)\cap J^+_{\S}(\C'')$ ) is a subset of $M_0$. Consider $$N:= (M_0\setminus J^+_{M_0}(\C'))\cup \reg(\mathrm{Int}(\S) \setminus J^-_\S(\C'')),$$ 
				  by Lemma \ref{lem:surface_complete} $\lim_{(\r,\theta)\rightarrow 0}\tau_0=+\infty$ and a simple analysis using Lemma \ref{lem:intersect_number_complete} shows $\Sigma_0$ is a Cauchy-surface of $N$. By Cauchy-maximality of $M_0$, we deduce that $N\subset M_0$.  In particular, $M_0$ contains some blunt spear neighborhood $\S_1^*$ the regular part of a spear neighborhood $\S_1\subset M_2$ around $\Delta$ of some vertex $p'$. Notice $M_1\cup \mathrm{Int}(\S_1)$ is a globally hyperbolic BTZ-extension of $M_1$, by BTZ-maximality of $M_1$, we deduce that $\mathrm{Int}(\S_1)\subset M_1$. Therefore, every BTZ-line of $M_1$ admits a spear neighborhood.
				  
				  \item \underline{Step 2 : $M_1=M_2$ is Cauchy-complete}
				  
				  Since every BTZ-line $\Delta$ of $M_1$ admits a spear neighborhood 
				  $\S_{\Delta}$, in very much the same way as in the proof of Proposition \ref{prop:abs_max}, Lemmas \ref{lem:add_BTZ_param}, \ref{lem:blunt_spear_neighborhood} and \ref{lem:curve_extension}  allow to construct a Cauchy-surface $\Sigma_1$ of $M_1$ by replacing pieces of a complete Cauchy-surface of $M_0$ inside each spear neighborhood by a compact piece cutting the BTZ line. This operation preserve metric completeness and using again Corollary  \ref{cor:intersect} we see that $\Sigma_1$ is a Cauchy-surface, $M_1$ is thus Cauchy-complete.
				  
                  The Cauchy-surfaces $\Sigma_0$ and $\Sigma_1$ now agree on the complement of the spear neighborhoods. Using again Corollary \ref{cor:intersect}, we see that $\Sigma_0$ is a Cauchy-surface of $M_2^*$ so $M_2^*$ is a Cauchy-extension of $M_0$. Since $M_0$ is Cauchy-maximal, then $M_2^*=M_0$. Since $M_2$ is a globally hyperbolic BTZ-extension of $M_2^*$ then, by BTZ-maximality of $M_1$, we deduce that $M_2= M_1$.

				\end{proof}

\section{Proof of Theorem 1}
We can now conclude the proof of the main Theorem.
\setcounter{theo}{0}

\begin{theo}\label{theo:princ} Let $\Sigma^*$ be a surface of genus $g$ with exactly $s$ marked points such that $2-2g-s<0$. The deformation space of globally hyperbolic Cauchy-maximal $\mass{0}$-manifolds homeomorphic to $\Sigma\times \RR$ with exactly $s$ marked singular lines can be identified to the tangent bundle of the Teichmüller space of $\Sigma^*$ .
\end{theo}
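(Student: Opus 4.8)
The plan is to establish that the map $\Hol\circ\reg\colon \M_{g,s}(\mass{0})\to T\T_{g,s}$ is a bijection; the identification claimed by the theorem is then realized by this map, which assembles everything proved so far. I would organize the argument as well-definedness, surjectivity, injectivity.

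First I would recall that the map is well defined, which is precisely Proposition \ref{prop:hol_map}: to a class $M\in \M_{g,s}(\mass{0})$ one attaches the holonomy of its regular part, and by Proposition \ref{prop:abs_max} this regular part is globally hyperbolic, Cauchy-complete and $\mass{}$-maximal with anabelian fundamental group, so that Barbot's analysis together with the Teichm\"uller-theoretic dictionary of section \ref{sec:admissible_rep} identifies its holonomy with a point of $T\T_{g,s}$.

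Surjectivity is Proposition \ref{prop:surjective}: starting from an arbitrary admissible representation $\rho$---equivalently a point of $T\T_{g,s}$---the polyhedral gluing of section \ref{sec:polyedron} produces a globally hyperbolic Cauchy-compact $\mass{0}$-manifold with exactly $s$ BTZ-lines and holonomy $\rho$, whereupon Choquet--Bruhat--Geroch (valid for $\mass{\geq0}$-manifolds by the preliminaries) furnishes a Cauchy-maximal representative in $\M_{g,s}(\mass{0})$ lying over $\rho$.

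The crux is injectivity, and this is the only step where I expect a genuine subtlety. Suppose $M,N\in\M_{g,s}(\mass{0})$ have the same image, so that $\reg(M)$ and $\reg(N)$ carry conjugate marked holonomies $\rho$. By Corollary \ref{cor:abs_max} both regular parts are globally hyperbolic, Cauchy-complete and absolutely maximal in the sense of Barbot \cite{barbot_globally_2004}, and they share the anabelian holonomy $\rho$. I would then invoke Barbot's classification: a globally hyperbolic Cauchy-complete absolutely maximal flat spacetime with anabelian fundamental group is determined, up to marked $\mass{}$-isomorphism, by the conjugacy class of its holonomy---the Cauchy-complete counterpart of the uniqueness clause in Mess's theorem. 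This yields $\reg(M)\simeq\reg(N)$. Finally, since $M$ and $N$ are Cauchy-compact and Cauchy-maximal, Proposition \ref{prop:cauchy_abs_max} makes them $\mass{\geq0}$-maximal, in particular BTZ-maximal, so Corollary \ref{cor:common_reg} promotes the isomorphism of regular parts to a marking-preserving isomorphism $M\simeq N$. The real difficulty thus resides entirely in the preparatory results already in hand---the Cauchy-completeness and absolute maximality of the regular part (Proposition \ref{prop:abs_max}) and the uniqueness of the maximal BTZ-extension underlying Corollary \ref{cor:common_reg}---after which the bijection follows formally.
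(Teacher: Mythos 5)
Your proposal is correct and follows essentially the same route as the paper: well-definedness via Proposition \ref{prop:hol_map}, surjectivity via Proposition \ref{prop:surjective}, and injectivity by factoring $\Hol\circ\reg$ through the two injections $\reg\colon \M_{g,s}(\mass{0})\to\M_{g,s}(\mass{})$ (Corollary \ref{cor:common_reg}) and $\Hol\colon\M_{g,s}(\mass{})\to T\T_{g,s}$ (Barbot's uniqueness, Remark 4.19 of \cite{barbot_globally_2004}). The only cosmetic difference is that you traverse the injectivity factorization in the opposite order (holonomy determines $\reg(M)$ first, then Corollary \ref{cor:common_reg} promotes it), which is logically identical to the paper's argument.
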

\begin{proof} 
Consider $\M_{g,s}(\mass{})$ the deformation space of globally hyperbolic Cau\-chy-complete $\mass{}$-maximal $\mass{}$-manifold marked by $\Sigma^*\times \RR$. 
 By Corol\-la\-ry \ref{cor:common_reg}, the map $\M_{g,s}(\mass{0})\xrightarrow{\reg}\M_{g,s}(\mass{})$ is injective and, as a direct 
 consequence of  Remark 4.19  of 
 \cite{barbot_globally_2004}, the holonomy map $ \M_{g,s}(\mass{})\xrightarrow{~\mathrm{Hol}~} T\T_{g,s}$ is injective. Then so is the composition of the two, so the holonomy map obtained in Proposition \ref{prop:hol_map} is injective. By Proposition \ref{prop:surjective},  it is also surjective.
\end{proof}

\bibliographystyle{alpha}
\bibliography{note}	

\end{document}